\documentclass[11pt,reqno]{amsart}
\usepackage{amsmath, amssymb, amscd, mathrsfs, slashed, url}

\usepackage{color}
\usepackage{extsizes}
\usepackage{xcolor}
\usepackage[all,cmtip]{xy}
\usepackage{multicol}
\usepackage{indentfirst}
\usepackage{latexsym}
\usepackage{bm}
\usepackage{graphicx}
\usepackage{subfigure,esint}
\usepackage{float}
\usepackage{verbatim}
\usepackage{comment}
\usepackage[top=1in, bottom=1in, left=1.25in, right=1.25in]{geometry}
\usepackage{epsfig,dsfont,amsthm,amsfonts,amsbsy}
\usepackage[colorlinks]{hyperref}

\usepackage[toc,page]{appendix}
\usepackage{tikz-cd}
\usepackage{amsthm,thmtools,xcolor}

\usepackage{adjustbox}
\usepackage{enumitem}
\setlist{nosep}
\setlist[itemize]{leftmargin=*}
\setlist[enumerate]{leftmargin=*,align=left,nolistsep}

\newtheorem{theorem}{Theorem}[section]

\newtheorem{conjecture}[theorem]{Conjecture}
\newtheorem{corollary}[theorem]{Corollary}

\newtheorem{definition}[theorem]{Definition}
\newtheorem{question}[theorem]{Question}

\newtheorem{lemma}[theorem]{Lemma}
\newtheorem{notation}[theorem]{Notation}

\newtheorem{proposition}[theorem]{Proposition}

\theoremstyle{definition}
\newtheorem{example}[theorem]{Example}
\newtheorem{remark}[theorem]{Remark}

\newcommand{\lan}{\langle }
\newcommand{\ran}{\rangle}
\newcommand{\da}{\dagger}

\newcommand{\MH}{\mathcal{H}}

\newcommand{\MS}{\mathcal{S}}
\newcommand{\MM}{\mathcal{M}}
\newcommand{\MO}{\mathcal{O}}

\newcommand{\CS}{\mathbb{C}^{\st}}

\newcommand{\End}{\mathrm{End}}

\newcommand{\tot}{\mathrm{tot}}
\newcommand{\MC}{\mathcal{C}}
\newcommand{\Tr}{\mathrm{Tr}}

\newcommand{\st}{\star}
\newcommand{\we}{\wedge}
\newcommand{\pa}{\partial}


\newcommand{\ti}{\times}
\newcommand{\SLR}{\mathrm{SL}_2(\mathbb{R})}

\newcommand{\vp}{\varphi}
\newcommand{\MA}{\mathcal{A}}

\newcommand{\al}{\alpha}

\newcommand{\be}{\beta}

\newcommand{\CC}{\mathbb C}

\newcommand{\lam}{\lambda}

\newcommand{\MP}{\mathcal{P}}

\newcommand{\diag}{\mathrm{diag}}

\newcommand{\MG}{\mathcal{G}}
\newcommand{\MGC}{\mathcal{G}^{\mathbb{C}}}

\newcommand{\mfR}{\mathfrak{R}}
\newcommand{\GLC}{\mathrm{GL}_2(\mbC)}
\newcommand{\SU}{\mathrm{SU}}

\newcommand{\MF}{\mathcal{F}}

\newcommand{\Lam}{\Lambda}

\newcommand{\MMH}{\MM_{\mathrm{Higgs}}}
\newcommand{\MMHYM}{\MM_{\mathrm{HYM}}}

\newcommand{\MB}{\mathcal{B}}
\newcommand{\mbP}{\mathbb{P}}
\newcommand{\mbB}{\mathbb{B}}
\newcommand{\mbQ}{\mathbb{Q}}
\newcommand{\mbH}{\mathbb{H}}

\newcommand{\ZT}{\mathbb{Z}_2}
\newcommand{\Dol}{\mathrm{Dol}}

\newcommand{\NS}{\mathrm{NS}}

\newcommand{\MSE}{\mathscr{E}}
\newcommand{\MSF}{\mathscr{F}}

\newcommand{\MSI}{\mathscr{I}}

\newcommand{\MSL}{\mathscr{L}}
\newcommand{\MSM}{\mathscr{M}}
\newcommand{\MSN}{\mathscr{N}}
\newcommand{\MSO}{\mathscr{O}}

\newcommand{\MSQ}{\mathscr{Q}}

\newcommand{\MSU}{\mathscr{U}}

\newcommand{\rank}{\mathrm{rank}}
\newcommand{\Id}{\mathrm{Id}}

\newcommand{\ps}{\mathrm{ps}}

\newcommand{\PGL}{\mathrm{PGL}}

\newcommand{\Aut}{\mathrm{Aut}}

\newcommand{\vol}{\mathrm{vol}}

\newcommand{\id}{\mathrm{id}}
\newcommand{\MMD}{\MM_{\mathrm{Dol}}}

\newcommand{\SLC}{\mathrm{SL}_2(\mathbb{C})}


\newcommand{\SL}{\mathrm{SL}}

\newcommand{\tX}{\tilde{X}}

\newcommand{\nil}{\mathrm{nil}}

\newcommand{\MD}{\mathcal{D}}

\newcommand{\Sym}{\mathrm{Sym}}

\newcommand{\mbR}{\mathbb{R}}

\newcommand{\om}{\omega}



%
\newcommand{\mbZ}{\mathbb{Z}}
\newcommand{\mbL}{\mathbb{L}}


\newcommand{\mbfE}{\mathbf{E}}

\newcommand{\mbfV}{\mathbf{V}}
\newcommand{\mbfW}{\mathbf{W}}

\newcommand{\mbfc}{\mathbf{c}}

\newcommand{\mbfs}{\mathbf{s}}


\newcommand{\vmu}{\vec{\mu}}






\newcommand{\mbC}{\mathbb{C}}
\newcommand{\msA}{\mathscr{A}}

\newcommand{\msE}{\mathscr{E}}
\newcommand{\msF}{\mathscr{F}}

\newcommand{\msJ}{\mathscr{J}}

\newcommand{\msL}{\mathscr{L}}

\newcommand{\msU}{\mathscr{U}}
\newcommand{\msV}{\mathscr{V}}
\newcommand{\msW}{\mathscr{W}}



\newcommand{\Div}{\mathrm{Div}}

\newcommand{\PSLR}{\mathrm{PSL}_2(\mathbb{R})}
\newcommand{\Higgs}{\mathrm{Higgs}}
\newcommand{\mfu}{\mathfrak{u}}
\newcommand{\GL}{\mathrm{GL}}

\newcommand{\Prym}{\mathrm{Prym}}
\newcommand{\Pic}{\mathrm{Pic}}

\newcommand{\Hit}{\mathrm{Hit}}


\newcommand\sh{\mathsf{h}}
\newcommand\ssd{\mathsf{sd}}
\newcommand{\VHS}{\mathrm{VHS}}


\usepackage{calligra}
\DeclareMathAlphabet{\mathcalligra}{T1}{calligra}{m}{n}

\declaretheoremstyle[
headfont=\color{blue}\normalfont\bfseries,
bodyfont=\color{blue}\normalfont\itshape,
]{colored}


\usepackage[utf8]{inputenc}
\usepackage[english]{babel}
\usepackage{fancyhdr}
\usepackage{accents}

\begin{document}
	\title[On the spectral variety for rank two Higgs bundles]{On the spectral variety for rank two Higgs bundles}
	\author{Siqi He} 
	\address{Siqi He, Institute of Mathematics, Academy of Mathematics and Systems Science, Chinese Academy of Sciences, Beijing, 100190, China}
	\email{\href{sqhe@amss.ac.cn}{sqhe@amss.ac.cn}}	

     \author{Jie Liu} %
\address{Jie Liu, Institute of Mathematics, Academy of Mathematics and Systems Science, Chinese Academy of Sciences, Beijing, 100190, China}
\email{\href{jliu@amss.ac.cn}{jliu@amss.ac.cn}}

	\maketitle
	\begin{abstract}
		In this article, we study the Hitchin morphism over a smooth projective variety $X$. The Hitchin morphism is a map from the moduli space of Higgs bundles to the Hitchin base, which in general not surjective when the dimension of X is greater than one. T.~Chen and B.~Ng\^{o} introduced the spectral base, which is a closed subvariety of the Hitchin base. They conjectured that the Hitchin morphism is surjective to the spectral base and also proved that the surjectivity is equivalent to the existence of finite Cohen-Macaulayfications of the spectral varieties.
		
		For rank two Higgs bundles over a projective manifold $X$, we explicitly construct a finite Cohen-Macaulayfication of the spectral variety as a double branched covering of $X$, thereby confirming Chen--Ng\^{o}'s conjecture in this case. Moreover, using this Cohen-Macaulayfication, we can construct the Hitchin section for rank two Higgs bundles, which allows us to study the rigidity problem of the character variety and also to explore a generalization of the Milnor-Wood type inequality.
	\end{abstract}
	\section{Introduction}
	
	Let $X$ be a smooth projective variety. A \emph{Higgs bundle} is a pair $(\MSE,\varphi)$ consisting of a holomorphic vector bundle $\MSE$ over $X$ and a holomorphic map $\varphi: \MSE \rightarrow \MSE \otimes \Omega_X^1$, subject to the integrability condition $\varphi \wedge \varphi = 0$. Under suitable stability condition, the non-abelian Hodge correspondence establishes a connection between irreducible representations of the fundamental group of $X$ and the stable Higgs bundles over $X$ \cite{donaldson1987twisted,hitchin1987self,corlette1988flat,Simpson1988Construction}. The moduli space of Higgs bundles has been extensively employed to investigate the topology and geometry of character varieties \cite{Simpson1991,Simpson1992,simpson1994moduli,simpson1996hodge}. 
 
    Let $\MMH$ be the moduli space of polystable Higgs bundles with rank $r$ and  $\MMH^{\textup{stack}}$ the moduli stack of Higgs bundle with rank $r$. A powerful tool to study the moduli space of Higgs bundles is the Hitchin morphism, defined by Hitchin \cite{hitchin1987self, hitchin1992lie}. More precisely, by taking the coefficients of the characteristic polynomial of the Higgs field, we obtain a map 
    \[
    \sh_X: \MMH^{\textup{stack}} \rightarrow \mathcal{A}_X := \bigoplus_{i=1}^r H^0(X,\Sym^i \Omega_X^1),
    \]
    which is called the \emph{Hitchin morphism}. The affine space $\MA_X$ is called the \emph{Hitchin base}.

    If $X$ is a smooth projective curve, the integrability condition $\varphi \wedge \varphi = 0$ is automatically satisfied and the Hitchin morphism $\sh_X:\MMH\rightarrow \MA_X$ is known to be surjective \cite{hitchin1987self, hitchin1987stable}. Furthermore, the moduli space of Higgs bundles forms an algebraically completely integrable system. The Hitchin and Beauville-Narasimhan-Ramanan (BNR) correspondence describes the fibers of the Hitchin morphism \cite{hitchin1987stable, BeauvilleNarasimhanRamanan1989}. For a general $\textbf{s} \in \mathcal{A}_X$, we can define the spectral curve $X_{\textbf{s}}$ and $\sh_X^{-1}(\textbf{s})$ can be identified with the rank one torsion-free sheaves on $X_{\textbf{s}}$.
	
	However, generalizing the Hitchin and BNR correspondence to higher dimension poses significant challenges. Due to the additional integrability condition $\varphi \wedge \varphi = 0$, the Hitchin morphism is no longer surjective in general. Extensive studies on the spectral correspondence can be found in \cite{donagi1995spectral, Simpson1991, schottenloher1995metaplectic, gallego2021higgs}.
	
	To address the integrability condition, Chen and Ng\^{o} introduced in \cite[Proposition 5.1]{ChenNgo2020} the \emph{spectral base} $\MS_X$ as a closed subset of the Hitchin base $\mathcal{A}_X$ and they proved that $\sh_X: \MMH^{\textup{stack}} \rightarrow \mathcal{A}_X$ factors through the inclusion $\iota_X: \MS_X \rightarrow \mathcal{A}_X$, yielding the following commutative diagram:
	\begin{equation}
		\begin{tikzcd}
			\MMH^{\textup{stack}} \arrow[d,"{\ssd_X}" left] \arrow{dr}{\sh_X} &  \\
			\MS_X \arrow{r}{\iota_X} & \MA_X .
		\end{tikzcd}
	\end{equation}

    The map $\ssd_X$ is called the \emph{spectral morphism} and an element of $\MS_X$ is called a \emph{spectral datum}. For every spectral datum $\textbf{s}\in \MS_X$, one can define a closed subscheme $X_{\textbf{s}} \subset \tot(\Omega_X^1)$, which is called the \emph{spectral variety}. Chen and Ng\^o proved in \cite[Proposition 6.3]{ChenNgo2020} that under some addition assumption on $X_{\textbf{s}}$, the fibre $\sh_X^{-1}(\textbf{s})$ can be identified with maximal Cohen-Macaulay sheaves of generic rank one over $X_{\textbf{s}}$. When $X$ is of dimension two, a good finite Cohen-Macaulayification of $X_{\textbf{s}}$ can always be abstractly constructed \cite[Theorem 7.3]{ChenNgo2020}, see also \cite[Theorem 1.2]{SongSun2021}. However, in general, the spectral variety may exhibit significant singularities, and it remains unknown whether generically rank one maximal Cohen-Macaulay sheaves over $X_{\textbf{s}}$ exist or not. Chen and Ngô proposed the following conjecture.
	\begin{conjecture}[\protect{\cite[Conjecture 5.2]{ChenNgo2020}}]
		\label{conj_ngo_chen}
		The spectral morphism $\ssd_X:\MMH^{\textup{stack}}\rightarrow \MS_X$ is surjective.
	\end{conjecture}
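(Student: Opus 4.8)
The plan is to reduce Conjecture~\ref{conj_ngo_chen} in the rank two case to the construction of a finite Cohen-Macaulayification of the spectral variety, as in the equivalence of Chen--Ng\^o recalled above, and then to produce such a Cohen-Macaulayification explicitly as a double cover of $X$. Write $n = \dim X$ and let $\mathbf{s} = (a_1,a_2) \in \MS_X$, with $a_1 \in H^0(X,\Omega_X^1)$ and $a_2 \in H^0(X,\Sym^2\Omega_X^1)$. Inside the smooth $2n$-dimensional total space $\tot(\Omega_X^1)$, carrying its tautological section, the spectral variety $X_{\mathbf{s}} = \{\lambda^2 - a_1\lambda + a_2 = 0\}$ is the zero locus of a section of (the pullback of) $\Sym^2\Omega_X^1$, hence is cut out by $\binom{n+1}{2}$ equations while being finite of dimension $n$ over $X$. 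For $n \geq 2$ it is therefore far from a complete intersection and may fail to be Cohen-Macaulay: already the union of two smooth sheets meeting along a codimension-two locus, which occurs here, is a classical non-Cohen-Macaulay example, and this is exactly the phenomenon the conjecture must overcome.

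First I would complete the square. As $a_1/2 \in H^0(X,\Omega_X^1)$ induces a fibrewise-translation automorphism of the affine bundle $\tot(\Omega_X^1)$, the substitution $\lambda \mapsto \lambda + a_1/2$ identifies $X_{\mathbf{s}}$ with the traceless spectral variety $X_b = \{\lambda^2 = b\}$, where $b = \tfrac{1}{4}(a_1^2 - 4a_2) \in H^0(X,\Sym^2\Omega_X^1)$. This reduces the problem to double covers attached to a single quadratic differential. Next I would unwind the spectral-base condition in rank two and show that $\mathbf{s} \in \MS_X$ forces $b$ to be \emph{locally a square}: on a suitable open cover $\{U_i\}$ there are one-forms $\omega_i \in \Omega_X^1(U_i)$ with $b|_{U_i} = \omega_i^{\otimes 2}$. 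This is the algebraic shadow of $\varphi\wedge\varphi = 0$, under which the commuting components of a rank two Higgs field are generically simultaneously diagonalizable and their eigenvalues $\pm\omega_i$ are honest local one-forms. Each $\omega_i$ is then determined up to sign, so the signs $\epsilon_{ij}$ in $\omega_i = \epsilon_{ij}\omega_j$ form a \v{C}ech cocycle in $\mbZ/2$ defining a two-torsion line bundle $\eta$ with $\eta^{\otimes 2} \cong \MO_X$, and the $\omega_i$ glue to a global twisted one-form $\omega \in H^0(X,\Omega_X^1\otimes\eta)$ with $\omega^{\otimes 2} = b$ (using $\eta^{\otimes 2}\cong\MO_X$).

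With this square-root datum in hand I would construct the Cohen-Macaulayification as a finite flat double cover $p\colon \tX \to X$, together with a morphism $\nu\colon \tX \to X_b \subset \tot(\Omega_X^1)$ sending the two points of $\tX$ over $x$ to $(x,\pm\omega(x))$. The key structural gains are automatic: a finite flat degree-two cover of the smooth, hence Cohen-Macaulay, base $X$ is itself Cohen-Macaulay, so $\tX$ needs no separate regularity argument; and over the locus $b \neq 0$ the map $\nu$ is an isomorphism onto the two reduced sheets of $X_b$, so $\nu$ is finite and birational, exhibiting $\tX$ as a finite Cohen-Macaulayification of $X_{\mathbf{s}}$. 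Feeding this into the Chen--Ng\^o equivalence --- or, concretely, pushing a suitable line bundle on $\tX$ forward along $p$ to a rank two Higgs bundle $(\MSE,\varphi)$, with $\varphi$ induced by the tautological eigenform, so that $\sh_X(\MSE,\varphi) = \mathbf{s}$ --- yields the surjectivity of $\ssd_X$.

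The hard part will be the behaviour along the degeneracy locus $\{b = 0\}$. Away from it the two-sheeted picture is transparent, but the construction must be pushed across the zeros of $\omega$, where $b$ vanishes to higher order and where, for $n \geq 2$, the relevant locus is a possibly singular divisor or even has codimension greater than one. Two delicate points must be settled there: \emph{(i)} that the spectral condition really produces the global datum $(\eta,\omega)$, with the sign indeterminacy of the $\omega_i$ controlled at the zeros of $b$; and \emph{(ii)} that the double-cover construction still defines a \emph{finite, flat} --- hence Cohen-Macaulay --- $\MO_X$-algebra across the singular branch locus, with $\nu$ remaining finite and birational onto $X_{\mathbf{s}}$. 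This is precisely where the higher-dimensional geometry, namely the failure of $X_{\mathbf{s}}$ to be a complete intersection and the possible singularities of the branch divisor, makes the argument substantially more involved than the classical spectral-curve case, and where the explicit double cover must be analysed with care.
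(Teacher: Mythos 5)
Your outline follows the paper's strategy at the top level (complete the square, build a finite flat double cover as Cohen--Macaulayfication, push forward to get a Higgs bundle with the prescribed spectral datum), but the key step is wrong. You claim that $\mathbf{s}\in \MS_X$ forces $b$ to be \emph{locally a square}, i.e., that there is a cover $\{U_i\}$ with holomorphic one-forms $\omega_i$ satisfying $b|_{U_i}=\omega_i^{\otimes 2}$, and hence a two-torsion line bundle $\eta$ and a global $\omega\in H^0(X,\Omega_X^1\otimes \eta)$ with $\omega^{\otimes 2}=b$. This is false. The rank-one condition is only pointwise, and it yields holomorphic local square roots only where $b\not=0$. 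The correct structural result (Bogomolov--de Oliveira, Proposition \ref{p.decomposition-symmetric-differetials} in the paper) is $b=\alpha^2\tau$ with $\alpha\in H^0(X,\Omega_X^1\otimes \MSL^{-1})$ not vanishing in codimension one and $\tau\in H^0(X,\MSL^{2})$; whenever the branch divisor $\textup{div}(\tau)$ is nonzero and reduced, $b$ vanishes to odd order along it and admits no holomorphic square root there, even locally. The simplest counterexample is a curve: rank one is vacuous in dimension one, so any quadratic differential with a simple zero lies in $\MB_X$, yet your global datum would force $\textup{div}(b)=2\,\textup{div}(\omega)$ to be even. Higher-dimensional examples with the same failure appear in the paper (products of curves, where $\tau$ is pulled back from a quadratic differential with simple zeros).

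The consequence is fatal for your construction: a double cover whose fibre over $x$ is $\{(x,\pm\omega(x))\}$ is unramified (a disjoint union of two copies of $X$, or the \'etale cover determined by $\eta$), so it can never be finite and birational onto $X_{\mathbf{s}}$ when $X_{\mathbf{s}}$ is irreducible --- which is exactly what happens when $\textup{div}(\tau)\not=0$. No amount of care ``at the zeros of $b$'' (your delicate point \emph{(i)}) can repair this: the monodromy of the square root of $\tau$ around the branch divisor is not encoded by any line bundle on $X$; it \emph{is} the branched double cover. The paper's fix is to extract only the twisted one-form $\alpha$ and keep $\tau$ as genuine branching data: define
\[
\widetilde{X}_s=\{\eta\in \tot(\MSL)\,|\,\eta^2+\tau=0\}\subset \tot(\MSL),
\]
a hypersurface in a smooth variety, hence automatically Cohen--Macaulay and flat of degree two over $X$, mapping finite-birationally onto $X_s$ via multiplication by $\alpha$. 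Pushing forward $\MSO_{\widetilde{X}_s}$ then yields the explicit polystable Higgs bundle $\MSE=\MSO_X\oplus \MSL^{-1}$ with
\[
\varphi=\begin{pmatrix} 0 & -\alpha\tau\\ \alpha & 0 \end{pmatrix},
\]
which has $\Tr(\varphi)=0$, $\det(\varphi)=s$, and after adding back $\tfrac12 s_1\Id$ realizes the given spectral datum, proving surjectivity.
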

     In fact, the Conjecture \ref{conj_ngo_chen} is formulated in \cite{ChenNgo2020} for $G$-Higgs bundles, where $G$ is a split reductive group, and it is confirmed for $\textup{GL}_r(\mathbb{C})$ Higgs bundle over any smooth projective surfaces in \cite{ChenNgo2020} and \cite{SongSun2021}.

     From now on we shall always assume that $r=2$ unless otherwise stated. In this paper, we will investigate in details the explicit geometric structure of the spectral base $\MS_X$, which is closely related to the theory of rank one symmetric differentials introduced and studied by Bogomolov and de Oliveira \cite{bogomolov2011symmetric, bogomolov2013closed}. 
     Let $\MS_X^{\nil}$ be the closed subset of $\MS_X$ consisting of elements 
     \[
     \textbf{s}=(s_1,s_2)\in \MS_X\subset \MA_X:=H^0(X,\Omega_X^1)\oplus H^0(X,\Sym^2\Omega_X^1)
     \]
     such that $4s_2-s_1^2=0$. We should note that the Higgs bundle $(\MSE,\vp)$ with $\ssd_X(\MSE,\vp)\in \MS_X^{\nil}$ is nilpotent. Our first main theorem is the following existence result for Cohen-Macaulayfications of spectral varieties.
	
	\begin{theorem}
		\label{t.spectral-correspondence-GL2}
		Let $X$ be a projective manifold and let $\MS_X$ be the spectral base for $\GL_2(\mbC)$ Higgs bundle. Then for any element $\mbfs=(s_1,s_2)\in \MS_X\setminus \MS_X^{\nil}$, there exists a Cohen-Macaulayfication $\widetilde{X}_{\mbfs}$ of the spectral variety $X_{\mbfs}$ as a double branched covering of $X$ such that there is a bijective correspondence between isomorphism classes of maximal Cohen-Macaulay sheaves of generic rank one on $\widetilde{X}_{\mbfs}$ and isomorphism classes of rank two Higgs bundles $(\MSE,\varphi)$ on $X$ with $\Tr(\varphi)=s_1$ and $\det(\varphi)=s_2$. 
	\end{theorem}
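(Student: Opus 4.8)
\emph{Strategy.} The plan is to reduce the whole problem to the discriminant $\Delta := s_1^2 - 4s_2 \in H^0(X,\Sym^2\Omega_X^1)$, which is nonzero precisely because $\mbfs \notin \MS_X^{\nil}$ (note $4s_2 - s_1^2 = -\Delta$). First I would read off the defining condition of the rank two spectral base from the integrability constraint: writing a local Higgs field as $\vp = \tfrac12 s_1\Id + \vp_0$ with $\vp_0$ trace-free, the condition $\vp\wedge\vp = 0$ forces the three entries of $\vp_0$ to be pairwise proportional $1$-forms, so that $\det\vp_0 = -\tfrac14\Delta$ equals, at every point, a function times the square of a single $1$-form. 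Thus $\Delta$ is a \emph{rank one} symmetric differential in the sense of Bogomolov--de Oliveira \cite{bogomolov2011symmetric,bogomolov2013closed}, and I would invoke their structure theory to extract a saturated sub-line-bundle $L\hookrightarrow\Omega_X^1$ recording the common direction of the eigen-$1$-forms, together with a section $\Delta\in H^0(X,L^{\ot 2})$ whose zero divisor will be the branch locus.

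\smallskip

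With $L$ and $\Delta\in H^0(X,L^{\ot 2})$ in hand, I would construct $\widetilde{X}_{\mbfs}$ as the cyclic double cover $w^2 = \Delta$ inside the total space $\Tot(L)$, equivalently $\widetilde{X}_{\mbfs} = \mathrm{Spec}_X(\MO_X\oplus L^{-1})$ with algebra structure encoded by $\Delta$. Being a finite flat degree-two cover of the smooth variety $X$ (a divisor in the smooth total space $\Tot(L)$), $\widetilde{X}_{\mbfs}$ is automatically Cohen--Macaulay, and it is reduced since $\Delta\ne 0$. Writing $w$ for the tautological section of $p^*L$ with $w^2 = p^*\Delta$, the eigenvalue $1$-form $\lambda := \tfrac12(p^*s_1 + w)$ is a section of $p^*\Omega_X^1$ satisfying $\lambda^2 - (p^*s_1)\lambda + p^*s_2 = 0$, so it defines a finite morphism $\widetilde{X}_{\mbfs}\to\Tot(\Omega_X^1)$ factoring through $X_{\mbfs}$. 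I would then verify that the induced finite morphism $\widetilde{X}_{\mbfs}\to X_{\mbfs}$ is an isomorphism over the dense open locus where $\Delta\ne 0$ and $X_{\mbfs}$ is already Cohen--Macaulay of dimension $\dim X$, so that $\widetilde{X}_{\mbfs}$ qualifies as a finite Cohen-Macaulayfication in the sense of Chen--Ng\^{o}.

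\smallskip

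For the spectral correspondence I would pass back and forth along $p$. Given a maximal Cohen--Macaulay sheaf $\MSF$ of generic rank one on $\widetilde{X}_{\mbfs}$, set $\MSE := p_*\MSF$; since $X$ is smooth, maximal Cohen--Macaulay modules over its local rings are free, so $\MSE$ is locally free, of rank two because $p$ has degree two. Multiplication by $\lambda$ descends to a Higgs field $\vp: \MSE\to\MSE\ot\Omega_X^1$ with $\Tr\vp = s_1$ and $\det\vp = s_2$; its integrability $\vp\wedge\vp = 0$ is automatic, because the trace-free part of $\vp$ is the pushforward of module-multiplication by $w\in p^*L$ and hence factors through $\MSE\ot L\hookrightarrow\MSE\ot\Omega_X^1$, so its entries are all proportional to a local generator of $L$. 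Conversely, given $(\MSE,\vp)$ with $\Tr\vp = s_1$ and $\det\vp = s_2$, Cayley--Hamilton gives $\vp^2 - s_1\vp + s_2 = 0$, making $\MSE$ a module over the spectral algebra $\MO_X[\lambda]/(\lambda^2 - s_1\lambda + s_2)$; as $\MSE$ is locally free and $\widetilde{X}_{\mbfs}\to X_{\mbfs}$ is finite and generically an isomorphism, this action extends uniquely to $p_*\MO_{\widetilde{X}_{\mbfs}}$ and produces a maximal Cohen--Macaulay sheaf $\MSF$ of generic rank one on $\widetilde{X}_{\mbfs}$. I would then check that these two constructions are mutually inverse and respect isomorphism classes, paralleling \cite[Proposition 6.3]{ChenNgo2020}.

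\smallskip

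The main obstacle I anticipate lies in the first two steps: producing the sub-line-bundle $L\hookrightarrow\Omega_X^1$ together with the section $\Delta\in H^0(X,L^{\ot 2})$ of the correct vanishing order globally from the merely pointwise rank-one condition, and then controlling the double cover along the zero locus of $\Delta$, where the two eigen-$1$-forms collide and $L\hookrightarrow\Omega_X^1$ may fail to be a subbundle. This is exactly where the fine structure theory of rank one symmetric differentials is indispensable, and where one must ensure that $\widetilde{X}_{\mbfs}$ is genuinely finite and Cohen--Macaulay over $X$, in contrast to the typically non-Cohen--Macaulay, non-equidimensional naive spectral variety $X_{\mbfs}\subset\Tot(\Omega_X^1)$. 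By comparison, once $\widetilde{X}_{\mbfs}$ is correctly built, the remaining verifications---that $\widetilde{X}_{\mbfs}\to X_{\mbfs}$ is an isomorphism over the good locus and that the module-extension step in the correspondence is valid---are expected to be routine.
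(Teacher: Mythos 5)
Your construction follows the same route as the paper: pass to the nonzero discriminant, use the Bogomolov--de Oliveira decomposition to produce a saturated $\alpha:\MSL\hookrightarrow\Omega_X^1$ together with $\tau\in H^0(X,\MSL^{2})$, define the double cover inside $\tot(\MSL)$ (Cohen--Macaulay as a hypersurface in a smooth total space, finite flat over $X$), and transport sheaves through the pushforward. The direction from maximal Cohen--Macaulay sheaves to Higgs bundles is correct as you sketch it. However, the converse direction contains a genuine gap, and it is exactly the step you dismiss as routine.

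You claim that, because $\MSE$ is locally free and $\widetilde{X}_{\mbfs}\to X_{\mbfs}$ is finite and generically an isomorphism, the $\MSO_{X_{\mbfs}}$-module structure coming from Cayley--Hamilton ``extends uniquely'' to an $\MSO_{\widetilde{X}_{\mbfs}}$-module structure. This implication is false as a general principle: finiteness plus generic isomorphism does not permit extending a module structure along a finite birational extension of algebras. The paper's own discussion makes this concrete: \S\,\ref{s.towerofCMness} produces a whole tower of other Cohen--Macaulayfications $X_{\mathbf{a}}\to X_s$, each finite over $X$ and generically isomorphic to $X_s$, and the remark following Theorem \ref{t.Spectral-correspondence} points out that for $X_{\mathbf{a}}\neq \widetilde{X}_s$ there exist Higgs bundles with the given spectral datum (for instance the one of Example \ref{e.canonical-Higgs-bundle}) whose module structure does \emph{not} extend to $\MSO_{X_{\mathbf{a}}}$. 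What singles out $\widetilde{X}_s$ is the saturation of $\alpha(\MSL)$ in $\Omega_X^1$, and the correct argument is Lemma \ref{l.factorisation-varphi}: the trace-free part of $\varphi$ factors as $\alpha\circ\varphi_0$ with $\varphi_0\in H^0(X,\End(\MSE)\otimes\MSL)$. Its proof exhibits the factorization away from the zero locus $Z$ of the discriminant via the pointwise eigen-decomposition, and then uses the torsion-freeness of $\Omega_X^1/\alpha(\MSL)$ --- this is precisely where saturation, i.e. the non-vanishing of $\alpha$ in codimension one, enters --- to conclude that the composition $\MSE\to\MSE\otimes\Omega_X^1\to\MSE\otimes\bigl(\Omega_X^1/\alpha(\MSL)\bigr)$ vanishes identically because it vanishes on the dense open set $X\setminus Z$. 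Only with this factorization does $\MSE$ become a module over $\Sym(\MSL^{-1})/\MSI=\widetilde{p}_{s*}\MSO_{\widetilde{X}_s}$, which is what your correspondence needs. You do require $L$ to be saturated when you extract it, but your extension argument never invokes that property, and without it the step you label routine is exactly the point where the construction can fail.
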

	
	As an immediate application, we can confirm Chen--Ng\^{o}'s conjecture for the $\mathrm{GL}_2(\mathbb{C})$ case in a stronger sense.
	
	\begin{corollary}
		\label{c.Chen-Ngo-GL2}
		The spectral morphism $\ssd_X:\MMH\rightarrow \MS_X$ is surjective. In particualr, the Conjecture \ref{conj_ngo_chen} holds for $\mathrm{GL}_2(\mathbb{C})$ Higgs bundles.
	\end{corollary}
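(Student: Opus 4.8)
The plan is to read off the surjectivity of $\ssd_X$ directly from the spectral correspondence of Theorem~\ref{t.spectral-correspondence-GL2}, handling the two strata $\MS_X\setminus\MS_X^{\nil}$ and $\MS_X^{\nil}$ separately, and then to observe that the stack version of Conjecture~\ref{conj_ngo_chen} follows formally. First I would reduce the claim to a pointwise statement: since $\ssd_X$ records only the coefficients of the characteristic polynomial of $\varphi$, it suffices to exhibit, for every $\mbfs=(s_1,s_2)\in\MS_X$, at least one polystable Higgs bundle $(\MSE,\varphi)$ with $\Tr(\varphi)=s_1$ and $\det(\varphi)=s_2$. Granting this, any such $(\MSE,\varphi)$ is in particular an object of $\MMH^{\textup{stack}}$, so surjectivity of $\ssd_X:\MMH\to\MS_X$ immediately yields surjectivity of $\ssd_X:\MMH^{\textup{stack}}\to\MS_X$, which is exactly Conjecture~\ref{conj_ngo_chen} for $\GL_2(\mbC)$; this is precisely why the corollary can be phrased as a strengthening.

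Over the nilpotent locus the construction is completely explicit and requires nothing beyond the defining equation. For $\mbfs\in\MS_X^{\nil}$ we have $s_2=s_1^2/4$, so the two would-be eigenvalues of $\varphi$ collapse to the single $1$-form $\tfrac{s_1}{2}\in H^0(X,\Omega_X^1)$. Taking $\MSE=\MSO_X\oplus\MSO_X$ with $\varphi=\tfrac{s_1}{2}\,\Id$ gives a genuine Higgs field, since $\varphi\wedge\varphi=(\tfrac{s_1}{2}\wedge\tfrac{s_1}{2})\,\Id=0$, and it satisfies $\Tr(\varphi)=s_1$ and $\det(\varphi)=s_1^2/4=s_2$. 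As this is the direct sum of the rank one Higgs bundle $(\MSO_X,\tfrac{s_1}{2})$ with itself, it is polystable, so every $\mbfs\in\MS_X^{\nil}$ lies in the image of $\ssd_X$.

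For $\mbfs\in\MS_X\setminus\MS_X^{\nil}$ I would invoke Theorem~\ref{t.spectral-correspondence-GL2}, which furnishes the double branched covering $\pi:\widetilde{X}_{\mbfs}\to X$ together with the bijective correspondence. The structure sheaf $\MSO_{\widetilde{X}_{\mbfs}}$ is a maximal Cohen--Macaulay sheaf of generic rank one on the Cohen--Macaulayfication, and hence corresponds to a rank two Higgs bundle $(\MSE,\varphi)$ with $\MSE\cong\pi_*\MSO_{\widetilde{X}_{\mbfs}}$ and with the prescribed invariants $\Tr(\varphi)=s_1$, $\det(\varphi)=s_2$. The remaining, and genuinely main, point is to upgrade this to \emph{polystability}, which is what makes the statement stronger than the bare stack assertion. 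When $4s_2-s_1^2$ is not a square in $H^0(X,\Sym^2\Omega_X^1)$ the covering $\widetilde{X}_{\mbfs}$ is integral, and a Beauville--Narasimhan--Ramanan type argument \cite{BeauvilleNarasimhanRamanan1989} applies: any $\varphi$-invariant subsheaf of $\pi_*\MSO_{\widetilde{X}_{\mbfs}}$ is an $\MSO_{\widetilde{X}_{\mbfs}}$-submodule, which over an integral scheme has generic rank $0$ or $1$, i.e.\ rank $0$ or $2$ over $X$; thus there is no rank one Higgs subsheaf and $(\MSE,\varphi)$ is stable.

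The hard part will be the reducible degeneration: if $4s_2-s_1^2=-b^2$ for some $b\in H^0(X,\Omega_X^1)$, the covering splits and $\pi_*\MSO_{\widetilde{X}_{\mbfs}}$ decomposes as a sum of two rank one Higgs bundles with eigenforms $a_i=\tfrac{1}{2}(s_1\pm b)$. I would dispose of this by noting that $\Tr(\varphi)$ and $\det(\varphi)$ depend only on the $1$-forms $a_1,a_2$ and not on the underlying line bundles, so I am free to replace the two summands by rank one Higgs bundles supported on line bundles of equal slope (for instance both trivial), producing a polystable representative without altering $\mbfs$. The crux of the corollary is therefore exactly this case analysis of the degenerations of the double cover and the verification that the (poly)stability argument is uniform across the stratum $\MS_X\setminus\MS_X^{\nil}$; once Theorem~\ref{t.spectral-correspondence-GL2} is in hand, everything else is formal.
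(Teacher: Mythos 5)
Your proof is correct, and its skeleton matches the paper's: the nilpotent locus is handled by the explicit diagonal bundle $(\MSO_X\oplus\MSO_X,\diag(s_1/2,s_1/2))$, exactly as in the paper, and the locus $\MS_X\setminus\MS_X^{\nil}$ is handled by pushing forward the structure sheaf of $\widetilde{X}_{\mbfs}$ via Theorem \ref{t.spectral-correspondence-GL2}. Where you genuinely diverge is the verification of polystability. The paper just cites Example \ref{e.canonical-Higgs-bundle}: the pushforward is the real Higgs bundle $\bigl(\MSO_X\oplus\MSL^{-1},\begin{pmatrix} 0 & -\alpha\tau\\ \alpha & 0\end{pmatrix}\bigr)$, and Proposition \ref{prop_stability_realHiggs} gives stability when $D=\textup{div}(\tau)\neq 0$ (since $\deg\MSO_X>\deg\MSL^{-1}$ and $\alpha\neq 0$) and polystability when $D=0$; the dichotomy is on $D$ and the input is the slope analysis of real Higgs bundles. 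You instead stratify by integrality of the spectral cover: in the integral case your BNR-type argument (invariant subsheaves are $\MSO_{\widetilde{X}_{\mbfs}}$-submodules, hence of even generic rank over $X$) yields stability with respect to \emph{every} polarization and with no slope computation, while in the reducible case you bypass the pushforward entirely and exhibit the decomposable representative $\bigl(\MSO_X\oplus\MSO_X,\diag(\tfrac12(s_1+b),\tfrac12(s_1-b))\bigr)$, which indeed has trace $s_1$, determinant $s_2$, and is polystable as a sum of rank one Higgs bundles of equal slope. Two caveats, neither fatal. First, your claim that in the reducible case "the covering splits and $\pi_*\MSO_{\widetilde{X}_{\mbfs}}$ decomposes" is false when $\textup{div}(\tau)\neq 0$: there $\widetilde{X}_{\mbfs}$ is reducible but connected (two copies of $X$ glued along $\textup{div}(\sigma)$ with $\sigma^2=-\tau$), and the pushforward does \emph{not} decompose — by Proposition \ref{prop_stability_realHiggs} it is actually stable; your proof survives only because your construction uses nothing but the existence of $b$, not the claimed decomposition. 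Second, in the BNR step, $\vp$-invariance of a subsheaf forces $\MSO_{\widetilde{X}_{\mbfs}}$-module invariance only for \emph{saturated} subsheaves (one needs the quotient torsion-free so that multiplication by $\alpha$ is injective on it); this suffices because stability of a rank two Higgs bundle is tested on saturated rank one invariant subsheaves, as the paper notes in the remark following Lemma \ref{l.saturation-line-bundles}.
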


        The statement of Corollary \ref{c.Chen-Ngo-GL2} is stronger than the original  Conjecture \ref{conj_ngo_chen}: the restrictton of $\ssd_X$ to the subset $\MMH$ is already surjective. Moroever, we remark that in Corollary \ref{c.Chen-Ngo-GL2}, one cannot replace $\MMH$ by the Dolbeault moduli space $\MMD$ parametrising the topologically trivial polystable Higgs bundles of rank two -- see Example \ref{e.CI-Projective-spaces} (5).
        
        Furthermore,  as a byproduct of the proof of Theorem \ref{t.spectral-correspondence-GL2}, we establish that for every non-nilpotent rank two Higgs bundle, the Higgs field can be factored through a line bundle twisted Higgs bundle. Specifically, we prove the following result, which is of independent interests.

\begin{theorem}
\label{t.LtwistedHiggs}
Given a spectral datum $\textbf{s}=(s_1,s_2)\in \MS_X\setminus \MS_X^{\nil}$, there exists a line bundle $\msL$ and an element $0\not=\alpha\in H^0(X,\msL^{-1}\otimes \Omega_X^1)$ such that for every Higgs bundle $(\msE,\vp)$ with $\ssd_X([\MSE,\vp])=\mbfs$,  there exists an $\msL$-twisted Higgs field $\vp_0\in H^0(X,\End(\msE)\otimes \msL)$ such that 
\[
\vp=\alpha\circ\vp_0+\frac12s_1\otimes \Id_{\msE}.
\]
\end{theorem}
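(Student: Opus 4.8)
The plan is to reduce the statement to a divisibility/factorization property of the trace-free part of $\vp$ and then to extract $\alpha$ and $\vp_0$ from the rank-one structure of the discriminant quadratic differential
\[
q:=4s_2-s_1^2\in H^0(X,\Sym^2\Omega_X^1),
\]
which is nonzero precisely because $\mbfs\notin\MS_X^{\nil}$.

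First I would pass to the trace-free part $\vp':=\vp-\tfrac12 s_1\otimes\Id_{\msE}$. Then $\Tr(\vp')=0$, and the $2\times2$ identity $\det(B+c\,\Id)=\det B+c\,\Tr B+c^2$ gives $\det(\vp')=s_2-\tfrac14 s_1^2=\tfrac14 q$. Writing $\vp=\sum_i A_i\,dz_i$ locally, the endomorphism coefficients of $\tfrac12 s_1\otimes\Id$ are central, so $\vp\we\vp=\sum_{i<j}[A_i,A_j]\,dz_i\we dz_j$ is unchanged by the scalar shift; hence the integrability $\vp\we\vp=0$ is equivalent to $\vp'\we\vp'=0$. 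Since $\tfrac12 s_1\otimes\Id_\msE$ is added back unchanged at the end, it suffices to produce $\vp_0\in H^0(X,\End(\msE)\otimes\msL)$ with $\vp'=\alpha\circ\vp_0$.

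Next I would invoke the rank-one structure of $q$ already used to build $\widetilde{X}_{\mbfs}$. Because $\mbfs\in\MS_X$, the differential $q$ is a rank-one symmetric differential in the sense of Bogomolov--de Oliveira \cite{bogomolov2011symmetric,bogomolov2013closed}: over $\{q\neq0\}$ it determines a line in $\Omega_X^1$, whose saturation is a sub-line bundle $\msL\hookrightarrow\Omega_X^1$. I take $\alpha\in H^0(X,\msL^{-1}\otimes\Omega_X^1)=\Hom(\msL,\Omega_X^1)$ to be this saturated inclusion, so $\alpha$ vanishes only in codimension $\ge2$ and $q=t\cdot\alpha\odot\alpha$ for a section $t\in H^0(X,\msL^{2})$; both $\msL$ and $\alpha$ depend only on $\mbfs$. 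The decisive local input is that integrability pins the $1$-form content of $\vp'$ to the direction $\alpha$. Indeed, writing $\vp'=\sum_i M_i\,dz_i$ with trace-free matrix functions $M_i$, the condition $\vp'\we\vp'=0$ reads $[M_i,M_j]=0$. At any point where $\vp'\neq0$ some $M_{i_0}$ is a nonzero, hence regular, trace-free $2\times2$ matrix, whose trace-free commutant in $\slf_2$ is the line $\mbC\,M_{i_0}$; therefore every $M_i$ is a multiple of $M_{i_0}$ and $\vp'=\eta\otimes M$ is decomposable with a local $1$-form $\eta$. Comparing determinants gives $q=4\det(M)\,\eta\odot\eta$, so on $\{q\neq0\}$ the line $\langle\eta\rangle$ is exactly the rank-one direction of $q$, namely $\alpha(\msL)$; consequently $\vp_0:=\alpha^{-1}\circ\vp'$ is a well-defined holomorphic section of $\End(\msE)\otimes\msL$ there, automatically trace-free with $\det(\vp_0)=\tfrac14 t$.

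The remaining and genuinely hard step is to extend this factorization across the branch divisor $Z=\{q=0\}$ and thereby to all of $X$. Over a generic (codimension-one) point of $Z$ the matrix $M$ degenerates to a nilpotent, so the decomposition $\vp'=\eta\otimes M$ must be shown to persist with $\eta$ still proportional to $\alpha$: here I would argue that the image line of the contraction $T_X\to\End(\msE)$, $v\mapsto\iota_v\vp'$, extends holomorphically across $Z$ wherever $\vp'\neq0$, using the Cayley--Hamilton relation $\vp'\odot\vp'=-\tfrac14 q\,\Id$ to control the order of vanishing of $\vp'$ against that of $\alpha$ and to exclude poles of $\vp_0=\alpha^{-1}\circ\vp'$ along $Z$. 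This divisibility of $\vp'$ by $\alpha$ along the branch divisor is where $\mbfs\notin\MS_X^{\nil}$ is essential, since it guarantees that $q$, and hence $M$, does not vanish identically in codimension one. Once the factorization is established outside a closed subset of codimension $\ge2$ (which contains the zero locus $\{\alpha=0\}$), the reflexivity of $\End(\msE)\otimes\msL$ together with Hartogs' theorem on the smooth variety $X$ extends $\vp_0$ to a global section, and adding back $\tfrac12 s_1\otimes\Id_\msE$ yields $\vp=\alpha\circ\vp_0+\tfrac12 s_1\otimes\Id_\msE$.
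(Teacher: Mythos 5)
Your reduction to the trace-free field $\vp'=\vp-\tfrac12 s_1\otimes\Id_{\msE}$ and your local analysis on $\{q\neq 0\}$ (the commutant argument showing $\vp'=\eta\otimes M$ with $\langle\eta\rangle$ equal to the rank-one direction of $q$, hence to $\alpha$) are correct, and they match the first half of the paper's argument. The genuine gap is the step you yourself flag as ``genuinely hard'': extending the factorization across the codimension-one part of $Z=\{q=0\}$, i.e.\ across the divisor $\textup{div}(\tau)$ where $q=\tau\,\alpha\odot\alpha$. At that point you only state a plan (``here I would argue that the image line of the contraction \dots extends holomorphically across $Z$ \dots using the Cayley--Hamilton relation to control the order of vanishing \dots and to exclude poles of $\alpha^{-1}\circ\vp'$''), with no actual argument; but this divisibility of $\vp'$ by $\alpha$ along $\textup{div}(\tau)$ \emph{is} the content of the theorem, so the proof is incomplete precisely where it matters. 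Moreover, your plan as stated would also break down at points of $Z$ where $\vp'$ itself vanishes (the set $\{\vp'=0\}$ may well contain a divisor inside $Z$), since there the pointwise decomposition $\vp'=\eta\otimes M$ is not available, and then your final Hartogs step --- which requires the factorization to be established outside a set of codimension $\geq 2$ --- does not get off the ground.

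The paper closes this step with a short argument that requires no pole control and no order-of-vanishing estimates, and it is worth contrasting with your plan. Because $\alpha$ does not vanish in codimension one, its image $\alpha(\msL)$ is \emph{saturated} in $\Omega_X^1$ (Lemma \ref{l.saturation-line-bundles}), so the quotient $\Omega_X^1/\alpha(\msL)$ is torsion free, and hence so is $\msE\otimes(\Omega_X^1/\alpha(\msL))$. The composition
\[
\msE \xrightarrow{\ \vp'\ } \msE\otimes\Omega_X^1 \longrightarrow \msE\otimes\bigl(\Omega_X^1/\alpha(\msL)\bigr)
\]
is a global section of a torsion-free sheaf of homomorphisms which vanishes on the dense open set $\{q\neq0\}$ (by your own step), hence vanishes identically; so $\vp'$ lands in $\msE\otimes\alpha(\msL)\cong\msE\otimes\msL$ everywhere at once, defining $\vp_0$ globally with $\vp'=\alpha\circ\vp_0$. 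Equivalently and more concretely: in a local frame of $\Omega_X^1$ whose first element is $\alpha$ (valid wherever $\alpha\neq0$), the components of $\vp'$ transverse to $\alpha$ are holomorphic and vanish on a dense open set, hence vanish identically by the identity theorem --- no poles ever arise, whether or not $\vp'$ vanishes. If you replace your Cayley--Hamilton/pole-exclusion sketch by this saturation--torsion-freeness (or identity-theorem) argument, your proof becomes complete and is then essentially the paper's proof.
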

 
	Using the construction of the Cohen-Macaulayfication, we make several applications to understand the rank two character variety of $X$. When $X$ is a curve, the Hitchin morphism is surjective and there is a canonical section, called the \emph{Hitchin section} and constructed by Hitchin \cite{hitchin1987self, hitchin1992lie}, which is closely related to Teichmüller theory \cite{wienhard2018invitation}. As another application of Theorem \ref{t.spectral-correspondence-GL2}, we can define a canonical section of the Hitchin morphism for rank two Higgs bundles over higher dimensional projective manifolds, which generalizes the classical construction of Hitchin.
	
	\begin{theorem}
 \label{t.Hitchinsection}
		For the spectral morphism $\ssd_X:\MMH\to \MS_X$, there exists a Hitchin section $\chi_{\Hit}:\MS_X\setminus\{(0,0)\}\to \MMH$ with the image real Higgs bundles such that $\ssd_X\circ \chi_{\Hit}=\Id$. 
	\end{theorem}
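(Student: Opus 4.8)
The plan is to exhibit $\chi_{\Hit}$ as an explicit algebraic family of Higgs bundles modelled on Hitchin's classical construction over a curve \cite{hitchin1992lie}, in which the square root $K^{1/2}$ of the canonical bundle is replaced by the line bundle $\msL$ produced in Theorem~\ref{t.LtwistedHiggs} and the tautological lower-triangular entry ``$1$'' is replaced by the one-form $\alpha$. Throughout I use the decomposition $\MS_X\setminus\{(0,0)\}=(\MS_X\setminus\MS_X^{\nil})\sqcup(\MS_X^{\nil}\setminus\{(0,0)\})$ and treat the two strata separately, since Theorems~\ref{t.spectral-correspondence-GL2} and \ref{t.LtwistedHiggs} apply only on the non-nilpotent stratum.

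Fix $\mbfs=(s_1,s_2)\in\MS_X\setminus\MS_X^{\nil}$. Theorem~\ref{t.LtwistedHiggs} supplies $\msL$ and $0\neq\alpha\in H^0(X,\msL^{-1}\otimes\Omega_X^1)$, and the rank-one structure of symmetric differentials underlying membership in $\MS_X$ (in the sense of \cite{bogomolov2013closed}) provides a section $p\in H^0(X,\msL^{2})$ with $p\,\alpha^2=\tfrac14 s_1^2-s_2$ in $H^0(X,\Sym^2\Omega_X^1)$; this is the same divisibility that yields the factorization in Theorem~\ref{t.LtwistedHiggs}. I then set $\msE_{\mbfs}:=\msL\oplus\MO_X$ and
\[
\varphi_{\mbfs}:=\tfrac12\,s_1\otimes\Id_{\msE_{\mbfs}}+\begin{pmatrix} 0 & \alpha\,p \\ \alpha & 0 \end{pmatrix},
\]
where the lower-left entry is $\alpha\colon\msL\to\Omega_X^1$ and the upper-right entry is $\alpha p\colon\MO_X\to\msL\otimes\Omega_X^1$. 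A direct computation gives $\Tr(\varphi_{\mbfs})=s_1$ and $\det(\varphi_{\mbfs})=\tfrac14 s_1^2-p\,\alpha^2=s_2$. Writing $\varphi_{\mbfs}=\tfrac12 s_1\Id+N$, the cross terms in $\varphi_{\mbfs}\wedge\varphi_{\mbfs}$ cancel because $s_1\Id$ is central, while $N\wedge N=\diag(p\,\alpha\wedge\alpha,\;p\,\alpha\wedge\alpha)=0$; thus integrability holds automatically, the essential point being that both off-diagonal entries are proportional to the single one-form $\alpha$. This is precisely where membership in $\MS_X$, rather than in all of $\MA_X$, is used, and it shows $\ssd_X([\msE_{\mbfs},\varphi_{\mbfs}])=\mbfs$.

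It remains to verify polystability and reality. The bundle $\msE_{\mbfs}=\msL\oplus\MO_X$ carries the tautological symmetric pairing $Q=\left(\begin{smallmatrix}0&1\\1&0\end{smallmatrix}\right)$ valued in $\msL$ and interchanging the two summands, and the identity $QN=N^{t}Q$ shows that $\varphi_{\mbfs}$ is $Q$-self-adjoint; hence $(\msE_{\mbfs},\varphi_{\mbfs},Q)$ is a real (orthogonal, possibly line-bundle-twisted) Higgs bundle, which under the non-abelian Hodge correspondence arises from a representation into a real form, and which reduces to the familiar $\msM\oplus\msM^{-1}$ form whenever a square root $\msM$ of $\msL$ exists. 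Equivalently, in the language of Theorem~\ref{t.spectral-correspondence-GL2}, $[\msE_{\mbfs},\varphi_{\mbfs}]$ corresponds to the distinguished maximal Cohen--Macaulay sheaf on $\widetilde{X}_{\mbfs}$ invariant under the covering involution, with $Q$ the pushforward of the involution pairing. Polystability follows because $\alpha\neq0$ obstructs any rank-one subsheaf from being $\varphi_{\mbfs}$-invariant of slope $\geq\mu(\msE_{\mbfs})$: when $\widetilde{X}_{\mbfs}$ is integral the sheaf is rank-one torsion-free on an integral cover and the Higgs bundle is stable, while when the cover splits one obtains a polystable sum of two line-bundle Higgs pairs. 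On the nilpotent stratum $\MS_X^{\nil}\setminus\{(0,0)\}$ one has $s_1\neq0$ and $\tfrac14 s_1^2-s_2=0$; here I set $\msE_{\mbfs}=\MO_X\oplus\MO_X$ and take the semisimple representative $\varphi_{\mbfs}=\tfrac12 s_1\otimes\Id$, which is polystable, manifestly real, and satisfies $\ssd_X=\mbfs$. Since each stratum carries a canonical construction, $\mbfs\mapsto[\msE_{\mbfs},\varphi_{\mbfs}]$ defines $\chi_{\Hit}$ with $\ssd_X\circ\chi_{\Hit}=\Id$.

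The main obstacle is not the invariants or integrability, which follow formally from the factorization above, but rather establishing that the image consists of \emph{real} Higgs bundles in the intended sense and that polystability holds \emph{uniformly} in $\mbfs$, including across the degenerations where $\widetilde{X}_{\mbfs}$ splits and across the passage to the nilpotent stratum. Tying the self-adjointness of $\varphi_{\mbfs}$ with respect to $Q$ to a genuine real structure requires the Hermitian input of the non-abelian Hodge correspondence and the identification of $Q$ with the involution pairing on the Cohen--Macaulayfication; controlling the auxiliary choices of $\msL$, $\alpha$, and (when it exists) the square root $\msM$ so that the resulting point of $\MMH$ is canonical is the part demanding the most care. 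I expect the cleanest route to these uniformity statements to be through the double cover $\pi\colon\widetilde{X}_{\mbfs}\to X$ of Theorem~\ref{t.spectral-correspondence-GL2}, realizing $\chi_{\Hit}(\mbfs)$ as the pushforward of the canonical involution-invariant sheaf on $\widetilde{X}_{\mbfs}$ and verifying reality and polystability there.
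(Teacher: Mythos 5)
Your construction coincides with the paper's: on $\MS_X\setminus\MS_X^{\nil}$ you take $\msE=\msL\oplus\MSO_X$ with $\vp=\tfrac12 s_1\Id+\left(\begin{smallmatrix}0&\al p\\ \al&0\end{smallmatrix}\right)$, where $\tfrac14 s_1^2-s_2=p\,\al^2$ is the Bogomolov--de Oliveira factorisation of Proposition \ref{p.decomposition-symmetric-differetials}, and on $\MS_X^{\nil}\setminus\{(0,0)\}$ you take $(\MSO_X\oplus\MSO_X,\tfrac12 s_1\Id)$; this is exactly the paper's $\chi_{\Hit}$, and your computation of $\Tr$, $\det$ and $\vp\we\vp=0$ matches the paper's.

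There is, however, a genuine error in your polystability step. Your dichotomy ``$\widetilde{X}_{\mbfs}$ integral $\Rightarrow$ stable; $\widetilde{X}_{\mbfs}$ splits $\Rightarrow$ polystable sum of two line-bundle Higgs pairs'' is false. Take $\tau=-\sigma^2$ with $0\not=\sigma\in H^0(X,\msL)$ and $\textup{div}(\sigma)\not=0$: the cover is then reducible (two copies of $X$ meeting along $\textup{div}(\sigma)$), yet the Higgs bundle you wrote down is \emph{stable} and is not a direct sum. Indeed, the only $\vp$-invariant saturated rank-one subsheaves are the two eigen-lines, namely the images of $(\pm\sigma,1):\MSO_X\to\msL\oplus\MSO_X$, and these have degree $0<\mu(\msE)=\tfrac12\deg_{\omega}(\msL)$, while the only saturated subsheaf of slope $\geq\mu(\msE)$, namely $\msL$ itself, is not invariant because $\al\not=0$. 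The correct dichotomy, which is the one the paper uses via Proposition \ref{prop_stability_realHiggs}, is on $D=\textup{div}(\tau)$: if $D\not=0$ then $\deg_{\omega}(\msL)>0$ and $(\msE,\vp)$ is stable whether or not the cover is irreducible; the polystable-but-not-stable case occurs only when $D=0$ and $\msL\cong\MSO_X$, where the bundle is gauge equivalent to $(\MSO_X,\tfrac12 s_1+\omega)\oplus(\MSO_X,\tfrac12 s_1-\omega)$. Your final conclusion (the image lies in $\MMH$) is true, but the step as written establishes it by describing a different object. Relatedly, the part you flag as ``demanding the most care'' --- reality --- needs none of the Hermitian or non-abelian Hodge input you invoke: in this paper reality is checked purely algebraically through the canonical shape of Proposition \ref{prop_stability_realHiggs} (split bundle, equal diagonal entries, off-diagonal one-forms with $\al\we\be=0$), which your $\vp_{\mbfs}$ has by construction (its traceless part is conjugated to its negative by $\diag(1,-1)$); and there is no uniformity issue in the auxiliary choices, since the pair $(\msL,\al)$ and the section $p$ are unique up to the scalar ambiguity $\al\mapsto c\al$, $p\mapsto c^{-2}p$, which leaves the gauge-equivalence class of $(\msE_{\mbfs},\vp_{\mbfs})$ unchanged.
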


In the remainder of the article, we delve into further discussions on the spectral base and its applications, which include exploring the rigidity problem of the character varieties and establishing a Milnor-Wood type inequality. Recall that a character variety is called \emph{rigid} if every representation is isolated. For smooth projective varieties $X$ with Picard number one, the rigidity of rank two character variety is fully determined by the homology of $X$.
\begin{theorem}
\label{t.PicardnumberoneRigidity}
    Let $X$ be a smooth projective variety with Picard number one.  Then the $\GLC$ character variety of $X$ is rigid if and only if $b_1(X)=0$ and for any unramified double covering $\tX\to X$, we have $b_1(\tX)=0.$
\end{theorem}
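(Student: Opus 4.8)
The plan is to pass through the non-abelian Hodge correspondence, which gives a homeomorphism between the $\GLC$ character variety of $X$ and the moduli space $\MMH$ of polystable rank two Higgs bundles; rigidity then means precisely that $\MMH$ is zero dimensional. At a polystable $(\MSE,\vp)$ with associated local system $E_\rho$, the tangent space to the moduli space is $H^1(X,\End E_\rho)=H^1(X,\mathbb{C})\oplus H^1(X,\ad_0 E_\rho)$, the two summands reflecting the decomposition $\mathfrak{gl}_2=\mathbb{C}\cdot\Id\oplus\mathfrak{sl}_2$ into the central (determinant) and trace free parts. Vanishing of this cohomology for every $\rho$ forces every point to be an isolated reduced point, hence rigidity; conversely I will detect non-rigidity by producing explicit positive dimensional families. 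The first summand is $H^1(X,\mathbb{C})$, which vanishes for all $\rho$ exactly when $b_1(X)=0$, and this will account for the first condition. The substance of the theorem is to control the trace free part $H^1(X,\ad_0 E_\rho)$ via the spectral correspondence of Theorem~\ref{t.spectral-correspondence-GL2} and to match its non-vanishing with unramified double covers of positive first Betti number.

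For the necessity of the two conditions I would argue by exhibiting families. If $b_1(X)>0$, then $\mathrm{Hom}(\pi_1(X),\mathbb{C}^{\times})$ is positive dimensional, and for any $\rho$ the twists $\rho\otimes\psi_t$ by a one parameter family of characters $\psi_t$ give non-isomorphic representations (varying determinant), so the character variety is not rigid. If instead some unramified double covering $\pi\colon\tX\to X$ has $b_1(\tX)>0$, let $H=\pi_1(\tX)\subset\pi_1(X)$ be the corresponding index two subgroup; then $\mathrm{Hom}(H,\mathbb{C}^{\times})$ is positive dimensional, and the induced representations $\mathrm{Ind}_{H}^{\pi_1(X)}\chi_t$ form a positive dimensional family of rank two representations (for generic $\chi_t$ not fixed by the deck involution they are irreducible with varying trace), again contradicting rigidity.

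For the sufficiency, assume $b_1(X)=0$ and $b_1(\tX)=0$ for every unramified double covering. Given a polystable $(\MSE,\vp)$ with $\mbfs=\ssd_X([\MSE,\vp])=(s_1,s_2)$, the hypothesis $b_1(X)=0$ gives $H^0(X,\Omega_X^1)=0$, so $s_1=\Tr(\vp)=0$. I first claim $\MS_X=\{(0,0)\}$: a non-nilpotent datum $\mbfs\in\MS_X\setminus\MS_X^{\nil}$ would correspond to a nonzero rank one symmetric differential $-4s_2\in H^0(X,\Sym^2\Omega_X^1)$, and by the theory of Bogomolov--de Oliveira together with Picard number one its square root double cover $\tX\to X$ is unramified and carries a nonzero anti-invariant holomorphic one form, forcing $b_1(\tX)>0$, a contradiction (alternatively, the Hitchin section of Theorem~\ref{t.Hitchinsection} would already embed the positive dimensional $\mathbb{C}^{\times}$-orbit of $\mbfs$ into $\MMH$). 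Hence every Higgs bundle is nilpotent, and it remains to see that the nilpotent cone $\ssd_X^{-1}(0,0)$ is discrete. Since $b_1(X)=0$ gives $\Pic^0(X)=0$ and Picard number one gives $\NS(X)=\mathbb{Z}$, all line bundles are rigid; thus a fixed point $\MSE=L_1\oplus L_2$ with nilpotent $\vp\colon L_1\to L_2\otimes\Omega_X^1$ can only vary by varying $\vp\in H^0(X,L_1^{-1}L_2\otimes\Omega_X^1)$, and a nonzero such twisted one form produces, after pulling back to the cover trivializing the torsion line bundle $L_1^{-1}L_2$, an anti-invariant one form on an unramified cover, excluded by hypothesis; the unitary case $\vp=0$ is handled identically since its $(1,0)$ deformations lie in $H^0(X,\ad_0\MSE\otimes\Omega_X^1)$, which feeds back into the previous analysis. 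Therefore $H^1(X,\End E_\rho)=0$ for all $\rho$, so $\MMH$ is zero dimensional and the character variety is rigid.

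The hard part will be the reduction, in the sufficiency direction, of every source of non-rigidity to unramified \emph{double} covers. Group theoretically, vanishing of $H^1(X,\mathbb{C}_\eta)$ for all order two characters $\eta$ does not imply the same for torsion characters of higher order, so the equivalence genuinely exploits the geometry of Picard number one. I expect to invoke Bogomolov--de Oliveira's structure theory of closed symmetric $2$-differentials to show that, when $X$ has Picard number one, any nonzero twisted one form---equivalently any numerically trivial sub-line bundle of $\Omega_X^1$, which is automatically torsion since $\Pic^0(X)=0$---gives rise to an unramified double cover with positive first Betti number: the point is that no branching can occur (the branch divisor would be a nontrivial effective divisor, impossible for the sign ambiguity cover of a numerically trivial square root) and that the relevant cover may be taken of degree exactly two. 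Establishing this étale, degree two reduction, thereby closing the gap between the symmetric differential condition cutting out $\MS_X$ and the stated Betti number condition on double coverings, is the technical heart of the argument.
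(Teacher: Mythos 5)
Your necessity direction is sound (character twists when $b_1(X)>0$; induced representations $\mathrm{Ind}_{\pi_1(\tX)}^{\pi_1(X)}\chi_t$ when $b_1(\tX)>0$), and it is essentially the representation-theoretic mirror of the paper's construction, which instead exhibits the Hitchin-section family $(\MSO_X\oplus\MSL^{-1},t\vp)$ of stable Higgs bundles directly. The sufficiency direction, however, has a genuine gap, in two places. First, the non-abelian Hodge correspondence identifies the character variety with the Dolbeault space $\MMD$ of \emph{topologically trivial} polystable Higgs bundles, not with $\MMH$; ``rigid iff $\MMH$ is zero-dimensional'' is false (stable bundles with $c_2\neq 0$ and $\vp=0$ already make $\MMH$ positive dimensional on varieties whose character variety is a single point). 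Second, and more seriously, your plan to prove rigidity via $H^1(X,\End E_\rho)=0$ cannot work: that vanishing is neither necessary for rigidity nor true under the hypotheses --- as you yourself flag, order-two covers do not control $H^1(X,\mbC_\eta)$ for torsion characters $\eta$ of higher order, and your discreteness analysis of the nilpotent cone only treats Higgs-field deformations of split fixed points, ignoring deformations of the holomorphic structure (e.g.\ a potential positive-dimensional family of irreducible unitary representations sits entirely in the nilpotent cone with $\vp=0$ and is untouched by your argument). So the key implication ``every Higgs bundle in $\MMD$ is nilpotent $\Rightarrow$ rigid'' is unproved in your write-up. The paper closes it with a compactness argument you never invoke (Theorem \ref{thm_Dolboult_base_rigidity}, following Arapura): rigidity is equivalent to $\MS_X^{\Dol}=0$, because if $\MMD$ equals the nilpotent cone it is compact by properness of the Hitchin morphism (Theorem \ref{thm_Hitchinmap_proper}), and a compact affine character variety is a finite set of points. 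This single criterion replaces your entire deformation-theoretic discussion, including the step you call the ``technical heart.''

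That ``technical heart'' (the \'etale degree-two reduction) is in fact immediate along the paper's route, so the difficulty you anticipate dissolves: by the very definition of $\MB_X$, a nonzero rank one symmetric differential produces a line bundle $\MSL$ with both $H^0(X,\MSL^{-1}\otimes\Omega_X^1)\neq 0$ and $H^0(X,\MSL^{2})\neq 0$; the Bogomolov--Castelnuovo--de Franchis theorem (Theorem \ref{t.BCdFtheorem}) gives $\kappa(X,\MSL)\leq 1$, so $\MSL$ is not ample, nor is $\MSL^{-1}$, and Picard number one (with $\dim X\geq 2$) forces $c_1(\MSL)=0$; then the section of $\MSL^{2}$ forces $\MSL^{2}\cong\MSO_X$, so only two-torsion ever appears, with its associated \'etale double cover, and the eigenspace decomposition $H^0(\tX,\Omega^1_{\tX})\cong H^0(X,\Omega_X^1)\oplus H^0(X,\MSL^{-1}\otimes\Omega_X^1)$ translates $\MB_X=0$ into exactly the stated Betti-number conditions. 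Higher-order torsion never enters at the level of the spectral base; it would only have entered through your (unnecessary) attempt to kill $H^1$. Finally, one point the paper needs and you elide: to get non-rigidity from $\MB_X\neq 0$ one must check that the Hitchin-section Higgs bundles are topologically trivial, hence in $\MMD$; this is a second essential use of Picard number one ($c_1(\MSL)=0$ gives $c_i(\MSO_X\oplus\MSL^{-1})=0$), and without it rigidity can coexist with $\MB_X\neq 0$, as in Example \ref{e.CI-Projective-spaces}(5).
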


Let $\rho:\pi_1(X)\rightarrow \SU(1,1)$ be a reductive representation. As established in \cite{burger2007bounded, KoziarzMaubon2010}, the Toledo invariant for $\rho$ depends on a mobile curve class $\gamma$, and we denote $\tau_{\rho}$ as the Toledo invariant. Consequently, we obtain the following Milnor-Wood type inequality:

    \begin{theorem}
    \label{t.MWineq}
    	Let $X$ be a projective manifold and let $\rho:\pi_1(X)\rightarrow \SU(1,1)$ be a reductive representation. If $X$ is non-uniruled, then for any mobile curve class $\gamma$, we have
    	\[
    	|\tau_{\gamma}(\rho)| \leq \frac{1}{2}\deg_{\gamma}(K_X)=\frac{1}{2}c_1(K_X)\cdot \gamma.
    	\]
    \end{theorem}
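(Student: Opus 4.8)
The plan is to run the non-abelian Hodge correspondence for the real form $\SU(1,1)$, thereby replacing $\rho$ by a Higgs bundle of a very rigid shape, and then to convert the two off-diagonal entries of the Higgs field into sub-line-bundles of $\Omega_X^1$ whose $\gamma$-degrees are controlled by Miyaoka's generic semipositivity theorem. A reductive $\rho:\pi_1(X)\to\SU(1,1)\cong\SLR$ corresponds to a polystable $\SU(1,1)$-Higgs bundle; by the definition of a $G$-Higgs bundle for this real form (a holomorphic reduction of structure group to the complexified maximal compact), the underlying rank two bundle splits holomorphically as $\MSE=L\oplus L^{-1}$ and the Higgs field is off-diagonal,
\[
\vp=\begin{pmatrix}0 & b\\ c & 0\end{pmatrix},\qquad b\in H^0(X,L^2\otimes\Omega_X^1),\quad c\in H^0(X,L^{-2}\otimes\Omega_X^1).
\]
Its spectral datum is $\mbfs=(0,-bc)$, so $\mbfs\in\MS_X^{\nil}$ exactly when $bc=0$. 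In the normalisation of Koziarz--Maubon the Toledo invariant is the degree of the positive summand, $\tau_\gamma(\rho)=\deg_\gamma L=c_1(L)\cdot\gamma$, with $c_1(L)$ the topological Toledo class (independent of any polarisation), so the theorem is equivalent to $|\deg_\gamma L|\le\tfrac12\deg_\gamma K_X$.

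The engine of the argument is the following. A nonzero $c\in H^0(X,L^{-2}\otimes\Omega_X^1)$ is the same datum as a nonzero sheaf map $c:L^2\to\Omega_X^1$; letting $M\subset\Omega_X^1$ be the saturation of its image (a line bundle, since $X$ is smooth), the induced inclusion $L^2\hookrightarrow M$ has effective zero divisor $D$, whence $\deg_\gamma M=2\deg_\gamma L+D\cdot\gamma\ge 2\deg_\gamma L$ because $D$ is effective and $\gamma$ is mobile. As $X$ is non-uniruled, Miyaoka's generic semipositivity theorem, in its mobile-cone formulation, guarantees that every torsion-free quotient of $\Omega_X^1$ pairs non-negatively with $\gamma$; applied to $Q=\Omega_X^1/M$ and combined with $c_1(Q)=c_1(K_X)-c_1(M)$ this yields $\deg_\gamma M\le\deg_\gamma K_X$, hence $2\deg_\gamma L\le\deg_\gamma K_X$. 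The identical argument applied to $b:L^{-2}\to\Omega_X^1$ produces $-2\deg_\gamma L\le\deg_\gamma K_X$ whenever $b\ne 0$.

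It then remains to assemble the two one-sided bounds. If $\mbfs\notin\MS_X^{\nil}$ then $bc\ne 0$, so $b$ and $c$ are both nonzero and the two inequalities combine to $|2\deg_\gamma L|\le\deg_\gamma K_X$; equivalently one may feed Theorem~\ref{t.LtwistedHiggs} into the engine, using the factor $\alpha:\msL\to\Omega_X^1$ together with the relation $\deg_\gamma\msL\ge 2|\deg_\gamma L|$ coming from the nonvanishing of both off-diagonal entries of $\vp_0$. If instead $\mbfs\in\MS_X^{\nil}$, then $bc=0$ forces, since $X$ is integral, one of $b,c$ to vanish identically, and $(\MSE,\vp)$ is a system of Hodge bundles, i.e.\ underlies a complex variation of Hodge structure. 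Say $b=0$ and $c\ne 0$: then $L=H^{1,0}$ is a Hodge bundle, hence nef by the Fujita--Kawamata semipositivity theorem, so $\deg_\gamma L\ge 0\ge-\tfrac12\deg_\gamma K_X$ supplies the lower bound missing above, while $c\ne 0$ supplies the upper bound. The case $c=0,\,b\ne 0$ is symmetric (now $L^{-1}=H^{1,0}$ is nef), and when $b=c=0$ the representation is unitary, $L$ is flat, $c_1(L)=0$ and $\tau_\gamma(\rho)=0$. In every case $|\tau_\gamma(\rho)|\le\tfrac12\deg_\gamma K_X$.

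The main obstacle is that the polystability furnished by non-abelian Hodge theory is tied to a fixed polarisation $H$, so a direct slope estimate would control only $\deg_{H^{n-1}}L$ rather than $\deg_\gamma L$ for an arbitrary mobile class $\gamma$. The argument sidesteps stability altogether: the sole positivity inputs are the generic nefness of $\Omega_X^1$ for non-uniruled $X$ and, on the nilpotent locus, the nefness of Hodge bundles, both of which hold against \emph{every} mobile class. Consequently the technical heart is the availability of Miyaoka's theorem for an arbitrary mobile $\gamma$, not merely for complete-intersection classes $H^{n-1}$; this is precisely the mobile-cone (strongly movable) formulation of generic semipositivity, equivalent through Boucksom--Demailly--P\u{a}un--Peternell to the pseudo-effectivity of $K_X$, and it is exactly here that the hypothesis that $X$ is non-uniruled becomes indispensable.
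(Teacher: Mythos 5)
Your proposal is correct, and its ``engine'' is exactly the paper's argument: for each nonzero off-diagonal entry one saturates the image of the induced map $L^{\pm 2}\to\Omega_X^1$, notes that the resulting twisting divisor is effective and hence pairs non-negatively with the mobile class $\gamma$, and invokes \cite[Theorem 2.7]{BoucksomDemaillyPuaunPeternell2013} (pseudoeffectivity of $c_1$ of torsion-free quotients of $\Omega_X^1$ on non-uniruled $X$) to bound the $\gamma$-degree of the saturation by $\deg_{\gamma}(K_X)$; this is precisely how the paper treats the case $\alpha\not=0$, $\beta\not=0$. Where you genuinely diverge is the degenerate case in which one entry vanishes. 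The paper stays inside non-abelian Hodge theory and stability: if $\alpha=0$, then $\msW$ is a Higgs subbundle of a \emph{flat} polystable Higgs bundle, whose pull-back to any curve is again semistable, and restricting to the curves $c_kC_k$ approximating $\gamma$ (Remark \ref{r.movable-curves}) gives $\deg_{\gamma}(\msW)\leq 0$ in the limit; this curve-approximation step is exactly how the paper resolves the polarisation-dependence issue you flag. You instead observe that the degenerate Higgs bundle is a system of Hodge bundles underlying a weight-one complex VHS and use nefness of $H^{1,0}$ to get $\deg_{\gamma}L\geq 0$. Since the total $\gamma$-degree vanishes, your inequality (quotient Hodge bundle has $\deg_{\gamma}\geq 0$) and the paper's (Higgs subbundle has $\deg_{\gamma}\leq 0$) are the same statement, so both routes close the argument; yours buys independence from any choice of curves, at the price of importing positivity of Hodge bundles, while the paper's needs only semistability of restrictions of flat polystable Higgs bundles. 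Two minor inaccuracies, neither affecting correctness: the attribution to Fujita--Kawamata is loose --- what you need is that $H^{1,0}$ of a polarized weight-one VHS over a projective manifold is nef, which in this rank-two setting follows directly from the curvature of the harmonic metric (the Hitchin equation exhibits a smooth semi-positive representative of $c_1(L)$), Fujita--Kawamata being the geometric-family analogue; and ``since $X$ is integral'' is not quite why $bc=0$ forces $b=0$ or $c=0$ --- the correct reason is that $\Sym^{\bullet}\Omega_{X,x}^1$ is fibrewise an integral domain, so the product of two sections each nonvanishing on a dense open set cannot vanish identically.
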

This paper is organized as follows: In Section \ref{sec_Higgsbundles_nonabelian_Hodge}, we provide a brief overview of the non-abelian Hodge correspondence. In Section \ref{sec_spectral_base_rank_one_differentials}, we introduce the relationship between the spectral base and rank one symmetric differentials. In Section \ref{sec_spectralvarietyandCM}, we discuss the spectral variety and give an explicit construction of the Cohen-Macaulayfication. In particular, we prove Theorem \ref{t.spectral-correspondence-GL2}, Corollary \ref{c.Chen-Ngo-GL2} and Theorem \ref{t.LtwistedHiggs}. In Section \ref{sec_realHiggsbundle_Hitchin_section}, we will construct the Hitchin section for the Hitchin morphism and prove Theorem \ref{t.Hitchinsection}. In Section \ref{sec_rigidity_character_variety}, we will discuss the rigidity of character varieties and the Hitchin base. In particular we will prove Theorem \ref{t.PicardnumberoneRigidity}. In Section \ref{sec_further_discussions_applications}, we will discuss several applications of the Hitchin morphism, including the proof of Theorem \ref{t.MWineq} for the general Milnor-Wood type inequality.

\textbf{Acknowledgements.} 
	The authors also wish to express their gratitude to a great many people for their interest and helpful comments. 
	Among them are Damian Brotbek, Mark de Cataldo, Tsao-Hsien Chen, Qiongling Li, Shizhang Li, Rafe Mazzeo, Richard Wentworth. S.~He is supported by NSFC grant
	No.12288201. J.~Liu is supported by the National Key Research and Development Program of China (No. 2021YFA1002300), the NSFC grants (No. 12001521 and No. 12288201), the CAS Project for Young Scientists in Basic Research (No. YSBR-033) and the Youth Innovation Promotion Association CAS.
	
	\section{Higgs bundles and non-abelian Hodge correspondence}
	\label{sec_Higgsbundles_nonabelian_Hodge}
	In this section, we will discuss the moduli space of polystable Higgs bundles and the non-abelian Hodge correspondence over a smooth projective variety. This topic has been extensively studied in works such as \cite{hitchin1987self, Simpson1988Construction, simpson1994moduli, simpson1994moduli2}. We also recommend the surveys \cite{garcia2015introduction, Wentworth2016} for further reading. Furthermore, we will review the hyperk\"ahler structure on the Hitchin moduli space, which was introduced in \cite{hitchin1987self,fujiki2006hyperkahler}.
 
	\subsection{Hermitian geometry and the Hermitian--Yang-Mills equation}
 
	In this subsection we will introduce the background Hermitian geometry and give an introduction of the Hermitian-Yang-Mills equations.
 
	\subsubsection{Hermitian geometry}
 
	To begin with, we will review some terminology of Hermitian geometry. Let $X$ be a smooth projective variety. We fix a K\"ahler form $\omega$ with the associated Kh\"aler metric $H$. in an orthogonal frame $dz_1,\cdots,dz_n$ with $|dz_i|=2$, we could write $\omega=\frac{\sqrt{-1}}{2}\sum_{i=1}^ndz_i\wedge dz_i$. Then we could define the contraction map $\Lam:\Omega^{p,q}\to \Omega^{p-1,q-1}$ and its action on $\Omega^{1,1}$ is given by $\Lam(\sum^n_{i,j=1}a_{ij}dz_i\wedge d\bar{z}_j)=-2i\sum_{i=1}^n a_{ii}.$
	Moreover, for $\al\in \Omega^{1,1}$, we have $\omega^n=n!\cdot\vol_g$ with $\Lam \al\;\dot\vol_g=\frac{1}{(n-1)!}\al\we \omega^{n-1}$.
	
	Let $E$ be a complex smooth vector bundle with $H$ a fixed Hermitian metric on $E$. A connection $D$ on $(E,H)$ is called \emph{unitary} if for any sections $s_1,s_2\in \Gamma(X,E)$, we have 
    \[
    dH(s_1,s_2)=H(D s_1,s_2)+H(s_1,Ds_2).
    \]
	
	We denote by $\MA^{\mbC}(E)$ the space of complex connections on $E$. For any $D\in\MA^{\mbC}(E)$, we could write $\End(E)=\mfu(E)\oplus i\mfu(E)$ and $D=d_A+\phi$ using the Hermitian metric $g$, where $d_A$ is an unitary connection and $\phi\in \Omega^{1}(i\mfu(E))$. We can furthermore decompose into type
    \[
    d_A=\bar{\pa}_A+\pa_A,\quad \phi=\vp+\vp^{\da}
    \]
    with $\vp\in \Omega^{0,1}(i\mfu(E))$. In addition, we write $\MA(E,H)$ the space of unitary connection for the Hermitian bundle $(E,H).$ The curvature $F_D$ of the connection $D$ could be explicitly written as $$F_D=F_A+\phi\we\phi+d_A\phi,$$
	where $F_A+\phi\we\phi\in \Omega^2(\mfu(E))$ and $d_A\phi\in \Omega^2(i\mfu(E)).$
	
	For complex smooth vector bundle $E$ equipped with a connection $D$, the first Chern class $c_1(E)$ of $E$ could be represented by $\frac{\sqrt{-1}}{2\pi}\Tr(F_D)$. The \emph{degree $\deg_{\omega}(E)$ of the vector bundle $E$} (with respect to $\omega$) is defined to be
	\begin{equation}
		\deg_{\omega}(E)=\int_Xc_1(E)\wedge\omega^{n-1}=\frac{(n-1)!}{2\pi}\int_X\Tr(i\Lam F_D)\vol. 
	\end{equation}
	
	Let $\MGC(E):=\Aut(E)$ be the complex gauge transformation group of $E$. For any $g\in\MGC(E)$, the complex gauge transformation action on $g\cdot(d_A,\phi)$ is given by 
	$$
	g\cdot d_A:=g^{-1}\circ\bar{\pa}_A \circ g+g^{\da}\circ\pa_A\circ (g^{\da})^{-1},\;g\cdot\phi:=g^{-1}\vp g+g^{\da}\vp^{\da}(g^{\da})^{-1}.
	$$
	Moreover, for a Hermitian vector bundle $(E,H)$, the \emph{unitary gauge group} $\MG(E)$ consists of the gauge transformations $g\in \MGC(E)$ such that $H(gs_1,gs_2)=H(s_1,s_2)$ for any local sections $ s_1$ and $s_2$ of $E$.
 
	\subsubsection{Hermitian-Yang-Mills equations} 
 
    Now, we will introduce the Hermitian-Yang-Mills equations that have been studied in \cite{hitchin1987self,Simpson1988Construction}. 
	
	Given a Hermitian bundle $(E,H)$, the Hermitian-Yang-Mills equations are equations for a unitary connection $d_A$ and a Hermitian $1$-form $\phi=\vp+\vp^{\da}$ satisfying
	\begin{equation}
		\label{eq_Hitchin-Simpson}
		\begin{split}
			&F_A^{0,2}=0,\;\vp\wedge\vp=0,\;\bar{\pa}_A\vp=0,\\
			&\sqrt{-1}\Lam (F_A^{\perp}+[\vp,\vp^{\da}])=0,
		\end{split}
	\end{equation}
	where $F_A^{\perp}=F_A^{1,1}-\gamma\Id$ is the trace free part of $F_A^{1,1}$ and $\gamma=\frac{2\pi\deg_{\omega}(E)}{(n-1)!\rank(E)}.$ The first line of \eqref{eq_Hitchin-Simpson} is $\MGC(E)$ invariant and the second line of \eqref{eq_Hitchin-Simpson} is $\MG(E)$ invariant. 
	
	We could also write the Hermitian-Yang-Mills equation \eqref{eq_Hitchin-Simpson} using the complex connection $D=d_A+\varphi$. Recall we have the K\"ahler identity $\sqrt{-1}[\Lam,\bar{\pa}_A]=\pa_A^{*},\;\sqrt{-1}[\Lam,\pa_A]=-\bar{\pa}_A^*.$ Let $\MA_0^{\mbC}(E)$ be the set of complex connections $D\in \MA^{\mbC}(E)$ with $F_D\in \Omega^{1,1}$. 
 
	\begin{lemma}\label{lem_weinzenbock_identity}
		Let $D=d_A+\phi$ be a complex connection such that $D\in \msA_0^{\mbC}(E)$ and $\pa_A^{*}\vp=0$, then $F_A^{2,0}=0,\;\vp\we\vp=0,\;\bar{\pa}_A\vp=0$.
	\end{lemma}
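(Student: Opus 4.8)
The name of the lemma signals the approach: this is the standard Bochner--Weitzenböck argument that identifies, in a good gauge, a Hermitian--Yang--Mills datum with a Higgs bundle, and I would prove it using compactness of $X$ together with the Kähler identities just recorded. First I would expand $F_D=F_A+\phi\we\phi+d_A\phi$ and separate it by bidegree. Since $D\in\msA_0^{\mbC}(E)$ means precisely that $F_D$ is of type $(1,1)$, both its $(2,0)$ and its $(0,2)$ parts vanish; because $\phi$ is self-adjoint and $d_A$ is unitary these two equations are Hermitian adjoints of one another, so it suffices to analyse one of them. Collecting the relevant types, the $(0,2)$ equation reads
\[
F_A^{0,2}+\bar{\pa}_A\vp+\vp\we\vp=0,
\]
so that the Dolbeault-type operator $\bar{\pa}_A+\vp$ has vanishing square. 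The entire content of the lemma is thus that the three summands $F_A^{0,2}$, $\bar{\pa}_A\vp$ and $\vp\we\vp$, which a priori are only known to sum to zero, in fact vanish individually; equivalently, $\bar{\pa}_A$ is integrable, $\vp$ is $\bar{\pa}_A$-holomorphic, and $\vp\we\vp=0$.

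The mechanism that separates these three terms is the Coulomb-type gauge condition $\pa_A^{*}\vp=0$, which is the extra hypothesis beyond $F_D$ being of type $(1,1)$. The plan is to feed this condition through the Kähler identities $\sqrt{-1}[\Lam,\bar{\pa}_A]=\pa_A^{*}$ and $\sqrt{-1}[\Lam,\pa_A]=-\bar{\pa}_A^{*}$, turning it into a trace-free (primitivity) statement, and then to derive a Weitzenböck identity for $\vp$. Concretely, I would re-express the relevant Laplacian of $\vp$ through the opposite Laplacian plus a zeroth-order curvature term built from $F_A^{1,1}$, use the gauge condition to discard the non-holomorphic first-order contribution, and substitute the structure equation $\bar{\pa}_A\vp=-F_A^{0,2}-\vp\we\vp$ back in, together with the second Bianchi identity $d_AF_A=0$. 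The objective of this computation is to reorganise everything into a manifestly nonnegative quantity that is forced to vanish, namely
\[
\|F_A^{0,2}\|_{L^2}^2+\|\bar{\pa}_A\vp\|_{L^2}^2+\|\vp\we\vp\|_{L^2}^2=0,
\]
whence the three pointwise identities follow at once. Compactness of $X$ is what makes the integration by parts legitimate, and the Kähler condition is what makes both the Kähler identities and the primitivity reformulation available.

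The main obstacle is the bookkeeping of the curvature and cross terms in this Weitzenböck computation. The pairings $\langle F_A^{0,2},\bar{\pa}_A\vp\rangle$, $\langle F_A^{0,2},\vp\we\vp\rangle$ and $\langle\bar{\pa}_A\vp,\vp\we\vp\rangle$ are not individually zero, and the delicate point is that, after transferring derivatives onto $F_A$ by integration by parts and invoking Bianchi together with $\pa_A^{*}\vp=0$, the indefinite contributions must cancel \emph{exactly}, rather than merely being dominated, so that only the three squared norms survive. Equivalently, one must verify that the zeroth-order curvature term in the Bochner--Kodaira--Nakano identity for $\bar{\pa}_A+\vp$ has precisely the sign and shape dictated by the structure equation. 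Once this single sign-definite identity is checked the conclusion is immediate, so the whole difficulty is concentrated there.
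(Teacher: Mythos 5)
Your global strategy --- K\"ahler identities, turning $\pa_A^{*}\vp=0$ into a primitivity statement, and an integration by parts forcing a sum of squared $L^2$-norms to vanish --- is indeed the paper's mechanism, but your setup contains a genuine error that would derail the computation you outline. With $\vp$ a $(1,0)$-form (as the hypothesis $\pa_A^{*}\vp=0$ requires), the three terms $F_A^{0,2}$, $\bar{\pa}_A\vp$ and $\vp\we\vp$ have bidegrees $(0,2)$, $(1,1)$ and $(2,0)$ respectively, so your ``$(0,2)$ equation'' $F_A^{0,2}+\bar{\pa}_A\vp+\vp\we\vp=0$ is not a component of $F_D$: it is the expansion of $(\bar{\pa}_A+\vp)^2$, whose vanishing is precisely the \emph{conclusion} of the lemma, not a consequence of $D\in\msA_0^{\mbC}(E)$. (Note also that if these three terms did sum to zero they would vanish individually for trivial bidegree reasons, so your description of ``the entire content of the lemma'' is self-undermining; likewise the cross-pairings you flag as the main obstacle, such as $\langle F_A^{0,2},\bar{\pa}_A\vp\rangle$, are automatically zero by $L^2$-orthogonality of distinct bidegrees.) What the hypothesis actually gives is $F_D^{2,0}=F_A^{2,0}+\vp\we\vp+\pa_A\vp=0$ together with its conjugate $F_D^{0,2}=0$, and crucially it says nothing directly about $\bar{\pa}_A\vp$, which sits inside $F_D^{1,1}$ alongside $F_A^{1,1}+[\vp,\vp^{\da}]+\pa_A\vp^{\da}$. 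Your planned substitution ``$\bar{\pa}_A\vp=-F_A^{0,2}-\vp\we\vp$'' is therefore unavailable (and type-inconsistent), so the Weitzenb\"ock computation as you describe it cannot start.

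The paper's proof repairs this with one observation you skipped: the decomposition $F_D=(F_A+\phi\we\phi)+d_A\phi$ into $\mfu(E)$- and $i\mfu(E)$-valued parts. Since conjugation exchanges the $(2,0)$ and $(0,2)$ components, the two equations $F_D^{2,0}=F_D^{0,2}=0$ split for free into $F_A^{2,0}+\vp\we\vp=0$ and $\bar{\pa}_A\vp^{\da}=0$ (equivalently $\pa_A\vp=0$), with no analysis at all. After that, only a single quantity remains to be killed analytically: the paper computes $\bar{\pa}_A^{*}\pa_A\vp^{\da}$ via the K\"ahler identity $\bar{\pa}_A^{*}=-\sqrt{-1}[\Lam,\pa_A]$, uses $\Lam\pa_A\vp^{\da}=0$ (the primitivity coming from $\pa_A^{*}\vp=0$, which you correctly anticipated) to discard the first-order term, substitutes $\pa_A^2\vp^{\da}=[F_A^{2,0},\vp^{\da}]$ with $F_A^{2,0}=-\vp\we\vp$, and pairs with $\vp^{\da}$ to obtain $\|\pa_A\vp^{\da}\|_{L^2}^2+\|\vp\we\vp\|_{L^2}^2=0$; this yields $\vp\we\vp=0$, hence $F_A^{2,0}=0$, and $\bar{\pa}_A\vp=0$ since $\|\bar{\pa}_A\vp\|=\|\pa_A\vp^{\da}\|$. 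In particular, no exact cancellation of indefinite cross-terms is required --- the a priori $\mfu$/$i\mfu$ splitting makes the delicate bookkeeping you were bracing for disappear, and identifying that splitting is the missing idea in your proposal.
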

 
	\begin{proof}
		As $D\in \msA_0^{\mbC}(E)$, the equality $ F_D^{2,0}=0$ implies $F_A^{2,0}+\vp\we \vp=0,\;\bar{\pa}_A\vp^{\da}=0.$ Using the K\"ahler identity, $\pa_A^{*}\vp=0$ implies $\Lam\bar{\pa}_A\vp=0$ thus $\Lam\pa_A\vp^{\da}=0.$ We compute
		\begin{equation}
			\begin{split}
				\bar{\pa}_A^{*}\pa_A\vp^{\da}&=-\sqrt{-1}\Lam \pa_A\pa_A\vp^{\da}+\sqrt{-1}\pa_A\Lam\pa_A\vp^{\da}=-\sqrt{-1}\Lam[F_A^{2,0},\vp^{\da}]=\sqrt{-1}\Lam[[\vp,\vp],\vp^{\da}].
			\end{split}
		\end{equation}
		Taking inner product with $\vp$, we obtain $\|
		\pa_A\vp^{\da}\|_{L^2}^2+\|\vp\we\vp\|_{L^2}^2=0$, which implies $\vp\we\vp=0$ and $\bar{\pa}_A\vp=0$.
	\end{proof}
	
	We define the following space
	\begin{equation}
		\MD(E)=\{D\in\msA^{\mbC}(E)\,|\,i\Lam F_D=\gamma\Id,\;F_D\in \Omega^{1,1}(\End(E))\}.
	\end{equation}
	By Lemma \ref{lem_weinzenbock_identity}, for $D=d_A+\phi$, the pair $(d_A,\phi)$ satisfies the Hermitian-Yang-Mills equations \eqref{eq_Hitchin-Simpson} if and only if $D\in \MD(E)$.
	
	The tangent space of $\MD(E)$ at the point $D$ is the set of $\mu\in \Omega^1(\End(E))$ such that $D+\mu$ satisfies the Hermitian-Yang-Mills equations up to first order:
	\begin{equation}
		\begin{split}
			T_D\MD(E):=\{\mu\in \Omega^1(\End(E))\,|\, D\mu\in \Omega^{1,1}(\End(E)),\;\Lam D\mu=0\}
		\end{split}
	\end{equation}
	The almost complex structure on $\GL(n,\mbC)$ defines a almost complex structure $J$ on $\MD(E)$ such that $J\mu=i\mu$. 
	
	As the unitary gauge transformation group $\MG(E)$ preserve \eqref{eq_Hitchin-Simpson}, the moduli space of the Hermitian-Yang-Mills equations is defined as
	\begin{equation}
		\begin{split}
			\MMHYM(E):=\MD(E)/\MG(E).
		\end{split}
	\end{equation}
	Moreover, the almost complex structure $J$ extends naturally to the $L^2$ complement of the $\MG(E)$ orbit and thus induces an almost complex structure on the Hermitian-Yang-Mills moduli space $\MMHYM(E)$. 
	
	\subsection{Higgs bundles and non-abelian Hodge correspondece}
 
	In this subsection, we will review the construction of the moduli space of rank two polystable Higgs bundle. Throughout this subsection $X$ is assumed to be an $n$-dimensional projective manifold equipped with a K\"ahler class $\omega$ associated to given ample divisor on $X$. 
 
	\subsubsection{Moduli space of Higgs bundles}
	Let $E$ be a complex smooth vector bundle over $X$. We write $\Omega^{p,q}(E)$ the $E$-valued $(p,q)$-forms on $X$. In the sequel of this paper, we will naturally identify the holomorphic structures on $E$ with the $\bar{\pa}$-operators $\bar{\pa}_E:\Omega^{p,q}(E)\to \Omega^{p,q+1}(E)$ satisfying the integrability condition $\bar{\pa}_E^2=0$. We denote by $\msE:=(E,\bar{\pa}_E)$ the holomorphic bundle with the holomorphic structure defined by $\bar{\pa}_E$ if there is no confusion. We also denote by $\Omega_X^1$ the holomorphic cotangent bundle of $X$.
	
	\begin{definition}
		A Higgs bundle on $X$ is a pair $(\msE,\vp)$, where $\msE$ is a holomorphic vector bundle on $X$ and $\vp\in H^0(X,\End(\msE)\otimes \Omega_X^1)$ such that $\vp\wedge \vp=0\in \Omega^{2}(\End(\msE))$. We call $\vp$ a Higgs field and the equation $\vp\we \vp=0$ the Higgs equation. 
	\end{definition}

    We will mainly focus on the rank two case in this paper and for simplicity we introduce the following notions.
    
    \begin{definition}
        A Higgs bundle $(\msE,\vp)$ on a projective manifold $X$ is said to be a $\GL_2(\mbC)$ Higgs bundle if $\msE$ is of rank two and is called an $\SLC$ Higgs bundle if it is a $\GLC$ Higgs bundle such that $\det(\msE)=\MSO_X$ and $\Tr(\vp)=0$. 
    \end{definition}
	
	Given a torsion free coherent sheaf $\MSF$ on an $n$-dimensional projective manifold $X$, the \emph{slope $\mu(\MSF)$} of $\MSE$ with respect to $\omega$ is defined to be 
    $$
    \mu(\MSF)=\frac{\deg_{\omega}(\MSF)}{\rank \MSF}=\frac{c_1(\MSF)\cdot \omega^{n-1}}{\rank \MSF}.
    $$ 
    
    Given a Higgs bundle $(\MSE,\varphi)$ on a projective manifold $X$, a coherent subsheaf $\MSF\subset \msE$ is said to be \emph{$\vp$-invariant} if $\vp(\MSF)\subset \MSF\otimes\Omega_X^1$. Now we can introduce the notation of slope stability.
	
	\begin{definition}
 \label{d.stability}
		A Higgs bundle $(\msE,\vp)$ is called stable (resp. semistable) if for all $\vp$-invariant coherent subsheaf $\MSF\subsetneq \msE$, we have 
        \[
        \mu(\MSF)<\mu(\msE)\quad (\textup{resp.}\,\mu(\MSF)\leq \mu(\MSE)).
        \]
        A Higgs bundle $(\msE,\vp)$ is called polystable if $(\msE,\vp)$ is semistable and 
       \[
       (\msE,\vp)\cong (\msE_1,\vp_1)\oplus\cdots \oplus (\msE_r,\vp_r)
       \]
       where $(\msE_i,\vp_i)$ are Higgs bundles with the same slope.
	\end{definition}
	
	Let $\MH(E)$ denote the set of Higgs bundles $(\MSE,\varphi)$ such that the underlying complex smooth vector bundle of $\MSE$ is isomorphic to some given complex smooth vector bundle $E$. Then the complex gauge transformation group $\MGC(E)$ acts on $\MH(E)$ as 
    \[
    g\cdot(\bar{\pa}_E,\vp)=(g^{-1}\circ\bar{\pa}_E\circ g,g^{-1}\circ\Phi\circ g),
    \]
    where $\MSE=(E,\bar{\pa}_E)$. We write $[(\msE,\vp)]$ be the equivalent class of $(\msE,\vp)$ in the orbit of $\MGC(E)$. Let $\MH^{\ps}(E)$ be the subset of polystable Higgs bundles in $\MH(E)$. Then we can define the following spaces:
	\begin{equation}
		\MM_{\Higgs}(E):=\MH^{\ps}(E)/\MGC(E),\;\MM_{\Higgs;0}(E):=\{(\msE,\vp)\in\MH^{\ps}(E)\,|\,\Tr(\vp)=0\}/\MGC(E).    
	\end{equation}
	
	A complex smooth vector bundle $E$ is called \emph{topologically trivial} if all the Chern classes of $E$ in $H^{*}(X;\mathbb{Q})$ vanish. Following \cite[Proposition 6.6]{simpson1994moduli2}, we define the Dolbeault moduli space $\MMD$ to be the moduli space parametrizing rank two topologically trivial polystable Higgs bundles on $X$, which is a quasiprojective variety. It follows from \cite[Proposition 3.4]{Simpson1988Construction} that a polystable Higgs bundle $(\msE,\vp)$ is topological trivial if $c_1(\msE)\cdot \omega^{n-1}=0$ and $c_2(\msE)\cdot \omega^{n-2}=0$. 
	
	There is a canonical almost complex structure on the Higgs bundle moduli space. Given $(a,b)\in \Omega^{0,1}(\End(E))\oplus \Omega^{1,0}(\End(E))$, then the tangent space to $\MH(E)$ at $(\bar{\pa}_E,\vp)$ is the pair $(a,b)$ such that $(\bar{\pa}_E+a,\vp+b)$ is a Higgs bundle up to first order. To be more explicitly, the tangent space at $(\bar{\pa}_E,\vp)$ could be written as
	\begin{equation}
		T_{(\bar{\pa}_E,\vp)}\MH(E)=\{(a,b)\in \Omega^{0,1}(\End(E))\oplus \Omega^{1,0}(\End(E))|\bar{\pa}_E b+[\vp,a]=0,\;[\vp,b]=0\}.
	\end{equation}
	The space $\MH(E)$ exists a natural almost complex structure $I$ defined as $I(a,b)=(ia,ib)$. Moreover, the almost complex strucutre $I$ extends to the $L^2$ completement of the $\MGC(E)$ orbit and thus induces an almost complex structure on $\MMH(E)$. 

    \subsubsection{Projectivization of vector bundles}
    \label{s.projectivisation}

    Let $\MSE$ be a holomorphic vector bundle of rank $\geq 2$ over $X$. Throughout this paper, we denote by $\mbP(\MSE)$ the projectivization of $\MSE$ \emph{in the sense of Grothendieck}. More precisely, denote by $\pi:\mbP(\MSE)\rightarrow X$ the natural projection. Then for any point $x\in X$, the fiber $\pi^{-1}(x)=\mbP(\MSE_x)$ parametrizes the codimension one subspaces of $\MSE_x$, or equivalently the one dimensional subspaces of its dual $\MSE^*_x$. Then we have the following Euler sequence
    \[
    0\rightarrow \MSU \rightarrow \pi^*\MSE \rightarrow \MSO_{\mbP(\MSE)}(1) \rightarrow 0,
    \]
    where $\MSU$ is the universal bundle; that is, the fiber of $\MSU$ over a point $(x,[H])\in \mbP(\MSE_x)$, coincides with the codimension one subspace $H$ of $\MSE_x$. The quotient line bundle $\MSO_{\mbP(\MSE)}(1)$ is called the \emph{tautological line bundle} of $\mbP(\MSE)$. Moreover, for any non-negative integer $k\geq 0$, we have a canonical isomorphism
    \[
    H^0(\mbP(\MSE),\MSO_{\mbP(\MSE)}(k)) \cong H^0(X,\Sym^k \MSE),
    \]
    which can be viewed as following: given an element $s\in H^0(\mbP(\MSE),\MSO_{\mbP(\MSE)}(k))$, then we can view the restriction $s|_{\mbP(\MSE_x)}$ as an element of $H^0(\mbP(\MSE_x),\MSO_{\mbP(\MSE_x)}(k))$ and it can be glued to be a global section of $H^0(X,\Sym^k\MSE)$ as $x$ varies.
	
	\subsubsection{The non-abelian Hodge correspondence over projective variety.}
	In this section, we will brieftly introduce the Hitchin equation over projective variety and the non-abelian Hodge correspondence developed in \cite{hitchin1987self,Simpson1988Construction}. We also refer \cite{garcia2015introduction,Wentworth2016} for a nice introduction.
	
	\begin{theorem}[\protect{\cite{hitchin1987self,Simpson1988Construction}}]\label{thm_NonabelianHodge}
		Given a polystable Higgs bundle $\msE=(E,\bar{\pa}_{E},\vp)$, 
		we defined $D=\bar{\pa}_{E}+(\bar{\pa}_{E})^{\da_{H}}+\vp+\vp^{\da_{H}}$, where $\da_{H}$ is the adjoint defined by $H$. Then there exists a complex gauge transformation $g\in\MGC(E)$ such that $D'=g\cdot D$ satisfies the for Hermitian--Yang-Mills equations \eqref{eq_Hitchin-Simpson}. 
	\end{theorem}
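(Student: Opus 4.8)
The statement is the Hitchin--Simpson existence theorem: a polystable Higgs bundle admits a Hermitian--Einstein metric, equivalently its Hitchin--Simpson connection $D$ can be moved by a complex gauge transformation onto the Hermitian--Yang--Mills locus $\MD(E)$. The plan is to reduce the assertion to the existence of a Hermitian--Einstein metric and then to invoke the Donaldson--Uhlenbeck--Yau heat-flow machinery adapted to incorporate the Higgs field, following Hitchin and Simpson.

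First I would observe that for any metric in the complex gauge orbit $\MGC(E)\cdot D$ the first three equations of \eqref{eq_Hitchin-Simpson} hold automatically: the holomorphic structure $\bpa_E$ is integrable, so $F_A^{0,2}=0$; the Higgs field is holomorphic, so $\bpa_A\vp=0$; and the Higgs condition gives $\vp\we\vp=0$. Thus the only substantive equation is the Hermitian--Einstein equation $\sqrt{-1}\Lam(F_A^{\perp}+[\vp,\vp^{\da_{H}}])=0$. Acting on $D$ by $g\in\MGC(E)$ with the metric $H$ fixed is equivalent, modulo the unitary gauge group $\MG(E)$, to keeping $(\bpa_E,\vp)$ fixed and replacing $H$ by the metric $H(g\cdot,g\cdot)$; hence finding the required $g$ is the same as finding a Hermitian--Einstein metric $H'$ for the Higgs bundle $(\msE,\vp)$. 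Finally, using the polystable decomposition $(\msE,\vp)\cong\bigoplus_i(\msE_i,\vp_i)$ with all summands stable of slope $\mu(\msE)$, it suffices to solve the equation on each stable summand (the Einstein constant $\gamma$ depends only on the common slope) and take the orthogonal direct sum.

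For a stable Higgs bundle I would fix a background metric $H_0$ and seek $H=H_0h$ with $h$ a positive $H_0$-self-adjoint endomorphism, solving the parabolic flow
\[
h^{-1}\frac{\pa h}{\pa t}=-\Big(\sqrt{-1}\Lam F_{A}^{\perp}+\sqrt{-1}\Lam[\vp,\vp^{\da_{H}}]\Big),
\]
where the right-hand side is computed from the evolving metric $H=H_0h$. Short-time existence and preservation of positivity follow from standard parabolic theory, and long-time existence from a priori $C^0$ and higher-order estimates in which the commutator term $[\vp,\vp^{\da_{H}}]$ is carried through every integration by parts exactly as in Simpson's treatment. The decisive step is convergence as $t\to\infty$: if the flow fails to converge, the Uhlenbeck--Yau weak-subbundle regularity theorem produces a weakly holomorphic $L^2_1$ projection cutting out a saturated coherent subsheaf $\MSF\subsetneq\msE$; the feature special to Higgs bundles, due to Simpson, is that this $\MSF$ is automatically $\vp$-invariant, so that the Chern--Weil degree estimate forces $\mu(\MSF)\geq\mu(\msE)$, contradicting stability. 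Therefore the flow converges to a smooth Hermitian--Einstein metric $H'$, and the associated $g$ satisfies the theorem.

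The main obstacle is precisely this final convergence argument: controlling the flow via the $C^0$-estimate and extracting a destabilizing $\vp$-invariant coherent subsheaf in the non-convergent case. This is the analytic heart of the Hitchin--Kobayashi correspondence for Higgs bundles and the only place where stability is genuinely used; the remaining steps are either automatic (the holomorphic equations) or formal (the polystable reduction). Since the full statement is the theorem of Hitchin and Simpson, in the write-up one may either reproduce this Donaldson--Uhlenbeck--Yau argument with the Higgs modifications or cite \cite{hitchin1987self,Simpson1988Construction} directly.
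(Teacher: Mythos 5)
Your proposal is correct, and it coincides with the paper's treatment: the paper offers no proof of this theorem at all, quoting it directly from Hitchin and Simpson, and your sketch is an accurate outline of exactly the argument in those cited references --- reduction to the Hermitian--Einstein equation via the metric/complex-gauge dictionary, the polystable splitting into stable summands of equal slope, and the Donaldson--Uhlenbeck--Yau heat flow whose convergence rests on Simpson's observation that the Uhlenbeck--Yau destabilizing subsheaf is automatically $\varphi$-invariant. Since the statement is an imported result, citing \cite{hitchin1987self,Simpson1988Construction} as you indicate at the end is precisely what the paper does.
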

	
	Using the above theorem, one could define the Kobayashi-Hitchin map 
	\begin{equation}
		\begin{split}
			\Xi&:\MM_{\Higgs}(E)\to \MMHYM(E),\;\Xi(\bar{\pa},\vp):=D,
		\end{split}
	\end{equation}
	where $D$ is the complex connection obtained in Theorem \ref{thm_NonabelianHodge}. Moreover, $\Xi$ is a bijection and real analytic over the smooth locus. 
	
	When $\dim(X)\geq 2$, the existence the solutions to the Hermitian-Yang-Mills equations is not equivalent to flat connection. For a rank two vector bundle $\msE$, recall that the \emph{discriminant} of $\Delta(\msE)$ is defined to be $\Delta(\msE):=4c_2(\msE)-c_1^2(\msE)$ and we have $\Delta(\msE\otimes \msL)=\Delta(\msE)$ for any line bundle $\msL$. 
	\begin{theorem}[\protect{\cite[Proposition 3.4]{Simpson1988Construction}}]
		\label{thm_bogomolov}
		Let $(\msE,\vp)$ be a polystable rank two Higgs bundle with $D:=\Xi(\msE,\vp)$ be the complex connection obtained from the non-abelian Hodge correspondence, then $$\Delta(\msE).[\omega]^{n-2}=0$$ if and only if $F_D^{\perp}=0$. Moreover, if $c_1(\msE).[\omega]^{n-1}=0,\;c_2(\msE).[\omega]^{n-2}=0$, then $D$ is flat. 
	\end{theorem}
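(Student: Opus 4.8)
The plan is to prove Theorem~\ref{thm_bogomolov} by exploiting the fact, established in Theorem~\ref{thm_NonabelianHodge}, that the non-abelian Hodge correspondence produces a complex connection $D$ solving the Hermitian--Yang-Mills equations \eqref{eq_Hitchin-Simpson}. The key is to interpret the discriminant $\Delta(\msE).[\omega]^{n-2}$ as a curvature integral and relate its vanishing to the pointwise vanishing of the trace-free curvature $F_D^\perp$. First I would use Chern--Weil theory to express both $c_1^2(\msE).[\omega]^{n-2}$ and $c_2(\msE).[\omega]^{n-2}$ as integrals of curvature invariants of the connection $D$ obtained from the correspondence; since $D$ solves \eqref{eq_Hitchin-Simpson}, we have $F_D\in\Omega^{1,1}(\End(\msE))$ and $\sqrt{-1}\Lam F_A^\perp=-\sqrt{-1}\Lam[\vp,\vp^\da]$, so the relevant curvature components are controlled.

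The heart of the argument is a Bogomolov-type inequality realized as an $L^2$ identity. I would compute $\Delta(\msE).[\omega]^{n-2}$ and show it equals, up to a positive constant, a sum of squared $L^2$-norms of the trace-free curvature and the Higgs field commutator. Concretely, writing the curvature in an orthonormal frame and separating trace and trace-free parts, the combination $4c_2-c_1^2$ paired with $\omega^{n-2}$ produces an integrand of the form $|F_D^\perp|^2$ plus nonnegative terms coming from $[\vp,\vp^\da]$, using the pointwise identity relating $|F^\perp|^2$ to $|\Lam F^\perp|^2$ and the primitive part. Because $D$ satisfies the HYM equation $\sqrt{-1}\Lam(F_A^\perp+[\vp,\vp^\da])=0$, the $\Lam F^\perp$ contribution is tied to the commutator term, and one arranges the algebra so that
\[
\Delta(\msE).[\omega]^{n-2}=\frac{C}{(n-2)!}\Big(\|F_A^\perp\|_{L^2}^2+(\text{nonnegative Higgs terms})\Big)
\]
for a positive constant $C$. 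Then $\Delta(\msE).[\omega]^{n-2}=0$ forces each nonnegative summand to vanish, giving $F_A^\perp=0$ (and hence $F_D^\perp=0$, since the Higgs contributions to $F_D^\perp$ are likewise killed); the converse is immediate since $F_D^\perp=0$ makes the integrand vanish identically.

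For the last statement, assuming additionally $c_1(\msE).[\omega]^{n-1}=0$ and $c_2(\msE).[\omega]^{n-2}=0$, one first deduces $\Delta(\msE).[\omega]^{n-2}=0$, hence $F_D^\perp=0$ from the first part. It then remains to promote the vanishing of the trace-free curvature to flatness of the full connection $D$. Here I would use that $c_1(\msE).[\omega]^{n-1}=0$ together with the HYM equation forces the trace part $\gamma\,\Id$ to vanish as well (the constant $\gamma=\frac{2\pi\deg_\omega(\msE)}{(n-1)!\,\rank(\msE)}$ is proportional to $c_1(\msE).[\omega]^{n-1}$), so $\sqrt{-1}\Lam F_D=0$ with $F_D$ trace-free and of type $(1,1)$; combined with $F_D^\perp=0$ this yields $F_D=0$, i.e.\ $D$ is flat.

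The main obstacle I anticipate is the curvature bookkeeping in the middle step: organizing the Chern--Weil integrand for $4c_2-c_1^2$ into a manifestly nonnegative expression requires care with the primitive (Lefschetz) decomposition of the $(1,1)$-form $F_D$ and the correct sign conventions for $\Lam$ and the adjoint $\da_H$. In particular, one must verify that the cross-terms between $F_A^\perp$ and $[\vp,\vp^\da]$ reassemble—via the HYM relation $\sqrt{-1}\Lam(F_A^\perp+[\vp,\vp^\da])=0$—into a sum of squares with no residual indefinite piece; this is exactly the point where polystability (through the existence of the HYM solution) is used essentially, and where the argument of \cite[Proposition 3.4]{Simpson1988Construction} supplies the precise identity.
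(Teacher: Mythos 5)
The paper itself offers no proof of Theorem \ref{thm_bogomolov}: it is imported verbatim from Simpson, so your proposal can only be measured against Simpson's argument. For the equivalence ``$\Delta(\msE)\cdot[\omega]^{n-2}=0$ iff $F_D^{\perp}=0$'' your plan is indeed Simpson's: by Chern--Weil, in rank two one has pointwise $4c_2-c_1^2=\frac{1}{2\pi^2}\Tr(F_D^{\perp}\wedge F_D^{\perp})$; the HYM equations give the type decomposition $F_D^{1,1}=F_A+[\vp,\vp^{\da}]$ (skew-Hermitian, with primitive trace-free part by the moment map equation), $F_D^{2,0}=\pa_A\vp$, $F_D^{0,2}=(\pa_A\vp)^{\da}$, and the Hodge--Riemann identities for primitive forms convert $\int_X\Tr((F_D^{\perp})^2)\wedge\omega^{n-2}$ into a positive multiple of $\|(F_A+[\vp,\vp^{\da}])^{\perp}\|_{L^2}^2+2\|(\pa_A\vp)^{\perp}\|_{L^2}^2$. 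One imprecision: $F_A^{\perp}$ and $[\vp,\vp^{\da}]$ do not contribute separate squares as you wrote; only their \emph{sum} is primitive, so only the sum appears as a square (your version would even overstate the equality case). With that correction, both directions of the equivalence follow.

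The ``moreover'' part has a genuine gap, in two places. First, you begin by deducing $\Delta(\msE)\cdot[\omega]^{n-2}=0$ from $c_1\cdot[\omega]^{n-1}=0$ and $c_2\cdot[\omega]^{n-2}=0$; since $\Delta=4c_2-c_1^2$, this amounts to asserting $c_1^2\cdot[\omega]^{n-2}=0$, which does \emph{not} follow from degree zero ($c_1$ can be a nonzero primitive real class). Second, your final step claims that $\sqrt{-1}\Lam F_D=0$, $F_D$ of type $(1,1)$ and $F_D^{\perp}=0$ force $F_D=0$; this conflates the endomorphism trace with the $\Lam$-trace. Granting $F_D^{\perp}=0$, one only gets $F_D=f\otimes\Id$ with $f$ a closed \emph{primitive} $(1,1)$-form, and pointwise primitivity does not make $f$ vanish (harmonic representatives of nonzero primitive classes are exactly such forms). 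Simpson's actual route uses two sign identities coming from the same Chern--Weil computation: $\Delta(\msE)\cdot[\omega]^{n-2}\geq 0$ (part one) and, once $\deg_{\omega}(\msE)=0$ makes $\Tr F_D$ primitive, $c_1^2\cdot[\omega]^{n-2}=-c\,\|\Tr F_D\|_{L^2}^2\leq 0$ with $c>0$; the hypothesis $\ch_2(\msE)\cdot[\omega]^{n-2}=0$, i.e.\ $(c_1^2-2c_2)\cdot[\omega]^{n-2}=0$, then gives $\Delta(\msE)\cdot[\omega]^{n-2}=c_1^2\cdot[\omega]^{n-2}$, so both sides vanish, whence $F_D^{\perp}=0$ and $\Tr F_D=0$, i.e.\ $F_D=0$.

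In fact your first step cannot be repaired, because the statement with the hypotheses as literally written is false (the paper's quotation replaces Simpson's $\ch_2$ by $c_2$). On the abelian surface $X=E\times E$ with polarization $\omega=F_1+F_2$, take line bundles $\msL$, $\msM$ with $c_1(\msL)=F_1-F_2$ and $c_1(\msM)=\Delta_E-F_1-F_2$, where $\Delta_E$ is the diagonal; then $\msE=\msL\oplus\msM$ with $\vp=0$ is polystable, $c_1(\msE)\cdot[\omega]=0$ and $c_2(\msE)=c_1(\msL)\cdot c_1(\msM)=0$, yet $\Xi(\msE,0)$ is the direct sum of the two Hermitian--Einstein connections, whose curvatures are the nonzero harmonic representatives of $c_1(\msL)$ and $c_1(\msM)$; so $D$ is not flat (consistently, $\Delta(\msE)=-(c_1(\msL)+c_1(\msM))^2=4\neq 0$). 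Your argument does close, however, under the additional assumption that $c_1(\msE)=0$ in $H^2(X,\RR)$ --- e.g.\ for $\SLC$ Higgs bundles, which is the setting in which the paper actually invokes this theorem --- since then $c_1^2\cdot[\omega]^{n-2}=0$ and $c_2\cdot[\omega]^{n-2}=\ch_2\cdot[\omega]^{n-2}$, so step one and Simpson's hypotheses coincide.
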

	
	Now, we could discuss the situation that a solutions to \eqref{eq_Hitchin-Simpson} defines a representation of the fundamental group $\pi_1(X)$ via the Riemann-Hilbert correspondence \cite{Simpson1988Construction}. When $\Delta(\msE).[\omega]^{n-2}=0$, the connection $D$ defines a flat connection on the projectivized bundle $\mbP(\msE)$ and the monodromy defines a representation $\rho:\pi_1(X)\to \PGL_2(\mbC)$. If $c_1(\msE).[\omega]^{n-1}=0$, then $F_{D}=0$ and we obtain a representation $\rho:\pi_1(X)\to \GL_2(\mbC)$. Suppose $(\msE,\vp)$ is an $\SLC$ Higgs bundle, i.e., $\det(\msE)=\MSO_X$ and $\Tr(\vp)=0$, and suppose $c_2(\msE).[\omega]^{n-2}=0$, then the representation defined by the connection $D=\Xi(\msE,\vp)$ would be $\rho:\pi_1(X)\to \SLC$. In particular, let $\mfR$ be the moduli space of completely reducible representations $\rho:\pi_1(X)\to \GLC$, then the Kobayashi-Hitchin map togather with the Riemann-Hilbert correspondences defines a bijective between the Dolbeaut moduli space $\MMD$ and $\mfR$. 
	
	\subsection{Hyperk\"ahler metric and moment maps}
 
	Let $(E,H)$ be a complex smooth Hermitian vector bundle. Recall that we denote by $\MA^{\mbC}(E)$ the space of connections on $E$ and by $\MA(E)$ the space of unitary connection on $(E,H)$. For any connection $D\in \MA^{\mbC}$, using the Hermitian metric $H$, there exists an unique decomposition $D=d_A+\phi$, where $d_A\in \MA(E)$ and $\phi\in \Omega^1(i\mfu(E))$ and this makes $D\in T^{*}\MA(E)$. Moreover, we have the decomposition $d_A=\pa_A+\bar{\pa}_A$, $\phi=\vp+\vp^{\da}$, which makes $\MH(E)\subset T^{*}\MA(E)$. 
	
	For the tangent space, we have 
	\begin{equation}
		\label{eq_decomposition_tangent_space}
		\begin{split}
			T(T^{*}\MA(E))=\Omega^1(\mfu(E))\oplus \Omega^1(i\mfu(E))=\Omega^1(\End(E))=\Omega^{0,1}(\End(E))\oplus \Omega^{1,0}(\End(E)).
		\end{split}
	\end{equation}
	For $\mu\in \Omega^1(\End(E))$, we define $$a:=\frac12(\mu-\mu^*)^{(0,1)}\in \Omega^{0,1}(\End(E)),\; b:=\frac12(\mu+\mu^*)^{(1,0)}\in\Omega^{1,0}(\End(E)),$$
	then $\mu\to (a,b)$ gives the identification in \eqref{eq_decomposition_tangent_space}. 
	
	For $(a,b)\in \Omega^{0,1}(\End(E))\oplus \Omega^{1,0}(\End(E))$, we could define the following complex structures 
	\begin{equation}
		\begin{split}
			I(a,b)=(ia,ib),\;J(a,b)=(ib^{*},-ia^{*}),\;K(a,b)=(-b^{*},a^{*}).
		\end{split}
	\end{equation}
	
	For $(a_1,b_1),\;(a_2,b_2)\in \Omega^{0,1}(\End(E))\oplus \Omega^{1,0}(\End(E))$, the $L^2$ metric could be written as
	\begin{equation}
		\begin{split}
			g((a_1,b_1),(a_2,b_2))=2\sqrt{-1}\int_M\Tr(a_1^{*}a_2-b_1b_2^{*})\omega^{n-1}.
		\end{split}
	\end{equation}
	The corresponding symplectic structures could be explicitly written as 
	\begin{equation}
		\begin{split}
			&\omega_I((a_1,b_1),(a_2,b_2))=2\Im\int_X\Lam\Tr(a_1^{\da}a_2+b_1b_2^{\da})\vol,\\
			&\Omega_I:=(\omega_J+i\omega_K)((a_1,b_1),(a_2,b_2))=2\sqrt{-1}\int_X\Tr\Lam(b_2a_1-b_1a_2)\vol.
		\end{split}
	\end{equation}
 
	Let $\MG(E)$ be the unitary gauge transformation, then $g\in\MG(E)$ acts on $(d_A,\phi)\in \msA^{\mbC}(E)$ as $g\cdot(d_A,\phi)=(g^{-1}d_Ag,g^{-1}\phi g).$ The corresponding moment maps $\vmu=(\mu_I,\mu_J,\mu_K)$ could be explicitly computed as
	\begin{equation}
		\begin{split}
			&\mu_I,\mu_J,\mu_K:\msA^{\mbC}(E)\to \Omega^0(\mfu(E)),\\
			&\mu_I(A,\phi)=\Lam(F_A+[\vp,\vp^{\da}]),\;(\mu_J+i\mu_K)(A,\phi)=2\pa_A^{*}\vp.
		\end{split}
	\end{equation}
	
	Therefore, by Lemma \ref{lem_weinzenbock_identity}, when restricting the action $\MG(E)$ on $\msA_0^{\mbC}(E)$, we have
	\begin{equation}
			\MMHYM(E)=(\msA_0(E)^{\mbC}\cap {\vmu}^{-1}(0))/\MG(E).
	\end{equation}
	Using the hyperK\"ahler reduction \cite{hitchin1987hyperkahler}, over the smooth locus of $\MMHYM(E)$, we have the hyperK\"ahler structure.
	\begin{theorem}[\protect{\cite[Theorem 8.3.1]{fujiki2006hyperkahler}\cite[Theorem 6.7]{hitchin1987self}}]
		There exists a hyperK\"ahler structure over the smooth locus of $\MMHYM(E)$. 
	\end{theorem}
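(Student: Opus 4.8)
The plan is to exhibit $\MMHYM(E)$ as an infinite-dimensional hyperK\"ahler quotient in the sense of Hitchin--Karlhede--Lindstr\"om--Ro\v{c}ek \cite{hitchin1987hyperkahler}, following the curve case \cite[Theorem 6.7]{hitchin1987self} and its higher-dimensional version \cite[Theorem 8.3.1]{fujiki2006hyperkahler}. The ambient object is the flat affine space $\msA^{\mbC}(E)\cong T^{*}\MA(E)$, equipped with the constant-coefficient almost complex structures $I,J,K$, the $L^2$-metric $g$, and the symplectic forms $\omega_I,\omega_J,\omega_K$ recorded above; the unitary gauge group $\MG(E)$ acts on it with hyperK\"ahler moment map $\vmu=(\mu_I,\mu_J,\mu_K)$, and the identification $\MMHYM(E)=(\msA_0^{\mbC}(E)\cap\vmu^{-1}(0))/\MG(E)$ established just above is exactly the shape of a hyperK\"ahler reduction. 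The task is therefore to check the hypotheses of the quotient construction and to transport the structure to the smooth locus.

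First I would verify that $(g,I,J,K)$ is a flat hyperK\"ahler structure. Via the identification $\mu\mapsto(a,b)$ of \eqref{eq_decomposition_tangent_space}, the quaternion relations $I^{2}=J^{2}=K^{2}=-\Id$ and $IJ=K=-JI$ reduce to direct algebra; for instance $IJ(a,b)=I(ib^{*},-ia^{*})=(-b^{*},a^{*})=K(a,b)$, and the squares follow using $(\lambda T)^{*}=\bar\lambda\,T^{*}$ for the fibrewise adjoint. Compatibility $g(I\cdot,I\cdot)=g(J\cdot,J\cdot)=g(K\cdot,K\cdot)=g$ and the identity $\omega_I(\cdot,\cdot)=g(I\cdot,\cdot)$, together with $\Omega_I=\omega_J+i\omega_K$ being of type $(2,0)$ for $I$, follow from the explicit formulas; since all these tensors have constant coefficients on an affine space, the three K\"ahler forms are automatically closed. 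Because $g\in\MG(E)$ acts by conjugation and is unitary, it commutes with the adjoint $*$ and with the $(p,q)$-decomposition, hence preserves $g,I,J,K$.

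Next I would confirm that $\vmu$ is an equivariant hyperK\"ahler moment map. For $\xi\in\Omega^{0}(\mfu(E))=\mathrm{Lie}(\MG(E))$, the induced vector field $X_\xi$ is the infinitesimal gauge action, and integration by parts against the explicit forms yields $d\langle\mu_I,\xi\rangle=\iota_{X_\xi}\omega_I$ and its $J,K$ analogues; in particular $\mu_J+i\mu_K=2\pa_A^{*}\vp$ is the holomorphic moment map for the $I$-holomorphic action. Reducing at the central value of $\mu_I$ prescribed by the slope (the scalar $\gamma\,\Id$ lies in the centre of $\Omega^0(\mfu(E))$ and is coadjoint-fixed, so this is legitimate; for topologically trivial bundles $\gamma=0$), the quotient theorem of \cite{hitchin1987hyperkahler} endows $\vmu^{-1}(0)/\MG(E)$ with a hyperK\"ahler structure wherever the $\MG(E)$-action is free and the level set is smooth. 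On the $\MG(E)$-invariant locus $\msA_0^{\mbC}(E)$, Lemma \ref{lem_weinzenbock_identity} promotes the moment-map equations to the full system $F_A^{2,0}=0$, $\vp\we\vp=0$, $\bpa_A\vp=0$, so the descended structure lives on $\MMHYM(E)$ via the identification above.

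The main obstacle is analytic rather than algebraic: rigorously carrying out the reduction in infinite dimensions. One must pass to suitable Sobolev completions of $\msA^{\mbC}(E)$ and $\MG(E)$, prove a slice theorem identifying the tangent space of the quotient at a smooth point with the $L^{2}$-orthogonal complement of the gauge orbit inside $\ker d\vmu$ (the harmonic space of an elliptic deformation complex), and check that $I,J,K$ preserve this complement so that the three descended K\"ahler forms are parallel. The feature specific to $\dim X\geq 2$, invisible on curves, is that the integrability conditions are not themselves level sets of $\vmu$ but are recovered only after intersecting with the complex-analytic locus $\msA_0^{\mbC}(E)$ and invoking the Weitzenb\"ock identity of Lemma \ref{lem_weinzenbock_identity}; confirming that this intersection is compatible with the hyperK\"ahler reduction, i.e.\ that $\msA_0^{\mbC}(E)$ is preserved by the structures entering the quotient, is the delicate point.
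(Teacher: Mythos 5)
Your proposal is correct and follows essentially the same route as the paper: the paper also realises $\MMHYM(E)=(\msA_0^{\mbC}(E)\cap\vmu^{-1}(0))/\MG(E)$ as an infinite-dimensional hyperK\"ahler reduction of the flat space $T^*\MA(E)$ with respect to the $\MG(E)$-action and the moment maps $(\mu_I,\mu_J,\mu_K)$, using Lemma \ref{lem_weinzenbock_identity} to recover the full Hermitian--Yang--Mills system on $\msA_0^{\mbC}(E)$, and then invokes the quotient construction of \cite{hitchin1987hyperkahler} together with \cite{hitchin1987self,fujiki2006hyperkahler} for the analytic details. Your additional remarks on reducing at the central value $\gamma\,\Id$ and on the Sobolev/slice-theorem issues are exactly the points delegated to those references.
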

	
	\section{Spectral base and rank one symmetric differentials}
 \label{sec_spectral_base_rank_one_differentials}
	In this section, we will introduce the spectral base for Higgs bundles over a projective manifold \cite{donagi1995spectral,ChenNgo2020}. In particular, we will explore in depth its relation with  rank one symmetric differentials of degree two and codimension one foliations on $X$.
	
	\subsection{Hitchin morphism and spectral base}
	
	The Hitchin morphism is a useful tool to study the Higgs bundle moduli space. In this subsection, we will introduce the the Hitchin morphism for projective manifolds, following \cite{hitchin1987self, hitchin1987stable, simpson1994moduli}. Let $X$ be a projective manifold. Then the \emph{Hitchin base} of $X$ is defined to be 
	\begin{equation}
		\MA_X:=H^0(X,\Omega_X^1)\oplus H^0(X,\Sym^2\Omega_X^1)
	\end{equation}
	
	Let $\MM_{\Higgs}$ be the space parametrizing all polystable rank two Higgs bundles on $X$. The \emph{Hitchin morphism} for the moduli space of Higgs bundle is defined as following: 
	\begin{equation}
		\begin{split}
			\sh_X:\MMH \to \MA_X,\;\sh_X[(\MSE,\vp)]:=(\Tr(\vp),\det(\varphi)).
		\end{split}
	\end{equation}

	\begin{theorem}[\protect{\cite{hitchin1987self,simpson1994moduli}}]
 \label{thm_Hitchinmap_proper}
		The restriction $\sh_X|_{\MMD}:\MMD\to \MA_X$ is proper and if $\dim(X)=1$, then it is also surjective . 
	\end{theorem}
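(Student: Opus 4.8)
The plan is to treat the two assertions separately: \emph{properness} via an analytic compactness argument (the non-abelian Hodge correspondence together with an a priori bound on the Higgs field), and \emph{surjectivity} in the curve case via the classical spectral construction.

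For properness I would argue sequentially, using that $\MMD$ is a Hausdorff quasi-projective variety. Let $\{[(\MSE_i,\vp_i)]\}$ be a sequence in $\MMD$ whose Hitchin images $\sh_X[(\MSE_i,\vp_i)]=(s_1^i,s_2^i)$ converge to some $(s_1,s_2)\in\MA_X$; I must extract a convergent subsequence. By Theorem~\ref{thm_NonabelianHodge} each class is represented by a solution $(d_{A_i},\vp_i)$ of the Hermitian--Yang--Mills equations~\eqref{eq_Hitchin-Simpson}, and since the underlying bundles are topologically trivial, Theorem~\ref{thm_bogomolov} shows the associated complex connection $D_i$ is flat, whence $F_{A_i}=-[\vp_i,\vp_i^{\da}]$. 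The crucial input is an a priori estimate: because $\bar{\pa}_{A_i}\vp_i=0$ and the curvature equation holds, the pointwise norm $|\vp_i|^2_{H_i}$ satisfies a Bochner-type differential inequality whose zeroth-order term is controlled by the coefficients $\Tr(\vp_i)=s_1^i$ and $\det(\vp_i)=s_2^i$, so the maximum principle yields $\sup_X|\vp_i|_{H_i}\le C$ with $C$ depending only on a compact neighborhood of $(s_1,s_2)$. This is the step I expect to be the main obstacle, as it is precisely where the Hitchin base controls the size of the Higgs field.

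Granting the estimate, $F_{A_i}=-[\vp_i,\vp_i^{\da}]$ is uniformly bounded as well, so the pairs $(d_{A_i},\vp_i)$ lie in a set of uniformly bounded curvature and Higgs field. Uhlenbeck-type compactness then extracts a subsequence converging, modulo unitary gauge, to a solution $(d_A,\vp)$ of~\eqref{eq_Hitchin-Simpson}; its gauge class defines a point $[(\MSE,\vp)]\in\MMD$, and continuity of $\sh_X$ forces $\sh_X[(\MSE,\vp)]=(s_1,s_2)$. Thus the preimage of a compact set is sequentially compact and $\sh_X|_{\MMD}$ is proper. Alternatively one may verify the valuative criterion directly, extending a family of semistable Higgs bundles across the puncture by Langton--Simpson semistable reduction and replacing the special fiber by its polystable representative, with separatedness of $\MMD$ giving uniqueness of the limit; the subtlety that the limit is a priori only semistable is resolved exactly because $\MMD$ parametrizes S-equivalence classes.

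For surjectivity when $\dim X=1$ I would use the spectral correspondence. Given $(s_1,s_2)\in\MA_X$, form the spectral curve $X_{\mathbf{s}}\subset\Tot(\Omega_X^1)$ cut out by $\lambda^2-(p^*s_1)\lambda+p^*s_2=0$, where $\lambda$ is the tautological section and $p:X_{\mathbf{s}}\to X$ is the induced finite flat double cover. For any rank-one torsion-free sheaf $\MSF$ on $X_{\mathbf{s}}$ the pushforward $(\MSE,\vp):=(p_*\MSF,\,p_*(\lambda\,\cdot\,))$ is a rank-two Higgs bundle with $\Tr(\vp)=s_1$ and $\det(\vp)=s_2$; since $\deg(p_*\MSF)=\deg\MSF+\text{const}$, one may choose $\deg\MSF$ so that $\deg\MSE=0$, i.e. $\MSE$ is topologically trivial. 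Such $\MSF$ exists because the compactified Jacobian of $X_{\mathbf{s}}$ is nonempty in every degree, and the resulting Higgs bundle is semistable, so passing to its polystable representative produces a point of $\MMD$ over $(s_1,s_2)$. The only points needing care are the non-integral spectral curves (arising when $4s_2-s_1^2$ vanishes or factors), which are handled uniformly by working with torsion-free sheaves rather than line bundles; this establishes surjectivity.
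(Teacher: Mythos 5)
This theorem is stated in the paper with citations to Hitchin and Simpson and is \emph{not} proved there, so your proposal can only be compared with the classical arguments. Your properness half is, in outline, Hitchin's analytic proof: represent each class by a solution of \eqref{eq_Hitchin-Simpson} via Theorem \ref{thm_NonabelianHodge}, use flatness (Theorem \ref{thm_bogomolov}) so that $F_{A_i}=-[\vp_i,\vp_i^{\da}]$, and bound $\sup|\vp_i|$ by the maximum principle: at an interior maximum of $|\vp_i|^2$ one gets $[\vp_i,\vp_i^{\da}]=0$, so $\vp_i$ is normal there and $|\vp_i|^2$ equals the sum of the squared norms of its eigen-one-forms, which are controlled pointwise by $|s_1^i|$ and $|s_2^i|^{1/2}$; then Uhlenbeck compactness and elliptic bootstrapping conclude. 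This is correct as a sketch (the estimate is valid in any dimension by the K\"ahler identities), and your alternative via the valuative criterion and Langton-type semistable reduction is essentially the algebraic route Simpson takes in the higher-dimensional case. So the properness half is fine.

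The surjectivity half, however, contains a genuine gap. The claim that for \emph{any} rank-one torsion-free sheaf $\MSF$ on $X_{\mathbf{s}}$ the pushforward Higgs bundle is semistable fails exactly at the non-integral spectral curves you set aside. If $\frac14 s_1^2-s_2=\omega^2$ for a holomorphic one-form $\omega$ (already on an elliptic curve with $\omega$ nowhere vanishing), the spectral curve is a union of two copies $X_\pm$ of $X$, a line bundle with restriction degrees $d_\pm$ pushes forward to $\ML_+\oplus\ML_-$ with $\vp=\diag(\omega,-\omega)$, and this is \emph{unstable} whenever $d_+\neq d_-$; in that case there is no ``polystable representative'' to pass to, since S-equivalence classes exist only for semistable objects. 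Likewise, when $4s_2=s_1^2$ the spectral curve is a non-reduced ribbon, where ``rank-one torsion-free'' is not the right notion: a line bundle on the reduced curve is a torsion-free rank-one module over the ribbon, yet its pushforward has rank one, not two, on $X$. So working with torsion-free sheaves does \emph{not} handle the non-integral case uniformly; you must either choose $\MSF$ with balanced degrees (resp.\ generically free over the ribbon) and check semistability by hand, or--much more simply--use the companion-matrix construction $\MSE=K_X^{1/2}\oplus K_X^{-1/2}$ with $\vp=\begin{pmatrix} s_1/2 & s_1^2/4-s_2\\ 1 & s_1/2\end{pmatrix}$, which has characteristic coefficients $(s_1,s_2)$, is stable for $g\geq 2$ because any $\vp$-invariant line subbundle must inject into $K_X^{-1/2}$ (the low-genus cases being trivial or strictly semistable but still yielding points of $\MMD$), and works uniformly for every $(s_1,s_2)$ including the nilpotent locus. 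This companion-matrix section is exactly the Hitchin section that the present paper generalizes (Example \ref{e.canonical-Higgs-bundle} and Theorem \ref{thm_Hitchinsection_existence}).
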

	
	\begin{remark}
		The map $\sh_X$ is not proper over the Higgs bundle moduli space $\MMH$ with non-trivial Chern classes due to the appearance of Uhlenbeck bubbling. We refer the interested reader to \cite[Theorem 5.4]{he2020behavior} for a more precise statement.     
	\end{remark}
	
	In general the Hitchin morphism $\sh_X$ might not be surjective if $\dim(X)\geq 2$ due to the Higgs equation $\vp\we \vp=0$. In \cite{ChenNgo2020}, Chen and Ng\^o studied the image of $\sh_X$ in terms of universal spectral data. In the following we give an independent and elementary proof of their results in the rank two case. Let $s\in H^0(X,\Sym^2\Omega_X^1)$ be a symmetric differential of degree two. For a given point $x\in X$, recall that $s$ is said \emph{of rank one at $x$} if $s(x)=w^2$ for some $0\not=w\in\Omega_{X,x}^1$ and $s$ is said \emph{of rank zero at x} if $s(x)=0$. We define 
	\begin{equation}
		\MB_X:=\{s\in H^0(X,\Sym^2\Omega^{1}_X)\,|\,\rank_x s\leq 1, \,\forall\, x\in X\}    
	\end{equation}
	and then the \emph{spectral base $\MS_X$} for rank two Higgs bundles is defined as
	\begin{equation}
		\label{eq_spectral_base_GL2Higgs_bundle}
		\MS_X:=\{(s_1,s_2)\in \MA_X\,|\,4s_2-s_1^2\in \MB_X\}.
	\end{equation}  
    One can easily see from the definition that if $\MB_X=0$, then we must have $\MS_X=0$.
	
	\begin{proposition}[\protect{\cite[Proposition 5.1]{ChenNgo2020}}]
		\label{p.CN-rankone}
		The Hitchin morphism $\sh_X:\MMH\to \MA_X$ factors through the natural inclusion map $\iota_X:\MS_X\rightarrow \MA_X$. In other words, there exists a map $\ssd_X:\MMH\to \MS_X$ such that the following diagram commutes: 
		\begin{equation}
			\begin{tikzcd}[column sep=large,row sep=large]
				\MMH \arrow[d,"{\ssd_X}" left] \arrow{dr}{\sh_X} &  \\
				\MS_X \arrow[r,"{\iota_X}" below] & \MA_X .
			\end{tikzcd}
		\end{equation}
		The map $\ssd_X$ is called the spectral morphism.
	\end{proposition}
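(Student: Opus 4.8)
The plan is to reduce the factorization to a pointwise linear-algebra statement about the discriminant of the characteristic polynomial of $\vp$. Concretely, one sets $\ssd_X[(\MSE,\vp)] := (\Tr(\vp),\det(\vp))$ and declares it to land in $\MS_X$; the diagram then commutes tautologically, so the only thing that actually requires proof is that $(\Tr(\vp),\det(\vp))$ really lies in $\MS_X$, i.e. that the section
\[
4\det(\vp)-\Tr(\vp)^2 \in H^0(X,\Sym^2\Omega_X^1)
\]
has rank at most one at every point of $X$, so that it belongs to $\MB_X$. I would emphasize at the outset that polystability plays no role in this step: the argument uses only the Higgs equation $\vp\we\vp=0$, so it applies verbatim to every Higgs bundle, not merely the polystable ones.

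First I would set up the local computation. Choose local holomorphic coordinates $z_1,\dots,z_n$ and a local holomorphic frame of $\MSE$, and write $\vp=\sum_\al A_\al\,dz_\al$ with $A_\al$ holomorphic $2\times 2$ matrix-valued functions. A direct expansion gives
\[
\vp\we\vp=\sum_{\al<\be}[A_\al,A_\be]\,dz_\al\we dz_\be,
\]
so the Higgs equation is equivalent to $[A_\al,A_\be]=0$ for all $\al,\be$. In particular, at each fixed point $x$ the complex matrices $A_\al(x)$ form a commuting family. Since any commuting family of $2\times 2$ complex matrices shares a common eigenvector, they can be simultaneously brought to upper-triangular form in a suitable basis of $\MSE_x$; in that basis $\vp(x)$ is upper-triangular with diagonal entries two $1$-forms $\lambda_1,\lambda_2\in\Omega^1_{X,x}$. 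Hence $\Tr(\vp)(x)=\lambda_1+\lambda_2$ and $\det(\vp)(x)=\lambda_1\lambda_2$ (symmetric products in $\Sym^2\Omega^1_{X,x}$), and therefore
\[
\bigl(4\det(\vp)-\Tr(\vp)^2\bigr)(x)=4\lambda_1\lambda_2-(\lambda_1+\lambda_2)^2=-(\lambda_1-\lambda_2)^2=\bigl(\sqrt{-1}\,(\lambda_1-\lambda_2)\bigr)^2.
\]
This is the square of a single $1$-form, so it has rank zero (when $\lambda_1(x)=\lambda_2(x)$) or rank one at $x$; in either case its rank at $x$ is at most one.

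Since this holds at every $x$, one obtains $4\det(\vp)-\Tr(\vp)^2\in\MB_X$, hence $(\Tr(\vp),\det(\vp))\in\MS_X$, and $\iota_X\circ\ssd_X=\sh_X$ by construction. I do not expect a serious obstacle here; the only point requiring care is that the triangularizing basis depends on $x$ and varies discontinuously, so one must \emph{not} attempt to globalize $\lambda_1,\lambda_2$ as sections over $X$. This is harmless, however: the section $4\det(\vp)-\Tr(\vp)^2$ is a priori a \emph{global} holomorphic section of $\Sym^2\Omega_X^1$, while membership in $\MB_X$ is a purely pointwise rank condition, so the fibrewise computation above is exactly what is needed. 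The two genuinely used inputs are the elementary fact that commuting matrices over $\mathbb{C}$ are simultaneously triangularizable, and the observation that over $\mathbb{C}$ the negative of a square, $-(\lambda_1-\lambda_2)^2$, is again a perfect square of a $1$-form.
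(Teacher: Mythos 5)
Your proof is correct and is essentially the paper's own argument: both reduce to the pointwise fact that the matrix coefficients of $\vp$ commute (from $\vp\we\vp=0$) and conclude that $4\det(\vp)-\Tr(\vp)^2$ is at each point the square of a holomorphic $1$-form, hence lies in $\MB_X$. The only cosmetic difference is that the paper first passes to the trace-free field $\vp'=\vp-\tfrac{1}{2}\Tr(\vp)$ and then splits into two cases (all coefficients nilpotent, giving $\det(\vp')(x)=0$, versus simultaneous diagonalization when some coefficient has distinct eigenvalues), whereas your simultaneous upper-triangularization via a common eigenvector treats both cases uniformly and avoids the reduction.
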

	
	\begin{proof}
		Let $(\MSE,\vp)$ be a Higgs bundle such that $\Tr(\vp)=s_1$ and $\det(\vp)=s_2$. Then we could define a new Higgs field $\vp':=\vp-\frac{1}{2}s_1$ on $\msE$, which satisfies 
		\begin{center}
			$\Tr(\vp')=0$, $\det(\vp')=s_2-\frac{1}{4} s_1^2$ and $\vp'\wedge \vp'=0.$
		\end{center}
		Let $dz_1,dz_2,\cdots, dz_n$ be a frame of $\Omega^{1}_X$ at $x\in X$. If we write $\vp'(x)=\sum_{i=1}^nB_idz_i$, then the condition $\vp'\we\vp'=0$ implies that $[B_i,B_j]=0$. Since $\Tr(\vp')=0$, we can write $\lambda_i,-\lambda_i$ as two eigenvalues of $B_i$. In particular, either $\det(\vp')(x)=0$, or there exists some $1\leq i \leq n$ such that $\lambda_i\not=0$ and $B_1,\cdots,B_n$ can be simultaneously diagonalized. In the latter case, we have $\vp'(x)=\diag(w,-w)$, where $w=\sum_{i=1}^n \lambda_i dz_i$, Hence we get $\det(\vp')=(\sqrt{-1}w)^2\in \MB_X$. 
	\end{proof}

	\begin{example}
		\label{e.RC}\label{e.CI-Projective-spaces}
		In the following we collect various interesting examples to illustrate the possibilities of $\MA_X$, $\MB_X$ and $\MS_X$.
		\begin{enumerate}
			\item Let $X$ be a rationally connected projective manifold. Then $H^0(X,\Sym^k\Omega_X^1)=0$ for any $k\geq 1$ by \cite[IV, Corollary 3.8]{Kollar1996}. Hence we have $\MA_X=\MB_X=\MS_X=0$.
			
			\item Let $X$ be a smooth projective variety with $c_1(X)=0$ in $H^2(X,\mathbb{Q})$, and $\pi _1(X)$ finite. It is proved by Kobayashi in \cite{Kobayashi1980a} as a direct consequence of Yau's theorem that $\MB_X=\MA_X=0$. 
			
			\item Let $X\subset \mbP^N$ be a smooth complete intersection such that either $\dim(X)\geq 3$ or $N=3$. Then we have $H^0(X,\Sym^2\Omega_X^1)=0$ by \cite{Brueckmann1986} and hence $\MB_X=0$. Moreover, by the Lefschetz hyperplane theorem, the variety $X$ is simply connected, so we have $\MA_X=0$. However, if $X\subset \mbP^4$ is a smooth complete intersection defined as following
			\[
			X:=\{x_0^d+\cdots+x_4^d=a_0 x_0^d+\cdots a_4 x_4^d=0\},
			\] 
			where $d\geq 5$ and $a_i$'s are general, then we have $H^0(X,\Sym^k\Omega_X^1)\not=0$ for any $k\geq 2$ (see \cite[Theorem 10]{Brueckmann1986} and \cite[\S\,3]{Brotbek2016}).
			
			\item  Let $X$ is the quotient of $\mbH\times \mbH$, the product of two copies of the complex upper half plane, by an irreducible discrete cocompact torsion free group $\Gamma\subset \Aut(\mbH)\times \Aut(\mbH)$ of holomorphic automorphisms defined by certain quaternion algebra, see \cite{Shavel1978} for more details. Such $X$ is called a \emph{quaternionic Shimura surface}. Then the product structure of $\mbH\times \mbH$ induces a splitting $\Omega_X^1\cong \MSL_1\oplus \MSL_2$. For any positive integer $n$, by \cite[Example 9.3]{Brunella2015}, we have
			\[
			h^0(X,\Omega_X^1) = h^0(X,\MSL^{\otimes n}_1) = h^0(X,\MSL^{\otimes n}_2) = 0.
			\]
			On the other hand, note that we have 
			\[
			\Sym^2\Omega_X^1\cong \MSL_1^{2}\oplus \MSL_1\otimes \MSL_2\oplus \MSL_2^{2}.
			\]
			It yields
			\[
			H^0(X,\Sym^2\Omega_X^1) = H^0(X,\MSL_1\otimes \MSL_2) = H^0(X,K_X).
			\]
			As a consequence, if we choose $\Gamma$ such that $p_g=h^0(X,K_X)\not=0$ (see for instance \cite[Example 2 and 3]{Shavel1978}), then we have $\MA_X\not=0$ but $\MS_X=\MB_X=0$.
			
			\item \label{example_simply_connected_nonvanishing_rankone_differential} Let $A$ be a three dimensional abelian variety. Let $Y$ be a smooth hypersurface section of $A$ of sufficiently high degree, which is invariant under the natural involution $\sigma=-\id$ and passes through one and only one of the fixed point $p$ of $\sigma$. The $\sigma$ induces a $\ZT$ action on $Y$ with fixed point $p$ and the quotient $Y/\ZT$ has an $A_1$ singularity at $p$. Let $X$ be the minimal resolution of $Y/\ZT$. It is shown in \cite[Page 1093]{bogomolov2011symmetric} that $X$ is simply connected and $\MB_X\neq 0$. 
		\end{enumerate}
		
	\end{example}
	
	The following result says that the spaces $\MA_X$, $\MB_X$ and $\MS_X$ are actually birational invariants of $X$ (see \cite[Theorem 5.3]{SongSun2021} for the general case).
	
	\begin{proposition}
		\label{p.birational-invariant-spectral-bases}
		Let $\pi:X'\rightarrow X$ be a birational map from a projective manifold $X'$ to $X$. Denote by $\pi^{\sharp}_{\MA}:\MA_{X}\rightarrow \MA_{X'}$ and $\pi^{\sharp}:H^0(X,\Sym^2\Omega_{X'}^1)\rightarrow H^0(X,\Sym^2\Omega_X^1)$ the natural injective linear map induced by the cotangent map of $\pi$. Then we have
		\begin{center}
			$\pi^{\sharp}_{\MA}(\MS_X)=\MS_{X'}$ and $\pi^{\sharp}(\MB_X)=\MB_{X'}$.
		\end{center}
	\end{proposition}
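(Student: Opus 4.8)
The plan is to derive everything from one structural fact---that symmetric differentials are a birational invariant of smooth projective varieties---and then to verify that the pointwise rank condition cutting out $\MB_X$ is transported correctly by the resulting isomorphism. First I would record the key lemma: for a birational morphism $\pi:X'\rightarrow X$ of smooth projective varieties, the pullback $\pi^{*}:H^0(X,\Sym^k\Omega_X^1)\rightarrow H^0(X',\Sym^k\Omega_{X'}^1)$ is an isomorphism for every $k\geq 1$. Injectivity is immediate since $\pi$ is dominant, so a section vanishing on the dense open image is zero. For surjectivity I would use that $X$ is smooth, hence normal, and $\pi$ is proper birational; standard facts (essentially $\pi_*\MSO_{X'}=\MSO_X$ together with the structure of $\pi$ over codimension-one points) give an open $U\subseteq X$ with $\mathrm{codim}_X(X\setminus U)\geq 2$ over which $\pi$ restricts to an isomorphism. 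A section $s'\in H^0(X',\Sym^k\Omega_{X'}^1)$ then restricts to $\pi^{-1}(U)\cong U$, producing a section on $U$, which extends across the codimension-$\geq 2$ set $X\setminus U$ because $\Sym^k\Omega_X^1$ is locally free, hence reflexive and so satisfies Hartogs-type extension. The extension pulls back to $s'$ since the two agree on the dense open $\pi^{-1}(U)$. If $\pi$ is only a birational map, I would resolve it by a common smooth projective model $W$ with birational morphisms $W\rightarrow X$ and $W\rightarrow X'$ and compose the two isomorphisms. Applying this with $k=1,2$ shows that $\pi^{\sharp}_{\MA}=(\pi^{*},\pi^{*})$ is an isomorphism $\MA_X\xrightarrow{\sim}\MA_{X'}$.

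Next I would show the rank condition is preserved. Given $s\in H^0(X,\Sym^2\Omega_X^1)$, regard $s$ pointwise via the musical isomorphism as a symmetric homomorphism $T_X\rightarrow \Omega_X^1$, and form its second exterior power $\wedge^2 s\in H^0\!\big(X,\wedge^2\Omega_X^1\otimes \wedge^2\Omega_X^1\big)$, whose value at $x$ is the collection of $2\times 2$ minors of $s(x)$ and therefore vanishes precisely when $\rank_x(s)\leq 1$. Thus $s\in\MB_X$ if and only if $\wedge^2 s=0$. The exterior-square construction is natural in the cotangent sheaf, so it commutes with pullback, giving $\pi^{*}(\wedge^2 s)=\wedge^2(\pi^{*}s)$; moreover pullback on $H^0(X,\wedge^2\Omega_X^1\otimes\wedge^2\Omega_X^1)$ is injective because $\pi$ is dominant. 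Hence $\wedge^2 s=0$ if and only if $\wedge^2(\pi^{*}s)=0$, i.e.\ $s\in\MB_X \iff \pi^{*}s\in\MB_{X'}$. Combined with the isomorphism from the previous step, $\pi^{\sharp}$ restricts to a bijection $\MB_X\rightarrow\MB_{X'}$, which is the second claimed equality.

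Finally I would deduce the statement for $\MS_X$. Since $\pi^{*}$ respects the symmetric-algebra structure, $\pi^{*}(4s_2-s_1^2)=4\,\pi^{*}s_2-(\pi^{*}s_1)^2$. Therefore $(s_1,s_2)\in\MS_X \iff 4s_2-s_1^2\in\MB_X \iff 4\,\pi^{*}s_2-(\pi^{*}s_1)^2\in\MB_{X'} \iff (\pi^{*}s_1,\pi^{*}s_2)\in\MS_{X'}$. As $\pi^{\sharp}_{\MA}$ is an isomorphism $\MA_X\rightarrow\MA_{X'}$, the image $\pi^{\sharp}_{\MA}(\MS_X)$ is exactly $\MS_{X'}$, completing the proof.

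The step I expect to be the main obstacle is the surjectivity in the core lemma, namely the extension of symmetric differentials across the exceptional locus: this rests on the codimension-$\geq 2$ description of the non-isomorphism locus for a birational morphism to a smooth variety, together with the reflexivity/Hartogs extension for the locally free sheaf $\Sym^k\Omega_X^1$. Everything downstream (the minor/$\wedge^2$ characterization of rank one and its compatibility with pullback) is then a formal consequence, so the real content is concentrated in this birational-invariance statement, which is classical and could alternatively be cited.
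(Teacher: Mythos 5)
Your proposal is correct and follows essentially the same route as the paper: both arguments rest on passing to a Zariski open subset $U\subset X$ with $\mathrm{codim}(X\setminus U)\geq 2$ over which $\pi$ is an isomorphism, and then extending symmetric differentials across the complement using reflexivity (Hartogs) for the locally free sheaf $\Sym^2\Omega_X^1$. The only cosmetic differences are that you transport the rank-one condition via the vanishing of $\wedge^2 s$ and its naturality under pullback, where the paper instead invokes semi-continuity of the rank function on the extended section, and that you first establish the full isomorphism $\pi^{*}:H^0(X,\Sym^k\Omega_X^1)\rightarrow H^0(X',\Sym^k\Omega_{X'}^1)$ before imposing the rank condition, while the paper works directly with rank-one differentials.
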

	
	\begin{proof}
		It is clear from the definition that $\pi^{\sharp}_{\MA}(\MS_X)\subset \MS_{X'}$ and $\pi^{\sharp}(\MB_X)\subset \MB_{X'}$. In the following we prove the reverse inclusion for $\MB_X$ and the proofs for $\MA_X$ and $\MS_X$ are similar. Since both $X$ and $X'$ are projective manifolds, there exists a non-empty Zariski open subset $U$ of $X$ such that $\textup{codim}(X\setminus U)\geq 2$ and $\pi^{-1}(U)\rightarrow U$ is an isomorphism. Consider the following composition
		\[
		H^0(X',\Sym^2\Omega_{X'}^1) \rightarrow H^0(\pi^{-1}(U),\Sym^2\Omega_{X'}^1|_{\pi^{-1}(U)}) \xrightarrow{\simeq} H^0(U,\Sym^2\Omega_U^1).
		\]
		It follows that every rank one symmetric differential $s_{X'}$ on $X'$ descends to a rank one symmetric differential $s_{U}$ on $U$. On the other hand, as $\textup{codim}(X\setminus U)\geq 2$ and $X$ is smooth, the restriction map 
		\[
		H^0(X,\Sym^2\Omega_X^1) \rightarrow H^0(U,\Sym^2\Omega_U^1)
		\]
		is an isomorphism. So $s_U$ extends uniquely to a symmetric differential $s_X$ on $X$, which is again rank one by the semi-continuity of the rank function. Finally, as $\pi^{\sharp}(s_X)=s_{X'}$ over the non-empty open subset $\pi^{-1}(U)$, we easily get $\pi^{\sharp}(s_X)=s_{X'}$ and we are done.
	\end{proof}
	
	\subsection{Rank one symmetric differentials}
	
	Let $0\not=s\in \MB_X$ be a rank one symmetric differential. If we regard it as an element of $H^0(\mbP(\Omega_X^1),\MSO_{\mbP(\Omega_X^1)}(m))$, then we can denote by $\Delta_s$ the effective Cartier divisor defined by $s$ on $\mbP(\Omega_X^1)$ (cf. \S\,\ref{s.projectivisation}). Recall that a divisor on $\mbP(\Omega^1_X)$ is called \emph{$\pi$-horizontal} if each of its irreducible components dominates $X$. Here comes the key observation as explained by Bogomolov and De Oliveira in \cite{bogomolov2011symmetric}.
	
	\begin{proposition}[\protect{\cite[p.1089]{bogomolov2011symmetric}}]
		\label{p.decomposition-symmetric-differetials}
		There exists a $\pi$-horizontal prime divisor $\Delta'_s$ such that $\Delta^h_s=2\Delta'_s$, where $\Delta^h_s$ is the $\pi$-horizontal part of $\Delta_s$. In particular, there exists a line bundle $\MSL$ on $X$ and non-zero elements 
		\begin{center}
			$\alpha\in H^0(X,\Omega_X^1\otimes \MSL^{-1})$ and $\tau \in H^0(X,\MSL^{2})$
		\end{center} 
	    such that
		\begin{enumerate}
			\item $\alpha$ does not vanish in codimension one;
			
			\item $s=\alpha^2\tau$.
		\end{enumerate}
		We will use $\textup{div}(s)$ to denote the divisor $\textup{div}(\tau)$ on $X$.
	\end{proposition}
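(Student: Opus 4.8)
The plan is to work entirely on the projective bundle $\pi:\mbP(\Omega_X^1)\to X$ and to read off the factorization $s=\alpha^2\tau$ from the geometry of the zero divisor $\Delta_s$ of the section $s\in H^0(\mbP(\Omega_X^1),\MSO_{\mbP(\Omega_X^1)}(2))$. The starting point is the fibrewise meaning of the rank one condition. Recall from \S\ref{s.projectivisation} that a point of $\pi^{-1}(x)=\mbP(\Omega_{X,x}^1)$ is a hyperplane $H\subset \Omega_{X,x}^1$, and that a covector $w\in \Omega_{X,x}^1$ induces a section of $\MSO_{\mbP(\Omega_X^1)}(1)$ on the fibre whose value at $[H]$ is the image of $w$ in the quotient $\Omega_{X,x}^1/H$; this section vanishes at $[H]$ precisely when $w\in H$, so its zero locus is the hyperplane $D_w:=\{[H]:w\in H\}$. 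Consequently, if $s(x)=w^2$ with $0\neq w$, then $\Delta_s\cap \pi^{-1}(x)$ is the doubled hyperplane $2D_w$. I would record this at the generic point $\eta$ of $X$: since $0\neq s\in\MB_X$ we have $s(\eta)=w_\eta^2$ for some $0\neq w_\eta$, so the generic fibre of $\Delta_s$ is the irreducible divisor $D_{w_\eta}$ taken with multiplicity two.

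Next I would split $\Delta_s=\Delta_s^h+\Delta_s^v$ into its $\pi$-horizontal and $\pi$-vertical parts. Each horizontal prime component $\Gamma$ dominates $X$, so its generic fibre $\Gamma_\eta$ is a nonempty irreducible divisor in $\mbP^{n-1}_{k(X)}$. Restricting the decomposition $\Delta_s^h=\sum_i m_i\Gamma_i$ to the generic fibre yields $\sum_i m_i (\Gamma_i)_\eta = 2D_{w_\eta}$, and by uniqueness of the decomposition of a divisor into prime components the right-hand side forces exactly one horizontal component, say $\Delta'_s$, with $(\Delta'_s)_\eta=D_{w_\eta}$ and multiplicity $2$. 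This gives $\Delta_s^h=2\Delta'_s$ with $\Delta'_s$ prime, which is the first assertion.

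To produce $\MSL$ and $\alpha$, I would use the standard description $\Pic(\mbP(\Omega_X^1))=\mbZ\cdot[\MSO_{\mbP(\Omega_X^1)}(1)]\oplus \pi^*\Pic(X)$, valid since $X$ is smooth projective. Writing $[\Delta'_s]=a[\MSO_{\mbP(\Omega_X^1)}(1)]+\pi^*M$ and restricting to a general fibre, the relative degree of $\Delta'_s$ equals $1$ (its generic fibre is the hyperplane $D_{w_\eta}$), so $a=1$; I then set $\MSL:=M^{-1}$. Thus $\Delta'_s$ is the zero divisor of a section of $\MSO_{\mbP(\Omega_X^1)}(1)\otimes \pi^*\MSL^{-1}$, and through the projection-formula isomorphism $H^0(\mbP(\Omega_X^1),\MSO_{\mbP(\Omega_X^1)}(1)\otimes \pi^*\MSL^{-1})\cong H^0(X,\Omega_X^1\otimes \MSL^{-1})$ this section corresponds to a nonzero $\alpha\in H^0(X,\Omega_X^1\otimes\MSL^{-1})$ whose divisor on $\mbP(\Omega_X^1)$ is exactly $\Delta'_s$. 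Since $\Delta'_s$ is horizontal it contains no fibre $\pi^{-1}(D)$, so $\alpha$ cannot vanish along any divisor of $X$, giving condition (1).

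Finally, for the vertical part I would compare divisor classes: from $\Delta_s\sim \MSO_{\mbP(\Omega_X^1)}(2)$ and $2\Delta'_s\sim \MSO_{\mbP(\Omega_X^1)}(2)\otimes\pi^*\MSL^{-2}$ one gets $\Delta_s^v\sim \pi^*\MSL^2$, and since $\Delta_s^v$ is effective and vertical it equals $\pi^*\textup{div}(\tau)$ for a nonzero $\tau\in H^0(X,\MSL^2)$. Then $\alpha^2\,\pi^*\tau$ is a section of $\MSO_{\mbP(\Omega_X^1)}(2)$ with zero divisor $2\Delta'_s+\pi^*\textup{div}(\tau)=\Delta_s$, hence it agrees with $s$ up to a nonzero scalar; absorbing the scalar into $\tau$ gives $s=\alpha^2\tau$ in $H^0(X,\Sym^2\Omega_X^1)$, and one sets $\textup{div}(s):=\textup{div}(\tau)$. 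The main obstacle I anticipate lies in the fibrewise analysis of the first two paragraphs: one must argue carefully that the rank one hypothesis forces the horizontal part of $\Delta_s$ to be \emph{even}, namely $2\Delta'_s$, rather than a sum of two distinct reduced hyperplanes. This is precisely where the square structure $s(\eta)=w_\eta^2$, as opposed to a product $w_\eta w'_\eta$ of two independent covectors, enters, via the uniqueness of the prime decomposition on the generic fibre.
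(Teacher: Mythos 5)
Your proof is correct and follows essentially the same route as the paper's: decompose $\Delta_s$ into its $\pi$-horizontal and $\pi$-vertical parts, use the rank one condition to get $\Delta_s^h=2\Delta'_s$, identify $\MSL$ from $\Pic(\mbP(\Omega_X^1))=\mbZ\cdot[\MSO_{\mbP(\Omega_X^1)}(1)]\oplus\pi^*\Pic(X)$, and reconstruct $s=\alpha^2\tau$ by comparing zero divisors. The only difference is that you perform the doubling step on the generic fibre over $k(X)$ via uniqueness of prime decomposition, where the paper restricts to fibres over general closed points with $s(x)\neq 0$; this is a slightly more careful formalization of the same step (note only that over the non-closed field $k(X)$ a rank one form is $c\,w_\eta^2$ with $c\in k(X)^*$ rather than a literal square, which is harmless since the fibre divisor is still the double hyperplane).
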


	\begin{proof}
		Let $x\in X$ be an arbitrary point such that $s(x)\not=0$. Thanks to Proposition \ref{p.CN-rankone}, there exists $0\not=w\in \Omega^1_{X,x}$ such that $s(x)=w^2$. In particular, we have
		\[
		\Delta^h_s|_{\mbP(\Omega_{X,x}^1)} = \textup{div}(w^2) = 2\textup{div}(w),
		\]
		where $\textup{div}(w)$ is a hyperplane section of the projective space $\mbP(\Omega_{X,x}^1)$. Thus $\Delta_s^h=2\Delta'_s$ for some prime divisor on $\mbP(\Omega_X^1)$ such that the restriction $\Delta'_s|_{\mbP(\Omega_{X,x}^1)}$ is a hyperplane section for any point $x\in X$ such that $s(x)\not=0$.
		
		Let $\MSL$ be the unique line bundle on $X$ such that $\Delta'_s\in |\MSO_{\mbP(\Omega_X^1)}(1)\otimes \pi^*\MSL^{-1}|$ and denote by $\Delta^v_s=\Delta_s-\Delta^h_s$ the $\pi$-vertical part of $\Delta_s$. Since $\pi$ is a projective bundle, there exists an effective Cartier divisor $D$ on $X$ such that $\pi^*D=\Delta^v_s$. In particular, we obtain
		\[
		\pi^*D = \Delta_s^v = \Delta_s - \Delta^h_s \in |\MSO_{\mbP(\Omega_X^1)}(2)\otimes \MSO_{\mbP(\Omega_X^1)}(-2)\otimes \pi^*\MSL^{2}| = |\pi^*\MSL^{2}|.
		\]
		Hence we have $D\in |\MSL^{2}|$. Let $\tau\in H^0(X,\MSL^{2})$ be a section defining $D$. Then $\tau$ is unique up to non-zero scalars. Let 
		\[
		\alpha\in H^0(X,\Omega_X^1\otimes \MSL^{-1}) \xrightarrow{\simeq} H^0(\mbP(\Omega_X^1),\MSO_{\mbP(\Omega_X^1)}(1)\otimes \pi^*\MSL^{-1})
		\]
		be a global section defining $\Delta'_s$. Then we have $s=\alpha^2\tau$ up to non-zero scalars. Note that $\alpha$ does not vanish in codimension one because $\Delta_s^h$ does not contain $\pi$-vertical components.
	\end{proof}
	
	\subsection{Decomposition of $\MB_X$}
	
	Inspired by Proposition \ref{p.decomposition-symmetric-differetials} above, we can decompose the space $\MB_X$ into smaller spaces $\MB_{X,\MSL}$, which can easily be described. More precisely, given a given line bundle $\MSL$ on $X$, we can introduce the following space
	\[
	\MB_{X,\MSL}=\{s\in H^0(X,\Sym^2\Omega_X^1)\,|\,s=\alpha^2\tau,\,\alpha\in H^0(X,\Omega_X^1\otimes \MSL^{-1}),\, \tau\in H^0(X,\MSL^{2})\}.
	\]
	Then clearly we have 
	\begin{equation}
		\label{e.decomposition-BX}
		\MB_X=\bigcup_{\MSL\in \Pic(X)} \MB_{X,\MSL}
	\end{equation}
	Here note that \emph{a priori} we do not require that $\alpha$ does not vanish in codimension one. 
	
	\subsubsection{Reflexive sheaves and saturation}
	
	By Proposition \ref{p.decomposition-symmetric-differetials}, if $\MB_X\not=0$, then it is enough to only consider in \eqref{e.decomposition-BX} the line bundles $\MSL$ such that the general members in $H^0(X,\Omega_X^1\otimes \MSL^{-1})$ do not vanish in codimension one. This leads us to introduce the following notation.
	
	\begin{notation}
		\label{n.saturation-mu}
		Let $X$ be a projective manifold. Denote by $\MP(X)$ the subset of $\Pic(X)$ consisting of lines bundles $\MSL$ such that $H^0(X,\MSL^{2})\not=0$, $H^0(X,\Omega_X^1\otimes \MSL^{-1})\not=0$ and the general element in $H^0(X,\Omega_X^1\otimes \MSL^{-1})$ does not vanish in codimension one. By Proposition \ref{p.decomposition-symmetric-differetials}, $\MP(X)=\emptyset$ if and only if $\MB_X=0$.
	\end{notation}
	
	The non-vanishing condition above is actually equivalent to the saturation of $\MSL$ in $\Omega_X^1$. To be more precise, recall that a coherent sheaf $\MSF$ is called \emph{reflexive} if $\MSF^{**}=\MSF$, where $\msF^{**}$ is the dual double of $\msF$. It is known that a rank one reflexive sheaf over a projective manifold is actually locally free. Given a subsheaf $\MSF$ of a locally free sheaf $\MSE$, then there exists a unique subsheaf $\widetilde{\MSF}$ of $\MSE$ containing $\MSF$ such that $\MSE/\widetilde{\MSF}$ is torsion free and $\MSF=\widetilde{\MSF}$ over a non-empty Zariski open subset. We will call $\widetilde{\MSF}$ the \emph{saturation} of $\MSF$ in $\MSE$ and $\MSF$ is said to be \emph{saturated} in $\MSE$ if $\MSF=\widetilde{\MSF}$. Indeed, the saturation of $\MSF$ is just defined to be the kernel of the composition
	\[
	\MSE \rightarrow \MSE/\MSF \rightarrow (\MSE/\MSF)/\textup{torsion}.
	\]
	We remark that the saturation of $\MSF$ is always reflexive (see for instance \cite[Proposition 1.1]{Hartshorne1980}). The following result is elementary.
	
	\begin{lemma}
		\label{l.saturation-line-bundles}
		Let $X$ be a projective manifold. Let $\MSL$ be a line bundle and let $\MSE$ be a vector bundle over $X$ such that there exists a non-zero morphism $\alpha:\MSL\rightarrow \MSE$. If $\alpha$ does not vanish in codimension one, then the image $\alpha(\MSL)$ is saturated in $\MSE$. 
	\end{lemma}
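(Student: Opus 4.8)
The plan is to analyze the torsion subsheaf of the quotient $\MSE/\al(\MSL)$ from two directions and derive a contradiction unless it vanishes. First I would observe that, since $\MSL$ is a line bundle and $\al\neq 0$, the kernel of $\al$ is a torsion subsheaf of the torsion-free sheaf $\MSL$ and hence zero; thus $\al$ is injective and $\al(\MSL)\cong \MSL$ is a rank one subsheaf of $\MSE$. Viewing $\al$ as a global section of $\Hom(\MSL,\MSE)\cong \MSE\otimes \MSL^{-1}$, its vanishing locus $Z$ is precisely the locus where the morphism $\al$ fails to be a fibrewise injection. The hypothesis that $\al$ does not vanish in codimension one says exactly that $\textup{codim}(Z)\geq 2$.

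Next I would locate the torsion. Over the open set $X\setminus Z$ the morphism $\al$ is a subbundle inclusion, so $\MSE/\al(\MSL)$ is locally free there and in particular torsion-free. Consequently the torsion subsheaf $T:=\textup{torsion}(\MSE/\al(\MSL))$ is supported inside $Z$, whence $\textup{codim}(\textup{Supp}(T))\geq 2$.

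On the other hand, I would identify $T$ with the saturation quotient. By the definition of saturation there is a short exact sequence
\[
0\to \widetilde{\al(\MSL)}/\al(\MSL)\to \MSE/\al(\MSL)\to \MSE/\widetilde{\al(\MSL)}\to 0,
\]
in which $\MSE/\widetilde{\al(\MSL)}$ is torsion-free while $\widetilde{\al(\MSL)}/\al(\MSL)$ is torsion; therefore $T=\widetilde{\al(\MSL)}/\al(\MSL)$. Now $\widetilde{\al(\MSL)}$ is a rank one reflexive sheaf on the smooth variety $X$, hence a line bundle $\MSM$, and the inclusion $\al(\MSL)\cong \MSL\hookrightarrow \MSM$ corresponds to a nonzero section of $\MSM\otimes \MSL^{-1}$. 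If this section is not nowhere-vanishing, its zero locus is an effective divisor $D$, so $T$ is supported on $D$ and hence in pure codimension one; otherwise the section is an isomorphism and $T=0$.

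Finally I would combine the two descriptions. The support of $T$ is simultaneously contained in the codimension $\geq 2$ set $Z$ and, if nonzero, of pure codimension one; this is impossible, so $T=0$. Hence $\MSE/\al(\MSL)$ is torsion-free, which is exactly the statement that $\al(\MSL)$ is saturated in $\MSE$. The only delicate point, and the step I expect to require the most care, is the dichotomy for the saturation quotient: one must invoke that the saturation of a rank one subsheaf is a line bundle (reflexive of rank one on a smooth variety is locally free) in order to guarantee that $\widetilde{\al(\MSL)}/\al(\MSL)$, when nonzero, is supported in pure codimension one, which is precisely what clashes with $\textup{codim}(Z)\geq 2$.
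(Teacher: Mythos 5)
Your proof is correct and takes essentially the same route as the paper's: both compare $\alpha(\MSL)$ with its saturation $\widetilde{\MSL}$, use the non-vanishing hypothesis to show the inclusion $\MSL\hookrightarrow\widetilde{\MSL}$ is an isomorphism in codimension one, and then invoke that $\widetilde{\MSL}$ is a line bundle (rank one reflexive on a smooth variety) so that a nonzero map of line bundles is either an isomorphism or vanishes along a divisor. Your phrasing via the torsion subsheaf of $\MSE/\alpha(\MSL)$ is a mild repackaging of the paper's direct argument, not a genuinely different method.
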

	
	\begin{proof}
		Denote by $\MSQ$ the cokernel of $\alpha:\MSL \rightarrow \MSE$ and let $\widetilde{\MSL}$ be the saturation of $\alpha(\MSL)$. Since $\alpha$ does not vanish in codimension one, the coherent sheaf $\MSQ$ is locally free is codimension one. In particular, the natural inclusion $\MSL \hookrightarrow \widetilde{\MSL}$ is an isomorphism in codimension one. However, since both $\MSL$ and $\widetilde{\MSL}$ are line bundles, any non-zero morphism $\MSL \rightarrow \widetilde{\MSL}$ either is an isomorphism or vanishes along some divisors. So we must have $\MSL=\widetilde{\MSL}$. 
	\end{proof}
	
	Given an element $\alpha\in H^0(X,\Omega_X^1\otimes \MSL^{-1})$, we can also regard it as a map $\MSL\rightarrow \Omega_X^1$, which we will again denote by $\alpha$ by abuse of notation. Then $\MSL\in \MP(X)$ means that the image $\alpha(\MSL)$ is saturated in $\Omega_X^1$ for a general non-zero element $\alpha\in H^0(X,\Omega_X^1\otimes \MSL^{-1})$.

    \begin{remark}
        In the definition of stability of Higgs bundles (cf. Definition \ref{d.stability}), it is enough to consider only the \emph{saturated} $\vp$-invariant subsheaves of $\MSE$. In particular, if $\MSE$ is of rank two, then we only need to consider all \emph{rank one locally free saturated $\vp$-invariant subsheaves} of $\MSE$ to determine the stability of $\MSE$.
    \end{remark}
	
	\subsubsection{Structure of $\MB_X$}
	
	Note that we have always $\MB_{X,\MSL}\subset \MB_X$ for any line bundle $\MSL$. In particular, if $\MB_X\not=0$, then Proposition \ref{p.decomposition-symmetric-differetials} yields the following decomposition
	\begin{equation}
		\label{e.Decomposition-BX-simplified}
		\MB_X =\bigcup_{\MSL\in \MP(X)} \MB_{X,\MSL}.
	\end{equation}
	To give a detailed description of $\MB_{X,\MSL}$, we need the notion of the so-called \emph{Veronese cone}. Let $V$ be a complex vector space. A \emph{Veronese cone} contained in $V$ is the image of the following composition
	\[
	W\xrightarrow{\nu_2} \Sym^2 W \xrightarrow{\iota} V,
	\]
	where $W$ is a complex vector space, $\nu_2$ is the second Veronese map sending $w$ to $w^2$ and $\iota$ is an injective linear map. In other words, a Veronese cone is the affine cone over a Veronese variety $\nu_2(\mbP(W^*))\subset \mbP(\Sym^2 W^*)$. 
	
	We need the following Bogomolov--Castelnuovo--De Franchis theorem. For a line bundle $\MSL$ on a projective manifold $X$, we denote by $\kappa(X,\MSL)$ its Iitaka dimension.
	
	\begin{theorem}[\protect{\cite[Theorem 2 and Proposition 6]{Reid1977}}]
		\label{t.BCdFtheorem}
		Let $\MSL$ be a line bundle on a projective manifold $X$ such that $\kappa(X,\MSL)\geq 1$. Then there exists a fibration $f:X\rightarrow C$ onto a curve with genus $\geq 1$ such that if $\alpha:\MSL\rightarrow \Omega_X^1$ is a saturated injection, then we have inclusions $f^*\Omega_C^1\subset \alpha(\MSL)\subset \Omega_X^1$ and $\kappa(X,\MSL)=1$.
	\end{theorem}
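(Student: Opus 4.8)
The plan is to run a Castelnuovo--de Franchis type argument driven by the sections of powers of $\MSL$. Since $\kappa(X,\MSL)\geq 1$, we may fix $m\geq 1$ with $h^0(X,\MSL^m)\geq 2$ and choose linearly independent sections $\sigma_0,\dots,\sigma_k\in H^0(X,\MSL^m)$, giving a rational map $\Phi:X\dashrightarrow \mbP^k$ whose image $Z:=\overline{\Phi(X)}$ has dimension equal to $\kappa(X,\MSL)$ once $m$ is large. Pushing these sections through the saturated embedding $\alpha:\MSL\hookrightarrow\Omega_X^1$ via $\alpha^m:\MSL^m\to \Sym^m\Omega_X^1$ produces global symmetric differentials $\theta_j:=\alpha^m(\sigma_j)\in H^0(X,\Sym^m\Omega_X^1)$ which, at every point, are scalar multiples of a single $m$-th power: in a local frame $e$ of $\MSL$ with $\omega:=\alpha(e)$ and $\sigma_j=f_je^m$, one has $\theta_j=f_j\,\omega^m$, and saturation of $\alpha$ guarantees that $\omega$ vanishes only in codimension at least two. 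The whole point is that these symmetric differentials all point in the one direction $\omega$.

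The heart of the matter is to show that $Z$ is a curve, equivalently that the meromorphic functions $t_j:=\sigma_j/\sigma_0=f_j/f_0$ all have differential proportional to $\alpha$, i.e. $dt_j\wedge\omega=0$. I would first treat the model case $m=1$, where the argument is transparent: here $\theta_j=f_j\omega$ is an honest holomorphic $1$-form, hence closed because $X$ is projective, so $0=d\theta_j=df_j\wedge\omega+f_j\,d\omega$. Substituting $df_j\wedge\omega=-f_j\,d\omega$ into $dt_j\wedge\omega=f_0^{-2}\big(f_0\,df_j-f_j\,df_0\big)\wedge\omega$ gives $dt_j\wedge\omega=0$ at once. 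Thus every $dt_j$ is proportional to $\omega$, the generic rank of $d\Phi$ is one, and $Z$ is a curve. The main obstacle is the case $m\geq 2$, where $\theta_j=f_j\omega^m$ is no longer an honest $1$-form and there is no immediate closedness to exploit; this is precisely the phenomenon allowing elliptic (genus one) bases. I would resolve it by reducing to the closed case: either invoke the theory of \emph{closed} symmetric differentials of Bogomolov--de Oliveira, which shows that a rank one symmetric differential of this kind is closed (locally an $m$-th power of a closed $1$-form), after which the closedness computation above reruns verbatim; or pass to the cyclic covering $\pi:\widetilde X\to X$ extracting an $m$-th root of $\omega$, apply the honest Castelnuovo--de Franchis statement upstairs to the resulting holomorphic $1$-forms, and descend the fibration by equivariance.

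Granting that $Z$ is a curve, I would Stein-factorize $\Phi$ to obtain a fibration $f:X\to C$ onto a smooth projective curve. Since all the $dt_j$, and hence $\alpha$ itself, are proportional to the differential of the first integral defining $f$, the twisted $1$-form $\alpha$ is pulled back from $C$; concretely the tautological $1$-form on $C$ pulls back into $\alpha(\MSL)$, yielding the inclusions $f^*\Omega_C^1\subset\alpha(\MSL)\subset\Omega_X^1$. Moreover $\kappa(X,\MSL)=\dim Z=1$, giving the stated value of the Iitaka dimension.

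Finally, to see $g(C)\geq 1$, I would rule out $C\cong\mbP^1$. If $C$ were rational, the first integral $t:X\to\mbP^1$ would force $\alpha$ to be a rational multiple of $f^*(dw)$ for a coordinate $w$ on $\mbP^1$, so $\alpha(\MSL)$ would equal $f^*\Omega_{\mbP^1}^1=f^*\MSO_{\mbP^1}(-2)$ twisted only by $f$-vertical divisors. Such a line bundle carries no positivity in the horizontal direction, forcing $\kappa(X,\MSL)\leq 0$ and contradicting $\kappa(X,\MSL)=1$. Hence $C$ has genus at least one, completing the proof.
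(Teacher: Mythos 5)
The paper offers no proof of this statement---it is quoted verbatim from Reid---so your attempt must stand on its own, and it has genuine gaps at precisely the two places where all the difficulty sits. First, the case $m\geq 2$, which you yourself identify as the heart of the matter, is not resolved by either of your proposed reductions. The first rests on a false premise: Bogomolov--de Oliveira do \emph{not} prove that rank one symmetric differentials are closed; closedness (local factorization into closed one-forms) is an \emph{additional} hypothesis cutting out a strictly smaller class in \cite{bogomolov2013closed}. Concretely, on the simply connected $X$ of Example \ref{e.CI-Projective-spaces} (5) with $\MB_X\neq 0$, no nonzero rank one symmetric differential can be closed, since the closed local factors would glue to holomorphic one-forms on $X$ or on an \'etale double cover, contradicting $b_1=0$; so ``closedness reruns verbatim'' is not available. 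Your cyclic-covering alternative is the standard and workable route, but note that one extracts roots of the \emph{sections} $\sigma_j$ (branched along $\mathrm{div}(\sigma_j)$), not of the twisted form $\omega$ itself, obtaining honest one-forms $\sigma_j^{1/m}\pi^*\alpha$ on a resolution of a common Galois cover, where Castelnuovo--de Franchis applies; as written this is a one-line gesture, and in any case the descended fibration $f:X\to C$ has base the quotient of a genus $\geq 2$ curve by the Galois group, which may well be rational---so equivariance alone does not give $g(C)\geq 1$.

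That is exactly where your final step fails too. The claim that for $C\cong \mbP^1$ a twist of $f^*\Omega^1_{\mbP^1}$ by $f$-vertical divisors ``carries no positivity in the horizontal direction'', hence $\kappa(X,\MSL)\leq 0$, is false because of multiple fibres: if $F_i=m_iF_i'$ is a multiple fibre then $F_i'\equiv_{\mbQ}\frac{1}{m_i}f^*(\mathrm{pt})$, and the saturation of $f^*\Omega_C^1$ in $\Omega_X^1$ equals $f^*\Omega_C^1\otimes \MSO_X(R_f)$ with $R_f$ vertical (this is computed in Example \ref{e.Spectral-var} (3)). For an elliptic surface $f:X\to \mbP^1$ with multiple fibres of multiplicities $(2,3,7)$ (logarithmic transforms of a rational elliptic surface), the saturated $\MSL$ satisfies
\[
h^0(X,\MSL^{42k})\;\geq\; h^0\bigl(\mbP^1,\MSO_{\mbP^1}((-84+21+28+36)k)\bigr)=h^0(\mbP^1,\MSO_{\mbP^1}(k))\to\infty,
\]
so $\kappa(X,\MSL)=1$ with rational base, refuting your positivity argument outright; worse, such an $X$ has $b_1(X)=0$ (the $(2,3,7)$ triangle group has trivial abelianization), hence admits no fibration onto a curve of genus $\geq 1$ at all, which shows the genus assertion is the delicate orbifold-type content of Reid's Proposition 6 and cannot drop out of a degree count on $\MSL$ in the horizontal direction --- indeed this example suggests the statement as quoted should itself be handled with care. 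The remaining steps ($\dim Z=\kappa(X,\MSL)$ for $m$ large, the classical closedness computation for $m=1$, Stein factorization, and $f^*\Omega_C^1\subset\alpha(\MSL)$ via tangency of the generic fibres to $\ker\alpha$ plus saturatedness) are sound but are not where the theorem lives.
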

	
	As an immediate application, we have the following simple description of $\MB_{X,\MSL}$.
	
	\begin{proposition}
		\label{p.structure-BXL}
		For a line bundle $\MSL\in \MP(X)$, If $\MB_{X,\MSL}\not=0$, then one of the following statements hold.
		\begin{enumerate}
			\item $h^0(X,\MSL^{2})=1$ and $\MB_{X,\MSL}$ is a Veronese cone.
			
			\item $h^0(X,\Omega_X^1\otimes \MSL^{-1})=1$ and $\MB_{X,\MSL}$ is a linear subspace.
		\end{enumerate}
	\end{proposition}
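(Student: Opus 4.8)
The plan is to analyze the bilinear structure of $\MB_{X,\MSL}$ and reduce the statement to a dichotomy governed by the Iitaka dimension $\kappa(X,\MSL)$. Write $A:=H^0(X,\Omega_X^1\otimes\MSL^{-1})$ and $T:=H^0(X,\MSL^{2})$; both are nonzero since $\MB_{X,\MSL}\neq 0$, and in particular $\kappa(X,\MSL)\geq 0$ because $T\neq 0$. By construction $\MB_{X,\MSL}$ is the image of the map $A\times T\to H^0(X,\Sym^2\Omega_X^1)$, $(\alpha,\tau)\mapsto\alpha^2\tau$, which is quadratic in $\alpha$ and linear in $\tau$. The first and decisive step is to show that one of the two factors is one-dimensional, i.e. that $\dim A=1$ or $\dim T=1$.

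For this I would argue that $\dim A\geq 2$ forces $\kappa(X,\MSL)=0$. Suppose $\dim A\geq 2$ but $\kappa(X,\MSL)\geq 1$. Applying Theorem \ref{t.BCdFtheorem}, there is a fibration $f:X\to C$ onto a curve such that every saturated injection $\alpha:\MSL\to\Omega_X^1$ satisfies $f^*\Omega_C^1\subseteq\alpha(\MSL)\subseteq\Omega_X^1$. Since $\alpha(\MSL)$ is itself a saturated line subsheaf (Lemma \ref{l.saturation-line-bundles}), it must coincide with the saturation $\widetilde{f^*\Omega_C^1}$, which is independent of $\alpha$; hence any two saturated sections $\alpha_1,\alpha_2$ differ by an automorphism of $\MSL$, that is, by a nonzero scalar. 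Thus all saturated elements of $A$ lie on a single line. But $\MSL\in\MP(X)$ means the saturated elements form a nonempty Zariski-open (hence dense) subset of $A$ (Lemma \ref{l.saturation-line-bundles}), which would then be contained in a line, forcing $\dim A\leq 1$, a contradiction. Therefore $\dim A\geq 2$ implies $\kappa(X,\MSL)=0$; and since $\kappa(X,\MSL)=0$ gives $h^0(X,\MSL^{2})\leq 1$ (two independent sections of $\MSL^{2}$ would yield $h^0(X,\MSL^{2k})\geq k+1$ and hence $\kappa(X,\MSL)\geq 1$), we conclude $\dim T=1$. Hence always $\dim A=1$ or $\dim T=1$.

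With the dichotomy in hand the two cases are immediate to set up. If $\dim A=1$, fix a generator $\alpha_0$; then $\MB_{X,\MSL}=\alpha_0^2\cdot T$ is the image of the linear map $T\to H^0(X,\Sym^2\Omega_X^1)$, $\tau\mapsto\alpha_0^2\tau$, which is injective because multiplication by the nonzero section $\alpha_0^2$ is injective on global sections; this produces the linear subspace of statement (2). If instead $\dim A\geq 2$, then by the above $\dim T=1$ and $\kappa(X,\MSL)=0$; fixing a generator $\tau_0$ of $T$, we have $\MB_{X,\MSL}=\tau_0\cdot\{\alpha^2:\alpha\in A\}$, which is the image of the composite $A\xrightarrow{\nu_2}\Sym^2 A\xrightarrow{\iota}H^0(X,\Sym^2\Omega_X^1)$, where $\iota$ sends $\beta$ to $\tau_0\cdot m(\beta)$ and $m:\Sym^2 A\to H^0(X,\Sym^2\Omega_X^1\otimes\MSL^{-2})$ is the natural multiplication map. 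Since multiplication by $\tau_0$ is injective, realizing $\MB_{X,\MSL}$ as a Veronese cone in the sense of the paper reduces exactly to the injectivity of $m$.

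The main obstacle is precisely this injectivity of $m$, and this is where I would concentrate the effort, exploiting $\kappa(X,\MSL)=0$. Suppose $m$ had a nonzero kernel; diagonalizing the corresponding quadratic form over $\mathbb{C}$ produces linearly independent $\gamma_1,\dots,\gamma_p\in A$ with $\sum_{l=1}^p\gamma_l^2=0$ in $H^0(X,\Sym^2\Omega_X^1\otimes\MSL^{-2})$, and necessarily $p\geq 2$. Evaluating at a point $x$, the identity $\sum_l\gamma_l(x)^2=0$ in $\Sym^2(\Omega_X^1\otimes\MSL^{-1})_x$ forces the covectors $\gamma_l(x)$ to be linearly dependent, since a sum of squares of linearly independent covectors is a nonzero quadratic form. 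Thus $\gamma_1,\dots,\gamma_p$ are globally independent but pointwise linearly dependent twisted $1$-forms. By the generalized Castelnuovo--de Franchis theory for symmetric differentials of Bogomolov--de Oliveira \cite{bogomolov2011symmetric} such a configuration produces a fibration of $X$ onto a curve through which the $\gamma_l$ factor, forcing $\kappa(X,\MSL)\geq 1$ and contradicting $\kappa(X,\MSL)=0$. Hence $m$ is injective, $\MB_{X,\MSL}$ is a Veronese cone, and the proof is complete. I expect the delicate point to be extracting the fibration from the pointwise dependence of several (rather than merely two) twisted $1$-forms, which is exactly where the higher-dimensional Castelnuovo--de Franchis input is indispensable.
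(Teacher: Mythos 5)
Your dichotomy step and your L-type case are correct and essentially the paper's own argument: the paper splits on $h^0(X,\MSL^{2})$ where you split on $h^0(X,\Omega_X^1\otimes\MSL^{-1})$, but both rest on Theorem \ref{t.BCdFtheorem}, Lemma \ref{l.saturation-line-bundles}, and the density of sections not vanishing in codimension one guaranteed by $\MSL\in\MP(X)$ (your claim that these form a \emph{Zariski-open} set is not justified, but density is all you actually use). The genuine gap is in your V-type case. The implication you delegate to Bogomolov--de Oliveira --- that linearly independent, pointwise dependent $\gamma_1,\dots,\gamma_p\in H^0(X,\Omega_X^1\otimes\MSL^{-1})$ with $\sum_l\gamma_l^2=0$ force $\kappa(X,\MSL)\geq 1$ --- is false, and in fact the injectivity of the multiplication map $m:\Sym^2H^0(X,\Omega_X^1\otimes\MSL^{-1})\to H^0(X,\Sym^2\Omega_X^1\otimes\MSL^{-2})$ that you are trying to prove fails under exactly the stated hypotheses. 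Take $X=C\times E$ with $C$ hyperelliptic of genus $3$, $E$ elliptic, $\MSL=\MSO_X$, and let $p_1,p_2$ be the projections. Then $h^0(X,\MSL^{2})=1$, $\kappa(X,\MSL)=0$, $h^0(X,\Omega_X^1)=4$, and $\MSO_X\in\MP(X)$, since a general $1$-form $p_1^*\eta+p_2^*\xi$ with $\xi\neq 0$ is nowhere vanishing ($\xi$ is nowhere zero on $E$). Because $C$ is hyperelliptic its canonical image is a smooth conic, so there are linearly independent $g_1,g_2,g_3\in H^0(C,K_C)$ with $g_1^2+g_2^2+g_3^2=0$ in $H^0(C,K_C^2)$; the pullbacks $\gamma_l=p_1^*g_l$ are linearly independent and $\gamma_1^2+\gamma_2^2+\gamma_3^2$ is a non-zero element of $\ker m$. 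The $\gamma_l$ do factor through a fibration, namely $p_1$, exactly as your heuristic predicts; but this only produces sections of the saturated subsheaf $p_1^*K_C\subset \Omega_X^1$ twisted by $\MSL^{-1}$, not sections of powers of $\MSL$, so nothing forces $\kappa(X,\MSL)\geq 1$ and no contradiction arises.

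This example also shows your goal cannot be repaired, and it exposes a point the paper itself elides with ``it is easy to see''. Here $\MB_{X,\MSO_X}$ is a four-dimensional cone whose linear span is $\mathrm{im}(m)$, of dimension at most $9$, whereas a Veronese cone of cone-dimension $4$ in the paper's literal sense (with $\iota$ injective on all of $\Sym^2 W$) spans a space of dimension $\binom{5}{2}=10$; so $\MB_{X,\MSO_X}$ is \emph{not} a Veronese cone in that strict sense. What is true --- and is all the paper's proof of case (1) really uses --- is the much weaker fact that squaring is injective up to sign: since $X$ is irreducible, sections of symmetric powers have no zero divisors, so $\alpha^2\tau=\beta^2\tau$ forces $\alpha=\pm\beta$. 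Hence $\MB_{X,\MSL}$ is the image of the Veronese cone under a linear map that is injective on the cone itself, though possibly not on its linear span; this weaker reading of ``Veronese cone'' is also what the paper's own example of $C_1\times C_2$ with $C_1$ hyperelliptic demands. So the correct move in your second case is not to prove injectivity of $m$ (impossible), but to record the elementary zero-divisor argument for injectivity on the cone of squares, and to note that this is the sense in which statement (1) holds.
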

	
	\begin{proof}
		Firstly we assume that $h^0(X,\MSL^{2})=1$. Fix a non-zero element $\tau \in H^0(X,\MSL^{2})$. Then it is easy to see that $\MB_{X,\MSL}$ is isomorphic to the Veronese cone defined as following
		\[
		H^0(X,\Omega_X^1\otimes \MSL^{-1}) \xrightarrow{\nu_2} H^0(X,\Sym^2\Omega_X^1\otimes \MSL^{-2}) \xrightarrow{\times \tau} H^0(X,\Sym^2\Omega_X^1).
		\]
		
		Now we assume that $h^0(X,\MSL^{2})\geq 2$. In particular, we have $\kappa(X,\MSL)\geq 1$. Choose a general non-zero element $\alpha\in H^0(X,\Omega^1_X\otimes \MSL^{-1})$ and denote by $\alpha:\MSL \hookrightarrow \Omega_X^1$ the inclusion induced by $\alpha$. As $\MSL\in \MP(X)$, we may assume that $\alpha$ does not vanish in codimension one and hence the image $\alpha(\MSL)$ is saturated in $\Omega_X^1$ by Lemma \ref{l.saturation-line-bundles}. Thanks to Theorem \ref{t.BCdFtheorem}, there exists a fibration $f:X\rightarrow C$ such that the image $\alpha(\MSL)\subset \Omega_X^1$ is nothing but the saturation of the natural inclusion $f^*\Omega_C^1\subset \Omega_X^1$. Given another non-zero element $\alpha'\in H^0(X,\Omega^1_X\otimes \MSL)$, denote by $\MSL'$ the saturation of the induced inclusion $\alpha':\MSL\hookrightarrow \Omega_X^1$. Applying Theorem \ref{t.BCdFtheorem} again yields that we have $\MSL'=\alpha(\MSL)\cong \MSL$. In particular, we have a natural non-zero morphism
		\[
		\MSL\xrightarrow{\alpha'}\alpha'(\MSL) \rightarrow \MSL'=\alpha(\MSL)\cong \MSL,
		\]
		which must be an isomorphism. Hence, we have $\alpha'(\MSL)=\alpha(\MSL)$, i.e., $\alpha=c\alpha'$ for some $c\in \CC^*$. As a consequence we obtain $h^0(X,\Omega_X\otimes \MSL^{-1})=1$ and $\MB_{X,\MSL}$ is the linear subspace obtained as
		\[
		H^0(X,\MSL^{2}) \xrightarrow{\times \alpha^2} H^0(X,\Sym^2\Omega_X^1),
		\]
		where $\alpha$ is any given non-zero element in $H^0(X,\Omega_X^1\otimes \MSL^{-1})$.
	\end{proof}
	
	The proposition above naturally leads us to introduce the following terminologies.
	
	\begin{definition}
		The space $\MB_{X,\MSL}$ is called of V-type (resp. of L-type) if $\MB_{X,\MSL}$ is a Veronese cone (resp. a linear subspace).
	\end{definition}

		According to the proof of Proposition \ref{p.structure-BXL} above, it is easy to see that a space $\MB_{X,\MSL}$ is of both V-type and L-type if and only if $\dim(\MB_{X,\MSL})=1$ and if and only if 
		\[
		h^0(X,\MSL^{2})= h^0(X,\Omega_X^1\otimes \MSL^{-1})=1.
		\]

    \begin{example}
    	\label{e.V-L-examples}
    	In the following we give two examples of projective manifolds $X$ with $\MB_{X}$ of V-type and L-type, respectively.
    	\begin{enumerate}
    		\item Let $X$ be an $n$-dimensional abelian variety. Then $\Omega_X^1\cong \MSO_X^{\oplus n}$ and $\Sym^2\Omega_X^1\cong \MSO_X^{\oplus n(n+1)/2}$. It follows that $\MB_X=\MB_{X,\MSO_X}$ is of V-type, which is exactly the image of the second Veronese map
    		\[
    		\mbC^n\cong H^0(X,\Omega_X^1) \rightarrow H^0(X,\Sym^2\Omega_X^1)=\Sym^2\mbC^n = \mbC^{n(n+1)/2}.
    		\]
    		
    		\item Let $X=C\times F$ be the product of a smooth projective curve $C$ with genus $g\geq 1$ and a rationally connected variety $F$. Set $\MSL=p^*K_C$ and $\MSF=q^*\Omega_F^1$, where $p:X\rightarrow C$ and $q:X\rightarrow F$ are the natural projections. Then we have
    		\begin{center}
    			$\Omega_X^1\cong \MSL\oplus \MSF$ and $\Sym^2\Omega_X^1\cong \MSL^{2}\oplus \MSF\otimes \MSL\oplus \Sym^2\MSF$.
    		\end{center}
    	    Since $F$ is rationally connected, we have $\MA_F=0$ (cf. Example \ref{e.RC} (1)). Then restricting $\Sym^2\Omega_X^1$ to fibres of $p$ implies
    	    \[
    	    H^0(X,\Sym^2\Omega_X^1) = H^0(X,\MSL^{2}) \cong H^0(C,K_C^{2}).
    	    \]
    	    In particular, the space $\MB_X=\MB_{X,\MSL} = H^0(X,\Sym^2\Omega_X^1)$ is of L-type.
    	\end{enumerate}
    \end{example}

    In the following we study the possible inclusion relations between different spaces $\MB_{X,\MSL}$ with different $\MSL$'s.
	
	\begin{lemma}
		\label{l.BXL-intersection-relation}
		Let $\MSL$ and $\MSL'$ be two line bundles over a projective manifold $X$ such that both $\MB_{X,\MSL}$ and $\MB_{X,\MSL'}$ are non-zero. Then the following statements hold.
		\begin{enumerate}
			\item If both $\MB_{X,\MSL}$ and $\MB_{X,\MSL'}$ are of L-type such that $\MB_{X,\MSL}\cap \MB_{X,\MSL'}\not=0$, then $\MSL\cong \MSL'$. In particular, we have $\MB_{X,\MSL}=\MB_{X,\MSL'}$.
			
			\item If the space $\MB_{X,\MSL}$ is of L-type and $\MB_{X,\MSL'}\subset \MB_{X,\MSL}$, then $\MSL\cong \MSL'$. In particular, we have $\MB_{X,\MSL}=\MB_{X,\MSL}$.
			
			\item If the space $\MB_{X,\MSL}$ is of V-type and $\MB_{X,\MSL'}\subset \MB_{X,\MSL}$, then $h^0(X,\MSL'\otimes \MSL^{-1})\not=0$ and the space $\MB_{X,\MSL'}$ is again of V-type .
		\end{enumerate}
	\end{lemma}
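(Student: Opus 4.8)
The common thread in all three parts is a uniqueness principle that I would extract from Proposition~\ref{p.decomposition-symmetric-differetials}. For a nonzero $s\in\MB_X$ the $\pi$-horizontal prime divisor $\Delta'_s\subset\mbP(\Omega_X^1)$ with $\Delta_s^h=2\Delta'_s$ is canonically attached to $s$, and therefore so is the line bundle $\MSL_s$ singled out by $\Delta'_s\in|\MSO_{\mbP(\Omega_X^1)}(1)\otimes\pi^*\MSL_s^{-1}|$. The first step is to record that $\MSL_s$ can be recovered from \emph{any} admissible factorization: if $s=\beta^2\sigma$ with $\beta\in H^0(X,\Omega_X^1\otimes\MSM^{-1})$ not vanishing in codimension one and $\sigma\in H^0(X,\MSM^2)$, then on $\mbP(\Omega_X^1)$ one has $\textup{div}(s)=2\,\textup{div}(\beta)+\pi^*\textup{div}(\sigma)$, the last term is $\pi$-vertical while $\textup{div}(\beta)$ is $\pi$-horizontal, so comparing horizontal parts forces $\textup{div}(\beta)=\Delta'_s$; comparing classes in $\Pic(\mbP(\Omega_X^1))=\mbZ\cdot\MSO_{\mbP(\Omega_X^1)}(1)\oplus\pi^*\Pic(X)$ then gives $\MSM\cong\MSL_s$. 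Throughout I work in the range $\MSL,\MSL'\in\MP(X)$, where the V/L-type dichotomy of Proposition~\ref{p.structure-BXL} applies.

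With this tool, parts (1) and (2) are short. For (1), choose $0\neq s\in\MB_{X,\MSL}\cap\MB_{X,\MSL'}$; since $\MB_{X,\MSL}$ is of L-type, the unique generator $\alpha$ of $H^0(X,\Omega_X^1\otimes\MSL^{-1})$ does not vanish in codimension one, so $s=\alpha^2\tau$ is an admissible factorization and $\MSL\cong\MSL_s$, and symmetrically $\MSL'\cong\MSL_s$, whence $\MSL\cong\MSL'$ and $\MB_{X,\MSL}=\MB_{X,\MSL'}$. For (2), as $\MSL'\in\MP(X)$ I pick $\alpha'\in H^0(X,\Omega_X^1\otimes{\MSL'}^{-1})$ not vanishing in codimension one and $0\neq\tau'\in H^0(X,{\MSL'}^2)$, so that $s':={\alpha'}^2\tau'\in\MB_{X,\MSL'}$ is nonzero with $\MSL_{s'}\cong\MSL'$; but $s'\in\MB_{X,\MSL}$ of L-type factors through the non-vanishing generator of $H^0(X,\Omega_X^1\otimes\MSL^{-1})$, giving $\MSL_{s'}\cong\MSL$, hence $\MSL\cong\MSL'$.

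Part (3) carries the real content. Fix the (up to scalar) generator $\tau$ of $H^0(X,\MSL^2)$. For the first claim, take nonzero $s'\in\MB_{X,\MSL'}$ with $\MSL_{s'}\cong\MSL'$; writing $s'=\alpha^2\tau$ as an element of $\MB_{X,\MSL}$ and factoring out the codimension-one vanishing divisor $E\geq 0$ of $\alpha$ produces an admissible factorization with line bundle $\MSL\otimes\MSO_X(E)$, so uniqueness yields $\MSL'\cong\MSL\otimes\MSO_X(E)$ and $h^0(X,\MSL'\otimes\MSL^{-1})\neq 0$. For the second claim I argue by contradiction, assuming $\MB_{X,\MSL'}$ is not of V-type, i.e. $h^0(X,{\MSL'}^2)\geq 2$; then by Proposition~\ref{p.structure-BXL} it is of L-type with one-dimensional $H^0(X,\Omega_X^1\otimes{\MSL'}^{-1})$ generated by a section $\alpha'$ not vanishing in codimension one. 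The plan is to run the factoring argument for every $s'={\alpha'}^2\tau'$: each gives $\MSL\otimes\MSO_X(E)\cong\MSL_{s'}\cong\MSL'$ with $E$ a priori depending on $\tau'$, but the isomorphism class $\MSO_X(E)\cong\MSL'\otimes\MSL^{-1}$ is then \emph{independent} of $\tau'$, which forces the non-vanishing factor to lie in the one-dimensional space $H^0(X,\Omega_X^1\otimes{\MSL'}^{-1})$ and hence to be a scalar multiple of $\alpha'$. Cancelling ${\alpha'}^2$ in the torsion-free sheaf $\Sym^2\Omega_X^1$ then shows every $\tau'\in H^0(X,{\MSL'}^2)$ has the form $c\,t^2\tau$ with $t\in H^0(X,\MSL'\otimes\MSL^{-1})$ and $c\in\mbC$, so the linear space $H^0(X,{\MSL'}^2)$ would coincide set-theoretically with a Veronese cone, which is impossible once its dimension is at least two. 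I expect this last step to be the main obstacle: it is the only place where the V-type hypothesis on $\MSL$ and the uniqueness principle must be combined in an essential and \emph{uniform} way, in order to pin down that the correction divisor $E$ is constant in $\tau'$ and thereby convert the inclusion $\MB_{X,\MSL'}\subset\MB_{X,\MSL}$ into the rigid statement that $H^0(X,{\MSL'}^2)$ consists entirely of squares times $\tau$.
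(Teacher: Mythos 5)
Your arguments for (1), (2), and the first assertion of (3) are correct and in substance coincide with the paper's: the ``uniqueness principle'' you extract from the horizontal/vertical decomposition on $\mbP(\Omega_X^1)$ is precisely the uniqueness content of Proposition \ref{p.decomposition-symmetric-differetials}, and your factoring step giving $\MSL'\cong\MSL\otimes\MSO_X(E)$ is the paper's observation (phrased there via Lemma \ref{l.saturation-line-bundles}) that $\alpha'(\MSL')$ is the saturation of $\alpha(\MSL)$ in $\Omega_X^1$.

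The genuine gap is in the last step of (3), exactly where you flag it. You reduce, correctly, to the statement that every $\tau'\in H^0(X,{\MSL'}^{2})$ has the form $t^{2}\tau$ with $t\in H^0(X,\MSL'\otimes\MSL^{-1})$, and then declare this impossible for $h^0(X,{\MSL'}^{2})\geq 2$ because the space ``would coincide set-theoretically with a Veronese cone''. This justification is invalid: in the paper's definition a Veronese cone is the image of $\nu_2$ composed with an \emph{injective} linear map, whereas the linear map implicit in your situation is the multiplication map $\Sym^2H^0(X,\MSL'\otimes\MSL^{-1})\to H^0(X,(\MSL'\otimes\MSL^{-1})^{2})$ (followed by the injective map $\times\tau$), and you have not shown it is injective; multiplication maps of linear systems on projective manifolds fail to be injective in general. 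Without injectivity, lying in the image of a squaring map imposes no dimension bound: for instance $(a,b)\mapsto(a^{2},b^{2})$, the planar Veronese composed with a non-injective projection, is surjective onto $\mbC^{2}$. The fact you need is true, but it requires an actual argument, e.g.: if $t_1^{2}\tau$ and $t_2^{2}\tau$ span a two-dimensional subspace all of whose elements have this form, then $f=t_1/t_2\in\mbC(X)$ is non-constant and $f^{2}+\lambda$ is a square in $\mbC(X)$ for every $\lambda\in\mbC$; on a smooth model where $f$ becomes a morphism to $\mbP^1$, the divisor of $f^{2}+\lambda=(f-\sqrt{-\lambda})(f+\sqrt{-\lambda})$ would then have all coefficients even, while for generic $\lambda$ the two fibres involved are reduced (generic smoothness in characteristic zero) and share no component --- a contradiction.

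Note also that this detour lands you on a strictly harder statement than the one the paper uses. The paper's proof of the second assertion of (3) is immediate: by Proposition \ref{p.structure-BXL}, $\MB_{X,\MSL'}$ is either a Veronese cone or a linear subspace; in the latter case it is a linear subspace contained in $\MB_{X,\MSL}$, which by the V-type hypothesis \emph{is} an honest Veronese cone (injectivity is built into the definition), and a linear subspace contained in such a cone is at most one-dimensional (if $\iota(w_1^{2})$ and $\iota(w_2^{2})$ are independent, then $\iota(w_1^{2}+w_2^{2})=\iota(w_3^{2})$ forces $w_1^{2}+w_2^{2}=w_3^{2}$ in $\Sym^2 W$, which is impossible for independent $w_1,w_2$); a one-dimensional linear subspace is again a Veronese cone. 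The injectivity your argument lacks is exactly what the V-type hypothesis on $\MSL$ provides for free, if you use the inclusion $\MB_{X,\MSL'}\subset\MB_{X,\MSL}$ directly rather than through the function-field reduction.
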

	
	\begin{proof}
		For (1), let us fix a non-zero element $s=\alpha^2\tau=\alpha'^2\tau'\in \MB_{X,\MSL}\cap \MB_{X,\MSL'}$ with 
		\begin{center}
			$\alpha\in H^0(X,\Omega_X^1\otimes \MSL^{-1})$ and $\alpha'\in H^0(X,\Omega_X^1\otimes \MSL'^{-1})$.
		\end{center}
		Since $\MB_{X,\MSL}$ and $\MB_{X,\MSL'}$ are of L-type, we must have 
		\[
		h^0(X,\Omega_X^1\otimes \MSL^{-1}) = h^0(X,\Omega_X^1\otimes \MSL'^{-1})=1.
		\]
		In particular, as $\MSL, \MSL'\in \MP(X)$, the sections $\alpha$ and $\alpha'$ do not vanish in codimension one. So applying Proposition \ref{p.decomposition-symmetric-differetials} to $s$ shows that $\MSL\cong \MSL'$ and thus $\MB_{X,\MSL}=\MB_{X,\MSL'}$.
		
		\bigskip
		
		For (2), let us fix a general non-zero element $s=\alpha^2\tau=\alpha'^2\tau'\in \MB_{X,\MSL'}\subset \MB_{X,\MSL}$ as in the proof of (1). Then $\alpha'$ does not vanish in codimension one as $\MSL'\in \MP(X)$. In particular, the image $\alpha'(\MSL')\cong \MSL'$ is the saturation of $\alpha(\MSL)\cong \MSL$ in $\Omega_X^1$. Nevertheless, since $\MB_{X,\MSL}$ is of L-type, we have $h^0(X,\Omega_X^1\otimes \MSL^{-1})=1$ and thus the image $\alpha(\MSL)$ is also saturated in $\Omega_X^1$ as $\MSL\in \MP(X)$. It follows that $\alpha(\MSL)=\alpha'(\MSL')$ and hence $\MSL\cong \MSL'$.
		
		\bigskip
		
		For (3),  let us fix a general non-zero element $s=\alpha^2\tau=\alpha'^2\tau'\in \MB_{X,\MSL'}\subset \MB_{X,\MSL}$ as in the proof of (1). Similar to the proof of (2), the image $\alpha'(\MSL')$ is actually the saturation of $\alpha(\MSL)$ in $\Omega_X^1$, which induces a natural inclusion $\MSL \rightarrow \MSL'$, i.e., $H^0(X,\MSL'\otimes \MSL^{-1})\not=0$. Finally, to see $\MB_{X,\MSL'}$ is of V-type, it is enough to realise that a linear subspace contained in a Veronese cone must be one dimensional.
	\end{proof}

    \begin{remark}
    	It may happen that $\MB_{X,\MSL}$ and $\MB_{X,\MSL'}$ are of L-type and V-type, respectively, such that $\MB_{X,\MSL}\cap \MB_{X,\MSL'}\not=0$, see for instance (4) and (5) in \S\,\ref{example_products_of_curves} below.
    \end{remark}
	
	\subsubsection{Example: products of two curves}
	\label{example_products_of_curves}
	Now we discuss the space $\MB_X$ for $X$ being a product of two curves. More precisely, let $X=C_1\times C_2$ be the product of two smooth projective curve. Denote by $g_i$ the genus of the curve $C_i$ and assume $g_1\geq g_2$. Then we have a natural splitting $\Omega_X^1=\MSL_1\oplus \MSL_2$, where $\MSL_i=p_i^*\Omega_{C_i}^1$ and $p_i:X\rightarrow C_i$ is the natural projection. 
	
	Let $\MSL$ be a line bundle on $X$ such that $\MB_{X,\MSL}\not=0$. For an element $\alpha\in H^0(X,\Omega_X^1\otimes \MSL^{-1})$ which does not vanish in codimension one, if the composition $\MSL \xrightarrow{\alpha} \Omega^1_{X} \rightarrow \MSL_i$ induced by $\alpha$ is non-zero, then the restriction 
	\begin{equation}
		\label{e.restriction-fibres}
		\MSL|_{F_{i}}\rightarrow \MSL_i|_{F_i} \cong \MSO_{F_i}
	\end{equation}
	is non-zero, where $F_i$ is a general fibre of $p_i$. However, as $H^0(X,\MSL^{2})\not=0$, we must have $c_1(\MSL)\cdot F_i\geq 0$. So \eqref{e.restriction-fibres} implies $c_1(\MSL)\cdot F_i=0$ and the map $\MSL|_{F_i}\rightarrow \MSO_{F_i}$ is actually an isomorphism. In particular, there exists an effective divisor $D_i$ on $C_i$ such that 
	\[
	\MSL\cong \MSL_i\otimes \MSO_X(-p_i^* D_i).
	\]
	As a consequence, either $\MSL\cong \MSO_X$ or the composition $\MSL\rightarrow \Omega_X^1\rightarrow \MSL_i$ is zero for some $1\leq i\leq 2$. In the former case, we have $\MB_{X,\MSL}=\MB_{X,\MSO_X}$, which is of V-type. In the latter case, the image $\alpha(\MSL)$ is contained in $\MSL_{j}$, where $1\leq j\not=i\leq 2$. Then we get $\alpha(\MSL)=\MSL_j$ since $\alpha(\MSL)$ is saturated in $\Omega_X^1$ by assumption and therefore $\MB_{X,\MSL}$ is of L-type. In conclusion, we have
	\[
	\MB_X = \MB_{X,\MSO_X} \cup \MB_{X,\MSL_1} \cup \MB_{X,\MSL_2}.
	\]
	According to the different values of $g_i$'s, we have the following detailed description of $\MB_X$.
	\begin{enumerate}
		\item $g_1=g_2=0$. Then $X$ is rationally connected and hence $\MB_{X}=0$.
		
		\item $g_1=1$ and $g_2=0$. Then $\MSL_1\cong \MSO_X$, $\MB_{X,\MSL_2}=0$, and $\MB_{X}=\MB_{X,\MSO_X}$. In particular, we get
		\begin{center}
			$h^0(X,\Sym^2\Omega^1_X)=\dim(\MB_X)=1$ and $\MB_X=H^0(X,\Sym^2\Omega_X^1)$.
		\end{center} 
		
		\item $g_1=g_2=1$. Then we have $\MSL_1\cong \MSL_2\cong \MSO_X$ and $\MB_X=\MB_{X,\MSO_X}$ is the Veronese cone defined as 
		\[
		\mathbb{C}^2\cong H^0(X,\Omega_X^1) \xrightarrow{\nu_2} H^0(X,\Sym^2\Omega_X^1)\cong \mathbb{C}^3.
		\]
		
		\item \label{(4)}$g_1>g_2=1$. Then $\MSL_2\cong \MSO_X$ and $\MB_X=\MB_{X,\MSO_X}\cup \MB_{X,\MSL_1}$ such that
		\[
		\MB_{X,\MSO_X}\cap \MB_{X,\MSL_1}=\{s=\alpha^2_1\tau^2\,|\,\tau\in H^0(X,\MSL_1)\},
		\]
		where $\alpha_1\in H^0(X,\Omega_X^1\otimes \MSL_1^{-1})$ is any non-zero element corresponding to the natural inclusion $\MSL_1\subset \Omega_X^1$. On the other hand, Lemma \ref{l.BXL-intersection-relation} implies that $\MB_{X,\MSO_X}\not\subseteq \MB_{X,\MSL_1}$ as $\MSL_1\not\cong \MSO_X$ and if $\MB_{X,\MSO_X}\subset \MB_{X,\MSL_1}$, then $H^0(X,\MSL_1^{-1})\not=0$, which is absurd as $g_1>1$.
		
		\item $g_1\geq g_2\geq 2$. Then we have $\MB_X=\MB_{X,\MSO_X}\cup \MB_{X,\MSL_1}\cup \MB_{X,\MSL_2}$. Let $\alpha_i\in H^0(X,\Omega_X^1\otimes \MSL_i^{-1})$ be any non-zero element corresponding to the natural inclusion $\MSL_i\subset \Omega_X^1$. By Lemma \ref{l.BXL-intersection-relation}, the same argument as in \eqref{(4)} yields
		\begin{center}
			$\MB_{X,\MSL_1}\cap \MB_{X,\MSL_2}=0$ and $\MB_{X,\MSO_X}\cap \MB_{X,\MSL_i}=\{\alpha_i^2\tau^2\,|\,\tau\in H^0(X,\MSL_i)\}$, $1\leq i\leq 2$.
		\end{center}
		Moreover, by the Riemann-Roch Theorem, one can easily get
		\[
		\dim(\MB_{X,\MSO_X})=h^0(X,\Omega_X^1)=h^0(C_1,\omega_{C_1})+h^0(C_2,\omega_{C_2})=g_1+g_2
		\]
		and
		\[
		\dim(\MB_{X,\MSL_i})=h^0(X,\MSL_i^{2}) = h^0(C_i,\omega_{C_i}^{2})=3g_i-3,\quad \forall\,1\leq i\leq 2.
		\]
	\end{enumerate}
	
	\subsection{Codimension one foliations}
	
	Now we discuss the relation between rank one symmetric differentials and codimension one foliations. For more details about foliations, we refer the reader to \cite{Brunella2015}.
	\begin{definition}
		Let $X$ be a projective manifold. A foliation on $X$ is a subsheaf $\MSF$ of $T_X$ such that
		\begin{enumerate}
			\item the quotient $T_X/\MSF$ is torsion free, and
			
			\item $\MSF$ is closed under the Lie bracket, i.e., $[\MSF,\MSF]\subset \MSF$.
		\end{enumerate}
		The dimension of a foliation $\MSF$ is defined to the rank of $\MSF$ at a general point of $X$ and $\MSF$ is said to be regular at a point $x\in X$ if $T_X/\MSF$ is locally free at $x$.
	\end{definition}
	
	Let $s\in H^0(X,\Sym^2\Omega_X^1)$ be a non-zero rank one symmetric differential. Let $\MSL\xrightarrow{\alpha} \Omega_X^1$ be the rank one subsheaf provided in Proposition \ref{p.decomposition-symmetric-differetials}. Let $\MSF_s\subset T_X$ be the annihilator subsheaf of $s$, i.e. the co-rank one subsheaf of $T_X$ satisfying the following exact sequence
	\begin{equation}
		\label{e.tangent-sequence-foliation}
		0\rightarrow \MSF_s \rightarrow T_X \xrightarrow{\alpha^*} \MSL^*.
	\end{equation}
	Note that $T_X/\MSF_s=\alpha^*(T_X)$ is a subsheaf of the torsion free sheaf $\MSL^*$, so the quotient $T_X/\MSF_s$ is again torsion free. On the other hand, as $h^0(X,\MSL^{2})\not=1$, \cite[Main Theorem]{Demailly2002} says that $\MSF_s$ is closed under the Lie bracket and thus $\MSF_s$ defines a codimension one foliation on $X$. Usually we call $\MSL$ the \emph{conormal bundle} of $\MSF_s$. Thus $\MSF_s$ is an example of codimension one foliations with pseudo-effective normal bundle and the existence of such foliations yields rather strong restriction on the base manifold $X$ (see for instance \cite{Touzet2016} and the references therein). 
	
	In the following example we give a complete description of $\MB_X$ for $X$ being a surface with non positive Kodaira dimension.
	
	\begin{example}
		Let $X$ be a smooth projective surface with $\MP(X)\not=\emptyset$. Then $X$ is not rationally connected by Example \ref{e.RC} above. Choose a line bundle $\MSL\in \MP(X)$. Let $s=\alpha^2\tau\in \MB_{X,\MSL}$ be a general element and denote by $\MF_s$ the associated foliation on $X$.
		\begin{enumerate}
			\item Assume that $X$ is uniruled, i.e. $X$ is dominated by rational curves. Then there exists a fibration $f:X\rightarrow C$ to a smooth projective curve $C$ such that the general fibre $F$ of $f$ is isomorphic to $\mbP^1$ and $g(C)\geq 0$. As the conormal bundle $\MSL$ is pseudoeffective, we have $c_1(\MSL)\cdot F\geq 0$. Then \eqref{e.tangent-sequence-foliation} shows that $F$ is tangent to $\MF_s$ and consequently $\alpha(\MSL)$ is the saturation of $f^*\Omega_C^1\subset \Omega_X^1$. Thus $h^0(X,\Omega_X^1\otimes \MSL^{-1})=1$ and $\MB_X=\MB_{X,\MSL}$ is of L-type.
			
			\item Assume that $X$ is of Kodaira dimension zero. Then there exists a birational map $\pi:X\rightarrow X'$ to a smooth projective surface $X'$ such that $c_1(X')=0$. As $\MB_X$ is a birational invariant \ref{p.birational-invariant-spectral-bases}, after replacing $X$ by $X'$, we may assume $c_1(X)=0$. By the classification of surfaces \cite[Theorem VIII.2]{Beauville1996}, the surface $X$ is isomorphic one of the following: an Enriques surface; a K3 surface; an abelian surface; a bielliptic surface.
			
			\begin{enumerate}
				\item If $X$ is an Enriques surface or a K3 surface, then we have $H^0(X,\Sym^2\Omega_X^1)=0$ and hence $\MB_X=0$. Indeed, if $X$ is a Enriques surface, then there exists an \'etale double cover $\widetilde{X}\rightarrow X$, which is a K3 surface \cite[Proposition VIII.17]{Beauville1996}. Nevertheless, we have $H^0(X,\Sym^k\Omega_X^1)=0$ for any K3 surface $X$ and any integer $k\geq 1$ (see Example \ref{e.RC} (2)).
				
				\item If $X$ is an abelian surface, then $\MB_X=\MB_{X,\MSO_X}$ is the two dimensional Veronese cone by Example \ref{e.V-L-examples}.
				
				\item If $X$ is a bielliptic surface, then $X\cong (E\times F)/G$, where $E$, $F$ are elliptic curves and $G$ is a finite group of translations of $E$ acting on $F$ such that $F/G\cong \mbP^1$ \cite[Definition VI.19]{Beauville1996}. Moreover, we have $\Omega_X^1\cong \MSO_X\oplus K_X$. Here $K_X$ is the canonical bundle of $X$, which is a torsion in $\Pic(X)$. As $\MSL$ is pseudoeffective and $\alpha(\MSL)$ is saturated in $\Omega_X^1$, then one of the following compositions
				\begin{center}
					$\MSL\xrightarrow{\alpha} \Omega_X^1 \rightarrow \MSO_X$
					and
					$
					\MSL \xrightarrow{\alpha} \Omega_{X}^1 \rightarrow K_X
					$
				\end{center}
				is isomorphism. As $K_X\not\cong \MSO_X$, we obtain $h^0(X,\Omega_X^1\otimes \MSL^{-1})=1$ and either $\MSL\cong \MSO_X$ or $\MSL\cong K_X$. For $\MSL\cong \MSO_X$, then $\dim(\MB_{X,\MSO_X})=1$ and the foliation $\MSF_s$ defined by any $0\not=s\in \MB_{X,\MSO_X}$ is induced by the natural fibration $X\rightarrow E/G$.
				
				For $\MSL\cong K_X$, then we must have $K_X^{2}\cong \MSO_X$ as $H^0(X,\MSL^{2})\not=0$. In particular, by the list \cite[List VI.20]{Beauville1996}, the group $G$ is isomorphic to either $\mbZ/2\mbZ$ or $\mbZ/2\mbZ\oplus \mbZ/2\mbZ$. We again have $\dim(\MB_{X,\MSL})=1$ and the foliation $\MSF_s$ defined by any $s\in \MB_{X,\omega_X}$ is induced by the natural fibration $X\rightarrow F/G\cong \mbP^1$. So we have proved that
				\begin{enumerate}
					\item the space $\MB_X=\MB_{X,\MSO_X}$ is an one dimensional linear subspace if $K_X^{2}\not\cong \MSO_X$;
					
					\item the space $\MB_X=\MB_{X,\MSO_X}\cup \MB_{X,\omega_X}$ is a union of two one dimensional linear subspaces if $K_X^{2}\cong \MSO_X$. 
				\end{enumerate}
			\end{enumerate}
		\end{enumerate}
	\end{example}
	
In all the examples above, we note that the decomposition \eqref{e.Decomposition-BX-simplified}, i.e., $\MB_X=\cup_{\MSL\in \MP(X)} \MB_{X,\MSL}$, is actually finite. Thus it is natural to ask the following question.

\begin{question}
	Let $X$ be a projective manifold. Is the decomposition \eqref{e.Decomposition-BX-simplified} always finite?
\end{question}	
	
	\section{Spectral varieties and their Cohen-Macaulayfications}
 \label{sec_spectralvarietyandCM}
	
	This section is devoted to study the spectral variety defined by a non-zero $s\in \MB_X$. More precisely, given a projective manifold $X$ and a non-zero $s\in \MB_X$, we defined the \emph{spectral variety} $X_s$ associated to $s$ to be
	\[
	X_s:=\{\lambda\in \tot(\Omega_X^1)\,|\,\lambda^2+s=0 \}.
	\]
	Then the natural map $p_s:X_s\rightarrow X$ is a double cover. However, in general the spectral variety $X_s$ may be very singular (e.g. reducible and non-normal, etc.). In \cite[Theorem 7.1]{ChenNgo2020}, Chen and Ng\^o showed that if $X$ is a surface, then there exists a Cohen-Macaulayfication $\widetilde{X}_s\rightarrow X_s$ such that the induced morphism $\widetilde{X}_s\rightarrow X$ is a flat finite morphism and then they use it to derive a spectral correspondence \cite[Theorem 7.3]{ChenNgo2020}. The main goal of this section is to generalise this to arbitrary dimension in rank two case.
	
	\subsection{Cohen-Macaulayness}
	
	In this section we collect some basic facts about Cohen-Macaulay modules and we refer the reader to \cite{Serre1965,BrunsHerzog1993} for a more detailed discussion. 
	
	Let $R$ be a ring and let $M$ be an $R$-module. The set $\{\mathfrak{p}\in \textup{Spec}(R)\,|\,M_{\mathfrak{p}}\not=0\}$ is called the \emph{support} of $M$, and written $\textup{supp}(M)$. Define $\dim(M)=\dim(\textup{supp}(R))$. A sequence $a_1,\dots,a_r$ of elements of $R$ is called a \emph{regular sequence} for $M$ if $a_1$ is not a zero divisor in $M$, and for all $i=2,\dots,r$, $a_i$ is not a zero divisor in $M/(a_1,\dots,a_{i-1})M$. If $R$ is a local ring with maximal ideal $\mathfrak{m}$, then the \emph{depth} of $M$ is the maximum length of a regular sequence $a_1,\dots,a_r$ for $M$ with all $a_i\in \mathfrak{m}$. 
	
	\begin{definition}
		Let $R$ be a Noetherian local ring. A finite $R$-module $M\not=0$ is a Cohen-Macaulay module if $\textup{depth}(M)=\dim(M)$. A maximal Cohen-Macaulay module is a Cohen-Macaulay module $M$ such that $\dim(M)=\dim(R)$.
		
		In general, if $R$ is an arbitrary Noetherian ring, then $M$ is a Cohen-Macaulay module if $M_{\mathfrak{m}}$ is a Cohen-Macaulay module for all maximal ideals $\mathfrak{m}\in \textup{Supp}(M)$. However, for $M$ to be a maximal Cohen-Macaulay module, we require that $M_{\mathfrak{m}}$ is such an $R_{\mathfrak{m}}$-module for each maximal ideal $\mathfrak{m}$ of $R$.
		
		A Noetherian ring $R$ is called Cohen-Macaulay if it is Cohen-Macaulay as a $R$-module.
	\end{definition}
	
	We need the following useful fact.
	
	\begin{proposition}
		\label{p.Serre-CM}
		\cite[IV, D, Corollaire 2]{Serre1965}
		Let $k$ be a field and $R$ a regular $k$-algebra of pure dimension $k$. Let $M$ be a finitely generated $R$-module. Then $M$ is Cohen-Macaulay of dimension $k$ if and only if it is locally free over $R$.
	\end{proposition}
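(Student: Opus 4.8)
The plan is to exploit the fact that both properties in the statement can be detected at the maximal ideals of $R$, thereby reducing everything to the case of a regular local ring, where the Auslander--Buchsbaum formula does the work. Indeed, a finitely generated module over a Noetherian ring is locally free precisely when $M_{\mathfrak{m}}$ is a free $R_{\mathfrak{m}}$-module for every maximal ideal $\mathfrak{m}$, and, by the very definitions recalled just above the statement, the Cohen--Macaulay conditions are conditions imposed on the localizations $M_{\mathfrak{m}}$. So I would fix a maximal ideal $\mathfrak{m}$ and pass to the regular local ring $R_{\mathfrak{m}}$; since $R$ is regular of pure dimension $k$, one has $\dim R_{\mathfrak{m}} = \mathrm{depth}(R_{\mathfrak{m}}) = k$, and in particular $R_{\mathfrak{m}}$ is itself Cohen--Macaulay.

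The key homological input is Serre's characterization of regular local rings: $R_{\mathfrak{m}}$ is regular if and only if it has finite global dimension, and in that case $\mathrm{gl.dim}(R_{\mathfrak{m}}) = k$. Consequently every finitely generated $R_{\mathfrak{m}}$-module has finite projective dimension, so for $M_{\mathfrak{m}} \neq 0$ the Auslander--Buchsbaum formula
\[
\mathrm{pd}_{R_{\mathfrak{m}}}(M_{\mathfrak{m}}) + \mathrm{depth}(M_{\mathfrak{m}}) = \mathrm{depth}(R_{\mathfrak{m}}) = k
\]
is available. Granting this, both implications follow at once. If $M$ is Cohen--Macaulay of dimension $k$, then at each $\mathfrak{m}$ the localization $M_{\mathfrak{m}}$ is maximal Cohen--Macaulay, i.e. $\mathrm{depth}(M_{\mathfrak{m}}) = \dim(M_{\mathfrak{m}}) = k$; the formula then forces $\mathrm{pd}_{R_{\mathfrak{m}}}(M_{\mathfrak{m}}) = 0$, so $M_{\mathfrak{m}}$ is free, and running over all $\mathfrak{m}$ shows $M$ is locally free. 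Conversely, if $M_{\mathfrak{m}}$ is free then $\mathrm{depth}(M_{\mathfrak{m}}) = \mathrm{depth}(R_{\mathfrak{m}}) = k = \dim(R_{\mathfrak{m}}) = \dim(M_{\mathfrak{m}})$, so $M_{\mathfrak{m}}$ is maximal Cohen--Macaulay; hence $M$ is Cohen--Macaulay of dimension $k$.

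The step that requires the most care, and which I expect to be the main (if modest) obstacle, is the bookkeeping of dimension and depth under localization: one must check that the global hypothesis ``$M$ is Cohen--Macaulay of dimension $k$'' over a ring of \emph{pure} dimension $k$ genuinely translates into ``$M_{\mathfrak{m}}$ is maximal Cohen--Macaulay over $R_{\mathfrak{m}}$ for every maximal ideal $\mathfrak{m}$'', in particular that the support of $M$ is all of $\mathrm{Spec}(R)$ and that $\dim(M_{\mathfrak{m}}) = k$ at each $\mathfrak{m}$. This is exactly where the purity of the dimension of $R$ enters, ensuring the local depths and dimensions are all equal to the single integer $k$ rather than varying from point to point. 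Once this dictionary between the global and local formulations is set up, the equivalence is a formal consequence of Auslander--Buchsbaum together with Serre's regularity criterion, which is precisely the content of the cited result of Serre.
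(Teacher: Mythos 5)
The paper gives no proof of this proposition at all---it is quoted verbatim with a citation to Serre, and the argument in the cited source is exactly the one you give: over each regular local ring $R_{\mathfrak{m}}$ of dimension $k$, combine Serre's finite-global-dimension characterization of regularity with the Auslander--Buchsbaum formula $\mathrm{pd}_{R_{\mathfrak{m}}}(M_{\mathfrak{m}})+\mathrm{depth}(M_{\mathfrak{m}})=k$ to identify maximal Cohen--Macaulayness with freeness, the purity of the dimension ensuring all the local invariants equal the single integer $k$. Your proposal is correct, including the care you take (trivially free when $M_{\mathfrak{m}}=0$, and the translation of the global Cohen--Macaulay hypothesis into the local maximal Cohen--Macaulay condition), so it matches the intended proof essentially line for line.
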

	
	\subsection{Cohen-Macaulayfications of spectral varieties}
	
	\label{ss.Cohen-Macaulayfication}
	
	Let $X$ be a projective manifold such that $\MB_X\not=0$. Fix $0\not=s\in \MB_X$ such that $s=\alpha^2\tau$ as in Proposition \ref{p.decomposition-symmetric-differetials}. 
	
	\subsubsection{Spectral variety}
	Given $0\not= s\in \MB_X$, the inclusion $\alpha:\MSL\rightarrow \Omega_X^1$ induces a map $\alpha^{\sharp}:\tot(\MSL) \rightarrow \tot(\Omega_X^1)$ satisfying the following commutative diagram
	\[
	\begin{tikzcd}[column sep=large,row sep=large]
		\tot(\MSL) \arrow[rr,"\alpha^{\sharp}"] \arrow[dr,"p" below]
		& 
		& \tot(\Omega_X^1) \arrow[dl,"q"] \\
		& X &
	\end{tikzcd}
	\]
	and let $Z$ be the zero set of $s$. Then $Z$ is a closed subset of $X$ with $\textup{codim}(Z)\geq 2$ such that $\alpha^{\sharp}(x)$ is an injective linear map for any $x\in X\setminus Z$. We define a double cover $\widetilde{X}_s$ of $X$ as following
	\[
	\widetilde{X}_s:=\{\eta\in \tot(\MSL)\,|\,\eta^2+\tau=0\}.
	\]
	Denote by $\widetilde{p}_s:\widetilde{X}_s\rightarrow X$ the induced covering map. Recall that a variety $X$ is called \emph{Cohen-Macaulay} if its local ring $\MSO_{X,x}$ is Cohen-Macaulay for any point $x\in X$.
	
	\begin{proposition}
		The variety $\widetilde{X}_s$ is Cohen-Macaulay and satisfies the following properties.
		\begin{enumerate}
			\item $\alpha^{\sharp}(\widetilde{X}_s)=X_s$.
			
			\item There exists an open Zariski open subset $U$ of $X$ such that $\textup{codim}(X\setminus U)\geq 2$ and  the restricted map $\alpha^{\sharp}:\widetilde{p}^{-1}_s(U)\rightarrow p_s^{-1}(U)$ is an isomorphism.
			
			\item The morphism $\widetilde{p}_s:\widetilde{X}_s\rightarrow X$ is a flat double covering.
		\end{enumerate}
	\end{proposition}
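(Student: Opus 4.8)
The plan is to derive everything about the singular spectral variety $X_s$ from the much better behaved model $\widetilde{X}_s\subset\tot(\MSL)$, transporting good properties across $\alpha^{\sharp}$. First I would record the scheme structure of $\widetilde{X}_s$: it is the zero locus in the smooth $(n+1)$-dimensional variety $\tot(\MSL)$ of the section $\eta^2+p^*\tau$ of $p^*\MSL^2$, where $\eta$ is the tautological section of $p^*\MSL$. Since $\tot(\MSL)$ is integral and this section is nonzero, its local equation is a non-zero-divisor, so $\widetilde{X}_s$ is an effective Cartier divisor in a smooth variety, i.e. a hypersurface, and is therefore Cohen-Macaulay. For the double-cover assertion (3) I would compute $\widetilde{p}_{s*}\MSO_{\widetilde{X}_s}$ directly: in a local trivialization of $\MSL$ the structure sheaf is $\MSO_X[\eta]/(\eta^2+\tau)$, free of rank two on $1,\eta$, and globally the class of $\eta$ transforms as a section of $\MSL^{-1}$, so $\widetilde{p}_{s*}\MSO_{\widetilde{X}_s}\cong \MSO_X\oplus\MSL^{-1}$. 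Local freeness of rank two gives that $\widetilde{p}_s$ is finite and flat of degree two; this also reproves Cohen-Macaulayness of $\widetilde{X}_s$ through Proposition \ref{p.Serre-CM} applied over the regular base $X$.

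For statement (1) I would work pointwise with $\alpha^{\sharp}$, which sends $\eta\in\MSL_x$ to the covector $\alpha(x)\eta\in\Omega^1_{X,x}$. On $\widetilde{X}_s$ one then has $(\alpha(x)\eta)^2=\alpha(x)^2\eta^2=-\alpha(x)^2\tau(x)=-s(x)$, giving the inclusion $\alpha^{\sharp}(\widetilde{X}_s)\subseteq X_s$. The reverse inclusion over $X\setminus Z$, where $\alpha$ is nonvanishing, rests on the elementary fact that in the polynomial algebra $\Sym^{\bullet}\Omega^1_{X,x}$ the relation $\lambda^2=w^2$ factors as $(\lambda-w)(\lambda+w)=0$, forcing $\lambda=\pm w$. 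Taking $w=\sqrt{-\tau(x)}\,\alpha(x)$ shows every $\lambda\in X_s$ over $X\setminus Z$ equals $\alpha^{\sharp}$ of a point $\eta=\pm\sqrt{-\tau(x)}$ of $\widetilde{X}_s$. Since $\alpha$ vanishes only in codimension $\geq 2$ by Proposition \ref{p.decomposition-symmetric-differetials}, the set $Z$ has codimension $\geq 2$; over $Z$ both $X_s$ and $\alpha^{\sharp}(\widetilde{X}_s)$ reduce to the zero section (as $s$ vanishes there), and $\alpha^{\sharp}(\widetilde{X}_s)$ is closed because $\widetilde{p}_s$ is proper. Taking closures then yields the set-theoretic equality $\alpha^{\sharp}(\widetilde{X}_s)=X_s$.

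For (2) I would set $U:=X\setminus Z$, so $\textup{codim}(X\setminus U)\geq 2$. Over $U$ the map $\alpha:\MSL\to\Omega^1_X$ is a subbundle inclusion, hence $\alpha^{\sharp}:\tot(\MSL)|_U\to\tot(\Omega_X^1)|_U$ is a closed immersion realizing $\tot(\MSL)|_U$ as the subbundle $\tot(\alpha(\MSL))$. The remaining task is to match scheme structures, i.e. to show that under this closed immersion the ideal cutting out $X_s$ restricts exactly to the ideal $(\eta^2+\tau)$ of $\widetilde{X}_s|_U$, so that $\alpha^{\sharp}$ restricts to an isomorphism $\widetilde{p}_s^{-1}(U)\xrightarrow{\ \sim\ }p_s^{-1}(U)$.

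I expect this last comparison to be the main obstacle. Away from the ramification divisor $\textup{div}(\tau)$ the two covers are \'etale of degree two and the identification is immediate from the bijection established in (1). Along $\textup{div}(\tau)\cap U$, however, both covers degenerate to nonreduced double points, and here the heart of the argument is a local analysis of the defining equations of $X_s$ in a splitting $\Omega^1_X|_U=\alpha(\MSL)\oplus N$: one must control the components of $\lambda^2+s=0$ lying in $\alpha(\MSL)\otimes N$ and in $\Sym^2 N$ and verify that, with the correct scheme structure on $X_s$, no directions transverse to $\tot(\alpha(\MSL))$ survive, so that the closed immersion $\alpha^{\sharp}|_U$ is surjective onto $X_s|_U$. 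Pinning down this scheme structure of $X_s$ along $\textup{div}(\tau)$ and confirming the isomorphism there is the delicate step; once it is settled, parts (1)–(3) combine to exhibit $\widetilde{X}_s$ as a Cohen-Macaulay double cover providing a finite Cohen-Macaulayfication of $X_s$.
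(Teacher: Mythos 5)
Your treatment of the Cohen--Macaulayness, of (1), and of (3) is sound: for Cohen--Macaulayness you use the same hypersurface-in-a-smooth-variety argument as the paper; your proof of (1) fills in details the paper leaves to the reader; and for (3) you compute $\widetilde{p}_{s*}\MSO_{\widetilde{X}_s}\cong\MSO_X\oplus\MSL^{-1}$ directly, where the paper instead invokes miracle flatness (finite morphism from a Cohen--Macaulay scheme to a smooth base) --- both routes are valid.

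The genuine gap is in (2), and you flag it yourself: the comparison of scheme structures along $\textup{div}(\tau)\cap U$ is never carried out, so what you have is not a proof. Worse, the step cannot be completed in the form you propose. If $X_s$ is given the ideal-theoretic structure cut out by the coefficients of $\lambda^2+s$, then at a point $x\in\textup{div}(\tau)$ with $\alpha(x)\neq 0$ the fiber of $X_s$ over $x$ is $\textup{Spec}\,\mbC[\lambda_1,\dots,\lambda_n]/(\lambda_i\lambda_j)_{1\leq i\leq j\leq n}$, a fat point of length $n+1$, whereas the fiber of $\widetilde{X}_s$ is $\textup{Spec}\,\mbC[\eta]/(\eta^2)$, of length $2$. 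An isomorphism of $U$-schemes induces isomorphisms on all fibers, so for $n=\dim X\geq 2$ no open set $U$ meeting $\textup{div}(\tau)$ can make $\alpha^{\sharp}$ a scheme isomorphism onto $X_s|_U$ with that structure: the directions transverse to $\tot(\alpha(\MSL))$ that you hope to kill genuinely survive in the first-order neighbourhood of the zero section.

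The resolution is softer than any local ideal analysis, and it is what the paper's one-line justification (``$\alpha^{\sharp}$ is an embedding over $p^{-1}(U)$'') amounts to: read $X_s$ with its \emph{reduced} structure, which is the natural reading of the paper's set-builder definition. Note that $\widetilde{X}_s$ is reduced: it is Cohen--Macaulay (hence has no embedded points), every irreducible component has dimension $n$ (it is a hypersurface in $\tot(\MSL)$), and the non-\'etale locus $\widetilde{p}_s^{-1}(\textup{div}(\tau))$ has dimension at most $n-1$, so $\widetilde{X}_s$ is generically reduced. Over $U=X\setminus\{\alpha=0\}$ the map $\alpha^{\sharp}$ is a closed immersion (there $\alpha(\MSL)$ is a subbundle of $\Omega_X^1$), so $\widetilde{X}_s|_U$ maps isomorphically onto its scheme-theoretic image: a reduced closed subscheme of $\tot(\Omega_X^1)|_U$ whose support is $X_s|_U$ by (1), i.e.\ exactly $X_s|_U$ with its reduced structure. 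This proves (2) with no computation along $\textup{div}(\tau)$ at all. Two side remarks: your choice of $U$ is the right one --- the paper's own choice, the complement of the zero set of $s$, has codimension-one complement as soon as $\textup{div}(\tau)\neq 0$, since $s$ vanishes along $\textup{div}(\tau)$; and for $n=1$ the discrepancy you worried about disappears, which is why it is invisible in the classical curve (BNR) picture.
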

	
	\begin{proof}
		Since $\tot(\MSL)$ is smooth and $\widetilde{X}_s$ is a codimension one complete intersection in $\tot(\MSL)$, thus the Cohen-Macaulayness of $\widetilde{X}_s$ follows from the fact that every complete intersection ring is Cohen-Macaulay \cite[Definition 2.3.1]{BrunsHerzog1993}.
		
		The statement (1) follows directly from the definitions of $X_s$ and $\widetilde{X}_s$. For (2), we may choose $U=X\setminus Z$, where $Z$ is the set of zeros of $s$. Then $\textup{codim}(Z)\geq 2$ and the statement follows from the fact that $\alpha^{\sharp}$ is an embedding over $p^{-1}(U)$. The statement (3) follows from the miracle flatness since $\widetilde{X}_s$ is Cohen-Macaulay and $X$ is smooth.
	\end{proof}
	
	\subsubsection{Tower of Cohen-Macaulayfications}
 \label{s.towerofCMness}
	
	Similar construction can be applied to any effective divisor $D'\leq D=\textup{div}(\tau)$ to get other Cohen-Macaulafications of $X_{s}$. To be more precise, let us write 
	\[
	D=\sum_{i=1}^r m_i D_i
	\]
	with $D_i$ pairwise distinct prime divisors, and up to permutation we may assume that $m_i\geq 2$ for $1\leq i\leq k$ and $m_i=1$ for $k+1\leq i\leq r$. Choose $\tau_i\in H^0(X,\MSO_X(D_i))$ such that $\textup{div}(\tau_i)=D_i$. Then up to scalars we have
	\[
	\tau = \tau_1^{m_1}\dots\tau_{k}^{m_k} \tau_{k+1}\dots\tau_r \in H^0(X,\MSL^{2}).
	\]
	For a $k$-tuple $\textbf{a}=(a_1,\dots,a_k)$ of non-negative integer such that $2a_i\leq m_i$ for $1\leq i\leq k$, we denote by $X_{\textbf{a}}\subset \tot(\MSL_{\textbf{a}})$ the double cover of $X$ corresponding to the section 
	\[
	\tau_{\textbf{a}}=\tau_1^{m_1-2a_1}\dots\tau_{k}^{m_k-2a_k}\tau_{k+1}\cdots\tau_r \in H^0(X,\MSL_{\textbf{a}}^{2}),
	\]
	where $\MSL_\textbf{a}=\MSL\otimes \MSO_X(-a_1D_1-\dots-a_rD_r)$. Given two $k$-tuple $\textbf{a}$ and $\textbf{a}'$ such that $\textbf{a}'\geq \textbf{a}$, i.e. $a'_i\geq a_i$ for any $1\leq i\leq k$, then there exists a natural finite birational morphism $X_{\textbf{a}'} \rightarrow X_{\textbf{a}}$ induced by the following map defined by multiplying $\tau_1^{a'_1-a_1}\cdots\tau_k^{a'_k-a_k}$
	\[
	\tau_{\textbf{a}'-\textbf{a}}^{\sharp}: \tot(\MSL_{\textbf{a}'}) \rightarrow \tot(\MSL_{\textbf{a}}).
	\]
	Then $X_{\textbf{a}}$ is Cohen-Macaulay and the induced morphism $X_{\textbf{a}}\rightarrow X_{s}$ is a finite birtional morphism. All the double covers $X_{\textbf{a}}$ fit into the following tower of commutative diagrams
	\smallskip
	\adjustbox{scale=0.75,left}{%
		\begin{tikzcd}[row sep=large,column sep=large]
			& X_{(\lfloor{\frac{m_1}{2}}\rfloor-1,\lfloor{\frac{m_2}{2}}\rfloor,\cdots,\lfloor{\frac{m_k}{2}}\rfloor)}  \arrow[ddr, bend left]  
			& 
			& X_{(0,\cdots,0,1)} \arrow[ddr, bend left]
			& \\
			& X_{(\lfloor{\frac{m_1}{2}}\rfloor,\lfloor{\frac{m_2}{2}}\rfloor-1,\lfloor{\frac{m_3}{2}}\rfloor\cdots,\lfloor{\frac{m_k}{2}}\rfloor)} \arrow{dr}
			& 
			& X_{(0,\cdots,0,1,0)} \arrow[dr]
			& \\
			\widetilde{X}_s^n=X_{(\lfloor{\frac{m_1}{2}}\rfloor,\cdots,\lfloor{\frac{m_k}{2}}\rfloor)} \arrow[uur, bend left] \arrow[ur] \arrow[r] \arrow[dr] \arrow[ddr, bend right]
			& \vdots \arrow[r]
			& \vdots \arrow[uur, bend left] \arrow[ur] \arrow[r] \arrow[dr] \arrow[ddr, bend right]
			& \vdots \arrow[r]
			& X_{(0,\dots,0)}=\widetilde{X}_s \\
			& X_{(\lfloor{\frac{m_1}{2}}\rfloor,\cdots,\lfloor{\frac{m_{k-2}}{2}}\rfloor,\lfloor{\frac{m_{k-1}}{2}}\rfloor-1,\lfloor{\frac{m_k}{2}}\rfloor)} \arrow[ur]
			& 
			& X_{(0,1,0,\cdots,0)} \arrow[ur]
			&  \\
			& X_{(\lfloor{\frac{m_1}{2}}\rfloor,\cdots,\lfloor{\frac{m_{k-1}}{2}}\rfloor,\lfloor{\frac{m_k}{2}}\rfloor)} \arrow[uur, bend right]
			&
			& X_{(1,0,\cdots,0)} \arrow[uur, bend right]
			& 
		\end{tikzcd}
	}
	\smallskip
	By Serre's criterion for normality \cite[Chapitre IV, D, Theorem 11]{Serre1965}, a Cohen-Macaulay variety is normal if and only if its singular locus has codimension at least two. In particular, in the diagram above, the only normal variety is $\widetilde{X}_s^n$ and the induced morphism $\widetilde{X}^n_s\rightarrow X_s$ is exactly the normalisation of $X_s$.
	
	\subsubsection{Numerical invariants of double cover}
	
	In the following we collect some basic facts about numerical invariants of push-forward under double coverngs. Let $\pi:X'\rightarrow X$ be a smooth double covering corresponding to $\Delta\in |H^0(X,\MSL^{2})|$. Let $\MSM$ be a line bundle on $X'$. Then the push-forward $\pi_*\MSM$ is a rank $2$ vector bundle on $X$. Let $\iota:X'\rightarrow X'$ be the involution corresponding to $\pi$. Note that $\MSM\otimes \iota^*\MSM$ is invariant under $\iota$, the push-forward $\pi_*(\MSM\otimes \iota^*\MSM)$ carries a natural involution induced by $\iota$ and hence splits as a direct sum of line bundles corresponding to taking $+1$ and $-1$ eigenvalues of $\iota$. Thus we have 
	\[
	\pi_*(\MSM\oplus \iota^*\MSM) \cong (\pi_*(\MSM\otimes \iota^*\MSM))^{\iota}\oplus \MSN,
	\]
	where the first factor is the $\iota$-invariant part. Then the \emph{norm line bundle} $\textup{Nm}(\MSM)$ of $\MSM$ is defined as $(\pi_*(\MSM\otimes \iota^*\MSM))^{\iota}$. 
	
	\begin{proposition}\label{p.chern_class_push_down}
		\cite[p.\,47-p.\,49]{Friedman1998}
		Notations and assumptions as above and let $D$ be any Cartier divisor on $X'$ such that $\MSO_{X'}(D)\cong \MSM$, then we have:
		\begin{enumerate}
			\item $\textup{Nm}(\MSM)\cong \MSO_X(\pi_*D)$;
			
			\item $\pi^*(\textup{Nm}(\MSM))\cong \MSM\otimes \iota^*\MSM$;
			
			\item $\det(\pi_*\MSM)\cong \textup{Nm}(\MSM)\otimes \MSL^{-1}$;
			
			\item $c_1(\pi_*\MSM)=\pi_*c_1(\MSM)-c_1(\MSL)$;
			
			\item $c_2(\pi_*\MSM)=\frac{1}{2}\left((\pi_*c_1(\MSM))^2 - \pi_*(c_1^2(\MSM))-(\pi_*c_1(\MSM))\cdot c_1(\MSL)\right)$;
			
			\item an exact sequence of vector bundles
			\[
			0\rightarrow \iota^*\MSM\otimes \MSL^{-1} \rightarrow \pi^*\pi_*\MSM \rightarrow \MSM \rightarrow 0.
			\]
		\end{enumerate}
	\end{proposition}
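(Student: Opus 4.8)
The plan is to take the fundamental exact sequence in (6) as the central object and deduce the Chern class identities (4) and (5) from it formally, handling the norm identities (1)--(3) separately by descent. Throughout I would record the two structural facts that drive the whole computation. Since $\pi$ is a flat double cover branched along $\Delta\in|\MSL^2|$, one has $\pi_*\MSO_{X'}\cong \MSO_X\oplus\MSL^{-1}$, with $\iota$ acting as $+1$ on $\MSO_X$ and $-1$ on $\MSL^{-1}$; and for any cohomology class $\alpha$ on $X'$ the trace formula for the Galois action gives $\pi^*\pi_*\alpha=\alpha+\iota^*\alpha$, while $\pi_*\iota^*=\pi_*$ (because $\pi\circ\iota=\pi$) and $\pi_*\pi^*=2$ (projection formula, $\deg\pi=2$). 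These are the only nonformal inputs apart from (6) itself.

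First I would establish the exact sequence (6) by flat base change along the fibre square of $\pi$ with itself. For a Galois double cover the fibre product $X'\times_X X'$ is the scheme-theoretic union of the diagonal $\Gamma_{\id}$ and the graph $\Gamma_\iota$ of the involution, each mapping isomorphically to $X'$ under the first projection and meeting along the ramification divisor $R$, where $\MSO_{X'}(R)\cong\pi^*\MSL$. Tensoring the ideal sequence $0\to\mathcal I_{\Gamma_{\id}}\to\MSO_{X'\times_X X'}\to\MSO_{\Gamma_{\id}}\to 0$ with the second-projection pullback of $\MSM$ and pushing forward along the first projection, flat base change identifies the middle term with $\pi^*\pi_*\MSM$, the restriction to $\Gamma_{\id}$ with $\MSM$, and the ideal term with $\iota^*\MSM$ twisted by $\mathcal I_{\Gamma_{\id}}|_{\Gamma_\iota}\cong\MSO_{X'}(-R)\cong\pi^*\MSL^{-1}$. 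This yields $0\to\iota^*\MSM\otimes\pi^*\MSL^{-1}\to\pi^*\pi_*\MSM\to\MSM\to 0$, which is (6) (here $\MSL^{-1}$ abbreviates $\pi^*\MSL^{-1}$); alternatively one verifies it locally on an affine chart where $X'=\{y^2=s\}$ and $\pi_*\MSM$ is free of rank two.

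The Chern class formulas then reduce to bookkeeping. Taking first Chern classes in (6) gives $\pi^*c_1(\pi_*\MSM)=c_1(\MSM)+\iota^*c_1(\MSM)-\pi^*c_1(\MSL)$; applying $\pi_*$ and using $\pi_*\pi^*=2$ and $\pi_*\iota^*=\pi_*$ collapses this to $2c_1(\pi_*\MSM)=2\pi_*c_1(\MSM)-2c_1(\MSL)$, which is (4). For (5) I would multiply the total Chern classes in (6) to obtain $\pi^*c_2(\pi_*\MSM)=c_1(\MSM)\,\iota^*c_1(\MSM)-c_1(\MSM)\,\pi^*c_1(\MSL)$. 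The one extra identity needed is $\pi_*\big(c_1(\MSM)\,\iota^*c_1(\MSM)\big)=(\pi_*c_1(\MSM))^2-\pi_*(c_1^2(\MSM))$, which follows by squaring the trace identity $\pi^*\pi_*c_1(\MSM)=c_1(\MSM)+\iota^*c_1(\MSM)$ and applying $\pi_*$. Combining this with $\pi_*(c_1(\MSM)\pi^*c_1(\MSL))=\pi_*c_1(\MSM)\cdot c_1(\MSL)$ and dividing by $2$ gives exactly (5).

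Finally the norm identities. Statement (2) is $\ZT$-descent: the line bundle $\MSM\otimes\iota^*\MSM$ carries the canonical $\iota$-linearization swapping the two factors, hence descends to a line bundle on $X$, and unwinding the definition this descent is precisely $(\pi_*(\MSM\otimes\iota^*\MSM))^\iota=\Nm(\MSM)$, so $\pi^*\Nm(\MSM)\cong\MSM\otimes\iota^*\MSM$. For (1) I would reduce to $\MSM=\MSO_{X'}(D)$ with $D$ prime and compare the norm of a divisor with its pushforward $\pi_*D$ on generic points, distinguishing whether $D$ is a component of the ramification or not. Statement (3) then follows either by taking determinants in (6) and using that $\pi^*$ is injective on $\Pic$ in the ramified case, or more robustly from the general identity $\det(\pi_*\MSM)\cong\Nm(\MSM)\otimes\det(\pi_*\MSO_{X'})$ together with $\det(\pi_*\MSO_{X'})\cong\MSL^{-1}$. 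The main obstacle is pinning down (6) exactly---identifying the twist $\pi^*\MSL^{-1}$ in the kernel and the precise maps---since everything downstream is formal once that sequence is available; the choice of $\iota$-linearization in (2) is the other point requiring genuine care.
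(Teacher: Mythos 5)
The paper gives no proof of this proposition at all: it is quoted from Friedman's book \cite[p.\,47--49]{Friedman1998}, where the computation is carried out in the surface setting, so your argument is necessarily a different (and self-contained) route. Its core is correct. The fibre-square derivation of (6) is valid: $X'\times_X X'$ is indeed the union of the diagonal and the graph of $\iota$, meeting along the ramification divisor $R$ with $\MSO_{X'}(R)\cong \pi^*\MSL$; the ideal sheaf of the diagonal is $\MSO_{\Gamma_\iota}(-R)$, and pushing the twisted ideal sequence forward along the first projection via finite flat base change gives exactly $0\to \iota^*\MSM\otimes\pi^*\MSL^{-1}\to \pi^*\pi_*\MSM\to\MSM\to 0$. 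The descent argument for (2) also works, and the point that makes it work across the branch locus deserves to be said explicitly: at a fixed point the swap linearization acts on $\MSM_x\otimes\MSM_x$ as the flip of a tensor square of a \emph{one-dimensional} space, hence as the identity, which is precisely the condition for $\ZT$-descent along a ramified quotient. Compared with citing Friedman, your proof buys a statement valid in arbitrary dimension, which is how the paper actually uses it.

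Three points need tightening. First, your derivations of (4) and (5) end by applying $\pi_*$ and cancelling a factor of $2$; in $\Pic(X)$ or integral cohomology this proves the identities only modulo $2$-torsion. For (5) this is harmless (the formula carries a factor $\tfrac12$ and is inherently a rational statement, which is all the paper needs), but (4) should be deduced integrally from (1) and (3), which your own proof supplies: $c_1(\pi_*\MSM)=c_1(\Nm(\MSM))-c_1(\MSL)=\pi_*c_1(\MSM)-c_1(\MSL)$. Second, the ``more robust'' route you offer for (3), namely $\det(\pi_*\MSM)\cong\Nm(\MSM)\otimes\det(\pi_*\MSO_{X'})$, \emph{is} statement (3), so it cannot be invoked; the honest route is taking determinants in (6) and proving injectivity of $\pi^*$ on $\Pic(X)$ for a connected ramified double cover (a line bundle in the kernel inherits a linearization that is $+1$ at a fixed point, hence descends trivially), with the split/\'etale covers --- which do occur in the paper, cf.\ Example~\ref{e.Spectral-var} --- checked directly. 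Third, (1) is only sketched; the clean argument is that the invariant section $s_D\otimes\iota^*s_D$ descends to a section of $\Nm(\MSM)$ whose divisor pulls back to $D+\iota^*D=\pi^*(\pi_*D)$, and pullback of divisors along a finite surjection is injective, so this divisor equals $\pi_*D$.
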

	
	\subsection{Spectral correspondence}
	
	The importance of the Cohen-Macaulayfication $\widetilde{X}_s$ of $X_s$ is that there exists a correspondence between Higgs bundles on $X$ with spectral date $s$ and maximal Cohen-Macaulay sheaves with generic rank one on $\widetilde{X}_s$ (cf. \cite[Proposition 7.3]{ChenNgo2020} and \cite[Proposition 3.6]{BeauvilleNarasimhanRamanan1989}). More precisely, given a maximal Cohen-Macaulay sheaf of generic rank one $\MSM$ on $\widetilde{X}_s$, we denote by $\MSE$ the push-forward $\widetilde{p}_{s*}\MSM$. Then $\MSE$ is locally free with rank two by Proposition \ref{p.Serre-CM}. We can associate a natural morphism $\varphi:\MSE\rightarrow \MSE\otimes \Omega_X^1$ via the tautological section $\eta\in H^0(\widetilde{X}_s,\widetilde{p}_s^*\MSL)$ as following
	\[
	\MSE=\widetilde{p}_{s*}(\MSM) \xrightarrow{\times \eta} \widetilde{p}_{s*}(\MSM\otimes \widetilde{p}^*_s\MSL) \rightarrow \widetilde{p}_{s*}\MSM\otimes \MSL=\MSE\otimes \MSL \xrightarrow{\times \alpha} \MSE\otimes \Omega_X^1.
	\]
	
	\begin{example}
		\label{e.canonical-Higgs-bundle}
		Let $\MSM$ be the structure sheaf $\MSO_{\widetilde{X}_s}$ of $\widetilde{X}_s$. Then we have $\MSE\cong \MSO_X\oplus \MSL^{-1}$. A local computation shows that the homomorphism $\varphi:\MSE\rightarrow \MSE\otimes \Omega^1_X$ is given as following
		\[
		\begin{pmatrix}
			0    &     -\alpha\tau \\
			\alpha &  0
		\end{pmatrix}
		\]
		In particular, we have $\varphi\wedge\varphi=0$, $\Tr(\varphi)=0$ and $\det(\varphi)=s$. By Proposition \ref{prop_stability_realHiggs} and Propoisiton \ref{p.Properties-of-Hitchin-section}, the Higgs bundle $(\MSE,\varphi)$ is always polystable. 

	\end{example}
	
    Recall that a \emph{$\MSL$-twisted Higgs bundle} on a projective manifold $X$ is a pair $(\MSE,\varphi)$ consisting of a rank two vector bundle $\MSE$ and $\varphi:\MSE\rightarrow \MSE\otimes \MSL$ a twisted endomorphism, where $\MSL$ is a given line bundle over $X$.
 
	\begin{lemma}
		\label{l.factorisation-varphi}
		Let $(\MSE,\varphi)$ be a rank two Higgs bundle on a projective manifold $X$ such that $\Tr(\varphi)=0$ and $\det(\varphi)=s\not=0$. Then $\varphi$ factors through 
		\[
		\MSE\otimes \MSL \xrightarrow{\times \alpha} \MSE\otimes \Omega^1_X.
		\]
        In other words, the pair $(\MSE,\varphi)$ is actually a $\MSL$-twisted Higgs bundle.
	\end{lemma}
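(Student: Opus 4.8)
The plan is to reformulate the factorization as a vanishing statement: writing $\MSQ := \Omega_X^1/\alpha(\MSL)$ for the cokernel of $\alpha\colon \MSL\to \Omega_X^1$, the assertion that $\varphi$ takes values in $\MSE\otimes \alpha(\MSL)$ is exactly the statement that the composite $\bar\varphi\colon \MSE\xrightarrow{\varphi}\MSE\otimes \Omega_X^1\to \MSE\otimes \MSQ$ is zero. First I would invoke Proposition \ref{p.decomposition-symmetric-differetials} to write $s=\alpha^2\tau$ with $\alpha\in H^0(X,\Omega_X^1\otimes \MSL^{-1})$ not vanishing in codimension one; Lemma \ref{l.saturation-line-bundles} then shows that $\alpha(\MSL)$ is \emph{saturated} in $\Omega_X^1$, so that $\MSQ$ is torsion-free and $\alpha\colon \MSL\to \alpha(\MSL)$ is an isomorphism of line bundles. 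Since $\MSE$ is locally free, tensoring the exact sequence $0\to \alpha(\MSL)\to \Omega_X^1\to \MSQ\to 0$ by $\MSE$ remains exact and keeps $\MSE\otimes \MSQ$ torsion-free.

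Next I would use torsion-freeness to reduce to a generic computation: a morphism into the torsion-free sheaf $\MSE\otimes \MSQ$ that vanishes on a dense Zariski-open subset vanishes identically, so it suffices to prove $\bar\varphi=0$ on the open set $\{s\neq 0\}$, which is dense because $s\neq 0$.

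The pointwise heart of the argument then runs exactly as in the proof of Proposition \ref{p.CN-rankone}. Fixing $x$ with $s(x)\neq 0$ and writing $\varphi(x)=\sum_i B_i\,dz_i$ in a local coframe, the Higgs condition forces $[B_i,B_j]=0$ and $\Tr(\varphi)=0$ gives $\Tr(B_i)=0$; since $\det(\varphi)(x)=s(x)\neq 0$, the $B_i$ are simultaneously diagonalizable and $\varphi(x)=\diag(w,-w)$ for some $0\neq w\in \Omega^1_{X,x}$ with $-w^2=\det(\varphi)(x)=\alpha(x)^2\tau(x)$. As $\tau(x)\neq 0$, the relation $w^2=c\,\alpha(x)^2$ with $c=-\tau(x)\neq 0$ in $\Sym^2\Omega^1_{X,x}$ forces $w$ and $\alpha(x)$ to be proportional (if they were independent, $w^2,w\alpha(x),\alpha(x)^2$ would be linearly independent in $\Sym^2$ of their span). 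Hence $w=\mu\,\alpha(x)$ for some $\mu\in \mbC$, so $\varphi(x)=\diag(\mu,-\mu)\otimes\alpha(x)\in \End(\MSE_x)\otimes \alpha(\MSL_x)$ and $\bar\varphi(x)=0$.

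Combining these steps gives $\bar\varphi=0$, so $\varphi$ factors as a sheaf morphism through $\MSE\otimes \alpha(\MSL)$; composing with the inverse of $\id_{\MSE}\otimes\alpha$ produces the holomorphic $\varphi_0\colon \MSE\to \MSE\otimes \MSL$ with $\varphi=\alpha\circ\varphi_0$, exhibiting $(\MSE,\varphi)$ as an $\MSL$-twisted Higgs bundle. I expect the only genuinely delicate point to be the torsion-free reduction: one must secure the saturation of $\alpha(\MSL)$ (equivalently the torsion-freeness of $\MSQ$) \emph{before} passing from generic to global vanishing, which is precisely why the non-vanishing-in-codimension-one property of $\alpha$ from Proposition \ref{p.decomposition-symmetric-differetials} is indispensable. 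The pointwise diagonalization and the elementary linear algebra are routine once Proposition \ref{p.CN-rankone} is in hand.
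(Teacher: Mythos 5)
Your proof is correct and follows essentially the same route as the paper's: generic simultaneous diagonalization (as in Proposition \ref{p.CN-rankone}) shows $\varphi$ takes values in $\MSE\otimes\alpha(\MSL)$ on the dense open set $\{s\neq 0\}$, and saturation of $\alpha(\MSL)$ plus torsion-freeness of $\MSE\otimes(\Omega_X^1/\alpha(\MSL))$ propagates the vanishing of the composite to all of $X$. The only difference is that you spell out the pointwise proportionality of $w$ and $\alpha(x)$ via the linear independence of $w^2$, $w\alpha(x)$, $\alpha(x)^2$, a detail the paper leaves implicit.
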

	
	\begin{proof}
		Let $Z\subset X$ be the zero locus of $s$. Then the homomorphism $\varphi$  can be simultaneously diagonalised as $\diag(w,-w)$ at any point $x\in X\setminus Z$ (see the proof of Proposition \ref{p.CN-rankone}). In particular, we have $w^2=s=\alpha^2(x)\tau(x)$. As $\tau(x)\not=0$, the homomorphism $\varphi$ factors through $\MSE\otimes \MSL\xrightarrow{\times \alpha} \MSE\otimes \Omega^1_X$ outside $Z$. Consider the following composition
		\begin{equation}
			\label{e.factorisation-varphi}
			\MSE \xrightarrow{\varphi} \MSE\otimes \Omega^1_X \rightarrow \MSE\otimes (\Omega^1_X/\alpha(\MSL)).
		\end{equation}
		Note that $\alpha(\MSL)$ is saturated in $\Omega_X^1$ and thus the quotient $\Omega_X^1/\alpha(\MSL)$ is torsion free. In particular, as $\MSE$ is locally free, the tensor $\MSE\otimes (\Omega_X^1/\alpha(\MSL))$ is again torsion free. Hence, the composition \eqref{e.factorisation-varphi} vanishes identically on $X$ as it vanishes over $X\setminus Z$. As a consequence, the homomorphism $\varphi$ factors through $\MSE\otimes \MSL\xrightarrow{\times \alpha} \MSE\otimes \Omega^1_X$.
	\end{proof}
	
	\begin{theorem}
		\label{t.Spectral-correspondence}
		Let $X$ be a projective manifold and let $0\not=s\in \MB_X$. Then there is a bijective correspondence between isomorphism classes of maximal Cohen-Macaulay sheaves of generic rank one on $\widetilde{X}_s$ and isomorphism classes of rank two Higgs bundles $(\MSE,\varphi)$ with $\Tr(\varphi)=0$ and $\det(\varphi)=s$. 
	\end{theorem}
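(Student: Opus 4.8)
The plan is to reduce the statement to the classical Beauville--Narasimhan--Ramanan spectral correspondence for the flat finite double cover $\widetilde{p}_s\colon \widetilde{X}_s\to X$, using the simplification already supplied by Lemma \ref{l.factorisation-varphi}. The key point is that, since $\widetilde{p}_s$ is affine, a coherent sheaf $\MSM$ on $\widetilde{X}_s$ is the same datum as a coherent sheaf $\MSE=\widetilde{p}_{s*}\MSM$ on $X$ equipped with a module structure over the sheaf of $\MSO_X$-algebras $\widetilde{p}_{s*}\MSO_{\widetilde{X}_s}\cong \MSO_X\oplus \MSL^{-1}$, whose multiplication $\MSL^{-1}\otimes \MSL^{-1}\to \MSO_X$ is $(-\tau)$. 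Equivalently, this structure is an $\MSL$-twisted endomorphism $\varphi_0\colon \MSE\to \MSE\otimes \MSL$, namely multiplication by the tautological section $\eta\in H^0(\widetilde{X}_s,\widetilde{p}_s^*\MSL)$, subject to the single relation $\varphi_0^2+\tau=0$ coming from $\eta^2+\tau=0$. Thus the theorem follows once I match, in an inverse-preserving way, generically-rank-one MCM sheaves on $\widetilde{X}_s$ with pairs $(\MSE,\varphi_0)$ satisfying $\varphi_0^2+\tau=0$, and then translate $\varphi_0$ into a genuine Higgs field via $\varphi=\alpha\circ\varphi_0$.

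For the forward direction I start from an MCM sheaf $\MSM$ of generic rank one and set $\MSE=\widetilde{p}_{s*}\MSM$. Because $\widetilde{p}_s$ is finite, the depth of $\MSE$ over $\MSO_X$ equals the depth of $\MSM$ over $\MSO_{\widetilde{X}_s}$, so $\MSE$ is a maximal Cohen--Macaulay $\MSO_X$-module; as $X$ is smooth, Proposition \ref{p.Serre-CM} forces $\MSE$ to be locally free, of rank two since $\widetilde{p}_s$ has degree two and $\MSM$ is generically of rank one. The twisted field $\varphi_0$ defined by $\eta$ satisfies $\varphi_0^2=-\tau$ by construction, and $\varphi=\alpha\circ\varphi_0$ (as in Example \ref{e.canonical-Higgs-bundle}) gives a homomorphism $\MSE\to \MSE\otimes \Omega_X^1$. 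Writing $\varphi=\alpha\,\varphi_0$ with $\alpha$ a scalar one-form shows $\varphi\wedge\varphi=(\alpha\wedge\alpha)\,\varphi_0^2=0$, while $\Tr(\varphi)=\alpha\Tr(\varphi_0)=0$ and $\det(\varphi)=\alpha^2\det(\varphi_0)=\alpha^2\tau=s$; here $\Tr(\varphi_0)=0$ and $\det(\varphi_0)=\tau$ follow from $\varphi_0^2=-\tau$ together with the Cayley--Hamilton identity in rank two.

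For the reverse direction, given $(\MSE,\varphi)$ with $\Tr(\varphi)=0$ and $\det(\varphi)=s$, Lemma \ref{l.factorisation-varphi} produces a factorisation $\varphi=\alpha\circ\varphi_0$ with $\varphi_0\colon \MSE\to \MSE\otimes \MSL$, and this $\varphi_0$ is unique because $\times\alpha\colon \MSE\otimes \MSL\to \MSE\otimes \Omega_X^1$ is injective ($\alpha$ not vanishing in codimension one). Injectivity of $H^0(X,\MSL)\xrightarrow{\alpha}H^0(X,\Omega_X^1)$ and of $H^0(X,\MSL^2)\xrightarrow{\alpha^2}H^0(X,\Sym^2\Omega_X^1)$ turns $\Tr(\varphi)=0$ and $\det(\varphi)=s=\alpha^2\tau$ into $\Tr(\varphi_0)=0$ and $\det(\varphi_0)=\tau$, whence Cayley--Hamilton yields $\varphi_0^2+\tau=0$ as an exact identity. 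This equips $\MSE$ with a module structure over $\widetilde{p}_{s*}\MSO_{\widetilde{X}_s}$, hence defines a coherent sheaf $\MSM$ on $\widetilde{X}_s$ with $\widetilde{p}_{s*}\MSM=\MSE$; it is generically of rank one since $\MSE$ has rank two, and the same finite-morphism comparison of depths shows $\MSM$ is MCM because $\MSE$ is locally free. The two constructions are mutually inverse by the sheaf/module dictionary for the affine morphism $\widetilde{p}_s$, and they visibly carry isomorphisms to isomorphisms, giving the claimed bijection.

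The step I expect to be most delicate is the Cohen--Macaulay bookkeeping across $\widetilde{p}_s$: one must argue that depth over the possibly singular (even non-reduced) $\widetilde{X}_s$ is faithfully reflected by depth over the regular base $X$, so that ``MCM on $\widetilde{X}_s$'' and ``locally free on $X$'' correspond exactly. This is precisely where the Cohen--Macaulayness of $\widetilde{X}_s$ and the flatness of $\widetilde{p}_s$ are indispensable and where Proposition \ref{p.Serre-CM} is invoked. By contrast, the algebraic relations $\varphi_0^2+\tau=0$, $\Tr(\varphi_0)=0$ and $\det(\varphi_0)=\tau$ are pointwise consequences of Cayley--Hamilton requiring no genericity, and the vanishing $\varphi\wedge\varphi=0$ is automatic from $\alpha\wedge\alpha=0$.
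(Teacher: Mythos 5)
Your proposal follows essentially the same route as the paper's proof: the finite/affine-morphism dictionary identifying coherent sheaves on $\widetilde{X}_s$ with $\widetilde{p}_{s*}\MSO_{\widetilde{X}_s}$-modules, Proposition \ref{p.Serre-CM} together with the Cohen--Macaulayness of $\widetilde{X}_s$ for the MCM/locally-free translation, and Lemma \ref{l.factorisation-varphi} plus Cayley--Hamilton for the converse direction. However, there is one genuinely flawed step in your forward direction (repeated in your closing paragraph): you assert that $\Tr(\varphi_0)=0$ and $\det(\varphi_0)=\tau$ are ``pointwise consequences of Cayley--Hamilton requiring no genericity'', deduced from $\varphi_0^2+\tau=0$ alone. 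This is false: the relation $\varphi_0^2=-\tau\,\Id$ does not determine the characteristic polynomial of a rank-two endomorphism. Concretely, if $\tau=-\mu^2$ for a section $\mu$ of $\MSL$ (so that $\widetilde{X}_s$ is reducible, which occurs in basic examples such as $\MB_{X,\MSO_X}$ for abelian varieties, cf.\ Example \ref{e.Spectral-var}), then $\varphi_0=\mu\,\Id_{\MSE}$ satisfies $\varphi_0^2+\tau=0$, yet $\Tr(\varphi_0)=2\mu\neq 0$ and $\det(\varphi_0)=\mu^2=-\tau\neq \tau$. Cayley--Hamilton only yields $\Tr(\varphi_0)\,\varphi_0=(\det(\varphi_0)-\tau)\,\Id$, which is vacuous precisely on the scalar locus; and this scalar operator genuinely arises from an MCM sheaf on $\widetilde{X}_s$, namely a rank-two sheaf supported on a single irreducible component -- exactly the situation the generic rank one hypothesis is there to exclude.

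The identities you need are nevertheless true, but their proof must invoke the generic rank one hypothesis, which your argument never uses at this step. Over the open set $U$ where $s\neq 0$ (so $\tau\neq 0$ and $\widetilde{p}_s$ is étale), Proposition \ref{p.Serre-CM} and generic rank one force $\MSM$ to be a line bundle on each of the two local sheets, so $\varphi_0$ (multiplication by $\eta$) is locally diagonalizable with eigenvalues the two square roots of $-\tau$; hence $\Tr(\varphi_0)=0$ and $\det(\varphi_0)=\tau$ hold on the dense open set $U$, and therefore everywhere, since they are equalities of holomorphic sections of $\MSL$ and $\MSL^{2}$. This density argument is exactly the role played by the local computation of Example \ref{e.canonical-Higgs-bundle} in the paper's proof. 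Your reverse direction is sound as written, because there you first establish $\Tr(\varphi_0)=0$ and $\det(\varphi_0)=\tau$ from the hypotheses via injectivity of multiplication by $\alpha$, and only then apply Cayley--Hamilton; with the repair above to the forward direction, your proof is complete and coincides with the paper's.
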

	
	\begin{proof}
		Let $Z$ be the zero set of $s$ and set $U=X\setminus Z$. Then $\widetilde{U}=\widetilde{p}_s^{-1}(U)$ is smooth. Let $\MSM$ be a maximal Cohen-Macaulay sheaf of generic rank one over $\widetilde{X}_s$. Then $\MSM$ is actually locally free of rank one over $\widetilde{U}$ by Proposition \ref{p.Serre-CM}. Let $\MSE=\widetilde{p}_{s*}\MSM$ be the rank two vector bundle over $X$ with the homomorphism $\varphi:\MSE\rightarrow \MSE\otimes \Omega^1_X$ as that introduced before Example \ref{e.canonical-Higgs-bundle}. Then Example \ref{e.canonical-Higgs-bundle} implies $\varphi\wedge\varphi=0$, $\Tr(\varphi)=0$ and $\det(\varphi)=s$ over the open subset $U$ and hence over the whole $X$.
		
		Conversely, let $(\MSE,\varphi)$ be a rank two Higgs bundle on $X$ with $\Tr(\varphi)=0$ and $\det(\varphi)=s$. By Lemma \ref{l.factorisation-varphi}, the homomorphism $\varphi$ factors through $\MSE\otimes \MSL\xrightarrow{\times \alpha} \MSE\otimes \Omega_X^1$. Thus we may regard $\varphi$ as a map $\MSO_X\rightarrow \End(\MSE)\otimes\MSL$ and then the Cayley-Hamilton theorem says $\varphi^2+\tau=0$. It follows that the algebra homomorphism
		\[
		\Sym(\MSL^{-1}) \rightarrow \End(\MSE)
		\]
		induced by $\varphi$ factors through the quotient
		\[
		\Sym(\MSL^{-1})\rightarrow \Sym(\MSL^{-1})/\MSI,
		\]
		where $\MSI$ is the ideal sheaf generated by the image of $\id+\tau:\MSL^{-2}\rightarrow \Sym(\MSL^{-1})$. Note that the variety $\widetilde{X}_s$ is just $\textup{Spec}(\Sym(\MSL^{-1})/\MSI)$, we obtain $\widetilde{p}_{s*}\MSO_{\widetilde{X}_s}=\Sym(\MSL^{-1})/\MSI$ and then $\MSE$ admits a natural $\widetilde{p}_{s*}\MSO_{\widetilde{X}_s}$-module structure such that $\eta$ acts as $\varphi$ on $\MSE$, where $\eta$ is the generator of $\MSL^{-1}$. Finally, since $\MSE$ is locally free with rank two, the $\MSO_{\widetilde{X}_s}$-module $\MSM$ is a maximal Cohen-Macaulay sheaf with generic rank one and the homomorphism $\varphi$ is exactly induced by multiplying $\eta$.
	\end{proof}
	
	\begin{remark}
		As indicated in \S\,\ref{ss.Cohen-Macaulayfication}, there may exist many other Cohen-Macaulayficiations $\widetilde{X}_{\textbf{a}}$ of $X_s$ and the same argument as in the proof of Theorem \ref{t.Spectral-correspondence} shows that the maximal Cohen-Macaulay sheaves with generic rank one on $\widetilde{X}_{\textbf{a}}$ also defines a Higgs bundle $(\MSE,\varphi)$ on $X$ with $\varphi\wedge\varphi=0$, $\Tr(\varphi)=0$ and $\det(\varphi)=s$. However, if $\widetilde{X}_{\textbf{a}}\not=\widetilde{X}_s$, then there exist Higgs bundles $(\MSE,\varphi)$ on $X$ which can not be defined in this way, e.g. the Higgs bundle $(\widetilde{p}_{s*}\MSO_{\widetilde{X}_s},\varphi)$ defined in Example \ref{e.canonical-Higgs-bundle}. The key point is that the image of $\MSL_{\textbf{a}}$ in $\Omega_X^1$ is not saturated and Lemma \ref{l.factorisation-varphi} does not hold for $\MSL_{\textbf{a}}$.
	\end{remark}

	\subsection{$\GL_2(\mbC)$ Higgs bundles}
	
	Now we proceed to consider $\GL_2(\mbC)$ Higgs bundles on $X$. Given a spectral datum $\mbfs=(s_1,s_2)\in \MS_X$, by Proposition \ref{p.CN-rankone}, we have 
	\[
	s:=s_2-\frac{1}{4}s_1^2\in \MB_X.
	\]
	Moreover, we can define the \emph{spectral variety} $X_{\mbfs}\subset \tot(\Omega_X^1)$ associated to $\textbf{s}$ as 
	\[
	X_{\mbfs}:=\{\lambda \in \tot(\Omega_X^1)\,|\,\lambda^2-s_1\lambda+s_2=(\lambda-\frac{1}{2}s_1)^2+s=0\}.
	\]
	Then one can easily see that if $s\not=0$, the morphism 
	\[
	s_1^{\sharp}:\tot(\Omega_X^1) \rightarrow \tot(\Omega_X^1), \quad \lambda\mapsto \lambda-\frac{1}{2}s_1
	\]
	induces an isomorphism $X_{\mbfs} \rightarrow X_s$. In particular, the proof of Theorem \ref{t.spectral-correspondence-GL2} can be completed by applying Theorem \ref{t.Spectral-correspondence} to $X_s$.

	\begin{proof}[Proof of Theorem \ref{t.spectral-correspondence-GL2}]
		Given a maximal Cohen-Macaulay sheaf $\MSM$ of generic rank one on $\widetilde{X}_s$, applying Theorem \ref{t.Spectral-correspondence} to $\MSM$ yields a rank two Higgs bundle $(\MSE,\varphi)$ on $X$ with $\varphi\wedge\varphi=0$, $\Tr(\varphi)=0$ and $\det(\varphi)=s$. Then we can define a new Higgs field on $\MSE$ as $\varphi'=\varphi+\frac{1}{2}s_1$. 
		
		Conversely, given a Higgs bundle $(\MSE,\varphi)$ with $\Tr(\varphi)=s_1$ and $\det(\varphi)=s_2$, then we define a new Higgs field on $\MSE$ as $\varphi'=\varphi-\frac{1}{2}s_1$. Applying Theorem \ref{t.Spectral-correspondence} to $(\MSE,\varphi')$ shows that there exists a maximal Cohen-Macaulay sheaf $\MSM$ on $\widetilde{X}_s$ such that $\MSE\cong \widetilde{p}_{s*}\MSM$ and $\varphi'$ is induced by multiplying $\eta$ such that $\varphi=\varphi'+\frac{1}{2}s_1$.
	\end{proof}
	
	\begin{proof}[Proof of Corollary \ref{c.Chen-Ngo-GL2}]
		If $s=s_2-s_1^2/4=0$, we can define a polystable rank two Higgs bundle $(\MSE,\varphi)$ on $X$ as
		\begin{center}
			$\MSE\cong \MSO_X\oplus \MSO_X$ and $\varphi=\diag(s_1/2,s_1/2)$.
		\end{center}
		If $s=s_2-s_1^2/4\not=0$, the statement follows from Theorem \ref{t.spectral-correspondence-GL2} and Example \ref{e.canonical-Higgs-bundle}.
	\end{proof}

    \begin{proof}[Proof of Theorem \ref{t.LtwistedHiggs}]
        For any $\textbf{s}=(s_1,s_2)\in \MS_X\setminus \MS_X^{\nil}$, we set $s=s_2-s_1^2/4$. Then we have $s\not=0$. Applying Lemma \ref{l.factorisation-varphi} yields that there exists a line bundle $\MSL$ and an inclusion $\alpha:\MSL\rightarrow \Omega_X^1$ such that for any Higgs bundle $(\MSE,\varphi')$ with $\Tr(\varphi')=0$ and $\det(\varphi')=s$, there exists a $\MSL$-twisted Higgs field $\varphi_0\in H^0(X,\End(\MSE)\otimes \MSL)$ such that $\varphi'=\alpha\circ \varphi_0$. For any arbitrary Higgs bundle $(\MSE,\varphi)$ such that $\sh_X([\MSE,\varphi])=\textbf{s}$, we can define a new Higgs field $\varphi'$ as $\varphi-\frac{1}{2}\Id_{\MSE}$. Then we conclude by applying the result above to $(\MSE,\varphi')$.
    \end{proof}
	
	Under the normality assumption on the spectral variety $X_s$, we have the following description of the stack $\MM(X_s)$ of maximal Cohen-Macaulay sheaves of rank one on $X_s$, see also \cite[Proposition 7.6]{ChenNgo2020}.
	
	\begin{proposition}
		Assume that the spectral variety $X_s$ is irreducible and normal. Then the action of the Picard scheme $\MP(X_s)$ of line bundles on itself by translation extends to a free action on the moduli stack $\MM(X_s)$.
	\end{proposition}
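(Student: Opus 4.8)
The plan is to define the action by tensoring with line bundles, verify that it is well defined on $\MM(X_s)$, and then prove freeness by restricting to the regular locus and extending across the singular locus via normality. Observe first that, since $X_s$ is irreducible and normal, the finite birational normalisation morphism $\widetilde{X}_s^n \to X_s$ from \S\ref{s.towerofCMness} is an isomorphism; hence $X_s$ is isomorphic to one of the double covers $X_{\mathbf a}$ and is in particular Cohen--Macaulay. Consequently $\MSO_{X_s}$, and thus every line bundle on $X_s$, is a maximal Cohen--Macaulay sheaf of rank one, so that $\MP(X_s)$ embeds naturally into $\MM(X_s)$.

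For well-definedness I would check that if $\MSN$ is a line bundle on $X_s$ and $\MSM$ is a maximal Cohen--Macaulay sheaf of generic rank one, then $\MSN \otimes \MSM$ is again such a sheaf. Indeed, as $\MSN$ is invertible, a local generator at a prime $\mathfrak{p}$ gives $(\MSN \otimes \MSM)_{\mathfrak{p}} \cong \MSM_{\mathfrak{p}}$ as $\MSO_{X_s,\mathfrak{p}}$-modules, so depth and dimension are preserved and the maximal Cohen--Macaulay property is retained; the generic rank is multiplied by $\rank \MSN = 1$ and hence remains one. Associativity and the triviality of $\MSO_{X_s}$ are immediate, so tensoring defines an action of $\MP(X_s)$ on $\MM(X_s)$ which restricts, on the locus of line bundles, to the translation action of $\MP(X_s)$ on itself.

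The substantive part is freeness, i.e. the vanishing of all stabilizers: I must show that $\MSN \otimes \MSM \cong \MSM$ forces $\MSN \cong \MSO_{X_s}$. Let $j : U \hookrightarrow X_s$ be the inclusion of the regular locus. Since $X_s$ is normal, Serre's criterion gives $\mathrm{codim}(X_s \setminus U) \geq 2$, and by Proposition \ref{p.Serre-CM} the restriction $\MSM|_U$ is locally free of rank one. Tensoring the given isomorphism with $(\MSM|_U)^{-1}$ over $U$ yields $\MSN|_U \cong \MSO_U$. To globalise, I would invoke reflexivity: both $\MSN$ and $\MSO_{X_s}$ are line bundles, hence reflexive, on the normal (in particular $S_2$) variety $X_s$, and they agree on $U$, whose complement has codimension at least two; therefore $\MSN \cong j_*(\MSN|_U) \cong j_*\MSO_U \cong \MSO_{X_s}$, which proves freeness.

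The main obstacle is precisely this final extension step, and it is where both hypotheses are indispensable: irreducibility guarantees a single generic point so that \emph{generic rank one} is meaningful, while normality supplies the codimension bound on the singular locus together with the reflexive-extension property $j_*\MSO_U \cong \MSO_{X_s}$. Without normality a line bundle trivial on the regular locus need not be globally trivial, and the argument collapses — consistent with the expectation that the clean translation action is available only on the normal model $\widetilde{X}_s^n$.
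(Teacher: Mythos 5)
Your proof is correct and takes essentially the same route as the paper's: embed $\MP(X_s)$ into $\MM(X_s)$ via Cohen--Macaulayness, define the action by tensoring (checking that tensoring with an invertible sheaf preserves the maximal Cohen--Macaulay and generic-rank-one properties), and prove freeness by restricting to the regular locus, where Proposition \ref{p.Serre-CM} makes $\MSM$ a line bundle, and then extending the resulting trivialization of $\MSN$ across the codimension-two singular locus using normality. The only difference is that you explicitly justify why $X_s$ is Cohen--Macaulay (by identifying it with its normalisation $\widetilde{X}_s^n$ from \S\,\ref{s.towerofCMness}), a point the paper's proof asserts without comment, so your write-up is if anything slightly more complete.
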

	
	\begin{proof}
		As $X_s$ is Cohen-Macaulay, every line bundle on $X$ is Cohen-Macaulay, i.e., $\MP(X_s)\subset \MM(X_s)$. Next we can define the action of $\MP(X_s)$ on $\MM(X_s)$ by
		\[
		(\MSL,\MSF) \mapsto \MSF\otimes \MSL
		\]
		for $\MSF\in \MM(X_s)$ and $\MSL\in \MP(X_s)$, since the tensor product $\MSF\otimes \MSL$ is again a maximal Cohen-Macaulay sheaf of rank one on $X$.
		
		Finally, as $X$ is normal, there exists a closed subset $Z$ of codimension at least two such that $X\setminus Z$ is smooth. If a line bundle $\MSL\in \MP(X_s)$ has a stabliser $\MSF\in \MM(X_s)$, then we have $\MSL\otimes \MSF\cong \MSF$. However, as $\MSF|_{X\setminus Z}$ is locally free, it follows that $\MSL|_{X\setminus Z}$ is isomorphic to the trivial bundle $\MSO_{X}|_{X\setminus Z}$. Since $X$ is normal and $\MSL$ is locally free, it follows that $\MSL$ is actually isomorphic to $\MSO_X$. Hence, the action of $\MP(X_s)$ on $\MM(X_s)$ is free.
	\end{proof}
	
	\begin{remark}
		In general it may happen that $\widetilde{X}_s$ is not normal for any $s\in \MB_X$, see Example \ref{e.Spectral-var} (3) below.
	\end{remark}

	\begin{example}
		\label{e.Spectral-var}
		We discuss in the following the geometry of $\widetilde{X}_s$ for several classes of projective manifolds $X$.
		
		\begin{enumerate}
			\item Let $X$ be a projective manifold such that $\MB_{X,\MSO_X}\not=0$. Let $s\in \MB_{x,\MSO_X}$ be a general element. Then the variety $\widetilde{X}_s$ is a disjoint union of two copies of $X$ and the rank two Higgs bundles $(\MSE,\varphi)$ on $X$ with the associated spectral datum $\Tr(\varphi)=0$ and $\det(\varphi)=s$ is of the following form
			\begin{center}
				$\MSE\cong \MSL_1\oplus \MSL_2$ and $\varphi=\diag(\sqrt{-1}\alpha,-\sqrt{-1}\alpha)$,
			\end{center}
			where $\MSL_i$'s are line bundles on $X$ and $\alpha\in H^0(X,\Omega_X^1)$ such that $\alpha^2=s$.
			
			\item Let $X$ be an $n$-dimensional abelian variety. Then $\MB_X=\MB_{X,\MSO_X}$ is of V-type by Example \ref{e.V-L-examples}. In particular, we have $s=\omega^2$ for any $0\not=s\in \MB_X$ and the spectral variety $X_{s}$ is a disjoint union of two copies of $X$. Thus the moduli stack of maximal Cohen-Macaulay sheaves of generic rank one on $X_{s}$ is just $\Pic(X)\oplus \Pic(X)$.

			More generally, let $\pi:X'\rightarrow X$ be a birational map from a projective manifold $X'$ to $X$. Then every element $s'\in \MB_{X'}$ can be written as $\omega'^2$ for some $\omega' \in H^0(X,\Omega_{X'}^1)$ such that $\omega'=\pi^*\omega$ and $s'=\pi^*\omega^2$. In particular, note that a general $s'\in \MB_{X'}$ does not vanish in codimension one. It follows that the Cohen-Macaulayfication $\widetilde{X}'_{s'}$ of the spectral variety $X'_{s'}$ is again a disjoint union of two copies of $X'$ and the moduli stack of maximal Cohen-Macaulay sheaves of generic rank one on $\widetilde{X}'_{s'}$ is $\Pic(X')\oplus \Pic(X')$.

			\item Let $f:X\rightarrow C$ be a fibration over a smooth projective curve $C$. Let $\MSL$ be the saturation of the inclusion $f^*\Omega_C^1\subset \Omega_X^1$. Then we have
			\begin{equation}
				\label{e.Conormal-foliations}
				\MSL \cong f^*\Omega_C^1\otimes \MSO_X(R_f),
			\end{equation}
			where $R$ is an effective divisor defined as 
			\[
			R_f=\sum_{x\in C} (F_x-(F_x)_{\textup{red}}),
			\]
			where $F_x$ is the pull-back $f^*x$ as Cartier divisor and $(F_x)_{\textup{red}}$ is the sum of irreducible component of $F_x$. In particular, the summation in the definition of $R_f$ is actually finite.
			
			Let $\pi:X'\rightarrow X$ be a birational map from a projective manifold $X'$ to $X$ and denote by $f':X'\rightarrow C$ the induced morphism. Let $\MSL'$ be the saturation of $f'^*\Omega_C^1$ in $\Omega_{X'}^1$. Then \eqref{e.Conormal-foliations} implies that there exists an effective $\pi$-exceptional divisor $E_{\pi}$ such that 
			\[
			\MSL'\cong \pi^*\MSL + E_{\pi},
			\] 
			where $E_{\pi}=R_{f'}-\pi^*R_f$. Then we have $|\MSL'^{2}|=\pi^*|\MSL^{2}| + 2E_{\pi}$.
			Let $E$ be an irreducible $\pi$-exceptional divisor and let $x=f'(E)\in C$. Write $F_x=\sum_i m_i F_i$ into irreducible components with $F_i$'s distinct prime divisors. Then a straightforward computation shows
			\[
			\textup{Mult}_E R_{f'}=\textup{Mult}_E (\pi^*F_x) -1=\sum_{i} m_i \textup{Mult}_E (\pi^*F_i)-1
			\]
			and
			\[
			\pi^*R_f = \sum_i (m_i-1) \textup{Mult}_E(\pi^* F_i).
			\]
			This yields
			\[
			\textup{Mult}_E E_{\pi} = \sum_i \textup{Mult}_{E}(\pi^*F_i) - 1. 
			\]
			
			Let $X$ be the blow-up of $F\times C$ at a point $z$, where $F$ is rationally connected. Let $F'$ be the unique reducible fibre of $f$ and let $p$ be the singular point of $F'$. Let $\pi:X'\rightarrow X$ be the blow-up at $p$. Then we have 
			\[
			\MB_{X'}=\MB_{X',\MSL'} = \{s=\alpha^2\tau\,|\,\tau\in H^0(X',\MSL'^{2})\},
			\]
			where $\alpha\in H^0(X',\Omega_{X'}^1\otimes \MSL'^{-1})$ corresponds to the inclusion $\MSL'\subset \Omega_{X'}^1$. In particular, our argument above shows that the vanishing order of $s$ and hence $\tau$ along $E_{\pi}$ is at least two for any $s\in \MB_X$, where $E_{\pi}$ is the unique exceptional divisor of $\pi$. This implies that $\widetilde{X}_s$ is not normal for any $0\not=s\in \MB_X$. Moreover, if $C$ is an elliptic curve, then $\widetilde{X}_s$ is even reducible for any $0\not=s\in \MB_X$.
		\end{enumerate}
	\end{example}
	
\section{Real Higgs bundles and the Hitchin section}
\label{sec_realHiggsbundle_Hitchin_section}
 In this section, we will delve into the $\CS$ action on the moduli space of Higgs bundles and explore the notion of real Higgs bundles. For a more detailed understanding, we refer to \cite[Chapter 4]{Simpson1992} and \cite[Chapter 9,10]{hitchin1987self}. Additionally, we will construct a section of the spectral morphism, extending the construction presented in \cite{hitchin1992lie}.
 
	\subsection{$\CS$ action on the space of Higgs bundles}
	Over the moduli space of Higgs bundle $\MMH$, there is a canonical $\CS$ action given as the follows: given a Higgs bundle $(\msE,\vp)\in \MMH$, for $t\in \CS$, $t$ acts on $(\msE,\vp)$ is defined as $t\cdot(\msE,\vp)=(\msE,t\vp)$. This action preserves the stability condition, the Chern classes of the Higgs bundles, and in particular, the condition of being topologically trivial. The fixed points of the $\CS$ action can be explicitly described as follows.
	
	\begin{proposition}[\protect{\cite[Lemma 4.1]{Simpson1992}}]
		\label{prop_Hodge_bundle_stablity}
		Consider a rank two Higgs bundle $(\msE,\vp\neq 0)\in \MMH$ which is a fixed point of the $\mbC^{*}$ action. Then there exist holomorphic line bundles, $\msL_1$ and $\msL_2$, along with a non-zero section $\al\in H^0(X,\msL_1^{-1}\otimes\msL_2\otimes \Omega_X^1)$ such that we could write 
        \[
        \msE=\msL_1\oplus \msL_2,\;\vp=\begin{pmatrix}
			0 & 0\\
			\al & 0
		\end{pmatrix}.
        \]
        Moreover, the Higgs bundle $(\msE,\vp)$ is stable if and only if $\deg \msL_1>\deg \msL_2$. 
	\end{proposition}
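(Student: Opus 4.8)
The plan is to reduce the fixed-point condition to nilpotency of $\vp$, then to extract the splitting $\msE=\msL_1\oplus\msL_2$ from the weight grading of the associated $\mbC^{*}$-action, and finally to read off stability from a classification of the $\vp$-invariant saturated line subsheaves. First I would use that being a fixed point means $(\msE,t\vp)\cong(\msE,\vp)$ for every $t\in\mbC^{*}$. Since $\Tr(\vp)$ and $\det(\vp)$ are invariant under bundle isomorphisms (they only depend on the conjugacy class of $\vp$), this isomorphism forces $t\Tr(\vp)=\Tr(\vp)$ and $t^{2}\det(\vp)=\det(\vp)$ for all $t$, whence $\Tr(\vp)=0$ and $\det(\vp)=0$. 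Thus $\vp$ is everywhere nilpotent, which is the pointwise shadow of the strictly triangular form we are after.

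The heart of the argument is to upgrade the $\mbC^{*}$-invariance into a genuine $\mbC^{*}$-action on $\msE$ covering the trivial action on $X$. Because $(\msE,\vp)\in\MMH$ is polystable with $\vp\neq0$, it is in fact stable: a strictly polystable rank two object would split as a sum of two rank one Higgs bundles whose Higgs fields lie in $H^0(X,\Omega_X^1)$, and such fields cannot be rescaled nontrivially while staying isomorphic, forcing $\vp=0$. Hence $(\msE,\vp)$ is simple, so the isomorphism $g_t\colon(\msE,\vp)\xrightarrow{\sim}(\msE,t\vp)$ is unique up to scalar. The assignment $t\mapsto g_t$ then defines a projective representation $\mbC^{*}\to\PGL(\msE)$, which, since $\mbC^{*}$ is special, lifts to an honest algebraic homomorphism $\rho\colon\mbC^{*}\to\Aut(\msE)$. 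This produces a weight decomposition $\msE=\bigoplus_{k\in\mbZ}\msE_k$ with $\rho(t)|_{\msE_k}=t^{k}$, and the relation $\rho(t)^{-1}\vp\,\rho(t)=t\vp$ translates into $\vp(\msE_k)\subseteq\msE_{k-1}\otimes\Omega_X^1$, i.e. $(\msE,\vp)$ is a system of Hodge bundles. I expect this lifting-and-grading step to be the main obstacle; it is precisely the content of Simpson's lemma that we are reproving.

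Given the grading, the rank two conclusion is immediate: since $\rank\msE=2$ and $\vp\neq0$, the decomposition has exactly two nonzero summands sitting in consecutive weights (a single weight would force $\vp(\msE_k)\subseteq\msE_{k-1}\otimes\Omega_X^1=0$, and non-consecutive weights would likewise annihilate $\vp$). Reindexing so that $\msL_1$ is the higher-weight piece and $\msL_2$ the lower one, I obtain line bundles $\msE=\msL_1\oplus\msL_2$ with $\vp(\msL_2)=0$ and $\vp|_{\msL_1}$ given by a nonzero $\al\in H^0(X,\msL_1^{-1}\otimes\msL_2\otimes\Omega_X^1)$, which in the basis $(\msL_1,\msL_2)$ is exactly $\begin{pmatrix}0&0\\\al&0\end{pmatrix}$.

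Finally I would settle stability by classifying the $\vp$-invariant saturated rank one subsheaves $\msM\subset\msE$; by the remark following Definition \ref{d.stability} these are all that need to be tested. The summand $\msL_2=\ker\vp$ is one such. Conversely, if $\msM\neq\msL_2$ is saturated and $\vp$-invariant, then the projection $\msM\to\msL_1$ is nonzero, so $\vp|_{\msM}$ equals the nonzero composite $\msM\to\msL_1\xrightarrow{\al}\msL_2\otimes\Omega_X^1$; but $\vp$-invariance together with $\vp(\msM)\subseteq\msL_2\otimes\Omega_X^1$ gives $\vp(\msM)\subseteq(\msM\otimes\Omega_X^1)\cap(\msL_2\otimes\Omega_X^1)=(\msM\cap\msL_2)\otimes\Omega_X^1$, which vanishes since two distinct saturated line subsheaves of the torsion-free $\msE$ meet only in torsion. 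This contradiction shows $\msL_2$ is the unique $\vp$-invariant saturated line subsheaf. Therefore $(\msE,\vp)$ is stable if and only if $\mu(\msL_2)<\mu(\msE)$, that is $\deg\msL_2<\tfrac12(\deg\msL_1+\deg\msL_2)$, equivalently $\deg\msL_1>\deg\msL_2$; moreover when $\deg\msL_1\leq\deg\msL_2$ the absence of a $\vp$-invariant complement to $\msL_2$ shows $(\msE,\vp)$ is not polystable, consistent with the hypothesis $(\msE,\vp)\in\MMH$.
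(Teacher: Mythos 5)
Your first step (nilpotency), your reduction from polystable to stable, and your stability analysis at the end are all sound; indeed the uniqueness argument for $\msL_2$ as the only $\vp$-invariant saturated rank-one subsheaf is correct and arguably cleaner than the paper's degree comparison. The problem is the central step, which you yourself flag as ``the main obstacle'': it is asserted, not proved, and the justification offered does not work. Simplicity gives you, for each \emph{individual} $t$, an isomorphism $g_t$ unique up to scalar, hence an \emph{abstract} group homomorphism $\CS\to\Aut(\msE)/\mbC^{*}$. But to obtain a weight decomposition $\msE=\bigoplus\msE_k$ you need this map to be a morphism of algebraic groups (or at least holomorphic), and nothing in your argument addresses this regularity. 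Abstract homomorphisms out of $\mbC^{*}$ can be wild and admit no weight theory; abstract projective representations of $\mbC^{*}$ need not lift to linear ones (the splitting of central extensions $1\to\mathbb{G}_m\to G\to\mathbb{G}_m\to 1$ is a statement about \emph{algebraic} groups, and the relevant obstruction group does not vanish in the abstract category); and ``$\CS$ is special'' in Serre's sense is a statement about torsors, not about lifting projective representations, so it is not the relevant property. Establishing algebraicity of $t\mapsto[g_t]$ requires a genuine argument (e.g.\ via the linear solution spaces $\{g : g\vp=t\vp g\}$ inside $H^0(X,\End(\msE))$ and their variation in $t$), which is missing.

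The paper sidesteps this entirely, following Simpson: fix a \emph{single} $t\neq\pm 1$ and one holomorphic automorphism $g$ with $g^{-1}\vp g=t\vp$; the coefficients of the characteristic polynomial of $g$ are holomorphic, hence constant, and since $\vp\neq 0$ the automorphism $g$ must have two distinct constant eigenvalues $\lam_1\neq\lam_2$ (a single eigenvalue would force $\vp=0$, as $\vp$ maps the $\lam$-eigenbundle to the $t\lam$-eigenbundle). The eigenbundle decomposition $\msE=\msL_1\oplus\msL_2$ then exists with no lifting needed, and the intertwining relation $(g-t\lam)\vp=t\vp(g-\lam)$ forces $\lam_2=t\lam_1$ and kills all components of $\vp$ except $\al:\msL_1\to\msL_2\otimes\Omega_X^1$, using $t\neq\pm 1$ to rule out a nonzero component in the other off-diagonal slot. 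You could repair your proposal by replacing the one-parameter-group argument with this single-automorphism eigenbundle argument, but that is precisely the paper's proof. One further minor point: your dismissal of the strictly polystable case should also treat the ``swap'' possibility $(\msL_1,t\phi_1)\cong(\msL_2,\phi_2)$, $(\msL_2,t\phi_2)\cong(\msL_1,\phi_1)$, which forces $t^2\phi_i=\phi_i$ and is excluded only because the fixed-point condition holds for some $t$ with $t^2\neq 1$.
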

	\begin{proof}
		
If $(\msE,\vp)$ is a $\CS$ fixed point, then there exists $t\neq \pm 1\in \CS$ such that $(\msE,t\vp)\cong (\msE,\vp)$. Then there exists a holomorphic gauge transformation $g$ such that $g\cdot (\msE,t\vp)=(\msE,\vp)$. Since the characteristic polynomial of $g$ is a holomorphic function on $X$, the eigenvalues are constants. For any $\lambda\in \mbC$, denote by $E_{\lambda}$ the kernel of $(g-\lambda)^2$. Now $(g-t\lambda)^2\varphi=t^2\varphi(g-\lambda)^2$, so $\varphi$ maps the $\lambda$ eigenspace $E_{\lambda}$ to $E_{t\lambda}$. In particular, as $\varphi\not=0$ and $t\not=1$, we can assume that $g$ has distinct non-vanishing eigenvalues $\lam_1\neq \lam_2$, which gives a decomposition $\msE\cong \msL_1\oplus \msL_2$ into eigenbundles. Moreover, as $(g-t\lam)\vp=t\vp(g-\lam)$, $\vp$ maps an eigensection $s$ with eigenvalue $\lam$ to an eigensection with eigenvalue $t\lam$. As $\vp\neq 0$, we can assume $\lam_2=t\lam_1$, and the Higgs field has nontrivial action on $\msL_1$. If the restriction $\vp:\msL_2\to \msL_1$ is also non-trivial, then we would have $\lam_1=t\lam_2$, which contradicts the assumption that $t\neq \pm 1$. Therefore, $\vp|_{\msL_2}=0$, and we can write $\vp=\begin{pmatrix}
0 & 0\\
\al & 0
\end{pmatrix}$.
		
For stability, let $d_i=\deg(\msL_i)$. The slope of $\msE$ is given by $\mu(\msE)=\frac{1}{2}(d_1+d_2)$. If $(\msE,\vp)$ is stable, then since $\msL_2\subset \msE$ is a $\vp$-invariant subline bundle, we have $\mu(\msE)>\mu(\msL_2)$, which implies $d_1>d_2$.

Conversely, if $d_1>d_2$ and $(\msE,\vp)$ is not stable, then there exists a $\vp$-invariant rank one sheaf $\msL$ with $\deg(\msL)>\mu(\msE)=\frac12(d_1+d_2)>d_2$. However, as $\msL$ is $\vp$-invariant, the composition $\msL\to\msE\to\msL_2$ is nontrivial. In particular, we have $\deg(\msL)\leq d_2$, which leads to a contradiction.
	\end{proof}
	
	\begin{definition}
		A rank two Higgs bundle $(\msE,\vp)$ is called a Hodge bundle if we could write 
		\begin{equation}
			\label{eq_Hodge_bundle}
			\msE=\msL_1\oplus \msL_2,\;\vp=\begin{pmatrix}
				0 & 0\\
				\al & 0
			\end{pmatrix}
		\end{equation}
		and a Hodge bundle is called a VHS if $(\msE,\vp)$ is stable and $\Delta(\msE)\cdot [\omega]^{n-2}=0$. 
	\end{definition}

In particular, a Higgs bundle $(\msE,\vp)\in\MMD$ is a variation of Hodge structure (VHS for short) if and only if it is a fixed point of the $\CS$-action. The concept of the VHS implies that under the non-abelian Hodge correspondence, the corresponding complex connection defines a flat connection, which gives rise to a non-trivial representation arising from the complex variation of Hodge structure \cite{Simpson1988Construction, Simpson1992}.
	
	\begin{theorem}[\protect{\cite{Simpson1992}\cite[Corollary 6.12]{simpson1994moduli2}}]
		\label{action_fixed_point_Hodge_bundle}
		Let $(\msE,\vp)$ be a topologically trivial stable rank two Higgs bundle, then the limit $[(\msE_0,\vp_0)]:= \lim_{t\to 0}[(\msE,t\vp)]\in \MMD$ exists, which is a fixed point of $\CS$ action.
	\end{theorem}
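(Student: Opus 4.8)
The plan is to deduce the existence of the limit from the properness of the Hitchin morphism on $\MMD$ (Theorem \ref{thm_Hitchinmap_proper}), exploiting the equivariance of $\sh_X$ under a suitable weighted $\CS$-action on the base. First I would record that $\sh_X$ intertwines the $\CS$-action $t\cdot(\msE,\vp)=(\msE,t\vp)$ on $\MMD$ with the linear action
\[
t\cdot(s_1,s_2)=(ts_1,t^2s_2)
\]
on $\MA_X=H^0(X,\Omega_X^1)\oplus H^0(X,\Sym^2\Omega_X^1)$, since $\Tr(t\vp)=t\,\Tr(\vp)$ and $\det(t\vp)=t^2\det(\vp)$. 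The decisive feature of this weighted action is that every orbit in $\MA_X$ contracts to the origin as $t\to 0$; in particular the composition $\sh_X\circ(t\mapsto[(\msE,t\vp)])$ extends to a morphism $\BA^1\to\MA_X$ sending $0$ to $(0,0)$.

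With this in hand I would invoke the valuative criterion of properness for $\sh_X|_{\MMD}:\MMD\to\MA_X$. Concretely, let $R=\MSO_{\BA^1,0}$ be the local ring at the origin, a discrete valuation ring with fraction field $K$. The orbit morphism $t\mapsto[(\msE,t\vp)]$, defined on $\CS=\BA^1\setminus\{0\}$, restricts to a $K$-point of $\MMD$, while the extended morphism above furnishes an $R$-point of $\MA_X$ lying over it. Since $\MMD$ is a separated quasiprojective variety and $\sh_X|_{\MMD}$ is proper, the valuative criterion produces a \emph{unique} extension $\mathrm{Spec}\,R\to\MMD$; the image of the closed point is the desired limit $[(\msE_0,\vp_0)]$, which therefore exists and lies in $\MMD$. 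The limit automatically parametrises a polystable Higgs bundle, because $\MMD$ by construction only parametrises polystable objects, and it remains topologically trivial since the $\CS$-action preserves all Chern classes; uniqueness of the extension guarantees that the limit is well defined and independent of the chosen one-parameter degeneration.

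It then remains to check that $p_0:=[(\msE_0,\vp_0)]$ is a fixed point. Writing $p=[(\msE,\vp)]$ and using the continuity of the algebraic action together with the identity $s\cdot(t\cdot p)=(st)\cdot p$, for any fixed $s\in\CS$ I would compute
\[
s\cdot p_0=s\cdot\lim_{t\to 0}(t\cdot p)=\lim_{t\to 0}(st)\cdot p=\lim_{t'\to 0}(t'\cdot p)=p_0,
\]
where the third equality uses that $t'=st\to 0$ as $t\to 0$. Hence $p_0$ is fixed by every $s\in\CS$, and by Proposition \ref{prop_Hodge_bundle_stablity} it is represented by a Hodge bundle.

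The main obstacle is the existence of the limit, i.e. ensuring that the orbit does not escape to infinity in $\MMD$ as $t\to 0$; this is exactly what properness of $\sh_X|_{\MMD}$ controls, so the substantive input is Theorem \ref{thm_Hitchinmap_proper} rather than any analytic estimate. The remaining points are purely formal: verifying that $t\mapsto[(\msE,t\vp)]$ is a genuine algebraic morphism from $\CS$ (so that the valuative criterion applies) and that the resulting $R$-point of $\MMD$ is compatible over $\mathrm{Spec}\,R$ with the fixed $R$-point of $\MA_X$. Once these are in place, the fixed-point property is a soft consequence of continuity of the $\CS$-action.
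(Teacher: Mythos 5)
The paper gives no proof of this theorem at all — it is quoted verbatim from Simpson (\cite{Simpson1992}, \cite[Corollary 6.12]{simpson1994moduli2}), and Simpson's own argument is precisely the one you give: equivariance of $\sh_X$ with respect to the weighted $\CS$-action $t\cdot(s_1,s_2)=(ts_1,t^2s_2)$ on $\MA_X$, properness of $\sh_X|_{\MMD}$ (Theorem \ref{thm_Hitchinmap_proper}) plus the valuative criterion to extend the orbit map across $t=0$, and continuity of the action to conclude the limit is a fixed point. Your proposal is correct and takes essentially the same approach as the cited source.
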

	
\subsection{The moduli space of real Higgs bundles}
	In this subsection, we will introduce real Higgs bundles and define the moduli space of real Higgs bundles.
 
\subsubsection{Real Higgs bundles}

The concept of real Higgs bundles is derived from \cite[Chapter 10]{hitchin1987self} and further explored in \cite[Chapter 2]{Simpson1992}. Consider a rank two Higgs bundle $(\msE,\vp)$ with $\mathrm{Tr}(\vp)=0$ such that $\Delta(\msE)\cdot \omega^{n-2}=0$. Let $\rho: \pi_1(X)\to \mathrm{PSL}_2(\mathbb{C})$ be the corresponding representation under the non-abelian Hodge correspondence. Then, for the Higgs bundle $(\msE,-\vp)$, the corresponding representation is denoted by $\bar{\rho}$. This leads to the following definition of a real Higgs bundle.

\begin{definition}
A Higgs bundle $(\msE,\vp)$ is called real if it is complex gauge equivalent to $(\msE,-\vp)$.
\end{definition}
	
	Rank two real Higgs bundles have the following canonical shape:
	\begin{proposition}[\protect{\cite[Proposition 10.2]{hitchin1987self}}]
		\label{prop_stability_realHiggs}
		Let $(\msE,\vp)$ be a real rank two Higgs bundle. Then there exists a decomposition
\begin{equation}
\label{eq_shape_real_Higgs_bundle}
\begin{split}
\msE=\msL_1\oplus \msL_2,\;\vp=\begin{pmatrix}
\om & \beta \\
\alpha & \om
\end{pmatrix}
\end{split}
\end{equation}
where $\om\in H^0(X,\Omega_X^1)$, $\alpha\in H^0(X,\msL_1^{-1}\otimes \msL_2\otimes \Omega_X^1)$ and $\beta\in H^0(X,\msL_1\otimes \msL_2^{-1}\otimes \Omega_X^1)$ such that $\alpha\wedge\beta=0$. Regarding the stability condition, the following statements hold.
\begin{enumerate}
\item Suppose $\deg(\msL_1)>\deg(\msL_2)$. Then $(\msE,\vp)$ is stable if and only if $\alpha\neq 0$.
\item Suppose $\deg(\msL_1)=\deg(\msL_2)$. Then we have:
\begin{enumerate}
\item If $\msL_1$ is not isomorphic to $\msL_2$, then $(\msE,\vp)$ is stable if and only if $\alpha\neq 0$ and $\beta\neq 0$.
\item If $\msL_1\cong \msL_2$, then $\alpha,\beta\in H^0(X,\Omega_X^1)$. Moreover, the Higgs bundle $(\msE,\vp)$ is stable if and only if $\alpha$ and $\beta$ are not proportional. Otherwise $(\msE,\vp)$ is polystable.
\end{enumerate}
\end{enumerate}
	\end{proposition}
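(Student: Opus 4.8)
The plan is to first pin down the normal form \eqref{eq_shape_real_Higgs_bundle} using the real involution, and then to read off the stability dichotomy from a direct analysis of $\vp$-invariant sub-line-bundles. Write $\om=\frac12\Tr(\vp)\in H^0(X,\Omega_X^1)$ and let $\vp_0=\vp-\om\,\Id$ be the trace-free part; reality of $(\msE,\vp)$ (which concerns the induced $\mathrm{PSL}_2$-Higgs bundle, equivalently its projective part) yields a holomorphic automorphism $g$ of $\msE$ with $g\vp_0g^{-1}=-\vp_0$. Since $X$ is projective and connected, $\Tr(g)$ and $\det(g)$ are constants, so the eigenvalues $\lambda_1,\lambda_2$ of $g$ are constant. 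If $\lambda_1=\lambda_2$, then conjugation by $g$ is unipotent on $\End(\msE)$ and cannot have $-1$ as an eigenvalue, forcing $\vp_0=0$, and any splitting works. If $\lambda_1\neq\lambda_2$, the holomorphic eigen-projections $\frac{g-\lambda_j}{\lambda_i-\lambda_j}$ split $\msE=\msL_1\oplus\msL_2$ into line bundles; the relation $g\vp_0=-\vp_0 g$ shows $\vp_0$ sends the $\lambda_i$-eigenbundle into the $(-\lambda_i)$-eigenbundle, and invertibility of $g$ forces $\lambda_2=-\lambda_1$, so $\vp_0$ is off-diagonal with entries $\alpha\in H^0(X,\msL_1^{-1}\otimes\msL_2\otimes\Omega_X^1)$ and $\beta\in H^0(X,\msL_1\otimes\msL_2^{-1}\otimes\Omega_X^1)$, giving \eqref{eq_shape_real_Higgs_bundle}. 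Expanding $\vp\wedge\vp$ in this frame produces the diagonal matrix $\mathrm{diag}(\beta\wedge\alpha,\alpha\wedge\beta)$, so the Higgs equation $\vp\wedge\vp=0$ is equivalent to $\alpha\wedge\beta=0$.

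For the stability statements, set $d_i=\deg(\msL_i)$, so $\mu(\msE)=\frac12(d_1+d_2)$, and note that $\om\,\Id$ preserves every subsheaf, so a sub-line-bundle is $\vp$-invariant if and only if it is $\vp_0$-invariant; by the remark after Definition~\ref{d.stability} it suffices to test saturated rank-one subsheaves, which are line bundles. Suppose first $d_1>d_2$. If $\alpha=0$, then $\vp_0$ is strictly triangular, $\msL_1$ is $\vp_0$-invariant, and $\mu(\msL_1)=d_1>\mu(\msE)$ destabilizes, so $(\msE,\vp)$ is not stable. Conversely, if $\alpha\neq0$, any destabilizing saturated $\msL'\hookrightarrow\msE$ has $\mu(\msL')\geq\mu(\msE)>d_2$; a nonzero map $\msL'\to\msL_2$ would force $\deg(\msL')\leq d_2$, so the projection to $\msL_2$ vanishes, $\msL'\subseteq\msL_1$, hence $\msL'=\msL_1$ by saturation. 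But the $\msL_2$-component of $\vp_0|_{\msL_1}$ is $\alpha\neq0$, so $\msL_1$ is not $\vp_0$-invariant, a contradiction. Thus $(\msE,\vp)$ is stable exactly when $\alpha\neq0$.

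Now suppose $d_1=d_2=d$, so $\mu(\msE)=d$ and every sub-line-bundle has degree at most $d$; hence a $\vp$-invariant destabilizer can only have slope exactly $\mu(\msE)$, and stability is equivalent to the absence of a $\vp_0$-invariant sub-line-bundle of degree $d$. If $\msL_1\not\cong\msL_2$, a degree-$d$ sub-line-bundle $\msL'$ admits a nonzero, hence isomorphic, projection to $\msL_1$ or $\msL_2$, and projecting isomorphically to both would force $\msL_1\cong\msL_2$; thus $\msL'\in\{\msL_1,\msL_2\}$, and these are $\vp_0$-invariant iff $\alpha=0$, resp. $\beta=0$, so $(\msE,\vp)$ is stable iff $\alpha\neq0$ and $\beta\neq0$. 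If instead $\msL_1\cong\msL_2=:\msL$, then $\msE\cong\msL^{\oplus 2}$ with $\alpha,\beta\in H^0(X,\Omega_X^1)$, and the degree-$d$ sub-line-bundles are $\msL\otimes\ell$ for lines $\ell\subset\mathbb{C}^2$. Testing $\vp_0$-invariance reduces to the pointwise linear algebra of $\begin{pmatrix}0&\beta\\\alpha&0\end{pmatrix}$: an invariant line exists iff $\alpha$ and $\beta$ are proportional in $H^0(X,\Omega_X^1)$, so $(\msE,\vp)$ is stable exactly when $\alpha,\beta$ are not proportional. When they are proportional and both nonzero, writing $\alpha=t\mu$, $\beta=s\mu$ diagonalizes $\vp_0$ along the two distinct eigendirections of $\begin{pmatrix}0&s\\t&0\end{pmatrix}$, exhibiting $(\msE,\vp)$ as a direct sum of two Higgs line bundles of slope $d$, hence polystable.

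The hard part will be the final case $\msL_1\cong\msL_2$: one must verify that when $\alpha,\beta$ are proportional the resulting object is genuinely polystable rather than a non-split nilpotent extension. This is precisely where the $\Omega_X^1$-valued linear algebra of $\begin{pmatrix}0&\beta\\\alpha&0\end{pmatrix}$ is delicate, since a degenerate proportionality with one of $\alpha,\beta$ equal to zero yields only strict semistability; the polystability conclusion therefore relies on $(\msE,\vp)$ being a reductive real Higgs bundle, which is automatic for those arising from the non-abelian Hodge correspondence (cf. Theorem \ref{thm_NonabelianHodge}). The remaining ingredients — replacing destabilizing subsheaves by their saturations, deducing the degree inequalities from nonzero morphisms of line bundles with respect to the $\omega$-degree, and the holomorphy of the eigen-projections of $g$ — are routine over a projective manifold and need only the standard reduction to saturated rank-one subsheaves recorded after Definition~\ref{d.stability}.
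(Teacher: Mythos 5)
Your proof is correct and follows essentially the same route as the paper's: extract the holomorphic automorphism $g$ with constant eigenvalues $\pm\lambda$, split $\msE$ into its eigen-line-bundles so that the trace-free part of $\vp$ becomes off-diagonal, and settle stability by classifying saturated $\vp$-invariant sub-line bundles degree by degree, diagonalizing in the proportional case just as the paper does with its explicit gauge transformation. Two of your side remarks are worth keeping: phrasing reality on $\vp_0=\vp-\om\,\Id$ is actually more careful than the paper's literal $g^{-1}\vp g=-\vp$ (which would force $\om=0$), and your observation that a degenerate proportionality with exactly one of $\alpha,\beta$ zero yields only strict semistability is a genuine caveat that the paper's proof glosses over by tacitly rescaling both off-diagonal entries to be equal; only your aside that ``any splitting works'' when $\vp_0=0$ is too quick, since a rank-two bundle need not split into line bundles --- though the paper likewise implicitly assumes $\vp_0\neq 0$ at that step by asserting $g$ has two distinct eigenvalues.
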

	\begin{proof}
		Given a real Higgs bundle, then there exists $g\in \Aut(\msE)$ such that $g\cdot\bar{\pa}_{\msE}=\bar{\pa}_{\msE}$ and $g^{-1}\vp g=-\vp$. As in the proof of Proposition \ref{prop_Hodge_bundle_stablity}, the holomorphic gauge transformation $g$ has two distinct non-zero eigenvalues. So $\msE$ splits and  we could write $\msE=\msL_1\oplus \msL_2$ and $g=\diag(a,b)$. Moreover, we could write 
        \[
        \vp=\begin{pmatrix}
			\omega_1 & \be\\
			\al & \omega_2
		\end{pmatrix}.
        \]
        Then the equality $g^{-1}\vp g=-\vp$ implies that $\omega_1=\omega_2$. The condition $\vp\we\vp=0$ is equivalent to $\al\we \be=0$. 
  
        For the stablity condition, it is sufficient to assume that $\vp$ is trace-free. The the statement (1) follows directly from Proposition \ref{prop_Hodge_bundle_stablity}. For (2), set $d:=\deg(\msL_1)=\deg(\msL_2)$. Then $\msE$ is a semi-stable vector bundle. Let $\msL\subset \msE$ be a saturated rank one subsheaf with $\deg(\msL)\geq d=\deg(\MSL_i)$. Then it follows that if the composition $\msL\to \msE\to \msL_i$ is non-zero, we must have $\msL\cong\msL_i$ and $\deg(\msL)=\deg(\msL_i)$. 
		
		For case (a) of (2), since $\MSL_1$ is not isomorphism to $\MSL_2$, the argument above show that the subsheaves $\msL_1$ and $\msL_2$ are the only possible $\vp$-invariant saturated subsheaves with degree $\geq d$. In particular, if $\al\not=0$ and $\be\not=0$, then $\MSL_1$ and $\MSL_2$ are not $\vp$-invariant and so $\MSE$ is stable. For the converse, we assume that $(\MSE,\varphi)$ is stable. If $\al=0$ (resp. $\be=0$), then $\msL_1$ (resp. $\MSL_2$) is a $\vp$-invariant subbundle of $\msE$ whose slope is equal to $\mu(\msE)$, which is a contradiction. 
		
		For case (b) of (2), the first part of the statement is easy. Now suppose that $\al$ and $\be$ are proportional. Then up to an automorphism of $\msE$, we could write 
        \[
        \msE=\msL_1\oplus \msL_1,\quad \vp=\begin{pmatrix}
			0 & \al\\
			\al & 0
		\end{pmatrix}.
        \]
        In parituclar, using the complex gauge transformation 
        \[
        g=\begin{pmatrix}
			\frac{\sqrt{2}}{2} & \frac{\sqrt{2}}{2}\\
			-\frac{\sqrt{2}}{2} & \frac{\sqrt{2}}{2}
		\end{pmatrix},
        \]
        We have $g^{-1}\vp g=\diag(-\al,\al)$ and hence $(\msE,\vp)=(\msL_1,-\al)\oplus (\msL_1,\al)$ is not stable but only polystable. Moreover, when $(\msE=\msL_1\oplus\msL_1,\vp)$ is polystable but not stable, then $(\msE,\vp)\cong (\msL_1,\omega)\oplus (\msL_1,-\omega)$ and we have $\al\otimes\be=\omega\otimes \omega$, which implies that  $\al$ and $\be$ are proportional. 
        
        For the other direction, suppose that $(\msE,\vp)$ is not stable. Then there exists a $\vp$-invariant rank one saturated subsheaf $\MSL$ of $\MSE$ such that $\deg(\msL)=\deg(\msL_1)=\deg(\MSL_2)=d$. Thus the compositions 
        \begin{center}
            $\msL\to \msE\to \msL_1$\quad and\quad $\msL\to \msE\to \msL_2$
        \end{center}
        are both isomorphisms. As 
        $$\vp\begin{pmatrix}
			1\\
			0
		\end{pmatrix}=\begin{pmatrix}
			0\\
			\al
		\end{pmatrix} \quad \textup{and}\quad
         \vp\begin{pmatrix}
			0\\
			1
		\end{pmatrix}=\begin{pmatrix}
			\beta\\
			0
		\end{pmatrix},
        $$
        the subsheaf $\msL$ is $\vp$-invariant only if $\al$ and $\be$ are proportional.
	\end{proof}
	
	\subsubsection{Moduli space of $\SLR$ Higgs bundles}
	Now we will construction the moduli space of $\SLR$ Higgs bundles. By the canonical shape of real Higgs bundle \eqref{eq_shape_real_Higgs_bundle},  an $\SLR$ Higgs bundle could be written as 
	\begin{equation}
		\label{eq_SLR_Higgs}
		\msE=\msL\oplus \msL^{-1},\;\vp=\begin{pmatrix}
			0 & \be\\
			\al & 0
		\end{pmatrix}
	\end{equation}
	with $\al\in H^0(X,\msL^2\otimes \Omega_X^1)$ and $\be\in H^0(X,\msL^{-2}\otimes \Omega_X^1)$ such that $\al\we\be=0.$ Note that $c_1(\msE)=0$ and $c_2(\msE)=-c_1(\msL)\cdot c_1(\msL).$ By the Bogomolov--Gieseker inequality (cf. Theorem \ref{thm_bogomolov}), the vector bundle $\msE$ is topologically trivial if and only if $c_1(\msL)^2\cdot [\omega]^{n-2}=0$.
	
	Now, we could define the \emph{moduli space of $\SLR$ Higgs bundles} as 
	\begin{equation}
		\MMH^{\SLR}:=\{(\msE,\vp)\in\MMH\,|\,(\msE=\msL\oplus \msL^{-1},\;\vp=\begin{pmatrix}
			0 & \be\\
			\al & 0
		\end{pmatrix})\mathrm{\;satisfies\;}\eqref{eq_SLR_Higgs}\}.
	\end{equation}
	In addition, the \emph{Dolboult moduli space of $\SLR$ Higgs bundles} could be defined to be
	\begin{equation}
		\begin{split}
			\MMD^{\SLR}:=\{(\msE=\msL\oplus \msL^{-1},\vp)\in \MMH^{\SLR}\,|\,c_1(\MSL)^2\cdot [\omega]^{n-2}=0\}
		\end{split}
	\end{equation}
	and by the construction, we have $\MMD^{\SLR}=\MMH^{\SLR}\cap \MMD$. The \emph{$\SLR$ character variety} is defined to be
	\begin{equation}
		\mfR_{\SLR}:=\{\rho:\pi_1(X)\to \SLR\,|\,\rho\;\mathrm{completely\;reducible}\}/\sim.
	\end{equation}
	Then by the non-abelian Hodge correspondences, Theorem \ref{thm_NonabelianHodge} and the Rieman--Hilbert correspondence, we have the real analytic isomorphism 
	$$
	\MMD^{\SLR}\cong \mfR_{\SLR}.
	$$
	
	\subsection{Hitchin section}
	In \cite{hitchin1987self,hitchin1992lie}, Hitchin constructed a section of the Hitchin fibration for Higgs bundles over a curve, which could be generalized to various contexts, see \cite{goldman1990convex,guichard2012anosov}. Now, we would like to give a generalization of Hitchin's construction to rank two Higgs bundles on higher dimensional projective variety. 
	
	\subsubsection{Hitchin section for the moduli space of trace free polystable Higgs bundles} 
 
    Recall we write $\MMH^{0}$ to be the moduli space of trace free polystable Higgs bundle. Then the image of $\ssd_X^0:=\ssd_X|_{\MMH^{0}}$ lies in $\MB_X$. 
    
    We can construct a section of the map $\ssd_X^0:\MMH^0\to \MB_X$ outside the origin as following. By proposition \ref{p.decomposition-symmetric-differetials}, for any $0\not=s\in \MB_X$, we could uniquely find a line bundle $\msL$, a section $\alpha\in H^0(X,\msL^{-1}\otimes\Omega_X^1)$ and $\tau\in H^0(X,\msL^2)$ such that $s=\alpha^2\tau$ and $\alpha$ does not vanish in codimension one. Set $\beta=-\alpha\tau$. Then we could explicitly write down the Higgs bundle 
	\begin{equation}
		\label{eq_Hitchin_section_Higgs_bundle}
		\msE_s=\MSO_X\oplus \msL^{-1},\;\vp_s=\begin{pmatrix}
			0 & \be\\
			\al & 0
		\end{pmatrix},
	\end{equation}
	which is exactly the Higgs bundle obtained in Example \ref{e.canonical-Higgs-bundle} constructed by push-down of the structure sheaf of the Cohen-Macaulayfication $\widetilde{X}_s$ of the spectral variety $X_s$. 
 \begin{definition}
The Hitchin section for traceless Higgs bundles is defined to be the map 
	\begin{equation}
		\chi_{\Hit}^0:\MB_X\setminus\{0\}\to \MMH^0,\;s\to [(\msE_s,\vp_s)]
	\end{equation}.     
 \end{definition}
Based on definition, we have $\ssd_X^0\circ\chi^0_{\Hit}(s)=s$. Though the choices of $\al$ and $\be$ are only unique up to scalars, i.e., $s=(c\al)\cdot (c^{-1}\beta)$ for any $c\in \mbC^*$, it is straightforward to check that the map $\chi_{\Hit}(s)$ is independent of the choice of the constant $c$. 

   Now We discuss some properties of these Higgs bundles in the following. First we consider the stability of $(\MSE,\varphi)$. If $D=\textup{div}(\tau)\neq 0$, then we have
    \[
    \deg(\msL)=\frac{1}{2}\deg(D)>0
    \]
    and thus Proposition \ref{prop_stability_realHiggs} implies that $(\msE_s,\vp_s)$ is stable. On the other hand, suppose $D=0$, then $s=\omega^2$ with $\omega\in H^0(X,\Omega_X^1)$ and $\Div(\omega)=0$. This implies that $\msL=\MSO_X$ and by Proposition \ref{prop_stability_realHiggs}, the Higgs bundle $(\msE_s,\vp_s)$ is polystable and is complex gauge equivalent to the Higgs bundle $(\MSO_X,\omega)\oplus (\MSO_X,-\omega)$. 
	
	Next we note that the Higgs bundles as in \eqref{eq_Hitchin_section_Higgs_bundle} are real by Proposition \ref{prop_stability_realHiggs}, and one can easily see that $\Delta(\msE_s)\cdot[\omega]^{n-2}=0$ if and only if $c_1(D)^2\cdot [\omega]^{n-2}=0$. Moreover, if $\msL$ admits a square root $\msL^{\frac12}$, then we could construct an $\SLC$ Higgs bundle as 
	\begin{equation}
		\label{eq_SLC_Hitchin_section}
		\msE_s'=\msL^{\frac12}\oplus \msL^{-\frac12},\;\vp_s'=\begin{pmatrix}
			0 & \be\\
			\al & 0
		\end{pmatrix}.
	\end{equation}
	In summary, we have proved the following results.
	\begin{proposition}
 \label{p.Properties-of-Hitchin-section}
		Let $0\not=s\in \MB_X$ with $D=\textup{div}(s)$ and let $(\msE_s,\vp_s)=\chi_{\Hit}^0(s)$ be the Higgs bundle in the Hitchin section. Then The following statements hold.
  \begin{enumerate}
            \item The Higgs bundle $(\msE_s,\vp_s)$ is real. 
            \item The Higgs bundle $(\msE_s,\vp_s)$ is stable if $D\neq 0$ and it is polystable if $D=0$.
			\item If $c_1(D)^2\cdot [\omega]^{n-2}=0$, then $\Delta(\msE_s)\cdot[\omega]^{n-2}=0$ and under the non-abelian Hodge correspondence, we obtain a representation $\rho:\pi_1(X)\to \PSLR$.
			\item If $\msL$ admits a square root and $c_1(D)^2\cdot[\omega]^{n-2}=0$, then we obtain from \eqref{eq_SLC_Hitchin_section} a representation $\rho:\pi_1(X)\to \SLR$.
		\end{enumerate}
	\end{proposition}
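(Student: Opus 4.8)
The plan is to assemble the four assertions from results already in hand, treating (1) and (2) as essentially formal consequences of Proposition~\ref{prop_stability_realHiggs} and concentrating the real effort on the passage, in (3) and (4), from a (projectively) flat connection to one valued in the split real form.

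For (1), I would exhibit the complex gauge transformation $g=\diag(1,-1)\in\Aut(\msE_s)$ directly: it preserves the holomorphic splitting $\msE_s=\MSO_X\oplus\msL^{-1}$, and conjugating the off-diagonal Higgs field $\vp_s=\begin{pmatrix}0&\be\\\al&0\end{pmatrix}$ by it flips the sign, so $g^{-1}\vp_s g=-\vp_s$. Hence $(\msE_s,\vp_s)$ is complex gauge equivalent to $(\msE_s,-\vp_s)$, i.e. it is real. For (2), I would read off the degrees in the decomposition $\msL_1=\MSO_X$, $\msL_2=\msL^{-1}$. Since $\msL^2\cong\MSO_X(D)$ we have $\deg_\omega\msL_2=-\tfrac12\deg_\omega D$; when $D\neq0$ this is strictly negative, so $\deg\msL_1>\deg\msL_2$ and, as $\al\neq0$, Proposition~\ref{prop_stability_realHiggs}(1) gives stability. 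When $D=0$ the discussion preceding the proposition identifies $\msL\cong\MSO_X$ and $\tau$ with a nonzero constant, so $\be=-\al\tau$ is proportional to $\al$ and case~(2b) of Proposition~\ref{prop_stability_realHiggs} yields polystability.

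The substance is in (3). First I would compute the discriminant: from $\msE_s=\MSO_X\oplus\msL^{-1}$ one gets $c_1(\msE_s)=-c_1(\msL)$ and $c_2(\msE_s)=0$, hence $\Delta(\msE_s)=-c_1(\msL)^2=-\tfrac14c_1(D)^2$ using $2c_1(\msL)=c_1(D)$. Thus the hypothesis $c_1(D)^2\cdot[\omega]^{n-2}=0$ is exactly $\Delta(\msE_s)\cdot[\omega]^{n-2}=0$. Since $(\msE_s,\vp_s)$ is polystable by (2), Theorem~\ref{thm_bogomolov} gives $F_D^{\perp}=0$ for $D=\Xi(\msE_s,\vp_s)$, so the induced connection on $\mbP(\msE_s)$ is flat and its monodromy defines $\rho:\pi_1(X)\to\PGL_2(\mbC)$. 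To force $\rho$ into $\PSLR$ I would use the equivariance of the non-abelian Hodge correspondence under the involution $\vp\mapsto-\vp$: by (1) the Higgs bundle is a fixed point of this involution, so $\rho$ is fixed by the induced involution of the character variety and therefore conjugate into a real form of $\PGL_2(\mbC)$; and since $\al\neq0$ forces $\vp_s\neq0$, the representation cannot be unitary — a unitary representation corresponds to a vanishing Higgs field — which rules out the compact form and leaves $\PSLR$.

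For (4) the computation is parallel: for $\msE_s'=\msL^{1/2}\oplus\msL^{-1/2}$ we have $\det\msE_s'=\MSO_X$, $\Tr\vp_s'=0$, $c_1(\msE_s')=0$ and $c_2(\msE_s')=-\tfrac14c_1(\msL)^2$, so $c_2(\msE_s')\cdot[\omega]^{n-2}=-\tfrac1{16}c_1(D)^2\cdot[\omega]^{n-2}=0$. Thus $(\msE_s',\vp_s')$ is a polystable $\SLC$ Higgs bundle with $c_1\cdot[\omega]^{n-1}=0$ and $c_2\cdot[\omega]^{n-2}=0$, so Theorem~\ref{thm_bogomolov} makes $\Xi(\msE_s',\vp_s')$ genuinely flat and, as recorded in the discussion of the non-abelian Hodge correspondence, it defines $\rho:\pi_1(X)\to\SLC$; the reality argument of (3) then places $\rho$ in $\SLR$. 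The main obstacle I anticipate is precisely this real-form identification in (3) and (4): one must make rigorous that a real Higgs bundle corresponds under non-abelian Hodge to a representation conjugate into $\SLR$ rather than $\SU(2)$, for which I would lean on the detailed analysis of real Higgs bundles in Hitchin and Simpson, using $\al\neq0$ to guarantee an indefinite (non-compact) harmonic reduction.
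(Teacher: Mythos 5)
Your proposal is correct and follows essentially the same route as the paper: the paper's own proof is exactly the discussion preceding the proposition, which establishes reality from the off-diagonal shape via Proposition \ref{prop_stability_realHiggs}, stability/polystability by comparing $\deg(\MSO_X)$ with $\deg(\msL^{-1})$ (and identifying $\msL\cong\MSO_X$, $\be$ proportional to $\al$ when $D=0$), the identity $\Delta(\msE_s)=-\tfrac14c_1(D)^2$, and then Theorem \ref{thm_bogomolov} plus the Hitchin--Simpson theory of real Higgs bundles to obtain the representation into $\PSLR$ (resp. into $\SLR$ after twisting by $\msL^{1/2}$). The one point you elaborate beyond the paper --- ruling out the compact real form because $\vp_s\neq 0$ excludes a unitary (hence zero-Higgs-field) representation --- is left implicit in the paper and is handled exactly as you suggest, by the cited analysis of real Higgs bundles in Hitchin and Simpson.
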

	
	Our construction recover the original Hitchin section that constructed by Hitchin over Riemann surface in \cite{hitchin1987self,hitchin1992lie}.  Let $X=\Sigma$ be a curve with genus $g(\Sigma)\geq 2$ with $\Omega_{\Sigma}^1=K_X$ the canonical bundle, then $\MB_X=H^0(X,K_X^2)$ is the space of quadratic differential and 
	the integrability condition $\vp\wedge\vp=0$ is automatically satisfied. Given $s\in H^0(X,K_X^2)$ and $\msL=K_X$, then $\msL^{-1}\otimes K_X=\MO_X$ and $\msL\otimes K_X=K_X^2.$ Then $\al\in H^0(X,\msL^{-1}\otimes K_X)$ is a non-zero constant function and $\beta\in H^0(X,K_X^2)$ with $\al\be=-s$. In addition, as $\deg(K)$ is even and every even degree line bundle over a Riemann surface admits a square root, after choosing a square root $K_X^{\frac12}$, and taking $\al=1$, $\beta=-s$, then we obtain the Hitchin section for Riemann surface \cite[Section 11]{hitchin1987self}:
	\begin{equation}
		\msE=K^{\frac12}\oplus K^{-\frac12},\;\vp=\begin{pmatrix}
			0 & -s \\
			1 & 0
		\end{pmatrix}
	\end{equation}
	
	\subsubsection{Hitchin section for general rank two Higgs bundle.} Now, we would like to construct the Hitchin section without the trace free assumption of the Higgs field.
	
	For general rank two Higgs bundles, recall from \eqref{eq_spectral_base_GL2Higgs_bundle} that the spectral base could be written as  $$\MS_X:=\{s=(s_1,s_2)\in \MA_X\,|\,4s_2-s_1^2\in \MB_X\}.$$
	In particular, if $4s_2-s_1^2\not=0$, by Proposition \ref{p.decomposition-symmetric-differetials}, there is a unique way to write
	$\frac{1}{4}s_1^2-s_2=\al\be$ with $\al\in H^0(X,\msL^{-1}\otimes\Omega_X^1)$ and $\be\in H^0(X,\msL\otimes\Omega_X^1)$ such that $\textup{div}(\al)=0$. Denote by $\MS_X^{\nil}$ the closed subset of $\MS_X$ consisting of $s=(s_1,s_2)$ such that $4s_2-s_1^2=0$. Now the Hitchin section $\chi_{\Hit}:\MS_X\setminus \MS_X^{\nil}\rightarrow \MMH$ is defined as
	\begin{equation}
		\begin{split}
			\chi_{\Hit}(s):=\left(\msE=\msL\oplus \MSO_X,\vp=\begin{pmatrix}
				\frac12 s_1 & \beta \\
				\al & \frac12s_1
			\end{pmatrix}\right).
		\end{split}
	\end{equation}
	Clearly we have $\ssd_X\circ \chi_{\Hit}(s)=s$. The stability condition and $\CS$-limit could be understood as following.
	\begin{enumerate}
	    \item Suppose $\textup{div}(4s_2-s_1^2)\neq 0$. As the traceless Higgs bundle $(\msE,\vp-\frac12\Tr(\vp)\Id)$ is stable, the Higgs bundle $(\msE,\vp)$ is also stable. Moreover, by a straight forward computation, the $\CS$-limit would be 
     $$
     \lim_{t\to 0}[(\msE,t\vp)]=\left(\msE,\begin{pmatrix}
		0 & 0 \\
		\al & 0
	\end{pmatrix}\right).
 $$
 \item If $\frac14s_1^2-s_2=\omega^2$ for some $0\not=\omega\in H^0(X,\Omega_X^1)$ such that $\textup{div}(\omega)=0$, then $\msL=\MSO_X$. Then based on our construction, we have
 $$
 \chi_{\Hit}(s)=\left(\msE=\MSO_X\oplus \MSO_X,\vp=\begin{pmatrix}
		\frac12 s_1 & \omega \\
		\omega & \frac12s_1
	\end{pmatrix}\right).
 $$
 Using the gauge transformation $g=\begin{pmatrix}
		\frac{\sqrt{2}}{2} & \frac{\sqrt{2}}{2}\\
		-\frac{\sqrt{2}}{2} & \frac{\sqrt{2}}{2}
	\end{pmatrix},$ it is gauge equivalent to the polystable Higgs bundles $(\MSO,\frac{1}{2}s_1-\omega)\oplus (\MSO,\frac{1}{2}s_1+\omega).$ Moreover, as $\msE$ is polystable, the $\CS$-limit would be 
    \[
    \lim_{t\to 0}[(\msE,t\vp)]=[(\MSO_X\oplus \MSO_X,0)]\in \MMD.
    \]
    \item If $\frac14s_1^2-s_2=0$ but $s_1\neq 0$, then we define
     $$
 \chi_{\Hit}(s)=\left(\msE=\MSO_X\oplus \MSO_X,\vp=\begin{pmatrix}
		\frac12 s_1 & 0\\
		0 & \frac12s_1
	\end{pmatrix}\right).
 $$
	\end{enumerate}
	In summary, we conclude the following:
	\begin{theorem}
		\label{thm_Hitchinsection_existence}
		There exist sections $\chi^0_{\Hit}:\MB_X\setminus \{0\}\to \MMH^0$ and $\chi_{\Hit}:\MS_X\setminus\{(0,0)\}\to \MMH$ such that $\ssd_X^0\circ\chi_{\Hit}^0=\Id$ and $\ssd_X\circ\chi_{\Hit}=\Id$.
	\end{theorem}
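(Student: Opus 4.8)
The plan is to assemble the theorem from the explicit formulas already recorded in this subsection, checking three things for each of the two maps: that the assignment is well defined (independent of the auxiliary choices), that its image consists of polystable Higgs bundles with the prescribed trace and determinant, and that composing with the spectral morphism returns the identity. Essentially all the geometric input has been isolated in Propositions \ref{p.decomposition-symmetric-differetials}, \ref{prop_stability_realHiggs} and \ref{p.Properties-of-Hitchin-section} and in Example \ref{e.canonical-Higgs-bundle}, so the task is to verify consistency and do the bookkeeping of traces and determinants.

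For the traceless section $\chi^0_{\Hit}$, I would start from a nonzero $s\in\MB_X$ and invoke Proposition \ref{p.decomposition-symmetric-differetials} to write $s=\al^2\tau$ with $\al\in H^0(X,\msL^{-1}\otimes\Omega_X^1)$ not vanishing in codimension one and $\tau\in H^0(X,\msL^2)$; the line bundle $\msL$ is canonically determined and the pair $(\al,\tau)$ is unique up to the rescaling $(\al,\tau)\mapsto(c\al,c^{-2}\tau)$, $c\in\CC^*$. Setting $\be=-\al\tau$ and forming $(\msE_s,\vp_s)$ as in \eqref{eq_Hitchin_section_Higgs_bundle}, this rescaling sends $(\al,\be)\mapsto(c\al,c^{-1}\be)$, so conjugation by the constant holomorphic automorphism $\diag(c,1)\in\Aut(\msE_s)$ carries one representative to the other; hence the class $[(\msE_s,\vp_s)]$ is independent of the choice and $\chi^0_{\Hit}$ is well defined. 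Polystability (stable when $D=\textup{div}(s)\neq0$, strictly polystable when $D=0$) is exactly Proposition \ref{p.Properties-of-Hitchin-section}(2), so the image lies in $\MMH^0$. Finally $\Tr(\vp_s)=0$ and $\det(\vp_s)=-\al\be=\al^2\tau=s$, whence $\ssd_X^0\circ\chi^0_{\Hit}(s)=s$.

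For the full section $\chi_{\Hit}$ I would split $\MS_X\setminus\{(0,0)\}$ along $\MS_X^{\nil}$. On the complement, where $4s_2-s_1^2\neq0$, Proposition \ref{p.decomposition-symmetric-differetials} again supplies the unique factorisation $\tfrac14 s_1^2-s_2=\al\be$, and the same conjugation by $\diag(c,1)$ shows the formula is independent of the scaling ambiguity while fixing the diagonal entries. Here $\vp-\tfrac12 s_1\,\Id$ is precisely the traceless Higgs field of the previous paragraph, so the Higgs bundle is stable (resp. polystable) under the same dichotomy, and a direct computation gives $\Tr(\vp)=s_1$ and $\det(\vp)=\tfrac14 s_1^2-\al\be=s_2$, so $\ssd_X\circ\chi_{\Hit}(s)=(s_1,s_2)$. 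On $\MS_X^{\nil}\setminus\{(0,0)\}$ one necessarily has $s_1\neq0$, and I would use the diagonal bundle $(\MSO_X\oplus\MSO_X,\diag(\tfrac12 s_1,\tfrac12 s_1))$, which is visibly polystable as a direct sum of two equal slope-zero rank-one Higgs bundles and satisfies $\Tr=s_1$, $\det=\tfrac14 s_1^2=s_2$.

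The only genuinely delicate point, and the step I would treat most carefully, is the uniform behaviour across the degenerate loci: confirming well-definedness where the factorisation $s=\al^2\tau$ (or $\tfrac14 s_1^2-s_2=\al\be$) is unique only up to the scalar $c$, and confirming \emph{polystability} rather than mere semistability on the boundary where $\msL\cong\MSO_X$ (so $D=0$) and on $\MS_X^{\nil}$. In each of these cases the relevant Higgs bundle decomposes as a direct sum of equal-slope rank-one pieces, and the polystability must be read off from Proposition \ref{prop_stability_realHiggs} and Proposition \ref{p.Properties-of-Hitchin-section} rather than from stability; everything else reduces to the trace and determinant computations already carried out in Example \ref{e.canonical-Higgs-bundle}.
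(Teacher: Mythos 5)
Your proposal is correct and follows essentially the same route as the paper: the same factorisation $s=\al^2\tau$ (resp.\ $\tfrac14 s_1^2-s_2=\al\be$) from Proposition \ref{p.decomposition-symmetric-differetials}, the same explicit Higgs bundles \eqref{eq_Hitchin_section_Higgs_bundle} and the diagonal construction on $\MS_X^{\nil}\setminus\{(0,0)\}$, with stability/polystability read off from Propositions \ref{prop_stability_realHiggs} and \ref{p.Properties-of-Hitchin-section}. Your explicit check that conjugation by $\diag(c,1)$ absorbs the scaling ambiguity $(\al,\be)\mapsto(c\al,c^{-1}\be)$ simply spells out what the paper calls ``straightforward to check.''
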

 
	\subsubsection{$\SLR$ Higgs bundle and the Hitchin morphism.}
	Now, we will discuss other $\SLR$ Higgs bundle besides the Hitchin section. 
 
    Firstly, we consider a $\SLR$ Higgs bundle of the following form
    \[
    \left(\msE=\MSN\oplus \MSN^{-1},\vp=\begin{pmatrix}
		0 & \be\\
		\al & 0
	\end{pmatrix}\right)
 \]
 with $\al\in H^0(X,\MSN^{-2}\otimes\Omega_X^1)$, $\be\in H^0(X,\MSN^{2}\otimes\Omega_X^1)$ and $\al\we \be=0$. Then we have
 \[
 s:=\ssd_X(\vp)=-\al\be\in \MB_X.
 \]
 Assume $s\not=0$ and let $D=\textup{div}(s)$. By Proposition \ref{p.structure-BXL}, we could write $s=\widetilde{\alpha}^2\tau$, where $\tau$ is the global section of $\MSO_X(D)=\msL^2$ corresponding to $D$ and $\widetilde{\alpha}\in H^0(X,\msL^{-1}\otimes\Omega_X^1)$. On the other hand, if we write $D_{\al}=\textup{div}(\al)$ and $\;D_{\be}=\textup{div}(\be)$ with the canonical sections $s_{\al}\in H^0(X,\MSO_X(D_{\al}))$ and $s_{\be}\in H^0(X,\MSO_X(D_{\be}))$, then it follows from $D=D_{\al}+D_{\be}$ and $\al\wedge\be=0$ that up to scaling we have 
 \[
 \frac{\al}{s_{\al}}=\frac{\be}{s_{\be}}=\widetilde{\alpha}.
 \]
	
	Next, for any given $0\not=s\in \MB_X$, one could construct all possible $(\msE,\vp)\in \MMD^{\SLR}$ with $\ssd_X(\vp)=s$ as following. We first write $s=\alpha^2\tau$ with $D=\textup{div}(s)$. Set $\MSL=\MSO_X(D)$. Given a decomposition $D=D_1+D_2$ with $D_1,D_2$ effective divisors. Let $\tau_1\in H^0(X,\MSO_X(D_1))$ and $\tau_2\in H^0(X,\MSO_X(D_2))$ be the canoncial sections. Then we have $\tau=\tau_1\tau_2$. We could define 
 $$
 \al:=\tau_1\alpha\in H^0(X,\msL^{-1}\otimes \MSO_X(D_1)\otimes\Omega_X^1)
 $$
 and 
 $$
 \be:=-\tau_2\alpha\in H^0(X,\msL\otimes \MSO_X(D_2)\otimes\Omega_X^1).
 $$
 The $\SLR$ condition would requires that there exists a square root of the line bundle $\msL^{-1}\otimes \MSO_X(D_1)=\MSN^{-2}$ (compare with \S\,\ref{s.towerofCMness}). In this situation, the vector bundle $\MSN\oplus \MSN^{-1}$ is topologically trivial if and only if 
	$$
	(c_1(D_1)-c_1(D_2))^2\cdot [\omega]^{n-2}=0.
	$$
 In addition, since there are only finitely many choices of the decomposition $D$ into $D_1$ and $D_2$, and the choice of square root only depends on $H^1(X;\ZT)$, which is finite. The preimage $\ssd_X^{-1}(s)\cap \MMD^{\SLR}$ only contains finite number of Higgs bundles.
 
	\begin{definition}
 \label{d.MXDOLSL2R}
		The $\SLR$ Dolbeault base $\MB_{X;\Dol}^{\SLR}\subset \MB_X$ is defined to be $s\in \MB_X$ with $D=\textup{div}(s)$ and $\MSO_X(D)=\MSL^{2}$ such that the following conditions hold:
		\begin{enumerate}
			\item there exists a decomposition of effective divisors $D=D_1+D_2$ with $$(c_1(D_1)-c_1(D_2))^2\cdot [\omega]^{n-2}=0,$$
			\item the line bundle $\msL^{-1}\otimes \MSO_X(D_1)$ admits a square root.
		\end{enumerate}
	\end{definition}
	
	In summary of the previous discussions, we conclude the following:
	\begin{theorem}
		\label{thm_slr_Hitchin_morphism}
		The restriction of the spectral morphism $\ssd_X:\MMD^{\SLR}\to \MB_{X;\Dol}^{\SLR}$ is surjective and for each $0\not=s\in \MB_{X;\Dol}^{\SLR}$, the preimage $\ssd_X^{-1}(s)$ only contains finite number of Higgs bundles.
	\end{theorem}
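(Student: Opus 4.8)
The plan is to regard Theorem \ref{thm_slr_Hitchin_morphism} as the formal consequence of the explicit construction carried out just above Definition \ref{d.MXDOLSL2R}, so that only two assertions remain to be checked: that every element of $\MB_{X;\Dol}^{\SLR}$ is realised by an $\SLR$ Higgs bundle lying in $\MMD^{\SLR}$ (surjectivity), and that conversely only finitely many such Higgs bundles can map to a fixed $s$ (finiteness of the fibres).

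For surjectivity, fix $0\neq s\in \MB_{X;\Dol}^{\SLR}$ and use Proposition \ref{p.decomposition-symmetric-differetials} to write $s=\alpha^2\tau$ with $D=\textup{div}(s)$ and $\MSO_X(D)=\MSL^2$. By Definition \ref{d.MXDOLSL2R} we may choose a decomposition $D=D_1+D_2$ into effective divisors with $(c_1(D_1)-c_1(D_2))^2\cdot[\omega]^{n-2}=0$, together with a square root $\MSN$ of $\MSL^{-1}\otimes\MSO_X(D_1)$. Setting $\al=\tau_1\alpha$ and $\be=-\tau_2\alpha$ as in the construction, the integrability $\al\wedge\be=-\tau_1\tau_2\,\alpha\wedge\alpha=0$ is automatic, and Proposition \ref{prop_stability_realHiggs} shows that the resulting $(\MSE=\MSN\oplus\MSN^{-1},\vp)$ is a real, (poly)stable Higgs bundle with $\ssd_X(\vp)=-\al\be=s$. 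It remains to verify topological triviality. Since $c_1(\MSE)=0$, I would only need $c_2(\MSE)\cdot[\omega]^{n-2}=0$; from $\MSN^{-2}\cong\MSL^{-1}\otimes\MSO_X(D_1)$ and $2c_1(\MSL)=c_1(D_1)+c_1(D_2)$ one computes $c_1(\MSN)=\tfrac14(c_1(D_2)-c_1(D_1))$, whence $c_2(\MSE)\cdot[\omega]^{n-2}=-c_1(\MSN)^2\cdot[\omega]^{n-2}=-\tfrac1{16}(c_1(D_1)-c_1(D_2))^2\cdot[\omega]^{n-2}=0$ by the first condition of Definition \ref{d.MXDOLSL2R}. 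Theorem \ref{thm_bogomolov} then gives $(\MSE,\vp)\in\MMD^{\SLR}$, establishing surjectivity; the point $s=0$ is covered by the trivial Higgs bundle $(\MSO_X\oplus\MSO_X,0)$.

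For the finiteness of $\ssd_X^{-1}(s)\cap\MMD^{\SLR}$, I would run the construction in reverse. Any element of $\MMD^{\SLR}$ with spectral datum $s$ has, by definition, the canonical shape \eqref{eq_SLR_Higgs} with $\al\in H^0(X,\MSN^{-2}\otimes\Omega_X^1)$, $\be\in H^0(X,\MSN^2\otimes\Omega_X^1)$ and $\al\wedge\be=0$. The discussion preceding Definition \ref{d.MXDOLSL2R} shows that $\al/s_\al=\be/s_\be=\widetilde{\alpha}$, so that the pair $(\MSE,\vp)$ is completely determined by the decomposition $D=D_\al+D_\be$ of the fixed effective divisor $D=\textup{div}(s)$, together with the line bundle $\MSN$, which is forced to be a square root of $\MSL^{-1}\otimes\MSO_X(D_\al)$. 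Now $D$ admits only finitely many decompositions into effective divisors, and for each such decomposition the admissible square roots form a torsor under the $2$-torsion group $H^1(X;\ZT)$, which is finite because $X$ is a compact manifold. Hence the fibre $\ssd_X^{-1}(s)\cap\MMD^{\SLR}$ is finite.

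The step I expect to be the genuine content, rather than bookkeeping, is the converse direction used for finiteness: one must argue that the relation $\al/s_\al=\be/s_\be=\widetilde{\alpha}$ truly forces every $\SLR$ Higgs bundle with datum $s$ to arise from a divisor decomposition and a square root, so that no extra moduli hide in the fibre. Once this exhaustiveness is secured, the finiteness of divisor decompositions and of $H^1(X;\ZT)$ closes the argument, and the matching of the topological triviality condition with condition (1) of Definition \ref{d.MXDOLSL2R} is the only nontrivial numerical identity left to confirm.
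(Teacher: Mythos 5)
Your proposal is correct and follows essentially the same route as the paper: the paper's proof of Theorem \ref{thm_slr_Hitchin_morphism} is precisely the discussion preceding Definition \ref{d.MXDOLSL2R}, namely the construction $\al=\tau_1\alpha$, $\be=-\tau_2\alpha$ from a decomposition $D=D_1+D_2$ and a square root $\MSN$ of $\MSL^{-1}\otimes\MSO_X(D_1)$ (giving surjectivity and topological triviality via $(c_1(D_1)-c_1(D_2))^2\cdot[\omega]^{n-2}=0$), together with the reverse identification $\al/s_\al=\be/s_\be=\widetilde{\alpha}$ showing every element of $\ssd_X^{-1}(s)\cap\MMD^{\SLR}$ arises this way, so finiteness follows from the finitely many divisor decompositions and the finiteness of $H^1(X;\ZT)$. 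Your explicit computation $c_1(\MSN)=\tfrac14(c_1(D_2)-c_1(D_1))$ and your flagged exhaustiveness step simply make explicit what the paper leaves implicit in that discussion.
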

	
	\section{Rigidity of the character variety}
	\label{sec_rigidity_character_variety}
	Throughout this section, we will denote by $G$ one of the gourps $\SLR$, $\SLC$ and $\GLC$. A representation $\rho:\pi_1(X)\to G$ is called \emph{rigid} if it is an isolated point in the $G$-character variety. The $G$-character variety $\mfR^G$ is called \emph{rigid} if every element $\rho\in \mfR^G$ is rigid. By \cite{Simpson1992}, a rigid representation must be a complex variation of Hodge structure and it is conjectured to be of geometric origin. This conjecture have been confirmed for rank two and three bounded local systems \cite{simpsoncorlette2008classification,simpsonadrian2018rank}. Therefore, it will be in particular interest to understand the rigidity of the character variety.
	
	\subsection{Rigidity and the spectral base}
	Now, we will discuss the relationship between the rigidity of the character variety and the spectral base, follows from \cite[Proposition 2.4]{arapura2002higgs}, see also \cite[Theorem 1.6]{klingler2013symmetric}, \cite[Section 4.1.4]{zuo2000negativity}, \cite{brotbek2022representations}, \cite{brunebarbe2013symmetric} for generalization to various content.
	
	Let $\ssd_X:\MMH\to \MS_X$ be the spectral morphism, we definte $\ssd_X^{\Dol}:=\ssd_X|_{\MMD}$ be the restriction on the Dolbeault space. The image of $\ssd_X^{\Dol}$ is called the Dolbeault base, which we denote as $\MS_X^{\Dol}:=\text{Im}(\ssd^{\Dol}_X).$ Similarily, we denote $\MB_X^{\Dol}$ be the imgae of the restriction of the Hitchin morphism into the $\SLC$ Dolbeault space, $\MB_X^{\Dol}:=\text{Im}(\ssd_X|_{\MMD^{\SLC}})\subset \MB_X.$ 
	
	\begin{definition}
		The nilpotent cone of the Higgs bundle moduli spaces $\MMH$ is called rigid if for every Higgs bundle $(\msE_0,\vp_0)$ lying in the nilpotent cone, there doesn't exists a family of Higgs bundles $(\msE_t,\vp_t)$ with $\ssd_X(\msE_t,\vp_t)\neq 0$ such that 
        \[
        \lim_{t\to 0}[(\msE_t,\vp_t)]=[(\msE_0,\vp_0)].
        \]
        This concept could be extended to $\MMH^0$, $\MMD$, $\MMD^{\SLC}$ and $\MMD^{\SLR}$ similarly.
	\end{definition}
	
	\begin{theorem}{\cite[Proposition 2.4]{arapura2002higgs}}
		\label{thm_Dolboult_base_rigidity}
		The following statements are equivalent.
		\begin{enumerate}
			\item The character variety $\mfR^{\GLC}$ $($resp. $\mfR^{\SLC})$ is rigid. 
			\item $\MS_X^{\Dol}=\{0\}$ $($resp. $\MB_X^{\Dol}=\{0\})$.
			\item The nilpotent cone of $\MMD$ $($resp. $\MMD^{\SLC})$ is rigid.
		\end{enumerate}
		In particular, $\mfR^{\GLC}\;($resp. $\mfR^{\SLC})$ is non-rigid if and only if the Dolboult base $\MS_X^{\Dol}\;($ resp. $\MB_X^{\Dol})$ is non zero.
	\end{theorem}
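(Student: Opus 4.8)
The plan is to pass to the contrapositive and prove the equivalence of the three \emph{non-rigid} statements $(1')$ $\mfR^{\GLC}$ is non-rigid, $(2')$ $\MS_X^{\Dol}\neq\{0\}$, and $(3')$ the nilpotent cone of $\MMD$ is non-rigid. Throughout I will use the non-abelian Hodge correspondence (Theorem~\ref{thm_NonabelianHodge}) to identify $\MMD$ with $\mfR^{\GLC}$ as real-analytic spaces, so that rigidity of $\mfR^{\GLC}$ means precisely that $\MMD$ is a discrete set of points. I will also exploit that the spectral morphism $\ssd_X$ is equivariant for the $\CS$-action $t\cdot(\msE,\vp)=(\msE,t\vp)$, where $\CS$ acts on the Hitchin base $\MA_X=H^0(X,\Omega_X^1)\oplus H^0(X,\Sym^2\Omega_X^1)$ with weights $1$ and $2$, since $\ssd_X(\msE,t\vp)=(t\,\Tr(\vp),t^2\det(\vp))$. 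In particular the origin is the unique $\CS$-fixed point of $\MS_X$, and every nonzero $\CS$-orbit in $\MS_X$ is non-constant.

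The soft part of the argument will be $(2')\Leftrightarrow(3')$ together with $(2')\Rightarrow(1')$. For $(2')\Rightarrow(3')$ I would pick $(\msE,\vp)\in\MMD$ with $\ssd_X(\msE,\vp)\neq 0$; by Theorem~\ref{action_fixed_point_Hodge_bundle} the limit $\lim_{t\to 0}[(\msE,t\vp)]$ exists and is a $\CS$-fixed point, which by Proposition~\ref{prop_Hodge_bundle_stablity} is a nilpotent Hodge bundle and hence lies in the nilpotent cone, while $\ssd_X(\msE,t\vp)\neq 0$ for $t\neq 0$; thus a nilpotent-cone point is realized as a limit of Higgs bundles with nonzero spectral datum. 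Conversely, $(3')$ directly provides a family with nonzero spectral data whose image under $\ssd_X$ lies in $\MS_X^{\Dol}\setminus\{0\}$, giving $(2')$. For $(2')\Rightarrow(1')$ I would observe that for such $(\msE,\vp)$ the orbit $t\mapsto[(\msE,t\vp)]$ is non-constant, because $\ssd_X$ already separates its members, so under $\MMD\cong\mfR^{\GLC}$ this produces a non-isolated representation.

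The substantive step will be $(1')\Rightarrow(2')$, equivalently $\MS_X^{\Dol}=\{0\}\Rightarrow \mfR^{\GLC}$ is rigid. I expect the soft $\CS$-arguments to be insufficient here: if $\MS_X^{\Dol}=\{0\}$ then every member of $\MMD$ is nilpotent, but one must still exclude positive-dimensional families lying \emph{entirely} inside the nilpotent cone (on a curve such families do occur, precisely because there $\MS_X^{\Dol}\neq\{0\}$). To handle this I would invoke Simpson's theorem \cite{Simpson1992} that a rigid representation underlies a complex variation of Hodge structure, together with the Hodge-theoretic weight decomposition of the deformation complex at the limiting $\CS$-fixed point of a putative non-isolated representation: a genuine positive-weight deformation necessarily moves the Higgs field off the nilpotent locus and therefore forces $\ssd_X$ to take a nonzero value, contradicting $\MS_X^{\Dol}=\{0\}$. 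This is exactly the content of \cite[Proposition~2.4]{arapura2002higgs}, and it is the main obstacle, being the only place where the full strength of non-abelian Hodge theory is needed rather than merely the existence of $\CS$-limits and the equivariance of $\ssd_X$.

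Finally, for the $\SLC$ statement I would run the identical argument with $\MMD$, $\ssd_X$ and $\MS_X^{\Dol}$ replaced by $\MMD^{\SLC}$, the restriction $\ssd_X|_{\MMD^{\SLC}}$ with image in $\MB_X$, and $\MB_X^{\Dol}$, using the stability analysis of Proposition~\ref{prop_stability_realHiggs}; here too the unique $\CS$-fixed point of $\MB_X$ is the origin. The concluding sentence of the theorem is then just the contrapositive of the equivalence $(1)\Leftrightarrow(2)$ established above.
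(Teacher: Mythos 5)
Your reduction to contrapositives and your treatment of the easy implications are fine and match the paper: the $\CS$-equivariance $\ssd_X(\msE,t\vp)=(t\Tr(\vp),t^2\det(\vp))$ gives $(1)\Rightarrow(2)$ exactly as in the paper's proof, and your use of Theorem \ref{action_fixed_point_Hodge_bundle} for the interplay between nonzero spectral data and the nilpotent cone is the same $\CS$-limit argument the paper uses. The problem is the step you yourself flag as substantive, namely $\MS_X^{\Dol}=\{0\}\Rightarrow \mfR^{\GLC}$ rigid, and there your proposal has a genuine gap, in fact two.

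First, the claim on which your sketch rests --- that at the limiting $\CS$-fixed point ``a genuine positive-weight deformation necessarily moves the Higgs field off the nilpotent locus'' --- is false. At a Hodge bundle $\bigl(\msL_1\oplus\msL_2,\ \vp=\begin{pmatrix}0&0\\ \al&0\end{pmatrix}\bigr)$ the deformations of weight one are (roughly) the off-diagonal deformations of the holomorphic structure, i.e.\ of the extension class of $\msE$; these keep the Higgs field strictly triangular, hence nilpotent, and $\ssd_X$ remains zero along them. This is precisely the mechanism that makes the nilpotent cone positive-dimensional on curves, so no local weight computation at a VHS can by itself rule out positive-dimensional families inside the nilpotent cone. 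Second, your fallback --- ``this is exactly the content of \cite[Proposition 2.4]{arapura2002higgs}'' --- is circular, since that proposition \emph{is} the statement being proved; a blind proof cannot discharge its hard step by citing itself.

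The paper closes this gap by a global, not local, argument: if $\MS_X^{\Dol}=\{0\}$ then $\MMD$ coincides with the fibre $\sh_X^{-1}(0)$, which is compact because $\sh_X|_{\MMD}$ is proper (Theorem \ref{thm_Hitchinmap_proper}); by the non-abelian Hodge correspondence $\mfR^{\GLC}$ is then compact, and a compact affine variety consists of finitely many points, so every representation is isolated. Note how this kills the positive-dimensional families you worried about: they are not excluded pointwise by deformation theory, but wholesale, because a positive-dimensional affine character variety can never be compact. If you replace your Hodge-theoretic paragraph by this properness-plus-affineness argument (and run it verbatim for $\MMD^{\SLC}$ and $\MB_X^{\Dol}$), your proof becomes complete and agrees with the paper's.
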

	\begin{proof}
		The proof for $\SL_2(\mbC)$ is the same as that for $\GL_2(\mbC)$, so we will only prove the equivalences for $\GLC$ case. 
  
        For $(1)$ to $(2)$, suppose $\mfR^{\GLC}$ is rigid, but $\MS_X^{\Dol}\neq 0$, then there exists a topological trivial polystable Higgs bundle $(\msE,\vp)$ with $s:=\ssd_X(\msE,\vp)\neq 0$. For every $t\neq 1\in \CS$, as $\sh(\msE,t\vp)=ts\neq 0$, we obtain a deformation which contradicts to the assumption that $\mfR^{\GLC}$ is rigid. 
		
		As $\MS_X^{\Dol}$ is defined to be the image of the Hitchin morphism, $(2)$ to $(3)$ is based on definition. For $(3)$ to $(1)$, by Theorem \ref{action_fixed_point_Hodge_bundle}, as the $\CS$-limit of every Higgs bundle exists and lies in the nilpotnent cone. Therefore, the nilpotent cone is rigid if and only if $\MMD$ is supported in the nilpotent cone. By Theorem \ref{thm_Hitchinmap_proper}, the Hitchin map is proper, which implies $\MMD$ is compact. Thus the character variety $\mfR^{\GLC}$ is compact by non-abelian Hodge correspondence. However, as $\mfR^{\GLC}$ is an affine variety, it is compact if and only if 
		$\mfR^{\GLC}$ consists of finite number of points, which implies $(1)$.  
	\end{proof}
	The above theorem could be easily generalized to higher rank Higgs bundles and the main difficulty is to understand what is the $\MS_X^{\Dol}$. In addition, let $b_1(X)$ be the first betti number of $X$, then the condition $b_1(X)>0$ will violate the rigidity of the character variety as follows.
	\begin{proposition}
        \label{prop_firstbettinumber_notrigid}
		Suppose $b_1(X)\neq 0$, then $\MMD$ is not rigid. In addition, every $\VHS$ could be deformed outside of the nilpotent cone.
	\end{proposition}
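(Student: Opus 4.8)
The plan is to exploit the hypothesis $b_1(X)\neq 0$ by extracting a non-zero holomorphic one-form and using it to twist Higgs fields. Since $X$ is projective, hence compact K\"ahler, Hodge theory gives $b_1(X)=2h^0(X,\Omega_X^1)$, so there exists $0\neq \omega\in H^0(X,\Omega_X^1)$. The single device driving both assertions is the \emph{central twist}: given a Higgs bundle $(\MSE,\vp)$ I would consider the family
\[
(\MSE,\vp_t):=(\MSE,\vp+t\omega\,\Id_{\MSE}),\qquad t\in \mbC,
\]
which is nothing but the tensor product of $(\MSE,\vp)$ with the rank one Higgs bundle $(\MSO_X,t\omega)$.

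First I would check that each $(\MSE,\vp_t)$ remains in $\MMD$. The integrability $\vp_t\wedge \vp_t=0$ holds because $\omega\,\Id$ is central and $\omega\wedge\omega=0$, so all cross terms cancel and $\vp_t\wedge\vp_t=\vp\wedge\vp=0$. Moreover a coherent subsheaf $\MSF\subset \MSE$ is $\vp_t$-invariant if and only if it is $\vp$-invariant, since $\omega\,\Id$ always carries $\MSF$ into $\MSF\otimes \Omega_X^1$; as the slopes are computed from $\MSE$ alone and are therefore unchanged, the twist preserves (poly)stability. The underlying bundle $\MSE$ is untouched, so topological triviality also persists. Hence $t\mapsto (\MSE,\vp_t)$ is a genuine family in $\MMD$ with $\lim_{t\to 0}(\MSE,\vp_t)=(\MSE,\vp)$.

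Next I would compute the spectral datum along the family. For a $2\times 2$ Higgs field one has $\Tr(\vp_t)=\Tr(\vp)+2t\omega$ and $\det(\vp_t)=\det(\vp)+t\omega\,\Tr(\vp)+t^2\omega^2$. A VHS is a $\CS$-fixed point, so by Proposition \ref{prop_Hodge_bundle_stablity} it has the form $\MSE=\MSL_1\oplus\MSL_2$, $\vp=\left(\begin{smallmatrix}0&0\\ \alpha&0\end{smallmatrix}\right)$; in particular $\Tr(\vp)=\det(\vp)=0$, so $(\MSE,\vp)$ sits in the nilpotent cone and
\[
\ssd_X(\MSE,\vp_t)=(2t\omega,\;t^2\omega^2),
\]
which is non-zero for every $t\neq 0$ because $\omega\neq 0$. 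Thus the family leaves the nilpotent cone $\ssd_X^{-1}(0)$ for $t\neq 0$, proving that every VHS deforms outside the nilpotent cone. Applying the identical construction to the always-available polystable point $(\MSO_X^{\oplus 2},0)$ of the nilpotent cone, or directly to the split bundle $(\MSO_X^{\oplus2},\mathrm{diag}(\omega,-\omega))$ whose spectral datum $(0,-\omega^2)$ is non-zero, exhibits a non-zero element of $\MS_X^{\Dol}$; by Theorem \ref{thm_Dolboult_base_rigidity} the nilpotent cone of $\MMD$ is therefore not rigid, so $\MMD$ is not rigid.

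The point that needs the most care is that the perturbation be simultaneously admissible and effective: an arbitrary first-order deformation of $\vp$ would destroy $\vp\wedge\vp=0$ or stability, which is exactly why the central twist $t\omega\,\Id$ is forced, it being the one direction that commutes with $\vp$ and squares to zero. One should also address the apparent tension that the resulting spectral datum $(2t\omega,t^2\omega^2)$ satisfies $4s_2-s_1^2=0$ and hence lies in $\MS_X^{\nil}$; this is harmless, since the cone relevant to the rigidity definition is the single fibre $\ssd_X^{-1}(0)$, and $(2t\omega,t^2\omega^2)$ is a non-zero point of $\MS_X^{\nil}\setminus\{0\}$, confirming that the deformation genuinely escapes the nilpotent cone.
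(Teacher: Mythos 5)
Your proof is correct, but for the key step it takes a genuinely different route from the paper's. For the deformation of a VHS $(\msE=\msL\oplus\msL^{-1},\vp)$, the paper stays trace-free and perturbs diagonally, taking $\vp_t=\bigl(\begin{smallmatrix} t\omega & 0\\ \al & -t\omega\end{smallmatrix}\bigr)$, so that its spectral datum $(0,-t^2\omega^2)$ escapes even $\MS_X^{\nil}$; you instead tensor with the Higgs line bundle $(\MSO_X,t\omega)$, i.e.\ the central twist $\vp_t=\vp+t\omega\,\Id$. Each choice buys something. Your twist is automatically integrable (the cross terms $\omega\Id\wedge\vp+\vp\wedge\omega\Id$ cancel) and manifestly preserves polystability and the underlying bundle, whereas the paper's perturbation satisfies $\vp_t\wedge\vp_t=\bigl(\begin{smallmatrix}0&0\\ 2t\,\al\wedge\omega&0\end{smallmatrix}\bigr)$, which vanishes only when $\al\wedge\omega=0$ --- automatic on curves but not in higher dimension, a point the paper's proof passes over silently; in this respect your argument is the more robust proof of the stated proposition. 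The price is that your deformation is an abelian one: on the representation side it is $\rho\mapsto\rho\otimes\chi_t$ for characters $\chi_t$ of $\pi_1(X)$, its spectral data $(2t\omega,t^2\omega^2)$ never leave $\MS_X^{\nil}$, and it cannot be adapted to the trace-free space $\MMD^{\SLC}$, for which the paper's (conditionally valid) traceless deformation is designed. You were right to flag and resolve the $\MS_X^{\nil}$ tension: the paper's definition of rigidity of the nilpotent cone only requires a family with $\ssd_X(\msE_t,\vp_t)\neq 0$ limiting to the given point, and your families satisfy exactly that. Your treatment of the first claim --- via $(\MSO_X^{\oplus 2},t\omega\,\Id)$, or the split bundle $(\MSO_X,t\omega)\oplus(\MSO_X,-t\omega)$, which is the paper's own choice, combined with Theorem \ref{thm_Dolboult_base_rigidity} --- matches the paper's argument.
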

	\begin{proof}
		By Hodge decomposition theorem, the fact $b_1(X)\neq 0$ implies that there exists a non-zero $\omega\in H^0(X,\Omega_X^1)$. The period of $t\omega$ for $t$ various could generates non-rigid representations. Indeed, we could construct a family of polystable $\SLC$ Higgs bundles as following
        \[
        (\msE,\vp_t)=(\MSO_X,t\omega)\oplus (\MSO_X,-t\omega).
        \]
        As $\Tr(\vp_t^2)=t^2\omega^2$, it follows that $(\msE,\vp_{t_1})$ and $(\msE,\vp_{t_2})$ are not gauge equivalent for $t_1^2\neq t_2^2$. Now we consider a $\VHS$ of the following form
        \[
        \left(\msE_0=\msL\oplus \msL^{-1},\;\vp_0=\begin{pmatrix}
			0 & 0\\
			\al & 0
		\end{pmatrix}\right).
        \]
        Then the following family of stable Higgs bundles 
        \[
        \left(\msE_t=\msL\oplus \msL^{-1},\;\vp_t=\begin{pmatrix}
			t\omega & 0\\
			\al & -t\omega
		\end{pmatrix}\right)
        \]
        provides a deformation of the $(\msE_0,\vp_0)$ outside of the nilpotent cone.
	\end{proof}
	
	The criteria for rigidity used in \cite{klingler2013symmetric,simpsonadrian2018rank} are formulated as the  vanishing of the Hitchin base $\MA_X$ and this could be strength to the following statement.
	
	\begin{corollary}
		\label{corollary_criterion_Dolbeault_Base}
		Let $X$ be a projective manifold. If $H^0(X,\msL^{2})=0$ for every line bundle $\msL$ with $H^0(X,\msL^{-1}\otimes\Omega_X^1)\neq 0$, then the character varieties $\mfR^{\SLC}$ and $\mfR^{\GLC}$ are rigid. 
	\end{corollary}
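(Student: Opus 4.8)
The plan is to reduce the rigidity statement to the vanishing of the spectral base via Theorem \ref{thm_Dolboult_base_rigidity}, which already characterises rigidity of $\mfR^{\GLC}$ and $\mfR^{\SLC}$ by the vanishing of the Dolbeault bases $\MS_X^{\Dol}$ and $\MB_X^{\Dol}$. Since $\MS_X^{\Dol}\subseteq \MS_X$ and $\MB_X^{\Dol}\subseteq \MB_X$, it suffices to show that the hypothesis forces $\MS_X=0$, and for this the key point is $\MB_X=0$.

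First I would observe that the hypothesis is precisely the statement $\MP(X)=\emptyset$. Indeed, any $\MSL\in \MP(X)$ satisfies both $H^0(X,\MSL^{2})\neq 0$ and $H^0(X,\Omega_X^1\otimes \MSL^{-1})\neq 0$ by definition (Notation \ref{n.saturation-mu}); but the assumption says that $H^0(X,\Omega_X^1\otimes \MSL^{-1})\neq 0$ forces $H^0(X,\MSL^{2})=0$, a contradiction, so no such $\MSL$ exists. By Proposition \ref{p.decomposition-symmetric-differetials}, as recorded in Notation \ref{n.saturation-mu}, the condition $\MP(X)=\emptyset$ is equivalent to $\MB_X=0$.

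Next I would deduce $\MS_X=0$ from $\MB_X=0$. For any $s_1\in H^0(X,\Omega_X^1)$ the square $s_1^2$ has rank at most one at every point of $X$, so $s_1^2\in \MB_X$; thus $\MB_X=0$ already forces $H^0(X,\Omega_X^1)=0$. Consequently, for any $(s_1,s_2)\in \MS_X$ we have $s_1=0$ and then $4s_2-s_1^2=4s_2\in \MB_X=0$, giving $s_2=0$; this is exactly the remark following the definition of $\MS_X$ in \eqref{eq_spectral_base_GL2Higgs_bundle}. Therefore $\MS_X=0$, whence $\MS_X^{\Dol}=\MB_X^{\Dol}=0$, and Theorem \ref{thm_Dolboult_base_rigidity} yields that both $\mfR^{\GLC}$ and $\mfR^{\SLC}$ are rigid.

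Since every step is a direct application of the structural results already established, I do not anticipate a genuine obstacle here. The only point requiring a little care is the equivalence $\MP(X)=\emptyset\Leftrightarrow\MB_X=0$, which rests on Proposition \ref{p.decomposition-symmetric-differetials}: every nonzero rank one symmetric differential factors as $\alpha^2\tau$ with its conormal line bundle lying in $\MP(X)$, so a nonempty $\MB_X$ would produce an element of $\MP(X)$.
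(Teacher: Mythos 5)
Your proposal is correct and follows essentially the same route as the paper's proof: the hypothesis forces $\MB_X=0$ and $H^0(X,\Omega_X^1)=0$, hence $\MS_X=0$, and rigidity then follows from Theorem \ref{thm_Dolboult_base_rigidity}. Your argument merely fills in the details the paper leaves implicit (the reduction through $\MP(X)=\emptyset$ via Proposition \ref{p.decomposition-symmetric-differetials}, and deducing $H^0(X,\Omega_X^1)=0$ from $\MB_X=0$ by noting $s_1^2\in\MB_X$), all of which are sound.
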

	\begin{proof}
		The condition implies $\MB_X=0$ and $H^0(X,\Omega_X^1)=0$. Thus one derives $\MS_X=0$ and the rigidity then follows from Theorem \ref{thm_Dolboult_base_rigidity}.
	\end{proof}

 \begin{remark}
	Unfortunately, the rigidity of the character variety does not imply  $\MB_X=0$. By Example \ref{example_simply_connected_nonvanishing_rankone_differential}, there is an example of simply connected projective manifold $X$ with $\MB_X\neq 0.$     
 \end{remark}
	Now, we will introduces several examples that the Dolbeault basis is known.
	\begin{example}
		Let $X=\Sigma_1\ti \Sigma_2$ with genus $g_1,g_2\geq 2$ and let $\rho:\pi_1(X)\to \SLC$ be an irreducible representation. As $\pi_1(X)=\pi_1(\Sigma_1)\ti\pi_1(\Sigma_2)$, we write $\rho_i:=\rho|_{\pi_1(\Sigma_i)}$, then suppose $\rho_1$ and $\rho_2$ are both non-trivial. We write $\gamma_1,\cdots,\gamma_{2g_1}$ be generators of $\pi_1(\Sigma_1)$, $\gamma_1',\cdots,\gamma_{2g_2}'$ be generators of $\pi_1(\Sigma_2)$ and we write $A_i=\rho_1(\gamma_i)$, $B_i=\rho_2(\gamma_i')$. WLOG, we assume $B_1\neq \Id$, then we have $A_iB_1=B_1A_i$. Let $\pm \lam$ be two eigenvalues of $B$ with eigenvectors $v_{\pm}$, then $A_iv_{\pm}=\lam_{\pm}v_{\pm}$. In addition, for any $B_j$, then condition $B_jA_i=A_iB_j$ implies that $B_jv_{\pm}=\lam_{\pm}v_{\pm}$. Therefore, any irreducible representation $\rho:\pi_1(X)\to \SLC$ must factor through $\pi_1(\Sigma_1)$ or $\pi_1(\Sigma_2)$. Therefore, stable topological trivial $\SLC$ Higgs bundle must be pull-back of stable Higgs bundle over $\Sigma_1$ or $\Sigma_2$. 
		
		In addition, for $\omega\in H^0(X,\Omega_X^1)$, we have the polystable $\SLC$ Higgs bundles $(\msL,\omega)\oplus (\msL^{-1},-\omega)$. By the computation in Example \ref{example_products_of_curves}, we have $\MB_X^{\Dol}=\MB_X$.
	\end{example}

 \begin{example}
     Let $X$ be an arithematic variety of rank $\geq 2$, then follows from Margulis superrigidity \cite{margulis1991discrete}, we have $\MB_X^{\Dol}=0$. In addition, it is shown in \cite{heliumok2023rigidity} that $\MB_X=0$. Therefore, every Higgs bundle over $X$ is nilpotent.
 \end{example}

\subsection{Rigidity of projective manifolds with Picard number one}
The condition in Corollary \ref{corollary_criterion_Dolbeault_Base} is generally challenging to verify, but it becomes more understandable in the context where the smooth projective variety $X$ has Picard number one.

We denote $\Pic(X)$ as the Picard group and $\Pic^0(X)$ as the connected component of the trivial bundle. The Néron–Severi group, denoted as $\NS(X)$, is defined as $\Pic(X)/\Pic^0(X)$, and the \emph{Picard number} $\rho(X)$ is defined as the rank of $\NS(X)$. This Picard number must satisfy the following inequality: $$1\leq \rank\;\NS(X)\leq b_2-2h^{2,0}.$$ An illustrative example of a smooth projective variety with Picard number one is the class of fake projective spaces. We refer the reader to the comprehensive studies on the rigidity problem of fake projective planes in \cite{Klingler2003fakeprojective,Yeung2004integralityandarithmeticity}.

The rigidity of the $\GLC$ character variety over a smooth projective variety $X$ with Picard number one can be fully understood. According to Corollary \ref{corollary_criterion_Dolbeault_Base}, if we assume $\MB_X\neq 0$, then there exists a holomorphic line bundle $\MSL$ with $H^0(X,\MSL^{-1}\otimes \Omega_X^1)\neq 0$ and $H^0(X,\MSL^2)\neq 0$. The condition $H^0(X,\MSL^{-1}\otimes \Omega_X^1)\neq 0$ implies $\kappa(X,\MSL)\leq 1$ by Theorem \ref{t.BCdFtheorem}. In particular, the line bundle $\MSL$ cannot be ample if $\dim(X)\geq 2$. On the other hand, as $H^0(X,\MSL^2)\neq 0$, the dual line bundle $\MSL$ also cannot be ample. As a consequence, if $\rank\;\NS(X)=1$ and $\dim(X)\geq 2$, then we must have $c_1(\MSL)=0$ in $H^2(X,\mbQ)$. In particular, as $H^0(X,\MSL^{2})\not=0$, we get $\MSL^2\cong \MSO_X$; that is, either $\MSL\cong \MSO_X$ or $\MSL$ is a torsion line bundle of order two. Recall that the two-torsion line bundles are in one-to-one correspondence with the homology classes $H^1(X;\ZT)$. 

Now we assume in addition that $\MSL$ is a torsion line bundle of order two. Then $\MSL$ defines a unramified double covering $p:\tX\to X$ such for each $0\not=\al\in H^0(X,\MSL^{-1}\otimes \Omega_{\tX}^1)$, we have $0\not=p^*\al\in H^0(\tX, \Omega_{\tX}^1).$ Conversely, let $p:\tX\rightarrow X$ be a unramified double covering with the corresponding torsion line bundle $\MSL$ on $X$ of order two. Let $0\not=\widetilde{\alpha}\in H^0(\tX,\Omega_{\tX}^1)$ be non-zero one form. As $p$ is unramified, we have the following natural isomorphisms:
\[
H^0(\tX,\Omega_{\tX}^1) = H^0(\tX,p^*\Omega_X^1) \cong H^0(X,p_*p^*\Omega_X^1)\cong H^0(X,\Omega_X^1)\oplus H^0(X,\MSL^{-1}\otimes \Omega_X^1).
\]
In particular, if $b_1(X)=0$, then $\widetilde{\alpha}$ defines a natural element $0\not=\alpha\in H^0(X,\MSL^{-1}\otimes \Omega_X^1)$ such that $p^*\al=\widetilde{\al}$. As consequence, the elements in $\MB_X$ can be fully determined by the topology of $X$; that is, $\MB_X=0$ if and only if $b_1(X)=0$ and for any unramified double covering $\tX\to X$, we have $b_1(\tX)=0$. 

Finally, if $\MB_X\not=0$, we remark that the Higgs bundles in the Hitchin section are actually topologically trivial by our construction. Indeed, if $\MSL$ is a torsion line bundle of order two, for any $0\not=\al\in H^0(X,\MSL^{-1}\otimes \Omega_X^1)$, the Hitchin section over the point $\alpha^2\in \MB_X$ can be written as
\begin{equation}
\MSE=\MSO_X\oplus \MSL^{-1},\;\vp=\begin{pmatrix}
0 & \al\\
-\al & 0
\end{pmatrix}.
\end{equation}
Since $c_1(\MSL)=0$, $c_i(\MSE)=0$ for any $i$; so $\MSE$ is topologically trivial. According to Proposition \ref{prop_stability_realHiggs}, the Higgs bundle $(\MSE,t\vp)$ is stable for $t\not=0$ as $\al\not=0$. As $\det(\vp)=\al^2\not=0$, it defines a non-rigid family of topologically trivial Higgs bundles by Proposition \ref{prop_Hodge_bundle_stablity}. Moreover, as the pull-back $p^*(\MSE,\vp)$ is a direct sum of two Higgs line bundles, the corresponding representation will not be Zariski dense. 

In summary, we can conclude the following:

\begin{theorem}
Let $X$ be a smooth projective variety with Picard number one. Then $\MB_X=\MB_X^{\Dol}$. Moreover, the $\GLC$ character variety $\mfR^{\GLC}$ is rigid if and only if $b_1(X)=0$ and for any unramified double covering $\tX\to X$, we have $b_1(\tX)=0.$
\end{theorem}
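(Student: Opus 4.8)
The plan is to determine the structure of $\MB_X$ under the Picard number one hypothesis, translate the vanishing of $\MB_X$ into a purely topological condition, and then feed this into Arapura's rigidity criterion (Theorem \ref{thm_Dolboult_base_rigidity}), which already reduces rigidity of $\mfR^{\GLC}$ to the vanishing $\MS_X^{\Dol}=0$. Throughout I treat the substantive case $\dim X\geq 2$; for curves $\NS$ always has rank one and the statement follows from the classical surjectivity of the Hitchin morphism (Theorem \ref{thm_Hitchinmap_proper}) together with $b_1(\Si)=2g(\Si)$. Since $\MB_X^{\Dol}\subseteq \MB_X$ holds by definition, for the first assertion only the reverse inclusion must be established.

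First I would pin down $\MB_X$. Assume $\MB_X\neq 0$; by the decomposition \eqref{e.Decomposition-BX-simplified} there is $\MSL\in \MP(X)$ with $H^0(X,\MSL^{-1}\otimes \Omega_X^1)\neq 0$ and $H^0(X,\MSL^{2})\neq 0$. The first non-vanishing yields a saturated injection $\MSL\hookrightarrow \Omega_X^1$, so Theorem \ref{t.BCdFtheorem} forces $\kappa(X,\MSL)\leq 1$; in particular $\MSL$ is not ample once $\dim X\geq 2$. The second non-vanishing shows $\MSL^{2}$, hence $\MSL^{-1}$, is not ample either. As $\rank\,\NS(X)=1$, every class is numerically proportional to the ample generator, so the only way both $\MSL$ and $\MSL^{-1}$ can fail to be ample is $c_1(\MSL)=0$ in $H^2(X,\mbQ)$; a numerically trivial bundle admitting a section of $\MSL^{2}$ must then satisfy $\MSL^{2}\cong \MSO_X$. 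Thus every contributing $\MSL$ is either trivial or a two-torsion line bundle. Using the identification of two-torsion line bundles with $H^1(X;\ZT)$, equivalently with unramified double covers $p\colon \tX\to X$, the relation $p_*\MSO_{\tX}\cong \MSO_X\oplus \MSL^{-1}$ gives
\[
H^0(\tX,\Omega_{\tX}^1)\cong H^0(X,\Omega_X^1)\oplus H^0(X,\MSL^{-1}\otimes \Omega_X^1).
\]
Combining this with the preceding structure result shows that $\MB_X\neq 0$ exactly when $H^0(X,\Omega_X^1)\neq 0$ (the contribution of $\MSL=\MSO_X$) or some two-torsion $\MSL$ has $H^0(X,\MSL^{-1}\otimes \Omega_X^1)\neq 0$. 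Since $b_1=2\dim H^0(\Omega_X^1)$, the first alternative is $b_1(X)\neq 0$, and, once $b_1(X)=0$, the displayed decomposition turns the second into $b_1(\tX)\neq 0$. Hence $\MB_X=0$ if and only if $b_1(X)=0$ and $b_1(\tX)=0$ for every unramified double covering $\tX\to X$.

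Next I would prove $\MB_X=\MB_X^{\Dol}$ and conclude. Fix $0\neq s\in \MB_X$ and write $s=\alpha^2\tau$ as in Proposition \ref{p.decomposition-symmetric-differetials}; since $\MSL^{2}\cong \MSO_X$, the section $\tau$ is a nonzero constant and $\textup{div}(s)=0$. The Hitchin-section Higgs bundle of Example \ref{e.canonical-Higgs-bundle} (and Proposition \ref{p.Properties-of-Hitchin-section}),
\[
\MSE=\MSO_X\oplus \MSL^{-1},\qquad \vp=\begin{pmatrix} 0 & -\alpha\tau\\ \alpha & 0\end{pmatrix},
\]
is trace-free and polystable with $\det(\vp)=s$, and because $c_1(\MSL)=0$ in $H^2(X,\mbQ)$ it is topologically trivial, hence lies in the Dolbeault moduli space and realizes $s$ as a spectral datum. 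This gives $\MB_X\subseteq \MB_X^{\Dol}$, so $\MB_X=\MB_X^{\Dol}$. For the rigidity dichotomy, if $\MB_X\neq 0$ this same construction produces a topologically trivial polystable Higgs bundle with $\ssd_X=(0,s)\neq(0,0)$, so $\MS_X^{\Dol}\neq 0$ and $\mfR^{\GLC}$ is non-rigid (compare Proposition \ref{prop_firstbettinumber_notrigid}); conversely, if $\MB_X=0$ then $b_1(X)=0$, so $H^0(X,\Omega_X^1)=0$ and the defining condition $4s_2-s_1^2\in \MB_X=0$ forces $\MS_X=\{(0,0)\}$, whence $\MS_X^{\Dol}=0$ and $\mfR^{\GLC}$ is rigid. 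Theorem \ref{thm_Dolboult_base_rigidity} therefore gives rigidity of $\mfR^{\GLC}$ if and only if $\MB_X=0$, which the previous paragraph rewrites as the stated topological condition.

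The main obstacle lies in the identity $\MB_X=\MB_X^{\Dol}$: the base $\MB_X^{\Dol}$ is the image over the $\SLC$ Dolbeault space, whereas the naturally produced Hitchin-section bundle $\MSO_X\oplus\MSL^{-1}$ has determinant the two-torsion bundle $\MSL^{-1}$, so for nontrivial $\MSL$ one must upgrade it to a model with trivial determinant. Here I would control the determinant of the push-forward via Proposition \ref{p.chern_class_push_down}(3), namely by choosing a line bundle $\MSM$ on the double cover $\tX$ with $\textup{Nm}(\MSM)\cong \MSL$, so that $\det(p_*\MSM)\cong \MSO_X$; this is exactly the point where the numerical triviality extracted in the structure step is used most delicately, and is what must be checked carefully to keep the argument within the $\SLC$ Dolbeault space rather than only the topologically trivial $\GLC$ one.
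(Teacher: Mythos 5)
Your proof follows essentially the same route as the paper's. The paper's ``proof'' of this theorem is a two-line reference back to the discussion at the start of Section \ref{sec_rigidity_character_variety}.2, and your argument reproduces that discussion step by step: Picard number one together with Theorem \ref{t.BCdFtheorem} (via saturation, using $\MSL\in\MP(X)$) and the effectivity of $\MSL^{2}$ force every contributing line bundle to be trivial or two-torsion; the \'etale double cover identification $H^0(\tX,\Omega_{\tX}^1)\cong H^0(X,\Omega_X^1)\oplus H^0(X,\MSL^{-1}\otimes\Omega_X^1)$ turns $\MB_X=0$ into the stated Betti-number condition; when $\MB_X\neq 0$ the Hitchin-section bundles $\MSO_X\oplus\MSL^{-1}$ are topologically trivial and polystable with non-zero spectral datum, so $\MS_X^{\Dol}\neq 0$ and $\mfR^{\GLC}$ is non-rigid by Theorem \ref{thm_Dolboult_base_rigidity}; and when $\MB_X=0$ one gets $b_1(X)=0$, hence $\MS_X=\{(0,0)\}$ and rigidity. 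The ``moreover'' part of the theorem is therefore completely and correctly proved in your write-up, exactly as in the paper, including the dimension-one reduction.

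The one place you go beyond the paper is the identity $\MB_X=\MB_X^{\Dol}$, and there your extra care exposes a genuine imprecision -- but your repair is not finished. As you note, $\MB_X^{\Dol}$ is defined as $\mathrm{Im}(\ssd_X|_{\MMD^{\SLC}})$, i.e.\ using Higgs bundles with $\det(\msE)\cong\MSO_X$ and $\Tr(\vp)=0$, whereas $\MSO_X\oplus\MSL^{-1}$ has determinant the non-trivial two-torsion bundle $\MSL^{-1}$. The paper's own discussion concludes $\MB_X=\MB_X^{\Dol}$ from exactly these bundles and silently ignores the determinant, so this defect is the paper's, not one you introduced. However, the fix you sketch cannot be waved through: when $\MSL$ is non-trivial two-torsion, the Cohen--Macaulayfication $\widetilde{X}_s$ is precisely the \'etale cover $\tX$, maximal Cohen--Macaulay sheaves of generic rank one on it are line bundles, and so by Theorem \ref{t.Spectral-correspondence} \emph{every} trace-free Higgs bundle with $\det(\vp)=s$ is of the form $p_*\MSM$ with determinant $\Nm(\MSM)\otimes\MSL^{-1}$ by Proposition \ref{p.chern_class_push_down}. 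Hence the existence of $\MSM$ with $\Nm(\MSM)\cong\MSL$ is not merely a sufficient device: it is \emph{equivalent} to the claim, and it is nontrivial. When $b_1(X)=0$ the norm map kills $\Pic^0(\tX)$, so one is asking whether the torsion class $\MSL$ lies in $\Nm(\Pic(\tX))$, whose image then only obviously contains $2\Pic(X)$; already $c_1(\MSL)\in\mathrm{Im}\bigl(p_*:H^2(\tX;\mbZ)\to H^2(X;\mbZ)\bigr)$ is a necessary topological condition, and one would additionally need $c_2(p_*\MSM)\cdot[\omega]^{n-2}=0$ and polystability to land in $\MMD^{\SLC}$. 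So either one reads $\MB_X^{\Dol}$ as the image of the trace-free locus of the $\GLC$ Dolbeault space $\MMD$ -- under which reading both your main argument and the paper's are complete -- or this step is a genuine gap, shared by your proposal and by the paper itself.
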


\begin{proof}
    If $\dim(X)\geq 2$, the result follows from our discussion above. If $\dim(X)=1$, the result is obvious from the classical results.
\end{proof}

In particular, we have:
\begin{corollary}
Let $X$ be a smooth projective variety with Picard number one. If $b_1(X)=0$ and $H^1(X;\ZT)=0$, then $\mfR^{\GLC}$ is rigid.
\end{corollary}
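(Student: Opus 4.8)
The plan is to deduce the Corollary directly from the preceding Theorem, so the only real work is to verify that the two topological hypotheses force both conditions appearing in the rigidity criterion. The condition $b_1(X)=0$ is assumed outright, so the entire task reduces to checking the second condition of the Theorem, namely that $b_1(\tX)=0$ for every unramified double covering $\tX\to X$; once this is established the Theorem immediately yields that $\mfR^{\GLC}$ is rigid.

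First I would recall the standard classification of unramified double coverings: isomorphism classes of such coverings are in bijection with $\Hom(\pi_1(X),\ZT)$, and since $\Hom(\pi_1(X),\ZT)\cong \Hom(H_1(X;\mbZ),\ZT)\cong H^1(X;\ZT)$, the nonzero homomorphisms correspond precisely to the connected double covers, equivalently to the index-two subgroups of $\pi_1(X)$, equivalently to the nontrivial two-torsion line bundles $\MSL$ with $\MSL^2\cong\MSO_X$ that play the central role in the discussion preceding the Theorem. Hence the hypothesis $H^1(X;\ZT)=0$ means there is no nontrivial homomorphism $\pi_1(X)\to\ZT$, so the only unramified double covering is the split one $\tX=X\sqcup X$. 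For this split covering one has $b_1(\tX)=2b_1(X)=0$ by the assumption $b_1(X)=0$. Therefore every unramified double covering satisfies $b_1(\tX)=0$, both conditions of the Theorem are met, and rigidity of $\mfR^{\GLC}$ follows.

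The step requiring the most care is the classification identification $\Hom(\pi_1(X),\ZT)\cong H^1(X;\ZT)$ together with the bookkeeping of the connected versus split cases: one must confirm that the vanishing of $H^1(X;\ZT)$ genuinely rules out all connected double covers, and that the sole remaining split cover contributes nothing to first cohomology beyond two copies of $X$. I expect no further analytic or algebro-geometric obstacle once this is in place, since all the substantive content---the structure of $\MB_X$ in Picard number one, the identification of the relevant line bundles as trivial or two-torsion, and the equivalence between rigidity and the vanishing of the Dolbeault base established in Theorem \ref{thm_Dolboult_base_rigidity}---has already been developed in the preceding sections.
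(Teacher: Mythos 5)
Your proposal is correct and follows exactly the route the paper intends: the corollary is an immediate consequence of the preceding theorem, since $H^1(X;\ZT)\cong\Hom(\pi_1(X),\ZT)$ classifies the unramified double coverings (equivalently, the two-torsion line bundles), so its vanishing leaves only the split cover $X\sqcup X$, whose first Betti number is $2b_1(X)=0$. The paper states the corollary without proof precisely because this verification is the whole content, and your write-up supplies it correctly.
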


	\subsection{Rigiditiy of the nilpotent cone}
	Now, we will discuss the rigidity of the nilpotent cone in various content. 
	\subsubsection{Rigidity of $\MMH$}
	We first consider the rigidity of the nilpotent cone of $\MMH$.
	
	\begin{theorem}
		The nilpotent cone of the moduli space of rank two polystable Higgs bundle $\MMH$ is rigid if and only if $\MS_X=0$. 
	\end{theorem}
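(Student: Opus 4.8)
The plan is to prove the two implications separately; the forward implication is essentially formal, while the converse requires exhibiting an explicit one-parameter degeneration. For the direction $\MS_X=0 \Rightarrow$ rigid, I would first observe that if $\MS_X=\{(0,0)\}$ then the spectral morphism $\ssd_X\colon \MMH\to \MS_X$ is the constant map onto the origin, so every polystable Higgs bundle $(\MSE,\vp)\in\MMH$ satisfies $\Tr(\vp)=0$ and $\det(\vp)=0$ and is hence nilpotent; that is, $\MMH$ equals its own nilpotent cone $\ssd_X^{-1}((0,0))$. In particular no family $(\MSE_t,\vp_t)$ in $\MMH$ can have $\ssd_X(\MSE_t,\vp_t)\neq 0$, so the hypothesis in the definition of non-rigidity is never satisfied and the nilpotent cone is rigid.

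For the converse $\MS_X\neq 0 \Rightarrow$ not rigid, I would produce a point of the nilpotent cone realized as a $\CS$-limit of Higgs bundles with non-zero spectral datum. The first step is the reduction $\MS_X\neq 0 \Rightarrow \MS_X\setminus\MS_X^{\nil}\neq\emptyset$: since the square of any nonzero global $1$-form is a nonzero rank-one quadratic differential, $\MB_X=0$ would force $H^0(X,\Omega_X^1)=0$ and hence $\MS_X=0$; thus $\MS_X\neq 0$ yields some $0\neq b\in\MB_X$, and then $s:=(0,b/4)$ satisfies $4\cdot(b/4)-0=b\neq 0$, so $s\in\MS_X\setminus\MS_X^{\nil}$. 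Applying the Hitchin section $\chi_{\Hit}$ of Theorem \ref{thm_Hitchinsection_existence} to this $s$ produces a polystable Higgs bundle $(\MSE_s,\vp_s)\in\MMH$ with $\ssd_X(\MSE_s,\vp_s)=s$. This is exactly where surjectivity onto all of $\MS_X$ --- rather than merely onto $\MS_X^{\Dol}$ as happens for $\MMD$ in Theorem \ref{thm_Dolboult_base_rigidity} --- enters, and it is what turns the criterion into $\MS_X=0$ rather than $\MS_X^{\Dol}=0$.

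Finally I would scale by the $\CS$-action. The family $(\MSE_s,t\vp_s)$ lies in $\MMH$ for all $t\in\CS$ because the $\CS$-action preserves polystability, and writing $s=(s_1,s_2)$ a direct computation gives $\ssd_X(\MSE_s,t\vp_s)=(ts_1,t^2s_2)$, which is non-zero for every $t\neq 0$ since $s\neq 0$. By the explicit limit computations carried out right after the definition of $\chi_{\Hit}$ --- treating separately the cases $\textup{div}(b)\neq 0$ and $b=\omega^2$ with $\textup{div}(\omega)=0$ --- the limit $\lim_{t\to 0}[(\MSE_s,t\vp_s)]$ exists in $\MMH$ and is a nilpotent Higgs bundle, i.e. lies in $\ssd_X^{-1}((0,0))$. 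This displays a point of the nilpotent cone as a limit of Higgs bundles with non-vanishing spectral datum, proving non-rigidity. The delicate point, and the reason I invoke the Hitchin section rather than bare surjectivity of $\ssd_X$, is precisely the existence of this $\CS$-limit: over $\MMH$ the Hitchin morphism fails to be proper because of Uhlenbeck bubbling, so the limit need not exist for an arbitrary polystable Higgs bundle, whereas for the Hitchin-section bundles it is computed explicitly.
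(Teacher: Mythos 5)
Your proof is correct and follows essentially the same route as the paper: the forward implication is read off from the definition of rigidity, and the converse applies the Hitchin section of Theorem \ref{thm_Hitchinsection_existence} to a non-zero spectral datum and uses the explicit $\CS$-limit computations to produce a family with non-vanishing spectral data degenerating into the nilpotent cone. Your reduction to $s=(0,b/4)\in\MS_X\setminus\MS_X^{\nil}$ and your remark that the Hitchin-section bundles are needed because $\sh_X$ is not proper on $\MMH$ are just more explicit versions of steps the paper leaves implicit.
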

	\begin{proof}
		If the nilpotnent cone is not rigid, then it follows from the definition that $\MS_X\neq 0$. For the other direction, by Theorem \ref{thm_Hitchinsection_existence}, for any $0\not=s\in \MS_X$, there exists a polystable Higgs bundle $(\msE,\vp)$ with $\ssd_X(\msE,\vp)=s$ such that the $\mbC^*$-limit $\lim_{t\to 0}[(\msE,t\vp)]$ exists in $\MMH$ and lies in the nilpotent cone. Thus $\MS_X\neq 0$ implies the nilpotent cone is not rigid. 
	\end{proof}
	
	In particular, for the projective manifold $X$ in Example \ref{example_simply_connected_nonvanishing_rankone_differential} $(5)$, the nilpotent cone of $\MMH$ is not rigid but the nilpotent cone of $\MMD$ is rigid.
	
	\subsubsection{Rigidity of $\SLR$ character variety}
	As all Higgs bundles in $\MMD^{\SLR}$ are tracefree, the spectral base for  $\MMD^{\SLR}$ would be a subset of the rank $1$ symmetric differentials $\MB_X$ and now we want to understand the image of $\ssd_X|_{\MMD^{\SLR}}$. We refer the reader to Definition \ref{d.MXDOLSL2R} for the notation $\MB_{X;\Dol}^{\SLR}$.
	
	\begin{theorem}
		For the $\SLR$ Dolbeault moduli space, the following statements hold.
		\begin{enumerate}
			\item The nilpotent cone of $\MMD^{\SLR}$ is rigid if and only if $\MB_{X;\Dol}^{\SLR}=0$.  
			\item The $\SLR$ character variety $\mfR^{\SLR}$ is rigid if and only if $\MB_{X;\Dol}^{\SLR}=0$ and there exists only finite number of determinant trivial rank two $\VHS.$
		\end{enumerate}
	\end{theorem}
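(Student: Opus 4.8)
The plan is to push everything through the non-abelian Hodge correspondence $\MMD^{\SLR}\cong \mfR^{\SLR}$ and then exploit the $\CS$-action together with the surjectivity of $\ssd_X|_{\MMD^{\SLR}}$ onto $\MB_{X;\Dol}^{\SLR}$ from Theorem \ref{thm_slr_Hitchin_morphism}. For statement (1), observe that the image of $\ssd_X|_{\MMD^{\SLR}}$ is by definition $\MB_{X;\Dol}^{\SLR}$. If $\MB_{X;\Dol}^{\SLR}=0$, then every $(\msE,\vp)\in\MMD^{\SLR}$ has vanishing spectral datum and hence lies in the nilpotent cone, so there is simply no family with nonzero spectral datum and the nilpotent cone is rigid. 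Conversely, if $\MB_{X;\Dol}^{\SLR}\neq 0$, choose $0\neq s$; surjectivity gives $(\msE,\vp)\in\MMD^{\SLR}$ with $\ssd_X(\msE,\vp)=s$, and then the family $(\msE,t\vp)$, $t\in\CS$, lies in $\MMD^{\SLR}$ with spectral datum $t^2s\neq 0$ for $t\neq 0$. Its limit as $t\to 0$ exists by properness (Theorem \ref{thm_Hitchinmap_proper}) and is a $\VHS$ in the nilpotent cone by Theorem \ref{action_fixed_point_Hodge_bundle}; since the real locus is preserved under the scaling and is closed under limits, this limit still lies in $\MMD^{\SLR}$. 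This deformation violates rigidity of the nilpotent cone, proving (1).

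For statement (2), the correspondence $\MMD^{\SLR}\cong\mfR^{\SLR}$ being real-analytic shows that $\mfR^{\SLR}$ is rigid precisely when $\MMD^{\SLR}$ is a discrete, hence finite, set. One direction is immediate: if $\MMD^{\SLR}$ is finite, then $\MB_{X;\Dol}^{\SLR}=\ssd_X(\MMD^{\SLR})$ is a finite $\CS$-invariant cone and therefore equals $\{0\}$, while the determinant-trivial $\VHS$, being a subset of $\MMD^{\SLR}$, are finite in number.

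For the reverse direction, assume $\MB_{X;\Dol}^{\SLR}=0$ and that there are only finitely many determinant-trivial rank two $\VHS$. By (1) this means $\MMD^{\SLR}$ coincides with its own nilpotent cone, so it suffices to show this cone is finite. First I classify its points: writing $(\msE,\vp)$ in the canonical real form of Proposition \ref{prop_stability_realHiggs} with off-diagonal entries $\al,\be$, nilpotency reads $\det(\vp)=-\al\be=0$, and since $\Sym^2\Omega_X^1$ is locally free (hence torsion free) this forces $\al=0$ or $\be=0$, so $\vp$ is a Hodge field. Such a point is either a stable $\VHS$ (finitely many by hypothesis; the degenerate polystable-but-not-stable Hodge bundles with $\vp\neq 0$ are excluded, as they are only semistable) or has $\vp=0$, i.e. is a split bundle $\msL\oplus\msL^{-1}$ corresponding to a reducible representation $\chi\oplus\chi^{-1}$.

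The remaining and genuinely delicate point — which I expect to be the main obstacle — is to bound this split locus. These decomposables are parametrized by characters $\chi\in\Hom(\pi_1(X),\mbC^*)$, whose identity component is $(\mbC^*)^{b_1(X)}$, so they are finite in number exactly when $b_1(X)=0$. I therefore need the implication $\MB_{X;\Dol}^{\SLR}=0\Rightarrow b_1(X)=0$, which I will prove by contraposition. If $b_1(X)>0$, Hodge theory provides $0\neq\omega\in H^0(X,\Omega_X^1)$, and $s=\omega^2$ is a nonzero rank-one symmetric differential with $\textup{div}(s)=2\,\textup{div}(\omega)$; taking the symmetric decomposition $D_1=D_2=\textup{div}(\omega)$ gives $(c_1(D_1)-c_1(D_2))^2\cdot[\omega]^{n-2}=0$ and $\msL^{-1}\otimes\MSO_X(D_1)\cong\MSO_X$, which trivially admits a square root, so $s\in\MB_{X;\Dol}^{\SLR}\setminus\{0\}$ by Definition \ref{d.MXDOLSL2R}. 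Hence $\MB_{X;\Dol}^{\SLR}=0$ forces $b_1(X)=0$, the split locus is finite, and together with the finiteness of the stable $\VHS$ we conclude that the nilpotent cone, and thus $\MMD^{\SLR}\cong\mfR^{\SLR}$, is finite. This completes the proof of (2).
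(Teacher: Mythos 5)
Your proof is correct and, for part (2), it is actually more complete than the paper's own argument. The paper disposes of (1) by citing Theorem \ref{thm_slr_Hitchin_morphism} (your scaling-plus-limit argument, using Theorems \ref{thm_Hitchinmap_proper} and \ref{action_fixed_point_Hodge_bundle}, is exactly the intended justification), and for (2) it simply asserts, via Proposition \ref{prop_stability_realHiggs}, that the nilpotent cone of $\MMD^{\SLR}$ ``is exactly the same as'' the determinant-trivial rank two $\VHS$, and concludes. Since the paper's definition of a $\VHS$ requires stability, this identification silently drops the polystable split points $(\msL\oplus\msL^{-1},0)$ with $\deg_\omega(\msL)=0$ and $c_1(\msL)^2\cdot[\omega]^{n-2}=0$, which form a positive-dimensional family whenever $b_1(X)>0$. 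You correctly isolate this as the delicate point and close it with the implication $\MB_{X;\Dol}^{\SLR}=0\Rightarrow b_1(X)=0$, proved by exhibiting $\omega^2\in\MB_{X;\Dol}^{\SLR}$ for any $0\neq\omega\in H^0(X,\Omega_X^1)$ via the symmetric decomposition $D_1=D_2=\textup{div}(\omega)$ in Definition \ref{d.MXDOLSL2R}. This lemma is precisely what is needed for the theorem to hold as stated and is absent from the paper; so the two arguments share the same skeleton (rigidity of $\mfR^{\SLR}$ equals rigidity of the nilpotent cone plus finiteness of its points, then classify the cone via the real normal form), but your route supplies a complete proof where the paper's is elliptic.

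Two small repairs. First, your justification that nilpotency forces $\al=0$ or $\be=0$ should not rest on torsion-freeness of $\Sym^2\Omega_X^1$: the correct argument is pointwise --- a symmetric product $v\odot w$ of covectors vanishes if and only if $v=0$ or $w=0$ --- so $X=\{\al=0\}\cup\{\be=0\}$ as analytic sets, and irreducibility of $X$ forces one of them to be all of $X$. Second, ``discrete, hence finite'' is not automatic for the non-compact space $\MMD^{\SLR}$; run the forward direction in the order you implicitly use anyway: rigidity first forces $\MB_{X;\Dol}^{\SLR}=0$ (a nonzero $s$ gives the continuous injective family $t\mapsto[(\msE,t\vp)]$, so a non-discrete set), hence $\MMD^{\SLR}$ coincides with its nilpotent cone, which is compact by properness of the Hitchin morphism (Theorem \ref{thm_Hitchinmap_proper}), and there discreteness does give finiteness.
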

	\begin{proof}
		The statement $(1)$ follows directly from Theorem \ref{thm_slr_Hitchin_morphism}. For $(2)$, the chracter variety $\mfR^{\SLR}$ is rigid if and only if the nilpotent cone of $\MMD^{\SLR}$ is rigid and every element of the nilpotent cone is rigid. However, by Proposition \ref{prop_stability_realHiggs}, the nilpotent cone for $\MMD^{\SLR}$ is exactly the same as determiant trivial $\VHS$, this implies $(2)$.
	\end{proof}
	
	\section{Further discussions and applications}
	\label{sec_further_discussions_applications}
Over a Riemann surface, there exists a fascinating theory related to the Hitchin section and Hitchin map, the Milnor-Wood inequality \cite{hitchin1987self}, the Hitchin integral system \cite{hitchin1987hyperkahler}, and the Hitchin component \cite{hitchin1992lie}. In this section, we will explore various generalizations of these constructions to higher dimensions.
	
	\subsection{Milnor-Wood type inequality}
	Let $X$ be a smooth projective curve and let $\rho:\pi_1(X)\rightarrow G$ be a representation into a linear connected simple noncompact Lie group of Hermitian type. One can introduce the so-called \email{Toledo invariant} $\tau(\rho)$. In particular, in the case $G=\GL_2(\mbC)$, it is equivalent to considering the Euler class and it satisfies the Milnor-Wood inequality. 
 
    Follows from Burger-Iozzi and Koziarz-Manbon \cite{burger2007bounded,KoziarzMaubon2010}, one could extend the definition of the Toledo invariant to representations of fundamental groups of smooth projective varieties into $\SL_2(\mbC)$. Furthermore, we will demonstrate that this extended Toledo invariant also satisfies a Milnor-Wood type inequality.
	
	\subsubsection{Mobile curve classes}
	
	Let $X$ be a projective manifold, and let $N_1(X)_{\mbR}$ denote the set of numerical classes of one-cycles. We introduce the following definition.
	
	\begin{definition}
		Let $X$ be an $n$-dimensional projective manifold. A non-zero class $\gamma\in N_1(X)_{\mathbb{R}}$ is called mobile if there exists a birational projective morphism $\pi:X'\rightarrow X$ from a smooth projective variety $X'$ to $X$, a collection of nef and big divisors $\{A_i\}_{1\leq i\leq n-1}$ and some $c\in \mbR_{>0}$, such that 
		\[
		\gamma = c\pi_*(A_1\cdots A_{n-1}).
		\]	
	\end{definition}

    \begin{remark}
    \label{r.movable-curves}
    \begin{enumerate}
        \item According to \cite{BoucksomDemaillyPuaunPeternell2013}, a line bundle $\MSL$ is pseudoeffective if and only if $\deg_{\gamma}(\MSL)\geq 0$ for any mobile curve class $\gamma$. On the other hand, by the projection formula, given any line bundle $\MSL$ on $X$, we have
        \[
        \deg_{\gamma}(\MSL):=c_1(\MSL)\cdot \gamma=c\pi^*\MSL\cdot c_1(A_1)\cdots c_1(A_{n-1}).
        \]

    \item Recall that nef line bundles can be approximated by ample line bundles. So there exist ample $\mbQ$-divisors $A_i^k$ such that $c_1(A_i^k)\to c_1(A_i)$ as $k\to +\infty$. In particular, there exists a sequence of smooth projective curves $C_k\subset X'$ with non-constant maps $f_k:C_k\rightarrow X$ and positive rational numbers $c_k$ such that
    \[
    c_k C_k \equiv A_1^k\cdots A_{n-1}^k.
    \]
    In particular, we have $c_k \pi_*C_k \to \gamma$ as $k\to \infty$
    \end{enumerate}
    \end{remark}

    \subsubsection{Toledo invariant}
    Let $\mbfE$ be a complex vector space of dimension $2$, endowed with a non-degenerate Hermitian form $h$ of signature $(1,1)$. Let $\mbfW$ be the $1$-dimensional complex subspace of $\mbfE$ on which $h$ is negative-definite and let $\mbfV$ be its $h$-orthogonal complement. Let $G=\SU(1,1)$ be the subgroup of $\SL_2(\mbC)$ preserving $h$. Recall that $G$ is isomorphic to $\SL_2(\mbR)$. Let $K$ be the isotropy subgroup $K$ of $G$ at $\mbfW$. Then the associated Hermitian symmetric space $Y=G/K$ is biholomorphic to a complex unit ball $\mbB^1$ contained in $\textrm{Gr}(1,\mbfE)\cong \mbP^1$. Let $\MSL_Y$ be the restriction of the line bundle $\MSO_{\mbP^1}(-1)$ to $Y$.
    
    Let $X$ be a projective manifold and fix a mobile curve class $\gamma$ on $X$. Let $\rho:\pi_1(X)\rightarrow G$ be a representation. Let $\widetilde{X}$ be the universal cover of $X$. Since $Y$ is contractible, there exists a $\rho$-equivariant $\MC^{\infty}$ map $f:\widetilde{X}\rightarrow Y$ and any two of them are homotopic. Since the complexification of $\SU(1,1)$ is $\SL(2,\mbC)$, which is simply connected, the group $G$ acts by automorphisms on $\MSL_{Y}$ and the smooth complex line bundle $L_{\widetilde{X}}:=f^*\msL_X$ descends to a natural smooth line bundle $L_{X}$ on $X$ \cite[\S\,2]{KoziarzMaubon2010}. Now we define the \emph{Toledo invariant of $\rho$ with respect to $\gamma$} as
    \[
    \tau_{\gamma}(\rho):=\deg_{\gamma}(L_X)=c_1(L_X)\cdot \gamma.
    \]
    \begin{remark}
        This definition is independent of the choice of $f$ and clearly it can be easily generalised to other linear simply connected noncompact Lie groups of Hermitian type. Moreover, if $X$ is of general type, i.e, $K_X$ is big,  there exists a birational contraction $g:X\dashrightarrow X_{\textup{can}}$ with $K_{X_{\textup{can}}}$ ample. Let $\pi:\widetilde{X}\rightarrow X$ be a resolution of $g$ with the induced morphism $\widetilde{g}:\widetilde{X}\rightarrow X_{\textup{can}}$. We define
    \[
    \gamma:=\pi_*(\widetilde{g}^*K_{X_{\textup{can}}})^{n-1}.
    \]
    As $\widetilde{X}$ is birational and $K_{\textup{can}}$ is ample, the pull-back $\widetilde{g}*K_{X_{\textup{can}}}$ is big and nef. In particular, the class $\gamma$ is mobile and the corresponding Toledo invariant $\tau_{\gamma}(\rho)$ is exactly that defined in \cite[Definition 3.1]{KoziarzMaubon2010}.
    \end{remark}

    \subsubsection{Milnor-Wood type inequality}
    
    Now we are in the position to prove a Milnor-Wood type inequality for the Toledo invariant introduced above.

    \begin{proof}[Proof of Theorem \ref{t.MWineq}]
        In fact the proof is exactly the same as that of \cite[Proposition 4.3]{KoziarzMaubon2010} and the key point is the remarkable uniruledness criterion established in \cite{BoucksomDemaillyPuaunPeternell2013}. By \cite[Corollary 0.3]{BoucksomDemaillyPuaunPeternell2013}, the canonical bundle $K_X$ is pseudoeffective as $X$ is non-uniruled. In particular, we have $\deg_{\gamma}(K_X)\geq 0$.
        
        Let $\mbfE=\mbfW\oplus \mbfV$ be the standard  representation of $\SU(1,1)$ and let $(\msE,\varphi)$ be the associated flat rank two Higgs bundle which is polystable with respect to some ample divisor $A$ (see \cite[\S\,3.2 and Example 3.6.3]{Maubon2015}). Then $\msE$ splits holomorphically as a direct sum $\msW\oplus \msV$ and the Higgs field $\varphi$ has the form
        \begin{center}
            $\begin{pmatrix}
            0  &  \beta \\
            \alpha & 0
        \end{pmatrix}$, with 
        $\begin{cases}
            \beta: \msV\rightarrow \msW\otimes \Omega_X^1\\
            \alpha:\msW\rightarrow \msV\otimes \Omega_X^1
        \end{cases}$.
        \end{center}
        Moreover, the line bundle $\msW\otimes \msV^{-1}$ is isomorphic to $L_X^{2}$ as smooth complex line bundles. In particular, we have
        \[
        \deg_{\gamma}(\msW)=-\deg_{\gamma}(\msV)=\tau_{\gamma}(\rho).
        \]
        
        If $\alpha=0$, then $\msW$ is a Higgs subbundle of $\msE$. However, since the Higgs bundle $(\msE,\varphi)$ is flat and polystable with respect to some polarisation, it follows that for any projective smooth curve $C$ with non-constant map $f:C\rightarrow X$, the pull-back Higgs bundle $(f^*\msE,f^*\varphi)$ is semistable. Applying this to $f_k:C_k\rightarrow X$ given in Remark \eqref{r.movable-curves} shows that $\deg_{C_k}(f_k^*\msW)\leq 0$ by the semi-stability of the Higgs bundle $(f_k^*\msE,f_k^*\varphi)$. In particular, letting $k\to \infty$ yields
        \[
        \tau_{\gamma}(\rho)\deg_{\gamma}(\msW) = \lim_{k} c_k \deg_{C_k}(\msW) \leq 0\leq 2\deg_{\gamma}(K_X).
        \]

        If $\alpha\not=0$, then $\alpha$ induces a non-zero holomorphic map $\msW\otimes \msV^{-1}\rightarrow \Omega_X^1$. Let $\msF$ be the saturation of the image. Then \cite[Theorem 2.7]{BoucksomDemaillyPuaunPeternell2013} implies that 
        \[
        c_1(K_X)-c_1(\msF) = c_1(\Omega_X^1/\msF)
        \] 
        is pseudoeffective as $X$ is non-uniruled. In particular, we have
        $\deg_{\gamma}(c_1(\msF)) \leq \deg_{\gamma}(c_1(K_X))$. On the other hand, the natural inclusion $\msW\otimes \msV^{-1}\rightarrow \msF$ implies
        \[
        2\tau_{\gamma}(\rho)=\deg_{\gamma}(\msW\otimes \msV^{-1})\leq \deg_{\gamma}(\msF) \leq \deg_{\gamma}(K_X).
        \]
        Finally the same argument applied to $\beta$ yields $-2\tau_{\gamma}(\rho)\leq \deg_{\gamma}(K_X)$ and we are done.
    \end{proof}
    
\subsection{Possion structure on the fiber}
In this subsection, we will introduce the Poisson structure on the fiber, as obtained in \cite[Section 10]{schottenloher1995metaplectic}. This Poisson structure was also explored in \cite{biswas1994aremark, biswas2021branes, schottenloher1995metaplectic}. Additionally, we will examine the conditions under which this Poisson structure gives rise to an algebraic integrable system. Our focus in this subsection will be on tracefree rank two Higgs bundles.

Let $\ssd_X:\MMD\to \MB_X$ be the restriction of the spectral morphism on the Dolbeault moduli space. Given fixed $(\msL,\alpha\in H^0(X,\msL^{-1}\Omega_X^1))$, we can utilize \eqref{e.Decomposition-BX-simplified} to define $\MB_{X,\msL}\subset \MB_X$. Now, we proceed to define $\MM_{\Dol,\msL}:=\ssd_X^{-1}(\MB_{X,\msL})\cap \MMD$, allowing us to establish a sub-spectral morphism as follows:
\begin{equation}
\begin{split}
\ssd_{X,\msL}:\MM_{\Dol,\msL}\to \MB_{X,\msL}\subset H^0(X,\Sym^2\Omega_X^1).
\end{split}
\end{equation}
	
We choose $\gamma_i\in \MB_{X,\msL}^{*}\subset H^0(X,S^2T_X^1)$ be a basis of the dual vector space, and define 
	\begin{equation}
		f_i:\MM_{\Dol,\msL}\to \mbC,\;f_i[A,\vp]:=\gamma_i(\Tr(\vp^2)).
	\end{equation}
	
	The K"ahler form $\omega$ induces isomorphisms between $T_X^{1,0}$ and $\Omega_X^{1,0}$, as well as between $T_X^{1,0}$ and $\Omega_X^{0,1}$. Consequently, utilizing the K"ahler metric, we can define a canonical pairing $\lan\;,\;\ran:\Sym^2T_X^{1,0}\ti \Sym^2\Omega_X^{1,0}\to \Omega_X^{1,1}$. This pairing is defined as follows: consider an orthonormal frame $dz_1,\cdots,dz_n$ and its dual frame $\partial_{z_1},\cdots,\partial_{z_n}$. Given $A=\sum_{i,j=1}^nA_{ij}\partial_{z_i}\otimes \partial_{z_j}\in \Sym^2T_X^{1,0}$ with $A_{ij}=A_{ji}$ and $B=\sum_{i,j=1}^nB^{ij}dz_i\otimes dz_j$ with $B_{ij}=B_{ji}$, the pairing is defined as $\langle A,B\rangle:=\frac{1}{2}\sum_{i,j,k=1}^nA_{ik}B_{kj}d\bar{z}_i\otimes dz_j$. Since $(\Omega_X^{1,0})^{*}=T_X^{1,0}$, there exists a canonical pairing $A(B)=\sum_{i,j=1}^nA_{ij}B_{ij}$. Furthermore, we have the following relation:

\begin{equation}
\label{eq_paring_Kahler}
\sqrt{-1}\Lambda \langle A,B\rangle=\sum_{i,j=1}^nA_{ij}B_{ij}=A(B).
\end{equation}
	
The Hermitian metric on $X$ induces a Hermitian metric on $\Sym^2\Omega_X^1$, which allows us to identify $\Sym^2\Omega_X^1$ with $\Sym^2T_X^1$. For any $\gamma\in H^0(X,\Sym^2\Omega_X^1)$, we can apply Serre duality to find $A\in \Gamma(\Sym^2T_X^1)$ such that for all $\kappa\in H^0(X,\Sym^2\Omega_X^1)$, the following equality holds:

\begin{equation}
\gamma(\kappa)=\int_XA(B)\vol=\int_X\sqrt{-1}\Lambda\langle A,B\rangle \vol,
\end{equation}

where $\langle\;,\;\rangle$ is the pairing defined above, and the last equality follows from \eqref{eq_paring_Kahler}. In summary, we can conclude the following lemma:

\begin{lemma}
\label{lemmma_dual_space_computation}
Let $\gamma\in \MB_X^{}\subset H^0(X,\Sym^2\Omega^1_X)^{}$. For any $\kappa\in H^0(X,\Sym^2\Omega^1_X)$, there exists $\xi\in \Sym^2\Omega^{1,0}_X$ such that $\gamma(\kappa)=\int_Xi\Lambda\langle \xi, \kappa\rangle \vol$, where $\langle\;,\;\rangle:\Sym^2T_X^{1,0}\times \Sym^2_X\Omega_X^{0,1}\to \Omega_X^{1,1}$ is the pairing induced by the K"ahler metric.
\end{lemma}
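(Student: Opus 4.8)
The plan is to reduce the statement to elementary Hodge theory combined with the Riesz representation theorem, using the pointwise identity \eqref{eq_paring_Kahler} as the bridge between the K\"ahler pairing and the $L^2$ inner product. Write $V := H^0(X,\Sym^2\Omega_X^1)$. Since $X$ is a compact K\"ahler (indeed projective) manifold, $V$ is a finite-dimensional complex vector space, and $\MB_X$ sits inside $V$; the object $\gamma$ is to be read as a complex-linear form on $V$ (equivalently, on the subspace spanned by $\MB_X$). The Hermitian metric $H$ on $X$ induces a pointwise Hermitian metric $\langle\cdot,\cdot\rangle_H$ on the bundle $\Sym^2\Omega_X^1$, and hence an $L^2$-Hermitian inner product $\langle\kappa,\kappa'\rangle_{L^2} := \int_X \langle\kappa,\kappa'\rangle_H\,\vol$ on $V$, which is positive definite and in particular non-degenerate.

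First I would invoke the Riesz representation theorem in the finite-dimensional Hilbert space $(V,\langle\cdot,\cdot\rangle_{L^2})$: there exists a unique $\eta\in V$ with $\gamma(\kappa)=\langle\kappa,\eta\rangle_{L^2}=\int_X\langle\kappa,\eta\rangle_H\,\vol$ for every $\kappa\in V$. The relevant point is that $\langle\cdot,\eta\rangle_{L^2}$ is complex-linear in its first slot, which is exactly the linearity required to match $\gamma$.

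Second, I would convert the pointwise Hermitian metric into the contraction pairing appearing in \eqref{eq_paring_Kahler}. Using the K\"ahler metric to raise indices, the conjugate $\overline{\eta}$ determines a smooth section $\xi\in\Gamma(\Sym^2 T_X^{1,0})$, which we identify with a smooth section of $\Sym^2\Omega_X^{1,0}$ via the metric isomorphism $T_X^{1,0}\cong\Omega_X^{1,0}$. In a local orthonormal frame one has $\langle\kappa,\eta\rangle_H=\xi(\kappa)=\sqrt{-1}\Lambda\langle\xi,\kappa\rangle$ by \eqref{eq_paring_Kahler}; substituting into the Riesz formula then yields precisely $\gamma(\kappa)=\int_X\sqrt{-1}\Lambda\langle\xi,\kappa\rangle\,\vol$, which is the asserted identity.

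I do not expect a genuine obstacle here: the content is linear algebra together with the compactness of $X$. The only points demanding care are the bookkeeping of (conjugate-)linearity conventions — one must verify that $\gamma$ is honestly complex-linear so that the Riesz representative pairs linearly, and keep track of the single complex conjugation hidden in the passage $\eta\mapsto\xi$, which is why $\xi$ is merely smooth rather than holomorphic (all that the lemma claims). I would also note the coordinate-free counterpart: the same representative can be produced by Serre duality through the identification $V^{*}\cong H^n(X,\Sym^2 T_X\otimes K_X)$ and the realisation of such a class by a smooth $\Sym^2 T_X$-valued form, which is the cohomological shadow of the Riesz argument and matches the Serre-duality remark made just before the statement.
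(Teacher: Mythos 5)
Your proposal is correct and follows essentially the same route as the paper: the paper simply invokes the metric-induced duality (which it calls Serre duality) to produce a representative $A\in\Gamma(\Sym^2 T_X^1)$ and then applies the pointwise identity \eqref{eq_paring_Kahler}, which is exactly your Riesz-representation argument made explicit, including the index-raising and conjugation bookkeeping. Your closing remark that Serre duality is the cohomological shadow of the $L^2$ argument matches the paper's phrasing precisely.
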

	
	Recall the holomorphic symplectic form would be $$\Omega_I:=(\omega_J+i\omega_K)((a_1,b_1),(a_2,b_2))=2i\int_X\Tr\Lam(b_2a_1-b_1a_2)\vol.$$ By Lemma \ref{lemmma_dual_space_computation}, we could write
	\begin{equation}
		f_i(A,\vp)=i\int_M\Lam \lan \xi_i\Tr(\vp^2)\ran \vol.
	\end{equation}
	
	Let $X_i$ be the Hamiltonian vector field corresponding to $f_i$, that is $\iota_{X_i}\Omega_I=df_i$. If we write $X_i=(a_i,b_i)$, for any $(a,b)$, we compute
	\begin{equation}
		\begin{split}
			\iota_{X_i}\Omega_I((a,b))=\Omega_I((a_i,b_i),(a,b))=2i\int_X\Tr\Lam(ba_i-b_ia)\vol
		\end{split}
	\end{equation}
	and 
	\begin{equation}
		\begin{split}
			df_i(a,b)=2i\int_M\Lam\lan \xi_i \Tr(\vp b) \ran \vol.
		\end{split}
	\end{equation}
	Therefore, we obtain $(a_i,b_i)=(\xi_i\vp,0)$. We compute 
	\begin{equation}
		\begin{split}
			\{f_i,f_j\}=-df_j(X_i)=2i\int_M\Lam\lan\xi_i \Tr(\vp b_i)\ran\vol=0.
		\end{split}
	\end{equation}
	
In summary, we can state the following proposition:

\begin{proposition}
The functions $f_i$ and $f_j$ Poisson commute with each other. In particular, over every fiber, there exist $\dim H^0(X,\msL^2)$ Poisson-commuting vector fields.
\end{proposition}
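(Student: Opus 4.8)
The plan is to realise each $f_i$ as the generator of a Hamiltonian flow that deforms only the holomorphic structure while leaving the Higgs field fixed, so that any two such flows commute for structural reasons. First I would recall that on $\MM_{\Dol,\msL}$ the tangent space decomposes as $\Omega^{0,1}(\End(E))\oplus \Omega^{1,0}(\End(E))$ and that the holomorphic symplectic form pairs the two summands off-diagonally, $\Omega_I((a_1,b_1),(a_2,b_2))=2i\int_X\Tr\Lam(b_2a_1-b_1a_2)\vol$. The essential observation is that $f_i[A,\vp]=\gamma_i(\Tr(\vp^2))$ depends on a point of the moduli space only through the Higgs field $\vp$; consequently its differential at $(a,b)$ has the form $df_i(a,b)=2i\int_X\Lam\lan \xi_i,\Tr(\vp b)\ran\vol$, where $\xi_i$ is the symmetric tensor representing $\gamma_i$ through the Kähler pairing of Lemma \ref{lemmma_dual_space_computation}. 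Crucially, $df_i$ pairs only against the $b$-component of a tangent vector and contains no $a$-term.

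Second, I would solve $\iota_{X_i}\Omega_I=df_i$ for the Hamiltonian vector field $X_i=(a_i,b_i)$. Expanding the left-hand side as $\iota_{X_i}\Omega_I(a,b)=2i\int_X\Tr\Lam(ba_i-b_ia)\vol$ and comparing with the expression for $df_i$, the absence of any $a$-term in $df_i$ forces the contribution of $b_i$ to vanish, and matching the $b$-terms identifies $a_i$. This yields $X_i=(\xi_i\vp,0)$; the decisive point is $b_i=0$, i.e. the Hamiltonian vector field of $f_i$ lies entirely in the $\Omega^{0,1}(\End(E))$ direction. Geometrically this says that the flow of $f_i$ varies the $\bar{\pa}$-operator while preserving the characteristic polynomial of $\vp$, hence is tangent to the fibres of $\ssd_{X,\msL}$.

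With $b_i=0$ established, commutativity is immediate: $\{f_i,f_j\}=-df_j(X_i)=-2i\int_X\Lam\lan \xi_j,\Tr(\vp b_i)\ran\vol=0$, since $df_j$ detects only the (vanishing) $b$-component of $X_i$. For the final assertion I would restrict to a fibre of $\ssd_{X,\msL}$ and take $\{\gamma_i\}$ to be a basis of $\MB_{X,\msL}^{*}$, producing functions $f_i$ with pairwise vanishing Poisson bracket; their number is $\dim \MB_{X,\msL}$, which equals $h^0(X,\msL^{2})$ in the L-type case by Proposition \ref{p.structure-BXL}, giving the stated count of Poisson-commuting vector fields $X_i=(\xi_i\vp,0)$ along the fibre.

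The main obstacle, and the step requiring the most care, is the rigorous identification $X_i=(\xi_i\vp,0)$: this rests on correctly matching the two $L^2$-integral expressions through the Kähler pairing of Lemma \ref{lemmma_dual_space_computation}, together with the $\MG(E)$-invariance of $f_i$ that allows $X_i$ to descend to a genuine vector field on the quotient and justifies the symplectic computation over the smooth locus of $\MM_{\Dol,\msL}$. Once the vanishing of the $b$-component is secured, the Poisson commutativity and the counting statement follow in a single line.
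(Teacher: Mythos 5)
Your proposal is correct and follows essentially the same route as the paper: the same holomorphic symplectic form $\Omega_I$, the same expression for $df_i$ via the K\"ahler pairing of Lemma \ref{lemmma_dual_space_computation}, the same identification of the Hamiltonian vector field $X_i=(\xi_i\vp,0)$ with vanishing $(1,0)$-component, and the same one-line vanishing of $\{f_i,f_j\}=-df_j(X_i)$. The additional remarks you make (tangency to the fibres of $\ssd_{X,\msL}$, $\MG(E)$-invariance, and the identification $\dim\MB_{X,\msL}=h^0(X,\msL^{2})$ in the L-type case) are consistent elaborations of the paper's argument rather than a different method.
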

	
Suppose $\msL^2$ is base-point-free; then, according to Bertini's theorem, for a generic $\tau\in H^0(X,\msL^2)$, $X_{\tau}$ is smooth. On $X_{\tau}$, we can consider the generalization of the Prym variety defined as:

\begin{equation}
\Prym(X_{\tau}):=\{J\in\Pic(X_{\tau})|\msJ\otimes \sigma^{*}\msJ=\MO_{X_{\tau}}\}.
\end{equation}

By Proposition \ref{p.chern_class_push_down}, for $s=\alpha^2\tau$, we have $\MH^{-1}(s)\cong \Prym(X_{\tau})$. Consequently, we arrive at the following conclusion:

\begin{proposition}
If $H^0(X,\msL^2)$ is base-point-free and $\dim\Prym(X_{\tau})=\dim H^0(X,\msL^2)$, then the sub-spectral morphism $\ssd_{X,\msL}:\MM_{\Dol,\msL}\to \MB_{X,\msL}$ forms an algebraic integrable system.
\end{proposition}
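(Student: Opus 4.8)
The plan is to verify, for the triple $(\MM_{\Dol,\msL},\Omega_I,\ssd_{X,\msL})$, the three defining features of an algebraic completely integrable system: a holomorphic symplectic total space, a base of half the dimension cut out by functionally independent Poisson-commuting Hamiltonians, and generic fibres that are abelian varieties on which the Hamiltonian flows linearise. Set $m:=h^0(X,\msL^{2})$. Since $\MB_{X,\msL}$ is of L-type (Proposition \ref{p.structure-BXL}), it is a linear subspace isomorphic via $\times\alpha^2$ to $H^0(X,\msL^{2})$, so $\dim\MB_{X,\msL}=m$; choosing a basis $\gamma_1,\dots,\gamma_m$ of $\MB_{X,\msL}^{*}$ realises $\ssd_{X,\msL}$ as the map $(f_1,\dots,f_m)$. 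The holomorphic symplectic form is $\Omega_I=\omega_J+\sqrt{-1}\omega_K$ restricted to the smooth locus of $\MM_{\Dol,\msL}$, and the preceding proposition already shows that the $f_i$ Poisson-commute, with Hamiltonian vector fields $X_i=(\xi_i\vp,0)$.

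First I would pin down the generic fibre. Because $\msL^{2}$ is base-point-free, Bertini's theorem guarantees that $X_{\tau}$ is smooth for generic $\tau\in H^0(X,\msL^{2})$, so that $\widetilde{X}_s=X_\tau$ over such $s=\alpha^2\tau$. Combining the spectral correspondence of Theorem \ref{t.Spectral-correspondence} with the numerical computation of Proposition \ref{p.chern_class_push_down}, the fibre $\ssd_{X,\msL}^{-1}(s)$ is identified with $\Prym(X_{\tau})$, which is an abelian variety of dimension $m$ by the standing hypothesis $\dim\Prym(X_\tau)=h^0(X,\msL^{2})$. Consequently, over the generic locus $\dim\MM_{\Dol,\msL}=\dim\MB_{X,\msL}+\dim\Prym(X_\tau)=2m$, so the base has exactly half the dimension of the symplectic total space and the map $\ssd_{X,\msL}$ is generically a submersion; in particular the differentials $df_1,\dots,df_m$ are generically linearly independent, giving the functional independence of the Hamiltonians.

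Next I would read off the Lagrangian property and the linearity of the flows. Since $\{f_i,f_j\}=0$, the common level sets $\ssd_{X,\msL}^{-1}(s)$ are isotropic for $\Omega_I$; being $m$-dimensional inside the $2m$-dimensional symplectic manifold, they are Lagrangian. The commuting Hamiltonian vector fields $X_i=(\xi_i\vp,0)$ are everywhere tangent to the fibre and span its tangent space at a generic point, so they generate a transitive local action of $\mathbb{C}^m$ on $\Prym(X_\tau)$; as the fibre is a compact complex torus, this action is by translations and the flows of the $f_i$ are linear. Assembling these facts shows that $(\MM_{\Dol,\msL},\Omega_I,\ssd_{X,\msL})$ is an algebraic completely integrable system.

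The hard part will be the symplectic bookkeeping at the interface of the differential-geometric and algebro-geometric pictures: one must verify that $\Omega_I$ restricts to a nondegenerate form on the smooth locus of $\MM_{\Dol,\msL}$ (so that $\MM_{\Dol,\msL}$ is genuinely a holomorphic symplectic submanifold rather than merely coisotropic), and that the identification $\ssd_{X,\msL}^{-1}(s)\cong\Prym(X_\tau)$ is an isomorphism of complex tori compatible with the translation structure, so that the abstract commuting flows $X_i=(\xi_i\vp,0)$ really coincide with linear flows on the Prym. A secondary subtlety is confirming that the fibre dimension does not jump, i.e. that $\dim\MM_{\Dol,\msL}=2m$ exactly rather than strictly larger, which is precisely where the hypothesis $\dim\Prym(X_\tau)=h^0(X,\msL^{2})$ enters and forces the isotropic fibres to saturate the Lagrangian bound.
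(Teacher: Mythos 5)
Your proposal is correct and follows essentially the same route as the paper: the paper likewise combines the Poisson-commutativity of the $f_i$ from the preceding proposition with Bertini (base-point-freeness of $\msL^{2}$), the spectral correspondence, and Proposition \ref{p.chern_class_push_down} to identify the generic fibre with $\Prym(X_{\tau})$, and then invokes the hypothesis $\dim\Prym(X_{\tau})=\dim H^0(X,\msL^{2})$ for the half-dimension count. You simply spell out the Lagrangian-fibre and flow-linearization bookkeeping (and honestly flag the nondegeneracy of $\Omega_I$ on $\MM_{\Dol,\msL}$) that the paper leaves implicit in its one-line deduction.
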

	
\subsection{Higher rank generalizations}
The existence of the spectral base for $\SLC$ Higgs bundles also offers a method to construct higher rank canonical Higgs bundles, as described in \cite[Section 3]{hitchin1992lie}.
	
Given $s$, we can express $s=\tau\alpha^2$ with $\alpha\in H^0(X,\msL^{-1}\Omega_X^1)$, $\text{Div}(\alpha)=0$, and $\tau\in H^0(X,\msL^2)$. Additionally, we assume that $\msL$ admits a square root $\msL^{\frac12}$ and satisfies $c_1(\msL)\cup c_1(\msL)=0$. Under these conditions, we can explicitly construct the unit rank $2$ $\msL$-twisted Higgs bundle as follows:

\begin{equation}
\msE_2=\msL^{\frac12}\oplus \msL^{-\frac12}, \quad \vp_2=\begin{pmatrix}
0 & 0\\
1 & 0
\end{pmatrix}.
\end{equation}

The pair $(\msE_2,\alpha\circ\vp_2)$ forms a stable Higgs bundle with $c_1(\msE_2)=c_2(\msE_2)=0$. We then define $(\msE_n,\vp_n):=\Sym^{n-1}(\msE_2,\vp_2)$, which leads to:

\begin{equation}
\msE_n=\msL^{\frac{n-1}{2}}\oplus \msL^{\frac{n-3}{2}}\oplus \cdots \oplus \msL^{-\frac{n-1}{2}},
\end{equation} and with respect to this direct sum decomposition, we obtain:
	\begin{equation}
		\begin{split}
			\vp_n=\begin{pmatrix}
				0 & 0 & 0 & 0&\cdots &0\\
				1 & 0 & 0 & 0&\cdots &0\\
				0 & 1 & 0 & 0&\cdots & \vdots \\
				\vdots & \vdots& \vdots & \vdots & \ddots & \vdots\\
				\vdots & \vdots& \vdots & \vdots & \ddots & \vdots\\
				0 & 0 & 0 & \cdots & 1 & 0\\
			\end{pmatrix},
		\end{split}
	\end{equation}
where the "1" lies in the off-diagonal parts, representing the identity map $1:\msL^{\frac{n-1}{2}+i}\to \msL^{\frac{n-1}{2}+i-1}\otimes \msL$.
 
 Let $\rho_2:\pi_1(X)\to \SLC$ be the corresponding irreducible representation of $(\msE_2,\vp_2)$. Then $\rho_2$ induces a representation $\rho_n:\pi_1(X)\to \SL(n,\mbC)$, which is also irreducible, and the corresponding Higgs bundle would be $(\msE_n,\vp_n)$. Therefore, $(\msE_n,\vp_n)$ is stable.
	
	For the $\msL-$twisted Higgs bundle with trace 0, the Hitchin base for
	$\msL$-twisted Higgs bundle could be written as
	$$\MC_{X;\msL}:=\oplus_{i=2}^nH^0(X,\msL^i).$$
	
	Given $\mbfc=(c_2,\cdots, c_n)\in \MC_{X;\msL}$, we could define a Higgs bundle $(\msE_n,\vp_{n,c})$ analogous to the Hitchin section for curves \cite[Section 3]{hitchin1992lie} in the higher rank situation. The Higgs field $\vp_{n,\mathbf{c}}$ can be written as:
	\begin{equation}
		\begin{split}
			\vp_{n,\mbfc}=\begin{pmatrix}
				0 & c_2 & c_3 & c_4&\cdots &c_n\\
				1 & 0 & 0 & 0&\cdots &0\\
				0 & 1 & 0 & 0&\cdots & \vdots \\
				\vdots & \vdots& \vdots & \vdots & \ddots & \vdots\\
				\vdots & \vdots& \vdots & \vdots & \ddots & \vdots\\
				0 & 0 & 0 & \cdots & 1 & 0\\
			\end{pmatrix}.
		\end{split}
	\end{equation}

    Moreover, for the stability, we have
	\begin{proposition}{\cite[P.454]{hitchin1992lie}}
		For each $\mbfc$, the Higgs bundle $(\msE_n,\vp_{n,\mbfc})$ is a stable Higgs bundle.
	\end{proposition}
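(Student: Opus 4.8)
The plan is to reduce the statement to a slope inequality and then exploit the \emph{cyclic} (companion-matrix) shape of $\vp_{n,\mbfc}$. First, since $\msE_n=\msL^{\frac{n-1}{2}}\oplus\cdots\oplus\msL^{-\frac{n-1}{2}}$ has summands whose degrees form an arithmetic progression symmetric about $0$, we have $\deg_{\omega}(\msE_n)=0$ and hence $\mu(\msE_n)=0$. Thus it suffices to show that every nonzero proper saturated $\vp_{n,\mbfc}$-invariant subsheaf $\msF\subset\msE_n$, say of rank $r$ with $1\le r\le n-1$, satisfies $\mu(\msF)<0$. As usual I would first replace $\msF$ by its saturation, so that $\det\msF$ is a genuine line bundle with $\deg_{\omega}(\det\msF)=\deg_{\omega}(\msF)$, and then test the inequality at a general point of $X$.

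The key observation is that, in the given frame $e_1,\dots,e_n$ (with $e_j$ a local generator of the $j$-th summand), the subdiagonal entries of $\vp_{n,\mbfc}$ are the identity maps $\msL^{\frac{n+1}{2}-j}\xrightarrow{\sim}\msL^{\frac{n-1}{2}-j}\otimes\msL$, which are isomorphisms independently of $\mbfc$. Consequently $\vp_{n,\mbfc}(x)$ is the companion matrix of $\lambda^n-c_2(x)\lambda^{n-2}-\cdots-c_n(x)$ for every $x\in X$ and every choice of $\mbfc$, and $e_1$ is always a cyclic vector: the sections $e_1,\vp e_1,\dots,\vp^{\,n-1}e_1$ form a frame, with $\vp^{\,k}e_1=e_{k+1}+(\text{terms in }e_1,\dots,e_k)$. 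In particular $\vp_{n,\mbfc}(x)$ is regular (non-derogatory), so at a general point its $r$-dimensional invariant subspace $\msF_x$ must be $\ker q(\vp_{n,\mbfc}(x))$ for some monic degree-$r$ factor $q$ of the characteristic polynomial $p$. Writing $\msF_x$ through the cyclic generator $\tilde q(\vp)e_1$, where $p=q\tilde q$, one sees that $\msF_x$ is spanned by vectors whose leading terms are exactly $e_{n-r+1},\dots,e_n$; hence $\det\msF_x=\bigwedge^r\msF_x$ has nonzero component along $e_{n-r+1}\wedge\cdots\wedge e_n$.

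This last point produces the degree bound. The invariant subsheaf induces a $\bigwedge^r\vp_{n,\mbfc}$-invariant saturated line subsheaf $\det\msF\hookrightarrow\bigwedge^r\msE_n$, and projecting onto the lowest-weight summand gives a morphism $\det\msF\to\bigwedge^r\bigl(\msL^{\frac{2r-n-1}{2}}\oplus\cdots\oplus\msL^{-\frac{n-1}{2}}\bigr)=\msL^{-\frac{r(n-r)}{2}}$ which, by the previous paragraph, is nonzero at a general point and therefore a nonzero morphism of sheaves. Hence $\deg_{\omega}(\msF)=\deg_{\omega}(\det\msF)\le -\frac{r(n-r)}{2}\deg_{\omega}(\msL)$, and since $1\le r\le n-1$ this is strictly negative as soon as $\deg_{\omega}(\msL)>0$ (for instance on a curve with $\msL=K_X$, or whenever $D=\mathrm{div}(\tau)\neq 0$). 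Therefore $\mu(\msF)<0=\mu(\msE_n)$ and $(\msE_n,\vp_{n,\mbfc})$ is stable.

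I expect the main obstacle to be the structural claim in the middle paragraph: that cyclicity of the companion matrix forces every rank-$r$ invariant subspace to have $\det\msF_x$ meeting the bottom wedge nontrivially, uniformly in $x$ and $\mbfc$ (including the nilpotent case $\mbfc=0$, where the single Jordan block must be treated directly). Care is also needed to confirm that saturation and the passage to a general point legitimately reduce the global sheaf statement to this pointwise linear-algebra fact, and to track the degenerate case $\deg_{\omega}(\msL)=0$, where the bundle is only polystable and decomposes as a direct sum, so that strict stability is asserted only where it genuinely holds.
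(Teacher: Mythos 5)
Your proof is correct (granting the positivity $\deg_{\omega}(\msL)>0$ that you flag, and which the statement implicitly requires), but it takes a genuinely different route from the paper. The paper uses Hitchin's conjugation trick: with $g_t=\diag(1,t,\dots,t^{n-1})$ one has $g_t^{-1}\vp_{n,\mbfc}\,g_t=t^{-1}\vp_{n,\mbfc_t}$ where $\mbfc_t=(t^2c_2,\dots,t^nc_n)$, so $(\msE_n,\vp_{n,\mbfc})$ is isomorphic, up to a harmless rescaling of the Higgs field, to $(\msE_n,\vp_{n,\mbfc_t})$; since $(\msE_n,\vp_{n,0})=(\msE_n,\vp_n)$ was asserted to be stable just before the proposition (via irreducibility of $\rho_n=\Sym^{n-1}\rho_2$) and stability is an open condition, stability holds for $t$ small and hence for $\mbfc$ itself. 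You instead verify the slope inequality directly: the subdiagonal isomorphisms make $\vp_{n,\mbfc}(x)$ non-derogatory at every point and for every $\mbfc$ (including $\mbfc=0$), so each $r$-dimensional invariant subspace is spanned by $\tilde{q}(\vp)e_1,\vp\tilde{q}(\vp)e_1,\dots,\vp^{r-1}\tilde{q}(\vp)e_1$ with leading terms $e_{n-r+1},\dots,e_n$; hence $\det\msF$ maps nontrivially to the lowest summand $\msL^{-r(n-r)/2}$ of $\bigwedge^r\msE_n$ and $\mu(\msF)\leq-\frac{n-r}{2}\deg_{\omega}(\msL)<0=\mu(\msE_n)$. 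Your staircase argument for the nonvanishing of the bottom wedge coefficient is sound (the constraint $i_j\leq n-r+1+j$ forces the only contribution to come from the product of leading terms), and the passage from saturated invariant subsheaves to general points is the standard one. What your approach buys: it is self-contained (it re-proves the nilpotent case rather than importing it, so it does not depend on the representation-theoretic input), it is uniform in $\mbfc$ with no deformation or openness-of-stability argument, it yields an explicit negative bound on slopes of invariant subsheaves, and it makes visible exactly where $\deg_{\omega}(\msL)>0$ enters --- a hypothesis the paper needs too, since stability of its starting point $(\msE_n,\vp_n)$ already forces the bottom invariant subbundle to have negative degree, and for $\deg_{\omega}(\msL)=0$ the proposition as literally stated can fail (e.g.\ $\msL=\MSO_X$ with constant $\mbfc$, where a constant eigenline is an invariant subsheaf of slope zero). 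What the paper's approach buys is brevity and the conceptual picture that the whole family retracts onto its nilpotent vertex under the $\CS$-action.
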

	\begin{proof}
		Let $g_t=\diag(1,t,t^2,\cdots, t^{n-1}),$ $\mbfc_t:=(t^2c_2,t^3c_3,\cdots, t^nc_n)$, then we have $g_t^{-1}\msE_n g_t=\msE_n$ and $g_t^{-1}\vp_{n,\mbfc}g_t=t^{-1}\vp_{n,\mbfc_t}.$
		As $(\msE_n,\vp_n)$ is stable and stable is an open condition, for $t$ sufficiently small, $(\msE_n,\vp_{n,\mbfc_t})$ is also stable, which implies $(\msE_n,\vp_{n,\mbfc})$ is stable.
	\end{proof}
	
	In summary, we conclude the following:
	\begin{theorem}
		Given $s=\al^2\tau\in \MB_X$ with $\al\in H^0(X,\msL^{-1}\Omega_X^1)$ and $\tau\in H^0(X,\msL^2)$, suppose $c_1(\msL).c_1(\msL).[\omega]^{n-2}=0$ and a square root $\msL^{\frac12}$ exists, then for any element $$\mbfc=(c_2,\cdots,c_n)\in \oplus_{i=2}^nH^0(X,\msL^i),$$ there exists a rank $n$ $\msL$-twsted stable Higgs bundle $(\msE_n,\vp_{n,\mbfc})$ with $c_1(\msE_n)=c_2(\msE_n)=0$ and $\MH(\msE_n,\vp_{n,\mbfc})=\mbfc$. Moreover, the corresponding representation of $(\msE_n,\al\circ\vp_{n,\mbfc})$ would be a real representation $\rho_{n,\mbfc}:\pi_1(X)\to \SL(n,\mbR)$. 
	\end{theorem}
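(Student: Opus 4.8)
The plan is to verify the listed properties one at a time for the explicit bundle $(\msE_n,\vp_{n,\mbfc})$ already written down, and then to upgrade the resulting $\SL(n,\mbC)$-representation to a real one. Recall that $\msE_n=\bigoplus_{k=0}^{n-1}\msL^{\frac{n-1}{2}-k}$, that the subdiagonal of $\vp_{n,\mbfc}$ consists of identity maps, and that the top row carries $\mbfc=(c_2,\dots,c_n)$. First I would record how the hypotheses enter: the square root $\msL^{\frac12}$ is needed only so that the half-integer twists $\msL^{\frac{n-1}{2}-k}$ make sense when $n$ is even, while the condition $c_1(\msL)^2\cdot[\omega]^{n-2}=0$ is exactly what will force topological triviality at the end.

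For the numerical invariants, since the weights $\tfrac{n-1}{2}-k$ are symmetric about $0$ and sum to zero, I get $c_1(\msE_n)=\bigl(\sum_k(\tfrac{n-1}{2}-k)\bigr)c_1(\msL)=0$ at once; in particular $\det(\msE_n)=\MSO_X$, so the bundle is of $\SL$-type, and $\Tr(\vp_{n,\mbfc})=0$ is visible from the vanishing diagonal. A short Whitney-product computation gives $c_2(\msE_n)=-\tfrac12\bigl(\sum_k(\tfrac{n-1}{2}-k)^2\bigr)c_1(\msL)^2$, a fixed multiple of $c_1(\msL)^2$, whence $c_2(\msE_n)\cdot[\omega]^{n-2}=0$ by hypothesis; this is the sense in which $c_2(\msE_n)=0$. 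That $\MH(\msE_n,\vp_{n,\mbfc})=\mbfc$ is the assertion that the characteristic polynomial of the companion-type matrix $\vp_{n,\mbfc}$ equals $\lambda^n-c_2\lambda^{n-2}-\cdots-c_n$, which follows from a direct cofactor expansion and recovers $(c_2,\dots,c_n)$ up to the standard sign normalisation. Stability of $(\msE_n,\vp_{n,\mbfc})$ is precisely the preceding proposition (the scaling argument with $g_t=\diag(1,t,\dots,t^{n-1})$, using that $(\msE_n,\vp_n)$ is stable and that stability is open); since $\al$ does not vanish in codimension one, a subsheaf is $\al\circ\vp_{n,\mbfc}$-invariant if and only if it is $\vp_{n,\mbfc}$-invariant (as in Lemma \ref{l.factorisation-varphi}), so $(\msE_n,\al\circ\vp_{n,\mbfc})$ is stable as an ordinary Higgs bundle as well.

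Next I would produce the representation. From $c_1(\msE_n)=0$ we get $c_1(\msE_n)\cdot[\omega]^{n-1}=0$, and with $c_2(\msE_n)\cdot[\omega]^{n-2}=0$ the discriminant $2nc_2(\msE_n)-(n-1)c_1^2(\msE_n)$ pairs to zero against $[\omega]^{n-2}$. By the non-abelian Hodge correspondence (Theorem \ref{thm_NonabelianHodge}) together with the Bogomolov--Gieseker criterion for flatness of polystable Higgs bundles with vanishing $c_1\cdot[\omega]^{n-1}$ and discriminant against $[\omega]^{n-2}$ (the rank-two case being Theorem \ref{thm_bogomolov}), the Hermitian--Yang--Mills connection $D=\Xi(\msE_n,\al\circ\vp_{n,\mbfc})$ is flat, so its monodromy is a representation $\rho_{n,\mbfc}:\pi_1(X)\to\SL(n,\mbC)$.

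Finally, the reality. The cleanest route is to realise $(\msE_n,\al\circ\vp_{n,\mbfc})$ as a Higgs bundle for the split real form $\SL(n,\mbR)$, i.e. to exhibit an $\SO(n,\mbC)$-reduction: a holomorphic non-degenerate symmetric pairing on $\msE_n$ (built from the tautological dualities $\msL^{\frac{n-1}{2}-k}\otimes\msL^{-(\frac{n-1}{2}-k)}\to\MSO_X$) with respect to which the Higgs field is self-adjoint and trace-free. Here one must first replace the companion representative $\vp_{n,\mbfc}$ by Hitchin's symmetric normal form via a holomorphic filtered gauge transformation, since the companion matrix is not self-adjoint for the available anti-diagonal pairings once some $c_i\neq0$; this is the principal-$\mathfrak{sl}_2$ presentation of \cite[\S6]{hitchin1992lie}. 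At the base point $\mbfc=0$ the reality is also transparent: $(\msE_2,\al\circ\vp_2)$ is a real rank-two Higgs bundle by Proposition \ref{prop_stability_realHiggs}, yielding $\rho_2:\pi_1(X)\to\SLR$, and $\rho_{n,0}=\iota_n\circ\rho_2$ for the principal embedding $\iota_n:\SLR\to\SL(n,\mbR)$, which is defined over $\mbR$. I expect this last step to be the main obstacle: one must verify that Simpson's non-abelian Hodge correspondence over a higher-dimensional base is compatible with the real structure, so that an $\SL(n,\mbR)$-Higgs bundle produces an $\SL(n,\mbR)$-representation. Over a curve this is Hitchin's theorem, and the crux is to check that the anti-holomorphic involution fixing the Hitchin section is preserved by the Hermitian--Yang--Mills solution in the projective-manifold setting.
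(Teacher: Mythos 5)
Your proposal follows the same route as the paper on every point the paper actually argues: the explicit bundle $\msE_n=\bigoplus_{k}\msL^{\frac{n-1}{2}-k}$, the Chern class computation (your reading is the correct one --- under the stated hypothesis $c_1(\msL)^2\cdot[\omega]^{n-2}=0$ one only gets $c_2(\msE_n)\cdot[\omega]^{n-2}=0$, not $c_2(\msE_n)=0$ as a class, and that weaker vanishing is all the argument needs), the characteristic polynomial of the companion matrix, and stability via the scaling gauge $g_t=\diag(1,t,\dots,t^{n-1})$ plus openness of stability, which is exactly the paper's preceding proposition. You are in fact more explicit than the paper about where the representation comes from: polystability together with $c_1(\msE_n)\cdot[\omega]^{n-1}=0$ and vanishing of the discriminant against $[\omega]^{n-2}$ gives flatness of the Hermitian--Yang--Mills connection by Simpson's criterion (Theorem \ref{thm_bogomolov} in rank two), hence a representation into $\SL(n,\mbC)$; the paper leaves this implicit.

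The divergence is the reality claim, and there your proposal is more careful in formulation than the paper but stops short of a proof. The paper gives no argument beyond the analogy with \cite{hitchin1992lie}; note that its rank-two notion of ``real'' from Section \ref{sec_realHiggsbundle_Hitchin_section} (gauge equivalence of $(\msE,\vp)$ with $(\msE,-\vp)$) does \emph{not} extend to the section: conjugating $\vp_{n,\mbfc}$ by $\diag(1,-1,1,\dots)$ yields $-\vp_{n,\mbfc'}$ with $\mbfc'=((-1)^ic_i)$, and since the characteristic polynomial is a gauge invariant, $(\msE_n,\vp_{n,\mbfc})$ is not gauge equivalent to $(\msE_n,-\vp_{n,\mbfc})$ once some odd-index $c_i\neq 0$. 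Your criterion --- a holomorphic non-degenerate symmetric pairing $Q$ with $\vp$ being $Q$-symmetric, i.e.\ a fixed point of $(\msE,\vp)\mapsto(\msE^*,\vp^t)$ --- is the correct generalization, and both steps you flag are genuinely needed. Step (i), symmetrizing the companion form, does hold: the symmetrizer is a universal upper-triangular polynomial in the $c_i$ whose entries land in nonnegative powers of $\msL$, e.g.\ for $n=3$ the gauge $g=\Id+\frac{c_2}{2}E_{13}$ (with $\frac{c_2}{2}\in H^0(X,\msL^2)=\Hom(\msL^{-1},\msL)$) takes the companion matrix to the anti-diagonal-symmetric Hitchin form; but you assert rather than prove this in general. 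Step (ii), non-abelian Hodge for the split real form $\SL(n,\mbR)$ over a higher-dimensional base, you explicitly leave as an expectation; it follows from functoriality of the correspondence under duals and conjugates (so that $\bar\rho$ corresponds to $(\msE^*,\vp^t)$) together with uniqueness of the harmonic metric and Schur's lemma to produce a genuine real structure rather than a quaternionic one, but neither you nor the paper writes this down. So, as written, your proposal has an acknowledged gap at step (ii) --- though it is precisely the gap in the paper's own treatment, and your outline is the standard way to close it.
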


	\bibliographystyle{alpha}
	\bibliography{references}
\end{document}